\theoremstyle{plain}
\newtheorem{theorem}{Theorem}[section]
\newtheorem{maintheorem}{Theorem}
\newtheorem{proposition}[theorem]{Proposition}
\newtheorem{lemma}[theorem]{Lemma}
\newtheorem{corollary}[theorem]{Corollary}
\theoremstyle{definition}
\newtheorem{definition}[theorem]{Definition}
\newtheorem{example}[theorem]{Example}
\newtheorem{construction}[theorem]{Construction}
\newtheorem{remark}[theorem]{Remark}
\newcommand{\nc}{\newcommand}
\nc{\dmo}{\DeclareMathOperator}
\nc{\Q}{\mathbb{Q}}
\nc{\F}{\mathbb{F}}
\nc{\R}{\mathbb{R}}
\nc{\Z}{\mathbb{Z}}
\nc{\C}{\mathbb{C}}
\nc{\Ell}{\mathcal{L}}
\nc{\M}{\mathcal{M}}
\nc{\K}{\mathcal{K}}
\nc{\I}{\mathcal{I}}
\nc{\T}{\mathcal T}
\nc{\U}{\mathcal U}
\nc{\disk}{\mathbb{D}}
\nc{\hyp}{\mathbb{H}}
\nc{\CP}{\mathbb{CP}}
\nc{\cS}{\mathcal{S}}
\dmo{\Mod}{Mod}
\dmo{\PMod}{PMod}
\dmo{\LMod}{LMod}
\dmo{\Diff}{Diff}
\dmo{\Homeo}{Homeo}
\dmo{\dist}{dist}
\dmo\BDiff{BDiff}
\dmo\SO{SO}
\dmo\Hom{Hom}
\dmo\SL{SL}
\dmo\Sp{Sp}
\dmo\rank{rank}
\dmo\sig{sig}
\dmo\Out{Out}
\dmo\Aut{Aut}
\dmo\Inn{Inn}
\dmo\GL{GL}
\dmo\PSL{PSL}
\dmo\BHomeo{BHomeo}
\dmo\EHomeo{EHomeo}
\dmo\EDiff{EDiff}
\nc\Sig{\Sigma}
\dmo\Teich{Teich}
\dmo\Fix{Fix}
\nc{\pair}[1]{\langle #1 \rangle}
\nc{\abs}[1]{\left| #1 \right|}
\nc{\action}{\circlearrowright}
\nc{\norm}[1]{\left | \left | #1 \right | \right |}
\nc{\abcd}[4]{\left(\begin{array}{cc} #1 & #2 \\ #3 & #4 \end{array}\right)}
\dmo{\Isom}{Isom}
\nc{\normal}{\vartriangleleft}
\dmo{\Vol}{Vol}
\dmo{\im}{Im}
\dmo{\Push}{Push}
\dmo{\Conf}{Conf}
\dmo{\PConf}{PConf}
\dmo{\id}{id}
\dmo{\Jac}{Jac}
\dmo{\Pic}{Pic}
\dmo{\Stab}{Stab}
\dmo{\Arf}{Arf}
\dmo{\End}{End}
\dmo{\Gal}{Gal}
\dmo{\lcm}{lcm}
\dmo{\ab}{ab}
\dmo{\opp}{op}
\dmo{\SU}{SU}
\dmo{\HT}{\Omega \mathcal{T}}
\dmo{\HM}{\Omega \mathcal{M}}
\dmo{\spin}{spin}
\dmo{\even}{even}
\dmo{\odd}{odd}
\nc{\Span}[1]{\operatorname{Span}(#1)}
\newcommand{\sing}{\underline{\kappa}}
\renewcommand{\epsilon}{\varepsilon}
\renewcommand{\tilde}{\widetilde}
\renewcommand{\le}{\leqslant}
\nc{\coloneq}{\mathrel{\mathop:}\mkern-1.2mu=}
\nc{\margin}[1]{\marginpar{\scriptsize #1}}
\nc{\para}[1]{\medskip\noindent\textbf{#1.}}
\nc{\red}[1]{\textcolor{red}{#1}}
\nc{\blue}[1]{\textcolor{blue}{#1}}
\nc{\lb}{[}
\nc{\rb}{]}
\title[Higher spin mapping class groups and strata of Abelian differentials]{Higher spin mapping class groups and strata of Abelian differentials over Teichm{\"u}ller space}
\author{Aaron Calderon and Nick Salter}
\email{aaron.calderon@yale.edu}
\email{nks@math.columbia.edu}
\thanks{AC is supported by NSF Award No. DGE-1122492. NS is supported by NSF Awards DMS-1703181 and DMS-2003984.}
\address{AC: Department of Mathematics, Yale University, 10 Hillhouse Ave, New Haven, CT 06511}
\address{NS: Department of Mathematics, Columbia University, 2990 Broadway, New York, NY 10027}
\date{June 28, 2021}
\begin{document}
\begin{abstract}
For $g \ge 5$, we give a complete classification of the connected components of strata of abelian differentials over Teichm\"uller space, establishing an analogue of Kontsevich and Zorich's classification of their components over moduli space. Building on work of the first author \cite{Calderon_strata}, we find that the non-hyperelliptic components are classified by an invariant known as an $r$--spin structure. This is accomplished by computing a certain monodromy group valued in the mapping class group. To do this, we determine explicit finite generating sets for all $r$--spin stabilizer subgroups of the mapping class group, completing a project begun by the second author in \cite{Salter_monodromy}. Some corollaries in flat geometry and toric geometry are obtained from these results. 
\end{abstract}
\vspace*{-2.5em}
\maketitle
\vspace*{-1em}

\section{Introduction}\label{section:introduction}

The moduli space $\HM_g$ of abelian differentials is a vector bundle over the usual moduli space $\M_g$ of closed genus $g$ Riemann surfaces, whose fiber above a given $X \in \M_g$ is the space $\Omega(X)$ of abelian differentials (holomorphic 1--forms) on $X$. Similarly, the space $\HT_g$ of marked abelian differentials is a vector bundle over the Teichm{\"u}ller space $\T_g$ of {\em marked} Riemann surfaces of fixed genus (recall that a {\em marking} of $X \in \M_g$ is an isotopy class of map from a (topological) reference surface $\Sigma_g$ to $X$).

Both $\HM_g$ and $\HT_g$ are naturally partitioned into subspaces called {\em strata} by the number and order of the zeros of a differential appearing in the stratum. For a partition $\sing = (k_1, \ldots, k_n)$ of $2g-2$, define
\[
\HM(\sing) := \{ (X, \omega) \in \HM_g :
\omega \in \Omega(X) \text{ with zeros of order } k_1, \ldots, k_n\}.
\]
Define $\HT(\sing)$ similarly; then $\HM(\sing)$ is the quotient of $\HT(\sing)$ by the mapping class group $\Mod(\Sigma_g)$. Each stratum $\HM(\sing)$ is an orbifold, and the mapping class group action demonstrates $\HT(\sing)$ as an orbifold covering space of $\HM(\sing)$.

While strata are fundamental objects in the study of Riemann surfaces, their global structure is poorly understood (outside of certain special cases \cite{LM_strata}).
Kontsevich and Zorich famously proved that there are only ever at most three connected components of $\HM(\sing)$, depending on hyperellipticity and the ``Arf invariant'' of an associated spin structure (see Theorem \ref{theorem:KZstrata} and Definition \ref{definition:arfQF}).

In \cite{Calderon_strata}, the first author gives a partial classification of the non-hyperelliptic connected components of $\HT(\sing)$ in terms of invariants known as ``$r$--spin structures,'' certain $\Z/r\Z$-valued functions on the set of isotopy classes of oriented simple closed curves (c.f. Definition \ref{definition:spin}). Our first main theorem finishes that classification, settling Conjecture 1.3 of \cite{Calderon_strata} for all $g \ge 5$.

\begin{maintheorem}[Classification of strata]\label{theorem:classification}
Let $g \ge 5$ and $\sing = (k_1, \ldots, k_n)$ be a partition of $2g-2$. Set $r = \gcd(\sing)$. Then there are exactly $r^{2g}$ non--hyperelliptic components of $\HT(\sing)$, corresponding to the $r$--spin structures on $\Sigma_g$.

Moreover, when $\gcd(\sing)$ is even, exactly
$(r/2)^{2g} \left( 2^{g-1} (2^g + 1) \right)$ of these components have even Arf invariant and
$(r/2)^{2g} \left( 2^{g-1} (2^g - 1) \right)$ have odd Arf invariant. 
\end{maintheorem}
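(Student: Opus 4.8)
\emph{The plan is to} recast the question as a computation of a geometric monodromy group valued in $\Mod(\Sigma_g)$, carry out that computation, and then read off the counts from Kontsevich--Zorich (Theorem~\ref{theorem:KZstrata}) together with some elementary bookkeeping for $r$--spin structures.

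\emph{Step 1: reduction to a monodromy computation.} Fix a non-hyperelliptic component $\mathcal{Q}$ of $\HM(\sing)$. Parallel-transporting markings around loops gives a monodromy homomorphism $\rho_\mathcal{Q}\colon \pi_1^{\mathrm{orb}}(\mathcal{Q}) \to \Mod(\Sigma_g)$, and the fibre of $\pi_0(\HT(\sing)) \to \pi_0(\HM(\sing))$ over $\mathcal{Q}$ is in natural bijection with the coset space $\mathcal{G}(\mathcal{Q}) \backslash \Mod(\Sigma_g)$, where $\mathcal{G}(\mathcal{Q}) := \im(\rho_\mathcal{Q})$. A marked translation surface determines a framing of $\Sigma_g$ punctured at the zeros, whose winding-number function reduces mod $r$ to an $r$--spin structure $\phi_\mathcal{Q}$ (Definition~\ref{definition:spin}); this is locally constant, hence constant on $\mathcal{Q}$ and preserved by the monodromy, so automatically $\mathcal{G}(\mathcal{Q}) \le \Mod(\Sigma_g)[\phi_\mathcal{Q}]$, the stabiliser of $\phi_\mathcal{Q}$. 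Thus the theorem will follow once we establish the reverse inclusion $\mathcal{G}(\mathcal{Q}) \supseteq \Mod(\Sigma_g)[\phi_\mathcal{Q}]$ --- this is the reduction carried out in \cite{Calderon_strata} --- after which the components over $\mathcal{Q}$ biject with the $\Mod(\Sigma_g)$--orbit of $\phi_\mathcal{Q}$.

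\emph{Step 2: computing $\mathcal{G}(\mathcal{Q})$.} Shearing a flat cylinder by its full modulus is a loop in the stratum, so for every curve $c$ arising as the core of a cylinder in some $(X,\omega)\in\mathcal{Q}$ one has $T_c \in \mathcal{G}(\mathcal{Q})$; such a $c$ is a closed horizontal geodesic, hence $\phi_\mathcal{Q}(c)=0$, i.e.\ $c$ is admissible. The equality $\mathcal{G}(\mathcal{Q}) = \Mod(\Sigma_g)[\phi_\mathcal{Q}]$ then rests on two ingredients. First, a purely group-theoretic result, extending the work of the second author in \cite{Salter_monodromy} from framings to all $r$--spin structures: one exhibits an explicit \emph{finite} collection of admissible curves on $\Sigma_g$ whose Dehn twists generate $\Mod(\Sigma_g)[\phi]$ for every $r$--spin structure $\phi$. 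Second, a flat-geometric realisation result: using standard surgeries on translation surfaces (breaking up a zero, adding handles, Rauzy--Veech--type moves) to travel between explicit model surfaces inside the connected set $\mathcal{Q}$, one shows that such a generating configuration of admissible curves --- or enough of it that, together with the twists already available, it generates --- can be realised simultaneously as core curves of cylinders of a single surface in $\mathcal{Q}$. Combining the two yields $\Mod(\Sigma_g)[\phi_\mathcal{Q}] \le \mathcal{G}(\mathcal{Q})$.

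\emph{Step 3: the count.} By Theorem~\ref{theorem:KZstrata}, $\HM(\sing)$ has exactly one non-hyperelliptic component when some $k_i$ is odd --- equivalently $r=\gcd(\sing)$ is odd --- and exactly two, distinguished by the Arf invariant (Definition~\ref{definition:arfQF}), when all $k_i$ are even --- equivalently $r$ is even. The set of $r$--spin structures is a torsor over $H^1(\Sigma_g;\Z/r)\cong(\Z/r)^{2g}$, hence has size $r^{2g}$, and a standard computation shows $\Mod(\Sigma_g)$ acts transitively on it when $r$ is odd and with exactly two orbits --- cut out by the Arf invariant of the mod--$2$ reduction --- when $r$ is even. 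By Steps~1--2: when $r$ is odd, $\pi_0^{\mathrm{nonhyp}}(\HT(\sing))$ is the single orbit, of size $r^{2g}$; when $r$ is even it is the union of the two orbits, each realised because the two non-hyperelliptic components of $\HM(\sing)$ carry the two Arf invariants, again of total size $r^{2g}$. In both cases $\pi_0^{\mathrm{nonhyp}}(\HT(\sing))$ is in bijection with the $r$--spin structures. For the refinement when $r$ is even, reduction mod $2$ is a surjection $(\Z/r)^{2g}\onto(\Z/2)^{2g}$ with kernel of order $(r/2)^{2g}$, so exactly $(r/2)^{2g}$ of the $r$--spin structures lie over any given spin structure; since $\Sigma_g$ carries $2^{g-1}(2^g+1)$ even and $2^{g-1}(2^g-1)$ odd spin structures, the stated formulas follow.

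\emph{Main obstacle.} The crux is Step~2, and within it the group theory: proving that $\Mod(\Sigma_g)[\phi]$ is generated by Dehn twists about an explicit finite set of admissible curves. This requires adapting the ``change of coordinates'' and generation machinery for $\Mod(\Sigma_g)$ so as to respect the $r$--spin constraint and the torsor structure over $H^1(\Sigma_g;\Z/r)$, controlling the low-complexity base cases (the source of the hypothesis $g\ge 5$), and matching Humphries--type topological generating sets against the admissibility condition; the flat-geometric realisation in the second half of Step~2 is likewise delicate, since one must produce the required cylinder configurations while remaining inside a single prescribed non-hyperelliptic component of $\HM(\sing)$.
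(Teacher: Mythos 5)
Your proposal follows the same three-step architecture as the paper: reduce to computing the geometric monodromy group $\mathcal{G}(\mathcal{H})$ via Proposition~\ref{prop:monstab} and Lemma~\ref{lemma:rspinconstant}, realize the generating twists of $\Mod_g[\phi]$ as cylinder twists via Lemma~\ref{lemma:cyltwist}, and then count using Theorem~\ref{theorem:KZstrata} together with Lemma~\ref{lemma:stabindex}. The one place where you are vaguer than the paper is the flat-geometric realization: you propose traveling between model surfaces by surgeries (breaking up zeros, Rauzy--Veech moves), whereas the paper sidesteps all of that by feeding the prescribed curve system $\mathsf{C}(\sing,\Arf)$ directly into the Thurston--Veech construction to produce a single square-tiled prototype (Lemmas~\ref{lemma:flatprototype} and~\ref{lemma:prototype}) whose horizontal and vertical cylinders are precisely the required curves. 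This simultaneously realizes the whole configuration in one shot and makes the verification that the prototype is non-hyperelliptic and has the prescribed Arf invariant a local computation, rather than requiring you to track these properties along a path of surgeries; your hedge (``or enough of it that \dots generates'') is exactly the gap the Thurston--Veech prototype closes. You also elide the fact that the prototype's cylinders give twists in curves $b_i$ of winding numbers $\sum_{j\le\ell} k_j$, which together with $\Mod_g[\tilde\phi]$ invoke Theorem~\ref{theorem:genset}.\ref{case:r} to descend from the $(2g-2)$--spin stabilizer to the $r$--spin stabilizer; this is a small but essential part of Step~2.
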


The classification of Theorem \ref{theorem:classification} should be contrasted with the classification of hyperelliptic components: as shown in the proof of \cite[Corollary 2.6]{Calderon_strata}, there are infinitely many hyperelliptic connected components for $g \ge 3$, corresponding to hyperelliptic involutions of the surface. 

We emphasize that Theorem \ref{theorem:classification} together with \cite[Corollary 2.6]{Calderon_strata} yields a complete classification of the connected components of $\HT(\sing)$ for all $g \ge 5$.\\

As in \cite{Calderon_strata}, the proof of Theorem \ref{theorem:classification} follows by analyzing which mapping classes can be realized as flat deformations living in a (connected component of a) stratum of genus-$g$ differentials. The {\em geometric monodromy group} of such a component $\mathcal H$ is the subgroup $\mathcal{G}({\mathcal H}) \le \Mod(\Sigma_g)$ obtained from the forgetful map $\mathcal H \to \mathcal M_g$ by taking (orbifold) $\pi_1$ (see also Remark \ref{rem:geomonodromy}). Theorem \ref{theorem:classification} is obtained as a direct corollary of the monodromy calculation described in Theorem \ref{theorem:moncomp} below. To formulate the result, let $\phi$ denote the $r$-spin structure associated to the stratum-component $\mathcal H$ (see Construction \ref{example:diffspin} for details), and let $\Mod_g[\phi]$ denote the stabilizer of $\phi$ under the action of $\Mod(\Sigma_g)$ (c.f. Definition \ref{definition:stabilizer}).

\begin{maintheorem}[Monodromy of strata]\label{theorem:moncomp}
Let $g \ge 5$ and $\sing = (k_1, \ldots, k_n)$ be a partition of $2g-2$. Let $\mathcal H$ be a non-hyperelliptic component of $\HT(\sing)$ with associated $r$-spin structure $\phi$. Then the geometric monodromy group $\mathcal{G}({\mathcal H}) \le \Mod(\Sigma_g)$ is the stabilizer of $\phi$:
\[
\mathcal{G}({\mathcal H}) = \Mod_g[\phi].
\]
\end{maintheorem}

The first author proved special cases of Theorems \ref{theorem:classification} and \ref{theorem:moncomp} in the case where $r \notin \{2g-2, g-1\}$, and when $r$ is even, only gave bounds on the number of components (equivalently, the index of $\mathcal{G}({\mathcal H})$ in $\Mod(\Sigma_g)$). Furthermore, those results only applied to a restricted family of genera.
The main difference in our work is that our improvements to the theory of stabilizers of $r$-spin structures yield exact computations for all genera $g \ge 5$ and all $r$.

The containment $\mathcal{G}({\mathcal H}) \le \Mod_g[\phi]$ is relatively clear; see Lemma \ref{lemma:rspinconstant}. The core of Theorem \ref{theorem:moncomp} is to show that $\mathcal{G}({\mathcal H})$ {\em surjects} onto $\Mod_g[\phi]$. Towards this goal, our final main theorem provides explicit finite generating sets for the stabilizer of any $r$--spin structure. In \cite[Theorem 9.5]{Salter_monodromy}, the second author obtained partial results in this direction, but the results there only applied in the setting of $r < g-1$, and were only approximate in the case of $r$ even. 

To state our results, we recall that the set of $r$--spin structures on $\Sigma_g$ is empty unless $r$ divides $2g-2$ (see Remark \ref{remark:rdivideschi}). For any $r$--spin structure $\phi$ on a surface of genus $g$, define
a {\em lift} of $\phi$ to be any $(2g-2)$--spin structure $\tilde \phi$ such that
\[\tilde \phi (c) \equiv \phi(c) \pmod r\]
for every oriented simple closed curve $c$.

Since $\Mod(\Sigma_g)$ acts transitively on the set of $r$--spin structures with the same Arf invariant (Lemma \ref{lemma:stabindex}), it suffices to give generators for the stabilizer of a single $r$--spin structure with given Arf invariant. There are two values of the Arf invariant, and hence two configurations of curves to be constructed in each genus. We analyze two particular such families of configurations in cases \ref{case:hardeven} and \ref{case:hardodd} of Theorem \ref{theorem:genset}. As a function of genus, the Arf invariant exhibits mod-$4$ periodicity in these families, leading to the dichotomy between $g \equiv 0,1 \pmod 4$ and $g \equiv 2,3 \pmod 4$ seen below.

\begin{maintheorem}[Generating $\Mod_g \lb \phi \rb$]\label{theorem:genset}
$\,$
\begin{enumerate} 
\item \label{case:hardeven} Let $g \ge 5$ be given. Then the Dehn twists about the curves $a_0, a_1, \dots, a_{2g-1}$ shown in Figure \ref{figure:hardcaseeven} generate $\Mod_g[\tilde \phi]$, where $\tilde \phi$ is the unique $(2g-2)$--spin structure obtained by specifying that $\tilde \phi(a_i) = 0$ for all $i$, and
\[
\Arf(\tilde \phi)  = \begin{cases} 
	1	&{g \equiv 0,3 \pmod 4}\\
	0	&{g \equiv 1,2 \pmod 4}
\end{cases}
\] 
\begin{figure}[ht]
\labellist
\small
\pinlabel $a_0$ [r] at 109.6 16
\pinlabel $a_1$ [b] at 88 91.2
\pinlabel $a_2$ [bl] at 110.4 59.2
\pinlabel $a_3$ [br] at 25.6 46.4
\pinlabel $a_4$ [t] at 77.6 28
\pinlabel $a_5$ [bl] at 124.2 45
\pinlabel $a_6$ [b] at 148 40
\pinlabel $a_{2g-1}$ [b] at 315.2 48.8
\endlabellist
\includegraphics{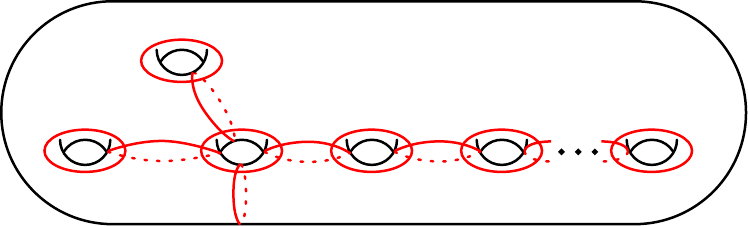}
\caption{Generators for $\Mod_g[\tilde \phi]$, Case \ref{case:hardeven}}
\label{figure:hardcaseeven}
\end{figure}

\item \label{case:hardodd} Let $g \ge 5$ be given. Then the Dehn twists about the curves $a_0, a_1, \dots, a_{2g-1}$ shown in Figure \ref{figure:hardcaseodd} generate $\Mod_g[\tilde \phi]$, where $\tilde \phi$ is the unique $(2g-2)$--spin structure obtained by specifying that $\tilde \phi(a_i) = 0$ for all $i$, and
\[
\Arf(\tilde \phi)  = \begin{cases} 
	1	&{g \equiv 1,2 \pmod 4}\\
	0	&{g \equiv 0,3 \pmod 4}
\end{cases}
\] 
\begin{figure}[ht]
\labellist
\small
\pinlabel $a_0$ [r] at 167.2 83.2
\pinlabel $a_1$ [b] at 52 66.4
\pinlabel $a_2$ [b] at 81.6 60.8
\pinlabel $a_{2g-1}$ [b] at 308.8 65.6
\endlabellist
\includegraphics{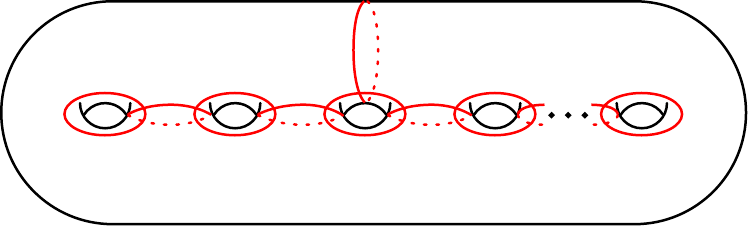}
\caption{Generators for $\Mod_g[\tilde \phi]$, Case \ref{case:hardodd}}
\label{figure:hardcaseodd}
\end{figure}

\item \label{case:r} Let $g \ge 3$, let $r$ be a proper divisor of $2g-2$, and let $\phi$ be an $r$--spin structure. Let $\tilde \phi$ be an arbitrary lift of $\phi$ to a $(2g-2)$--spin structure, and let $\{c_i\}$ be any collection of simple closed curves such that the set of values $\{\tilde \phi(c_i)\}$ generates the subgroup $r\Z/(2g-2)\Z$. Then 
\[
\Mod_g[\phi] = \pair{\Mod_g[\tilde \phi], \{T_{c_i}\}}.
\]
In particular, $\Mod_g[\phi]$ is generated by a finite collection of Dehn twists for all $g \ge 5$: the twists about the curves $\{c_i\}$ in combination with the finite generating set for $\Mod_g[\tilde \phi]$ given by whichever of Theorem \ref{theorem:genset}.\ref{case:hardeven} or \ref{theorem:genset}.\ref{case:hardodd} is applicable to $\tilde \phi$. 
\end{enumerate}
\end{maintheorem}

We remark that the generating sets exhibited in Cases \ref{case:hardeven} and \ref{case:hardodd} are minimal in the sense that any subcollection of the $a_i$'s does not generate $\Mod_g[\tilde \phi]$. Indeed, any proper subset does not fill $\Sigma_g$, and so the subgroup generated by those twists stabilizes a curve and is hence of infinite index in $\Mod(\Sigma_g)$.

\para{Application: realizing curves as cylinders} Using the monodromy calculation of Theorem \ref{theorem:moncomp}, we give a complete characterization of which curves can be realized as embedded Euclidean cylinders on some surface in a given connected component of a stratum (see Section \ref{subsection:flat} for a discussion of cylinders and other basic notions in flat geometry). 

\begin{corollary}\label{corollary:cylchar}
Suppose that $g \ge 5$ and $\sing$ is a partition of $2g-2$ with $\gcd(\sing) = r$. Let $\tilde{\mathcal H}$ be a component of $\HT(\sing)$ and $\phi$ the corresponding $r$--spin structure, and let $c \subset \Sigma_g$ be a simple closed curve.
\begin{itemize}
\item If $\tilde{ \mathcal H}$ is hyperelliptic with corresponding involution $\iota$, then $c$ is realized as the core curve of a cylinder on some marked abelian differential in $\tilde{ \mathcal H}$ if and only if it is nonseparating and $\iota(c)=c$.
\item If $\tilde { \mathcal H}$ is non-hyperelliptic, then $c$ is realized as the core curve of a cylinder on some marked abelian differential in $\tilde { \mathcal H}$ if and only if it is nonseparating and $\phi(c)=0$.
\end{itemize}
\end{corollary}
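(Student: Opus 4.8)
The plan is to derive the corollary from Theorem~\ref{theorem:classification} together with a change-of-coordinates principle for $r$--spin structures. Throughout we will use that $\tilde{\mathcal H}$, being a nonempty component of a stratum, contains square-tiled surfaces, and that a square-tiled surface decomposes into a union of embedded Euclidean cylinders.

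\textbf{Necessity.} Suppose $c \subset \Sigma_g$ is the core curve of a cylinder $C$ on a marked abelian differential $(X,\omega)$ in $\tilde{\mathcal H}$. Then $c$ is nonseparating: the holonomy $\int_c \omega$ of $C$ is a nonzero vector in $\C$, whereas a separating curve is nullhomologous and so has vanishing $\omega$--period. If $\tilde{\mathcal H}$ is non-hyperelliptic, recall from \cite{Calderon_strata} that the associated $r$--spin structure $\phi$ is the reduction mod $r$ of the winding-number function of the horizontal line field of $\omega$ (which descends to $\Sigma_g$ precisely because $r \mid k_i$ for every $i$). Since $c$ is freely homotopic to a closed leaf of $C$, a straight line in the flat metric, its winding number vanishes, so $\phi(c) = 0$. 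If instead $\tilde{\mathcal H}$ is hyperelliptic with involution $\iota$, then $\iota$ is realized on $X$ by an isometry of the flat structure acting by $-1$ on $\omega$, so $\iota(C)$ is a cylinder parallel to $C$ of the same circumference; as $X/\iota$ is a sphere, a Gauss--Bonnet argument shows that the core of $\iota(C)$ is isotopic to $c$ (either $\iota(C) = C$, or the two cores cobound an annulus covering a disk that contains exactly two branch points). Hence $\iota(c) = c$. This point is essentially contained in the analysis of hyperelliptic components in \cite{Calderon_strata}.

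\textbf{Sufficiency.} Conversely, let $c$ be nonseparating with $\phi(c) = 0$ (resp.\ $\iota(c) = c$). Pick a square-tiled surface $(X_0,\omega_0)$ in $\tilde{\mathcal H}$ and let $c_0$ be the core of one of its cylinders; by the necessity direction, $c_0$ is nonseparating with $\phi(c_0) = 0$ (resp.\ $\iota(c_0) = c_0$). Now invoke the change-of-coordinates principle: the stabilizer $\Stab(\phi) \le \Mod(\Sigma_g)$ of the $r$--spin structure (resp.\ the centralizer of $\iota$) acts transitively on the set of isotopy classes of nonseparating simple closed curves with the prescribed value of $\phi$ (resp.\ fixed by $\iota$). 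Choose $f$ in this subgroup with $f(c_0) = c$. Since $f \in \Stab(\phi)$ (resp.\ $f$ centralizes $\iota$), the marked differential $f\cdot (X_0,\omega_0)$ has the same associated $r$--spin structure (resp.\ hyperelliptic involution), so by Theorem~\ref{theorem:classification} it again lies in $\tilde{\mathcal H}$. As $f$ carries the cylinder with core $c_0$ to a cylinder with core $f(c_0) = c$, the curve $c$ is realized as a cylinder core in $\tilde{\mathcal H}$.

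\textbf{Main obstacle.} The crux is the change-of-coordinates statement for $r$--spin structures: the transitivity of $\Stab(\phi)$ on nonseparating curves with a fixed $\phi$--value. For $g \ge 5$ this should be extractable from the machinery developed for Theorem~\ref{theorem:classification} (compare the treatment of $r$--spin stabilizer subgroups in \cite{Salter_monodromy}). In the hyperelliptic case one needs in addition the analogous transitivity statement for the centralizer of $\iota$ acting on $\iota$--invariant nonseparating curves, together with the geometric input (via Gauss--Bonnet on the quotient sphere) that cylinder cores on hyperelliptic differentials are $\iota$--invariant; both are accessible through the description of hyperelliptic components in \cite{Calderon_strata}.
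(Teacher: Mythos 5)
The non-hyperelliptic half of your proposal is essentially the paper's argument: necessity via the winding-number observation (the paper cites Lemma~\ref{lemma:cylinderadmissible}) and the Stokes-type fact that separating curves have zero period, and sufficiency via the transitivity of $\Mod_g[\phi]$ on admissible curves (this is exactly Lemma~\ref{lemma:ccpcurves}), combined with the chain of identifications $\Mod_g[\phi] = \mathcal{G}(\mathcal{H}) = \Stab_{\Mod(\Sigma_g)}(\tilde{\mathcal H})$ that comes out of the proof of Theorem~\ref{theorem:classification} and Proposition~\ref{prop:monstab}.

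The hyperelliptic half takes a genuinely different route, and the difference is worth recording. You work with the full centralizer $C(\iota)$ and invoke the bijection from \cite[Corollary~2.6]{Calderon_strata} between hyperelliptic components of $\HT(\sing)$ and hyperelliptic involutions of $\Sigma_g$, so that $\Stab_{\Mod(\Sigma_g)}(\tilde{\mathcal H}) = C(\iota)$; the transitivity of $C(\iota)$ on nonseparating $\iota$-invariant curves then follows from Birman--Hilden, since $C(\iota)/\langle\iota\rangle$ is the full mapping class group of the $(2g+2)$-marked sphere, which is transitive on simple arcs between marked points. The paper instead works inside the braid group $B_q$ on only those branch values that lift to regular points of $\omega$; this is a priori a proper subgroup (and genuinely so when $\sing = (2g-2)$), and the paper's deduction of transitivity on arcs is stated for arcs connecting points permuted by $B_q$. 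Your approach is therefore somewhat more robust, handling uniformly the case where the candidate curve passes through the Weierstrass point marked as the zero. Two points in your write-up need correcting: (1) you cite Theorem~\ref{theorem:classification} to conclude that $f\cdot(X_0,\omega_0)$ again lies in $\tilde{\mathcal H}$ in the \emph{hyperelliptic} case, but Theorem~\ref{theorem:classification} addresses only the non-hyperelliptic components; the correct reference is \cite[Corollary~2.6]{Calderon_strata}. (2) Your Gauss--Bonnet sketch that $\iota$ fixes each cylinder (hence each core curve) is looser than the paper's appeal to \cite[Lemma~2.1]{Lind_hyp}; the asserted dichotomy (``either $\iota(C)=C$, or the two cores cobound an annulus covering a disk containing exactly two branch points'') is not self-evident and would need a real argument if you do not cite the external lemma.
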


\para{Application: homological monodromy} As a second corollary, we recover a recent theorem of Guti\'errez--Romo \cite[Corollary 1.2]{GR_RVgroups} using topological methods. The result of Guti\'errez--Romo concerns the {\em homological} monodromy of a stratum. Let ${ \mathcal H}$ be a component of $\HM(\sing)$. There is a vector bundle $H_1{ \mathcal H}$ over ${ \mathcal H}$ where the fiber over the Abelian differential $(X, \omega)$ is the space $H_1(X, \R)$. The (orbifold) fundamental group of ${ \mathcal H}$ admits a monodromy action on $H_1{ \mathcal H}$ as a subgroup of $\Sp(2g,\Z)$; this was computed (via the ``Rauzy--Veech group'' of ${ \mathcal H}$) by Guti\'errez--Romo.

\begin{corollary}[c.f. Corollary 1.2 of \cite{GR_RVgroups}]\label{corollary:GR}
Suppose that $\sing = (k_1, \dots, k_n)$ is a partition of $2g-2$ such that $g \ge 5$, and set $r = \gcd(k_1, \dots, k_n)$. Let ${ \mathcal H}$ be a connected component of $\HM(\sing)$.
\begin{enumerate}
\item If $r$ is odd, then the monodromy group of $H_1{ \mathcal H}$ is the entire symplectic group $\Sp(2g,\Z)$.
\item If $r$ is even, then the monodromy group of $H_1 { \mathcal H}$ is the stabilizer in $\Sp(2g,\Z)$ of a mod-$2$ quadratic form $q$ associated to the spin structure on the chosen basepoint; in particular, it is a finite-index subgroup of $\Sp(2g, \Z)$ (see Section \ref{subsection:arf}). 
\end{enumerate}
\end{corollary}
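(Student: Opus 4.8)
The plan is to deduce the homological statement from the stronger topological statement that the geometric monodromy group of a component $\mathcal H$ of $\HM(\sing)$ equals the relevant $r$--spin stabilizer $\Mod_g[\phi]$, which is precisely the content established in the proof of Theorem \ref{theorem:classification}. Granting this, the monodromy group on $H_1 \mathcal H$ is simply the image of $\Mod_g[\phi]$ under the symplectic representation $\Mod(\Sigma_g) \to \Sp(2g,\Z)$, so the task reduces to computing that image. First I would recall that when $r = \gcd(\sing)$ is odd, one can always choose a lift to a $(2g-2)$--spin structure and, more to the point, the $r$--spin stabilizer $\Mod_g[\phi]$ contains $\Mod_g[\tilde\phi]$ together with extra twists (Theorem \ref{theorem:genset}.\ref{case:r}); but for the homological image it is cleanest to argue directly that $\Mod_g[\phi]$ surjects onto $\Sp(2g,\Z)$ when $r$ is odd. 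This follows because $\Mod_g[\phi]$ contains the Dehn twists about all nonseparating curves $c$ with $\phi(c) = 0$ (by Proposition \ref{prop:admissfull}, $\T_\phi = \Mod_g[\phi]$), and when $r$ is odd the homology classes of such curves span $H_1(\Sigma_g;\Z)$ and in fact the corresponding transvections generate $\Sp(2g,\Z)$ — here one uses that the mod-$r$ reduction of a spin structure places no $2$--torsion constraint, so there is no obstruction of Arf type. One must check that enough such transvections are present: it suffices to exhibit, for a symplectic basis $\{x_i, y_i\}$, curves realizing these classes and also a curve realizing $x_1 + y_1$ (or similar) all with $\phi$--value $0$, which is a standard change-of-coordinates argument once one knows $r$ is odd.

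Second, when $r$ is even, the reduction $\phi \bmod 2$ is an honest $\Z/2$--spin structure, equivalently a quadratic form $q$ refining the mod-$2$ intersection pairing, with a well-defined Arf invariant. Any Dehn twist $T_c$ with $\phi(c) = 0$ necessarily has $q(c) = 0 \bmod 2$ as well, so its image transvection preserves $q$; hence the homological monodromy is contained in the stabilizer $\Sp(2g,\Z)[q]$ of the quadratic form. For the reverse inclusion I would invoke the classical fact (going back to work on the symplectic representation of spin mapping class groups, e.g. in the theory of theta characteristics) that $\Sp(2g,\Z)[q]$ is generated by the transvections in vectors $v$ with $q(v) = 0$, and then lift each such $v$ to a nonseparating simple closed curve $c$ with $\phi(c) = 0$ (possible since the mod-$r$ values are unconstrained beyond the mod-$2$ value, as $r$ is even so $2 \mid r$), whose twist lies in $\Mod_g[\phi]$ and maps to the desired transvection. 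Combining the two inclusions gives equality.

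The main obstacle I anticipate is the passage from "$\Mod_g[\phi]$ contains all admissible twists" to "these twists' images generate the full symplectic stabilizer (or the full $\Sp(2g,\Z)$)": one needs to know that the \emph{homology classes} of admissible curves, together with the relations among their transvections, suffice — i.e., a generation statement for $\Sp(2g,\Z)$ and for $\Sp(2g,\Z)[q]$ by prescribed sets of transvections. For $\Sp(2g,\Z)$ this is classical; for the quadratic-form stabilizer $\Sp(2g,\Z)[q]$ one must cite or reprove that the transvections in isotropic-for-$q$ vectors generate it, which requires $g$ large enough (here $g \ge 5$ is more than sufficient). A secondary subtlety is simply bookkeeping: identifying the quadratic form $q$ of part (2) with the one "associated to the spin structure on the chosen basepoint" from Section \ref{subsection:arf}, and checking that the Arf invariant of $q$ matches the component — but this is exactly the correspondence already set up in Theorem \ref{theorem:classification}, so it should be a matter of citation rather than new work.
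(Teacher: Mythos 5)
Your overall plan is exactly the paper's: factor the homological monodromy through the geometric monodromy $\rho \colon \pi_1^{\mathrm{orb}}(\mathcal H) \to \Mod_g[\phi]$ established in the proof of Theorem~\ref{theorem:classification}, and then compute the image of $\Mod_g[\phi]$ under $\Psi$. Where you diverge is that you set out to re-derive that image by analyzing which transvections arise from admissible twists; the paper instead cites Lemma~\ref{step1}, which is precisely the statement $\Psi(\Mod_g[\phi]) = \Sp(2g,\Z)$ for $r$ odd and $\Psi(\Mod_g[\phi]) = \Sp(2g,\Z)[q]$ for $r$ even, already imported from \cite[Lemmas 5.4 and 6.4]{Salter_monodromy}. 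So while your outline of the inner computation is reasonable, it duplicates work the paper has done; the corollary really is a two-line deduction once one notices Lemma~\ref{step1}.

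One concrete error you should fix before attempting your longer route: with the paper's normalization (Remark~\ref{remark:SSandQF}), the quadratic form is $q([c]) = \phi(c) + 1 \pmod 2$, so an admissible curve with $\phi(c)=0$ has $q([c]) = 1$, not $q([c]) = 0$ as you assert. Correspondingly, the transvection $T_v$ preserves $q$ precisely when $q(v) = 1$ (a short computation from $q(x + \langle x,v\rangle v) = q(x) + \langle x,v\rangle\bigl(q(v)+1\bigr)$), and the generation statement you want is that $\Sp(2g,\Z)[q]$ is generated by transvections in vectors with $q(v) = 1$. This is a pure convention mismatch, but as written your two inclusions would be about the wrong set of transvections.
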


The proof of Corollary \ref{corollary:GR} follows from Theorem \ref{theorem:classification} together with a description of the action of $r$-spin mapping class groups on homology; see the end of Section \ref{subsection:firststep} for details.

\para{Application: monodromy of line bundles on toric surfaces} In the course of proving Theorem \ref{theorem:genset}, we establish in Proposition \ref{prop:admissfull} that the group $\T_{\phi}$ of ``admissible twists'' (c.f. Definition \ref{definition:admissible}) generated by the Dehn twists in all nonseparating curves $c$ with $\phi(c) = 0$ is equal to the spin stabilizer subgroup $\Mod_g[\phi]$. Together with the main theorem of \cite{Salter_monodromy}, this is enough to settle a conjecture of the second author. We briefly describe the problem, referring the interested reader to \cite{Salter_monodromy} for a fuller discussion.

 Suppose that $\Ell$ is an ample line bundle on a smooth toric surface $Y$ for which the generic fiber $\Sigma_{g(\Ell)}$ has genus at least 5 and is not hyperelliptic. Let $| \Ell |$ denote the complete linear system of $\Ell$ and $\M( \Ell ) \subset | \Ell |$ the complement of the discriminant locus; then $\mathcal{M}(\Ell)$ supports a tautological family of Riemann surfaces. Let $\pi: \mathcal{E}(\Ell) \rightarrow \M(\Ell)$ be the corresponding $\Sigma_{g(\Ell)}$ bundle, and let
\[
\Gamma_{\mathcal{L}} \le \Mod(\Sigma_{g(\Ell)})
\]
denote the image of the monodromy representation of $\pi$. The paper \cite{Salter_monodromy} undertakes a study of such $\Gamma_{\mathcal L}$. In this context, $r$-spin structures arise algebro-geometrically as maximal roots of the the adjoint line bundle $\Ell \otimes K_Y$. It follows that $\Gamma_\mathcal{L}$ preserves the associated $r$-spin structure: $\Gamma_{\mathcal L} \le \Mod_g[\phi]$. In the case of $r$ odd, \cite{Salter_monodromy} shows that this is an equality, but for $r$ even, we were only able to show that the index $[\Mod(\Sigma_{g(\Ell)}): \Gamma_{\mathcal{L}}]$ is finite. The improvements in the theory of $r$-spin mapping class groups afforded by Proposition \ref{prop:admissfull} allows us to upgrade this containment to equality in all cases.

\begin{corollary}[c.f. Conjecture 1.4 of \cite{Salter_monodromy}]
Fix $Y, \Ell, \Gamma_{\mathcal L}, \phi$ as above. Then
\[\Gamma_\mathcal{L} = \Mod_g[\phi].\]
\end{corollary}
\begin{proof}
By \cite[Theorem A]{Salter_monodromy}, $\T_{\phi} \le \Gamma_{\Ell} \le \Mod_g[\phi]$. By Proposition \ref{prop:admissfull}, $\T_\phi = \Mod_g[\phi]$.
\end{proof}

\para{Relation to previous work..} The present paper should be viewed as a joint sequel to the works \cite{Calderon_strata} and \cite{Salter_monodromy}. So as to avoid a large amount of redundancy, we have aimed to give an exposition that is self--contained but does not dwell on background. The reader looking for a more thorough discussion of flat geometry is referred to \cite{Calderon_strata}, and the reader looking for more on $r$--spin structures is referred to \cite{Salter_monodromy}. We have also omitted the proofs of many statements that are essentially contained in our previous work. In some cases we require slight modifications of our results that cannot be cited directly; in this case, we have attempted to indicate the necessary modifications without repeating the arguments in their entirety. 

For the most part, the technology of \cite{Calderon_strata} does not need to be improved, and much of the content of Section \ref{section:classification} is included solely for the convenience of the reader. On the other hand, Theorem \ref{theorem:genset} is a substantial improvement over its counterpart \cite[Theorem 9.5]{Salter_monodromy}. The basic outline is the same, but many of the constituent arguments have been sharpened and simplified. The reader who is primarily interested in the theory of the stabilizer groups $\Mod_g[\phi]$ is encouraged to treat Theorem \ref{theorem:genset} as the ``canonical'' version, and is referred to \cite[Theorem 9.5]{Salter_monodromy} only as necessary.

It is worthwhile to situate our work on $r$-spin mapping class groups within the larger context of the literature. To our knowledge, $r$-spin mapping class groups were first investigated by Sipe in the papers \cite{sipe1, sipe2}; Sipe acknowledges inspiration from Mumford. Sipe works out formulas for the action of $\Mod(\Sigma_g)$ on the set of $r$-spin structures and obtains some fundamental results on the structure of the simultaneous stabilizer of {\em all} $r$-spin structures. Later, Randal-Williams \cite{RW} investigated the homological stability properties of families of $r$-spin mapping class groups on surfaces of increasing genus; as part of this work, he obtains a classification of $\Mod(\Sigma_g)$-orbits on the set of $r$-spin structures. We exploit this classification (recorded here as Lemma \ref{lemma:stabindex}) throughout the paper. In unpublished work \cite{kawazumi}, Kawazumi carried out a similar analysis under different conventions for isotopy along boundary components; as the surfaces we consider in this paper are closed, we do not use Kawazumi's work directly. Finally, the paper \cite{Salter_monodromy} by the second author begins the project of finding explicit finite generating sets for $r$-spin mapping class groups that is completed here as Theorem \ref{theorem:genset}.

\para{...and to subsequent work} Since this paper was first released, the authors have completed a sequel \cite{strata3}. In this work, we consider a refined version of the monodromy representation valued in the {\em punctured} mapping class group $\Mod(\Sigma_{g,n})$ --- the marked points record the locations of the zeroes of the differential. The fundamental invariant of the present paper, the $r$-spin structure, is then refined into a {\em framing} of the punctured surface $\Sigma_{g,n}$. We develop the counterpart to Theorem \ref{theorem:genset} (in fact, we prove a stronger, ``coordinate-free'' version), finding finite generating sets for these ``framed mapping class groups.'' We use this to prove refined versions of Theorems \ref{theorem:classification} and \ref{theorem:moncomp}.

It is worth stressing that the results of \cite{strata3} logically depend on the results established here. Our study of framed mapping class groups proceeds by induction on the number of punctures $n$, and the base case $n = 1$ hinges on the results established in this paper in the case $r = 2g-2$.

\para{Outline of Theorem \ref{theorem:genset}} The proof of Theorem \ref{theorem:genset} largely follows the outline of the proof of \cite[Theorem 9.5]{Salter_monodromy}, with one modification that allows for a cleaner argument with less casework. The result of \cite[Theorem 9.5]{Salter_monodromy} did not treat the maximal case $r = 2g-2$, but here we are able to do so. In fact, we find that the case of general $r$ described in Theorem \ref{theorem:genset}.\ref{case:r} follows very quickly from the maximal case (see Section \ref{section:generalr}). Accordingly, the bulk of the proof only treats the case $r = 2g-2$. 

The argument in the case $r = 2g-2$ proceeds in two stages. The first stage, presented in Section \ref{section:finitegen} as Proposition \ref{lemma:gammaadmiss}, shows that the finite collections of twists given in Theorem \ref{theorem:genset}.\ref{case:hardeven}/\ref{case:hardodd} generate an intermediate subgroup $\mathcal T_\phi \le \Mod_g[\phi]$ called the ``admissible subgroup'' (see Definition \ref{definition:admissible}). This is the group generated by Dehn twists about ``admissible curves'' (again see Definition \ref{definition:admissible}). 

The set of admissible curves determine a subgraph of the curve graph, and Proposition \ref{lemma:gammaadmiss} is proved by working one's way out in this complex, using combinations of admissible twists to ``acquire'' twists about curves further and further out in the complex. This is encapsulated in Lemma \ref{lemma:pushmakesT} (note that the connection with curve complexes is all contained within the proof of Lemma \ref{lemma:pushmakesT}, which is imported directly from \cite{Salter_monodromy}). The corresponding arguments in \cite{Salter_monodromy} made use of the existence of a certain configuration of curves which does not exist when $r \ge g-1$. Here we avoid this issue by directly showing that the configurations of Theorem \ref{theorem:genset} have the requisite properties (c.f. Lemmas \ref{lemma:hardeven}, \ref{lemma:hardodd}). 

The second step is to show that the admissible subgroup $\T_\phi$ coincides with the stabilizer $\Mod_g[\phi]$, an {\em a priori} larger group. This result appears as Proposition \ref{prop:admissfull}; the proof takes place in Section \ref{section:admiss=stab}. The method here is to show that both $\T_\phi$ and $\Mod_g[\phi]$ have the same intersection with the Johnson filtration on $\Mod(\Sigma_g)$ (c.f. Section \ref{subsection:firststep}). The outline exactly mirrors its counterpart in \cite{Salter_monodromy}: Proposition \ref{prop:admissfull} follows by assembling the three Lemmas \ref{step1}, \ref{step2}, \ref{step3}, each of which shows that $\T_\phi$ and $\Mod_g[\phi]$ behave identically with respect to a certain piece of the Johnson filtration. The arguments provided here are both sharper and in many cases simpler than their predecessors in \cite{Salter_monodromy}. In particular, the previous version of Lemma \ref{step2} required an intricate lower bound on genus which we replace here with the uniform (and optimal) requirement $g \ge 3$. The other main result of this section, Lemma \ref{step3}, also improves on its predecessor. The previous version of Lemma \ref{step3} was only applicable for $r$ odd, but here we are able to treat arbitrary $r$. The internal workings of this step have also been improved and are now substantially less coordinate--dependent. 

Prior to the work carried out in Sections \ref{section:finitegen} and \ref{section:admiss=stab}, in Section \ref{section:sliding} we prove a lemma we call the ``sliding principle'' (Lemma \ref{lemma:sliding}). This is a flexible tool for carrying out computations involving the action of Dehn twists on curves, and largely subsumes the work done in \cite[Appendix A]{Calderon_strata}. We believe that the sliding principle will be widely applicable to the study of the mapping class group. 

\begin{remark}
Theorem \ref{theorem:genset} requires $g \ge 5$. This is necessary in only one place in the argument, Lemma \ref{lemma:pushmakesT}. This lemma, which was proved in \cite{Salter_monodromy}, rests on the connectivity of a certain simplicial complex which is disconnected for $g < 5$. It is likely that Lemma \ref{lemma:pushmakesT} holds for $g \ge 3$, but to the best of the authors' knowledge, some substantial new ideas are needed to improve the range. Among other things, this would complete the classification of components of strata of marked abelian differentials in genera $3$ and $4$. 
\end{remark}

\para{Outline of Theorems \ref{theorem:classification} and \ref{theorem:moncomp}} The proofs of Theorems \ref{theorem:classification} and \ref{theorem:moncomp} in turn essentially follow those of their counterparts \cite[Theorems 1.1 and 1.2]{Calderon_strata}. To prove Theorem \ref{theorem:moncomp}, we construct in Section \ref{subsection:prototype} a square--tiled surface in each component of each stratum that has a set of cylinders in correspondence with the Dehn twist generators described in Theorem \ref{theorem:genset}. Each such cylinder gives rise to a Dehn twist in $\mathcal G({ \mathcal H})$ (Lemma \ref{lemma:cyltwist}), so Theorem \ref{theorem:genset} implies that this collection of Dehn twists causes $\mathcal G({ \mathcal H})$ to be ``as large as possible,'' leading to the monodromy computation of Theorem \ref{theorem:moncomp}. Theorem \ref{theorem:classification} then follows as a corollary via the basic theory of covering spaces and the orbit-stabilizer theorem (Proposition \ref{prop:monstab}).

\para{Acknowledgements} The authors would like to thank Paul Apisa for suggesting that Theorem \ref{theorem:classification} should lead to Corollary \ref{corollary:cylchar}. They would also like to acknowledge Ursula Hamenst\"adt and Curt McMullen for comments on a preliminary draft, as well as multiple anonymous referees whose close reading and helpful suggestions greatly contributed to the presentation of this paper. Finally, they thank Dick Hain for his interest in the project and a productive correspondence.

\setcounter{tocdepth}{1}
\tableofcontents

\section{Higher spin structures}
Theorem \ref{theorem:classification} asserts that the non--hyperelliptic components of strata of marked abelian differentials are classified by an object known as an ``$r$--spin structure.'' Here we introduce the basic theory of such objects. After defining spin structures and their stabilizer subgroups in Section \ref{subsection:spinbasics}, we explain how $r$--spin structures arise from vector fields in Section \ref{subsection:wnf}. In Section \ref{subsection:arf}, we connect the theory of $r$--spin structures to the classical theory of spin structures and quadratic forms on vector spaces in characteristic $2$.
	
\para{Reference convention} To streamline pointers to \cite{Salter_monodromy}, in this section, we adopt the convention of referring to \cite[Statement X.Y]{Salter_monodromy} as ``(SX.Y)''.

\subsection{Basic properties}\label{subsection:spinbasics}
	There are several points of view on $r$--spin structures: they can be defined algebro--geometrically as a root of the canonical bundle, topologically as a cohomology class, or as an invariant of isotopy classes of simple closed curves on surfaces. For a more complete discussion, including proofs of the claims below, see \cite[Section 3]{Salter_monodromy}. In this work we only need to study $r$--spin structures from the point of view of surface topology; this approach is originally due to Humphries and Johnson \cite{HJ_windingnumber} and has its roots in the earlier work \cite{Johnson_spin} of Johnson.

	\begin{definition}[$r$--spin structure (S3.1)]\label{definition:spin}
	Let $\Sigma_g$ be a closed surface of genus $g \ge 2$, and let $\mathcal S$ denote the set of isotopy classes of oriented simple closed curves on $\Sigma_g$; we include here the inessential curve $\zeta$ that bounds an embedded disk to its left. An {\em $r$--spin structure} is a function $\phi: \mathcal S \to \Z/r\Z$ satisfying the following two properties:
	\begin{enumerate}
	\item\label{item:TL} (Twist--linearity) Let $c,d \in \mathcal S$ be arbitrary. Then
	\[
	\phi(T_c(d)) = \phi(d) + \pair{d,c} \phi(c) \pmod r,
	\]
	where $\pair{c,d}$ denotes the algebraic intersection pairing and $T_c$ denotes the (left-handed) Dehn twist about $c$.
	\item (Normalization) For $\zeta$ as above, $\phi(\zeta) = 1$.
	\end{enumerate}
	\end{definition}
	
\begin{remark}\label{remark:rdivideschi}
It can be shown that $r$ must divide $2g-2$; this is a consequence, for instance, of the interpretation of $r$-spin structures as roots of the canonical bundle, where it follows from the multiplicativity of degree under tensor product. See also (S3.6) for a more topological perspective. \end{remark}	
	
An essential fact about $r$--spin structures is that they behave predictably on collections of curves bounding an embedded subsurface. This property is called {\em homological coherence}; it is essentially a recasting of the Poincar\'e-Hopf theorem on indices of vector fields. 
	\begin{lemma}[Homological coherence (S3.8)]\label{lemma:homcoh}
	Let $\phi$ be an $r$--spin structure on $\Sigma_g$, and let $S \subset \Sigma_g$ be a subsurface. Suppose $\partial S = c_1 \cup \dots \cup c_k$ and all boundary components $c_i$ are oriented so that $S$ lies to the left. Then
	\[
	\sum_{i = 1}^k \phi(c_i) = \chi(S).
	\]
	\end{lemma}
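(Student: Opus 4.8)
The plan is to reduce the general statement to a small list of ``building block'' subsurfaces and then propagate the identity along gluings, using twist-linearity and the normalization axiom as the only inputs. First I would check the two base cases directly. If $S$ is an embedded disk, then $\partial S$ is a single curve isotopic to $\zeta$ (with $S$ on the left), so $\sum \phi(c_i) = \phi(\zeta) = 1 = \chi(S)$; the normalization axiom is doing exactly this. If $S$ is an embedded annulus, then $\partial S = c_1 \cup c_2$ with $c_1, c_2$ isotopic as unoriented curves but oriented oppositely (so that $S$ lies to the left of each), i.e.\ $c_2 = \bar c_1$ in $\mathcal S$. Here I would first record the elementary fact that $\phi(\bar c) = -\phi(c)$ for any $c$: apply twist-linearity with $d = \bar c$ and note $T_c(\bar c)$ is isotopic to $\bar c$ while $\langle \bar c, c\rangle = 0$ handles the orientation-reversal bookkeeping — more carefully, one uses that reversing orientation negates $\phi$, a standard consequence of the axioms which is surely established in \cite{Salter_monodromy} and which I would cite or prove in a sentence. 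Then $\phi(c_1) + \phi(c_2) = \phi(c_1) - \phi(c_1) = 0 = \chi(\text{annulus})$.

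Next I would handle the pair of pants $S_{0,3}$, which is the essential new case: $\partial S = c_1 \cup c_2 \cup c_3$, all oriented with $S$ on the left, and the key geometric input is that $c_3$ is isotopic to a curve obtained from $c_1 \cup c_2$ by a ``band sum,'' so that in $\mathcal S$ one has a relation letting us express $\phi(c_3)$ via $\phi(c_1)$, $\phi(c_2)$, and a correction term coming from the disk that is capped off. The cleanest route: embed the pair of pants so that capping one boundary component $c_3$ with a disk produces an annulus; equivalently, find an embedded arc realizing $S_{0,3}$ as the complement of a neighborhood of an arc joining $c_1$ to $c_2$ inside an annulus. Using twist-linearity to slide one boundary curve across this arc, together with the disk/annulus base cases, yields $\phi(c_1) + \phi(c_2) + \phi(c_3) = -1 = \chi(S_{0,3})$. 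This computation is the technical heart and I would do it by an explicit local picture; alternatively, since the paper later develops the ``sliding principle'' (Lemma \ref{lemma:sliding}), one could invoke a version of it, but at this point in the text it is cleaner to argue by hand.

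With the three base cases ($D$, $S_{0,1}$; annulus, $S_{0,2}$; pants, $S_{0,3}$) in place, I would finish by induction on $\chi(S)$ (equivalently on the complexity of $S$) via cut-and-paste. Any subsurface $S$ with $\chi(S) < -1$ or with higher genus can be written as $S = S' \cup_\gamma S''$, glued along a single simple closed curve $\gamma$ in the interior, where $S'$ is a pair of pants (or an annulus, to reduce genus via a handle) and $S''$ has strictly larger Euler characteristic. The boundary of $S$ is the symmetric difference of $\partial S'$ and $\partial S''$, with the copies of $\gamma$ appearing in $\partial S'$ and $\partial S''$ carrying opposite orientations. Applying the inductive hypothesis to $S'$ and $S''$ and adding, the two $\phi(\gamma)$-contributions cancel by $\phi(\bar\gamma) = -\phi(\gamma)$, the Euler characteristics add as $\chi(S) = \chi(S') + \chi(S'') - \chi(\gamma) = \chi(S') + \chi(S'')$, and we obtain $\sum_{c_i \subset \partial S}\phi(c_i) = \chi(S)$.

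The main obstacle I anticipate is the pair-of-pants computation: making the band-sum/sliding relation precise with correct orientation conventions and confirming that exactly one factor of $\phi(\zeta) = 1$ enters with the right sign. Everything else — the disk and annulus cases, the orientation-reversal identity, and the inductive gluing — is routine bookkeeping once that local model is nailed down. One should also be slightly careful that the curves $c_i$ are required to be essential in $\Sigma_g$ (the statement is about a subsurface $S$, so its boundary curves may individually bound in $\Sigma_g$ or not), but since twist-linearity and the axioms make no essentiality assumptions, the argument goes through uniformly; only the disk base case uses $\phi(\zeta) = 1$ for a genuinely inessential curve.
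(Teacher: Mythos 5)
The paper itself does not prove this lemma; it defers all of the foundational facts about $r$--spin structures, including homological coherence, to \cite[Section 3]{Salter_monodromy}, where (following Humphries--Johnson) the proof proceeds by identifying $r$--spin structures in the sense of Definition \ref{definition:spin} with cohomology classes on the unit tangent bundle $UT\Sigma_g$ (equivalently, with winding number functions of vector fields as in Example \ref{example:wnf}), from which homological coherence is a Poincar\'e--Hopf-type index count. Your proposal instead tries to derive the statement purely from the two axioms, which is a genuinely different route, and the high-level plan (disk, annulus and pair of pants as base cases, then cut-and-paste) is a natural one. The disk and annulus base cases are correct, and the gluing induction is sound once you fix the genus-reduction step: for a nonseparating interior curve $\gamma$ you should cut $S$ along $\gamma$ to get $S''$ with two \emph{new} boundary components $\gamma_+,\gamma_-$ carrying opposite orientations (these cancel in the sum, and $\chi$ is unchanged), rather than gluing an annulus along one curve, which does nothing topologically.

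The real gap is the pair-of-pants base case, and it is not as tractable as your sketch suggests. The ``cap a boundary with a disk'' argument only applies when one of the three boundary curves $c_i$ is inessential in $\Sigma_g$; for a general embedded pair of pants (e.g.\ one whose complement in $\Sigma_g$ is connected of genus $g-2$, so that all three boundary components are nonseparating) no boundary component bounds a disk and there is nothing to cap off inside $\Sigma_g$. The useful observation you are circling around is that the quantity $\sigma(P) := \phi(c_1)+\phi(c_2)+\phi(c_3)$ is a $\Mod(\Sigma_g)$-invariant of the embedded pair of pants: applying twist-linearity to a Dehn twist $T_d$ gives a correction term $\bigl(\sum_i\langle c_i,d\rangle\bigr)\phi(d)$, which vanishes because $[c_1]+[c_2]+[c_3]=0$ in $H_1(\Sigma_g;\Z)$. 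However, embedded pairs of pants form several $\Mod(\Sigma_g)$-orbits, classified by the topology of the complement and the separation pattern of the $c_i$, and the degenerate orbit (inessential boundary) is only one of these; $\Mod$-invariance alone does not let you transport the computation from the degenerate orbit to the others. ``Sliding one boundary curve across an arc'' is not an allowed move here---it changes the isotopy class of the pair of pants without any a priori control on how $\sigma$ changes, since $\phi$ is not a homological invariant for $r>2$. Until you supply an argument connecting the $\Mod(\Sigma_g)$-orbits (for instance, a careful use of pentagon-type moves inside four-holed spheres, together with the identity $\sigma(P_1)+\sigma(P_2)=\sigma(P_3)+\sigma(P_4)$ relating the two pants decompositions of a four-holed sphere), or switch to the cohomological model as in \cite{Salter_monodromy}, the pair-of-pants step---and with it the whole proof---remains open.
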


	It is worth emphasizing that homological coherence implies that for $r > 2$ and an oriented curve $c$, the value $\phi(c)$ is {\em not} determined by the homology class $[c]$ (for instance, if $c$ and $d$ cobound a subsurface of genus $1$, Lemma \ref{lemma:homcoh} shows that $\phi(c) = \phi(d) \pm 2$). Nevertheless, we see in Lemma \ref{lemma:basisvalues} below that an $r$-spin structure does turn out to be {\em determined} by its values on a homological basis. In preparation, we define a {\em geometric homology basis} $\mathcal B = \{x_1, \dots, x_{2g}\}$ to be a collection of oriented simple closed curves whose homology classes are linearly independent and generate $H_1(\Sigma_g;\Z)$.
	
	\begin{lemma}[$r$--spin structures and geometric homology bases (S3.9), (S3.5)]\label{lemma:basisvalues}
	Let
	\[
	\mathcal B = \{x_1, \dots, x_{2g}\}
	\]
	be a geometric homology basis. If $\phi, \psi$ are two $r$--spin structures on $\Sigma_g$ such that $\phi(x_i) = \psi(x_i)$ for $1 \le i \le 2g$, then $\phi = \psi$. 
	
	Conversely, given $\mathcal B$ as above and any vector $v = (v_i) \in (\Z/r\Z)^{2g}$, there exists an $r$--spin structure $\phi$ such that $\phi(x_i) = v_i$ for $1 \le i \le 2g$. 
	\end{lemma}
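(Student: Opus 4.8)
The plan is to deduce both halves from homological coherence (Lemma \ref{lemma:homcoh}) together with twist--linearity (Definition \ref{definition:spin}(1)). For the first (uniqueness) assertion, suppose $\phi$ and $\psi$ agree on the geometric homology basis $\mathcal B = \{x_1, \dots, x_{2g}\}$. The key point is that twist--linearity forces the value of an $r$--spin structure on \emph{any} oriented simple closed curve $c$ to be expressible in terms of its values on a fixed basis, plus a correction term depending only on the homology class of $c$ and on the intersection data. Concretely, I would first show that $\phi(c)$ depends only on $\phi$ and the \emph{isotopy class} of $c$ (immediate from the definition), and then that for any two oriented simple closed curves $c, d$ with $[c] = [d]$ in $H_1(\Sigma_g;\Z)$, the difference $\phi(c) - \phi(d)$ is the same for all $r$--spin structures $\phi$ --- this is where homological coherence enters, since $c$ and $d$ cobound (after a small perturbation and tubing) subsurfaces whose Euler characteristics pin down the difference independently of $\phi$. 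Granting this, write an arbitrary oriented simple closed curve $c$ with $[c] = \sum a_i [x_i]$; realize the class $\sum a_i [x_i]$ by an explicit curve $c_0$ built from the $x_i$ via handle slides (i.e. a sequence of Dehn twists in the $x_i$ and their duals), apply twist--linearity repeatedly to compute $\phi(c_0)$ purely from the values $\phi(x_i)$ and intersection numbers, and then use the homology-class comparison to get $\phi(c) = \phi(c_0) + (\text{universal correction})$. Since every term on the right is either a function of $\{\phi(x_i)\}$ (which equals $\{\psi(x_i)\}$) or universal, we conclude $\phi(c) = \psi(c)$, and hence $\phi = \psi$.

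For the converse (existence), the natural approach is to \emph{define} $\phi$ by the formula extracted above: given the target vector $v = (v_i) \in (\Z/r\Z)^{2g}$, set $\phi(x_i) := v_i$, and for a general oriented simple closed curve $c$ define $\phi(c)$ by the expression obtained by writing $[c]$ in the basis, applying the twist--linearity recursion to a standard curve $c_0$ representing that class, and adding the universal correction term. One must then check (a) well--definedness --- that the value does not depend on the chosen word in handle slides expressing $c_0$, nor on the choice of $c_0$ within its homology class --- and (b) that the resulting $\phi$ genuinely satisfies twist--linearity and the normalization $\phi(\zeta) = 1$. An alternative and perhaps cleaner route to existence: it is classical (Humphries--Johnson \cite{HJ_windingnumber}) that $r$--spin structures on $\Sigma_g$ form a torsor under $H^1(\Sigma_g; \Z/r\Z) \cong (\Z/r\Z)^{2g}$, with the action of a cohomology class $\alpha$ sending $\phi \mapsto \phi + \alpha$ (evaluating $\alpha$ on $[c]$); since at least one $r$--spin structure exists whenever $r \mid 2g-2$ (Remark \ref{remark:rdivideschi}), and since $[x_1], \dots, [x_{2g}]$ form a basis for $H_1$, the evaluation map $H^1(\Sigma_g;\Z/r\Z) \to (\Z/r\Z)^{2g}$, $\alpha \mapsto (\alpha([x_i]))$, is an isomorphism; translating a fixed $r$--spin structure by the appropriate cohomology class realizes any prescribed vector of basis values. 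I would present this torsor argument as the main line, as it sidesteps the bookkeeping in the direct construction.

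The main obstacle is the well--definedness / homology-invariance step underpinning both halves: showing that for oriented simple closed curves $c, d$ in the same homology class, $\phi(c) - \phi(d)$ is independent of $\phi$. The cleanest way to see this is via homological coherence applied to a subsurface $S$ with $\partial S = c \cup d'$, where $d'$ is $d$ with reversed orientation (after arranging $c$ and $d$ to be disjoint representatives, or more generally using a bordism between them inside $\Sigma_g \times [0,1]$ and cutting appropriately); then $\phi(c) + \phi(d') = \chi(S)$ by Lemma \ref{lemma:homcoh}, and separately $\phi(d') = -\phi(d) + (\text{intersection correction from reversing orientation})$, so $\phi(c) - \phi(d)$ is forced. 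One subtlety is that $c$ and $d$ need not be disjoint or even both nonseparating, so one may have to first reduce to the case of a convenient standard representative; if the invocation of Lemma \ref{lemma:homcoh} requires a genuine embedded subsurface, this reduction (connecting $c$ to a standard curve through a sequence of homologous curves each pair cobounding a subsurface) is the step that needs care. Everything else --- the twist--linearity recursion, the torsor structure, the normalization check --- is routine once this invariance is in hand.
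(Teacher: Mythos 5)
The paper does not actually contain a proof of this lemma; it is stated without proof and deferred to \cite[Section 3]{Salter_monodromy}, so there is no internal argument to compare against. On its own merits your proposal is reasonable, and your ``Approach B'' (the torsor under $H^1(\Sigma_g;\Z/r\Z)$) is the standard and cleanest route: once one knows that $r$--spin structures form a torsor under $H^1(\Sigma_g;\Z/r\Z)$ acting by $\phi \mapsto \phi + \alpha(\lb \cdot \rb)$ and that at least one $r$--spin structure exists when $r \mid 2g-2$, both halves of the lemma follow from the fact that evaluation on the classes $\lb x_i\rb$ is an isomorphism $H^1(\Sigma_g;\Z/r\Z) \to (\Z/r\Z)^{2g}$. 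You should be clear with yourself, however, that this is not a shortcut: the torsor structure \emph{is} the lemma in disguise --- the assertion that $\phi - \psi$ defines a class in $H^1$ is precisely the homological invariance you flag as the crux --- so invoking Humphries--Johnson here is a legitimate citation (parallel to what the paper itself does), not an independent proof.

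Your ``Approach A'' has genuine gaps beyond the one you acknowledge. First, the reduction to the case where two homologous oriented curves $c,d$ cobound a subsurface (so that Lemma~\ref{lemma:homcoh} applies directly) requires connecting $c$ to $d$ by a sequence of pairwise disjoint curves all in the same homology class; this is the connectivity of a certain ``complex of homologous curves,'' a real theorem (not immediate from first principles), and for separating curves (homology class zero) homological coherence already pins down $\phi(c)$ outright so no chain is needed --- the two cases should be separated. Second, your proposed construction of a standard representative $c_0$ of the class $\sum a_i \lb x_i \rb$ ``via handle slides, i.e.\ Dehn twists in the $x_i$ and their duals,'' tacitly assumes the twists $T_{x_i}$ generate a large enough subgroup of $\Mod(\Sigma_g)$ to produce such a $c_0$ by twist--linearity bookkeeping; but a geometric homology basis in this paper's sense (Section~\ref{subsection:spinbasics}) carries no constraint on intersection pattern, and for a generic such $\mathcal B$ the group $\langle T_{x_1}, \dots, T_{x_{2g}}\rangle$ need not act transitively on primitive homology classes, so the recursion as stated does not obviously terminate at something computable. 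Either of these can be repaired, but as written Approach A is not a complete argument, whereas Approach B is a complete (if citation-heavy) one.
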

	
	There is an action of the mapping class group $\Mod(\Sigma_g)$ on the set of $r$--spin structures: for $f \in \Mod(\Sigma_g)$ and $c \in \mathcal S$, define $(f\cdot \phi)(c) = \phi(f^{-1}(c))$. 
 
\begin{definition}[Stabilizer subgroup (S3.14)]\label{definition:stabilizer} Let $\phi$ be a spin structure on a surface $\Sigma_g$. The {\em stabilizer subgroup} of $\phi$, written $\Mod_g[\phi]$, is defined as
\[
\Mod_g[\phi] = \{ f\in \Mod(\Sigma_g) \mid (f \cdot \phi) = \phi\}.
\]
\end{definition}

The simplest class of elements of $\Mod_g[\phi]$ are the Dehn twists that preserve $\phi$. By twist--linearity (Definition \ref{definition:spin}.\ref{item:TL}), if $c$ is a nonseparating curve, $T_c$ preserves $\phi$ if and only if $\phi(c) = 0$. 

\begin{remark}\label{remark:admissorient}
In general, the value $\phi(c)$ depends on the orientation of $c$. However, homological coherence (Lemma \ref{lemma:homcoh}) implies that if $c$ is given the opposite orientation then $\phi(c)$ changes sign. In particular, having $\phi(c)=0$ is a property of {\em unoriented} curves.
\end{remark}

\begin{definition}[Admissible twist, admissible subgroup]\label{definition:admissible}
Let $\phi$ be an $r$--spin structure on $\Sigma_g$. A nonseparating simple closed curve $c$ is said to be {\em $\phi$--admissible} if $\phi(c) = 0$ (if the spin structure $\phi$ is implied, it will be omitted from the notation). The corresponding Dehn twist $T_c \in \Mod_g[\phi]$ is called an {\em admissible twist}. The subgroup
\[
\mathcal T_\phi = \pair{T_c \mid \phi(c) = 0, \,c \text{ nonseparating}} \le \Mod_g[\phi]
\]
is called the {\em admissible subgroup}. 
\end{definition}
	
\subsection{Spin structures from winding number functions}\label{subsection:wnf} The spin structures under study in this paper arise from a construction known as a ``winding number function'' originally due to Chillingworth \cite{Chill_wn1}. We sketch here the basic idea; see \cite{HJ_windingnumber} for details. 

\begin{example}[Winding number function]\label{example:wnf}
Let $\Sigma_g$ be a compact surface endowed with a vector field $V$ with isolated zeroes $p_1, \dots, p_n$ of orders $k_1, \dots, k_n$. Suppose 
\[
\gamma: S^1 \to \Sigma_g \setminus \{p_1, \dots, p_n\}
\]
is a $C^1$--embedded curve on $\Sigma_g \setminus \{p_1, \dots, p_n\}$. Then the winding number of the tangent vector $\gamma'(t)$ relative to $V(\gamma(t))$ determines a $\Z$--valued winding number for $\gamma$. Now if $\gamma'$ is isotopic to $\gamma$ on the surface $\Sigma_g \setminus \{p_1, \dots, \hat{p}_i, \dots, p_n\}$ through $C^1$--embedded curves, then the winding numbers of $\gamma$ and $\gamma'$ differ exactly by $k_i$. Thus, if $r= \gcd(k_1, \dots, k_n)$, the function
\[
wn_{V}: \mathcal S \to \Z/r\Z
\]
is a well--defined map from the set of isotopy classes of oriented curves to $\Z/r\Z$. Both twist--linearity and the fact that $\phi(\zeta)=1$ are easy to check, so in fact $wn_V$ is an $r$--spin structure. 
\end{example}	

Accordingly, we sometimes speak of the value $\phi(c)$ as the ``winding number'' of $c$ even when the $r$-spin structure $\phi$ does not manifestly arise from this construction. (We note in passing that in fact, every $r$-spin structure $\phi$ {\em does} arise from a vector field. This can be seen, e.g. by a direction construction: given the $\phi$-values on curves forming a spine of the surface, it is possible to build a vector field $V$ with the correct winding numbers, and then Lemma \ref{lemma:basisvalues} shows that $wn_V = \phi$.)

\subsection{Classical spin structures and the Arf invariant}\label{subsection:arf} If $r$ is even, then the mod $2$ reduction of $\phi$ determines a ``classical'' spin structure. A basic understanding of the special features present in this case is necessary for a full understanding of $r$--spin structures for $r > 2$ even. In Lemma \ref{lemma:2spinhom} we note the basic fact that bridges our notion of a $2$--spin structure with the classical formulation via quadratic forms. We then proceed to define the ``Arf invariant'' (Definition \ref{definition:arfQF}) and recall some of its basic properties.

\para{From $\mathbf{2}$--spin structures to quadratic forms} For $r >2$, the value of $\phi$ on a simple closed curve $c$ depends on $c$ itself, and not merely the homology class $[c] \in H_1(\Sigma_g;\Z)$. However, the information encoded in a $2$--spin structure is ``purely homological'':
\begin{lemma}[c.f. \cite{Johnson_spin}, Theorem 1A]\label{lemma:2spinhom}
Let $\phi$ be a $2$--spin structure on $\Sigma_g$ and let $c \subset \Sigma_g$ be a simple closed curve. Then $\phi(c) \in \Z/2\Z$ depends only on the homology class $[c] \in H_1(\Sigma_g;\Z/2\Z)$.
\end{lemma}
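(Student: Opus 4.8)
The plan is to reduce the statement to a computation on a fixed geometric homology basis together with twist-linearity. The key point is that twist-linearity (Definition \ref{definition:spin}.\ref{item:TL}) already expresses the obstruction to $\phi$ being homological: for $r = 2$ we have $\pair{d,c} \equiv \pair{d,c} \pmod 2$, and the correction term $\pair{d,c}\phi(c)$ only involves the mod-$2$ intersection pairing, which factors through $H_1(\Sigma_g;\Z/2\Z)$. So I expect the proof to have the shape: define a candidate function $\bar\phi$ on $H_1(\Sigma_g;\Z/2\Z)$ and show it agrees with $\phi$ on every simple closed curve.

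Concretely, first I would fix a geometric homology basis $\mathcal B = \{x_1, \dots, x_{2g}\}$ — say a standard symplectic basis realized by simple closed curves — and set $\bar\phi\left(\sum_i \epsilon_i [x_i]\right)$ to be determined by the values $\phi(x_i)$ via the quadratic-form formula $\bar\phi(v + w) = \bar\phi(v) + \bar\phi(w) + \pair{v,w}$, together with $\bar\phi([x_i]) = \phi(x_i)$. One must check this is well-defined, i.e. that the cocycle identity is consistent, which is a standard fact about quadratic refinements of the mod-$2$ symplectic form (it uses that $\pair{\cdot,\cdot}$ is a symmetric bilinear form in characteristic $2$, hence $\pair{v,v}$ is linear in $v$). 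Then I would invoke Lemma \ref{lemma:basisvalues}: there is an $r$--spin structure $\psi$ (here $r=2$) with $\psi(x_i) = \phi(x_i)$; by the uniqueness clause of that same lemma, $\psi = \phi$.

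The remaining content is to show that this particular $\psi$ is genuinely homological, i.e. that $\psi(c)$ depends only on $[c] \in H_1(\Sigma_g;\Z/2\Z)$, and moreover equals $\bar\phi([c])$. For this I would argue that any simple closed curve $c$ can be carried to one of the basis curves (or to a curve with prescribed homology class) by a composition of Dehn twists about curves in $\mathcal B$, or alternatively directly expand $[c] = \sum \epsilon_i [x_i]$ and use homological coherence (Lemma \ref{lemma:homcoh}) on a subsurface exhibiting $c$ together with the basis curves to compare $\phi(c)$ with the sum $\sum_{\epsilon_i = 1}\phi(x_i)$ plus the appropriate intersection corrections. Twist-linearity with $r=2$ then shows the correction terms are exactly $\sum_{i<j}\epsilon_i\epsilon_j\pair{x_i,x_j}$, which is visibly a function of $[c]$ alone. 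I expect the main obstacle to be the bookkeeping in this last step: carefully tracking the intersection-pairing correction terms when writing $[c]$ in the basis and verifying they assemble into the claimed quadratic expression, rather than any conceptual difficulty. (In fact, it is cleanest to simply cite the classical equivalence between $2$--spin structures and quadratic refinements of the mod-$2$ intersection form — the references in \cite{Salter_monodromy} and \cite{HJ_windingnumber} — and note that the content of the lemma is precisely this equivalence restricted to the statement that $\phi$ descends to $H_1(\Sigma_g;\Z/2\Z)$.)
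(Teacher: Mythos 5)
The paper states this lemma without proof, treating it as a classical fact (it is due to Johnson; see the references in \cite{Salter_monodromy} and \cite{HJ_windingnumber}), so there is no in-paper argument to compare against. Your closing observation — that the cleanest path is to cite this classical equivalence — is exactly right, and is evidently the route the paper takes.

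That said, the body of your proposal contains a concrete error. You define $\bar\phi$ by the standard quadratic-form relation $\bar\phi(v+w)=\bar\phi(v)+\bar\phi(w)+\pair{v,w}$ together with $\bar\phi([x_i])=\phi(x_i)$, and expect $\bar\phi$ to recover $\phi$. But a $2$--spin structure $\phi$ does \emph{not} satisfy the quadratic-form relation of Definition \ref{definition:QF}; it is the shifted function $q=\phi+1$ that does, as Remark \ref{remark:SSandQF} explicitly records. To see the discrepancy concretely: if $a,b$ are disjoint nonseparating curves and $c$ is the third boundary of a pair of pants they cobound, then homological coherence (Lemma \ref{lemma:homcoh}) gives $\phi(a)+\phi(b)+\phi(c)=-1$, hence $\phi(c)=\phi(a)+\phi(b)+1 \pmod 2$; your $\bar\phi$ instead predicts $\bar\phi([a]+[b])=\phi(a)+\phi(b)+\pair{[a],[b]}=\phi(a)+\phi(b)$ since $\pair{[a],[b]}=0$. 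So $\bar\phi \ne \phi$ already on such classes, and the proposed correction term $\sum_{i<j}\epsilon_i\epsilon_j\pair{x_i,x_j}$ in your bookkeeping is missing an affine piece.

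Separately, the step invoking Lemma \ref{lemma:basisvalues} does not do the work you ascribe to it: the $\psi$ whose existence that lemma supplies is, by the uniqueness clause, just $\phi$ itself, so identifying $\psi=\phi$ returns you to where you started. The argument you want is to first check that $c\mapsto\bar\phi([c])$ (with the corrected formula) satisfies twist-linearity and normalization — only then does Lemma \ref{lemma:basisvalues} force it to equal $\phi$ — and that verification is precisely the substance of the lemma. Your final paragraph (tracking $\phi(c)$ via homological coherence and Dehn twists as $[c]$ is expanded in the basis) is the right mechanism for that verification, provided the shift is accounted for.
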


Following Lemma \ref{lemma:2spinhom}, if $\phi$ is an $r$--spin structure for $r>2$ even, we define the mod $2$ value of $\phi$ on a homology class $z \in H_1(\Sigma_g;\Z/2\Z)$ to be $\phi(c) \pmod 2$ for any simple closed curve $c$ with $[c]=z$. This gives rise to an algebraic structure on $H_1(\Sigma_g;\Z/2\Z)$ known as a {\em quadratic form}. In preparation, recall that if $V$ is a vector space over a field of characteristic $2$, a {\em symplectic form} $\pair{\cdot, \cdot}$ is defined to be a bilinear form satisfying $\pair{v,v} = 0$ for all $v \in V$. 

\begin{definition}\label{definition:QF}
Let $V$ be a vector space over $\Z/2\Z$ equipped with a symplectic form $\pair{\cdot, \cdot}$. A {\em quadratic form} $q$ on $V$ is a function $q:V \to \Z/2\Z$ satisfying
\[
q(x + y) = q(x) + q(y) + \pair{x,y}.
\]
\end{definition}

\begin{remark}\label{remark:SSandQF}
There is a standard correspondence between 2--spin structures and quadratic forms which generalizes for any even $r \ge 2$. If $\phi$ is an $r$--spin structure for $r \ge 2$ even, then the function 
\[
q(x) = \phi(x) + 1 \pmod 2
\]
is a quadratic form on $H_1(\Sigma_g; \Z/2\Z)$; here one evaluates $\phi(x)$ on $x \ne 0$ by choosing a simple closed curve representative for $x$ and applying Lemma \ref{lemma:2spinhom}. 
\end{remark}

\para{Orbits of quadratic forms and the Arf invariant} The symplectic group $\Sp(2g, \Z/2\Z)$ acts on the set of quadratic forms on $H_1(\Sigma_g;\Z/2\Z)$ by pullback. Here we recall the {\em Arf invariant} which describes the orbit structure of this group action. The definition and basic properties presented in this paragraph were developed by Arf in \cite{Arf}.

\begin{definition}[Arf invariant]\label{definition:arfQF}
Let $V$ be a vector space over $\Z/2\Z$ equipped with a symplectic form $\pair{\cdot, \cdot}$, and $q$ be a quadratic form on $V$. The {\em Arf invariant} of $q$, written $\Arf(q)$, is the element of $\Z/2\Z$ defined by
\[
\Arf(q) = \sum_{i = 1}^g q(x_i)q(y_i) \pmod 2,
\]
where $\{x_1, y_1, \dots, x_g, y_g\}$ is {\em any} symplectic basis for $V$. 

For an $r$--spin structure $\phi$ on $\Sigma_g$ with $r$ even, $\Arf(\phi)$ is defined to be the Arf invariant of the quadratic form associated to $\phi$ by Remark \ref{remark:SSandQF}. 

A quadratic form $q$ is said to be {\em even} or {\em odd} according to the parity of $\Arf(q)$. The parity of an $r$--spin structure for $r \ge 2$ even is defined analogously.
\end{definition}

The Arf invariant of a spin structure is easy to compute given any collection of curves which span the homology of the surface. We say that a {\em geometric symplectic basis} for $\Sigma_g$ is a collection
\[
\mathcal B = \{x_1, y_1, \dots, x_g, y_g\}
\]
of $2g$ curves on $S$ such that $i(x_i,y_i) = 1$ for $i = 1, \dots, g$, and such that all other intersections are zero (here $i(c,d)$ denotes the {\em geometric intersection number} of $c,d$). Then $\Arf(\phi)$ may be computed as
\[
\Arf(\phi) = \sum_{i = 1}^g (\phi(x_i)+1)(\phi(y_i)+1) \pmod 2,
\]
where $\mathcal B = \{x_1, y_1, \dots, x_g, y_g\}$ is {\em any} geometric symplectic basis on $\Sigma_g$. 

\begin{remark}\label{remark:arfadd}
The Arf invariant is additive under direct sum; that is, if $V= W_1 \oplus W_2$ where $W_1$ and $W_2$ are symplectically orthogonal and are equipped with nondegenerate quadratic forms $q_1$ and $q_2$, then one has
\[\Arf(q_1 \oplus q_2) = \Arf(q_1) + \Arf(q_2).\]

If $S \subset \Sigma_g$ is a subsurface with one boundary component, then the $r$--spin structure $\phi$ admits an obvious restriction to an $r$--spin structure $\phi\mid_{S}$ on $S$. In this way we speak of the Arf invariant of a subsurface $S$, i.e. $\Arf(\phi\mid_{S})$. If $\Sigma_g = S_1 \cup S_2$ where both subsurfaces have a single boundary component, then the Arf invariant is additive in the obvious sense. The Arf invariant is not defined in any straightforward way on a surface with 2 or more boundary components.
\end{remark}

Since 2--spin structures (or equivalently, quadratic forms on $H_1(\Sigma_g; \Z/2\Z)$) are ``purely homological'' in the sense of Lemma \ref{lemma:2spinhom}, the action of the mapping class group on the set of $2$--spin structures factors through the action of $\Sp(2g,\Z)$ on $H_1(\Sigma_g; \Z)$ and ultimately through $\Sp(2g,\Z/2\Z)$ acting on $H_1(\Sigma_g;\Z/2\Z)$. Thus there is an algebraic counterpart to the notion of spin structure stabilizer defined in Definition \ref{definition:stabilizer}.

\begin{definition}[Algebraic stabilizer subgroup]\label{definition:algstab}
Let $q$ be a quadratic form on $H_1(\Sigma_g; \Z/2\Z)$. The {\em algebraic stabilizer subgroup} is the subgroup
\[
\Sp(2g,\Z/2\Z)[q] = \{A \in \Sp(2g, \Z/2\Z) \mid A \cdot q = q\}.
\]
We define the algebraic stabilizer subgroup $\Sp(2g, \Z)[q]$ as the preimage of $\Sp(2g,\Z/2\Z)[q]$ in $\Sp(2g,\Z)$ under the usual quotient map.
\end{definition}

The Arf invariant of a quadratic form is invariant under the action of $\Sp(2g,\Z/2\Z)$ (and hence under the action of $\Sp(2g,\Z)$ and $\Mod(\Sigma_g)$), and in fact this is the only invariant of the $\Mod(\Sigma_g)$ action.

More generally, for any even $r$ the $\Mod(\Sigma_g)$ action on the set of $r$--spin structures must always preserve the induced Arf invariant, and as above, this is the only invariant of the $\Mod(\Sigma_g)$ action. This fact was originally proven by Randal--Williams \cite[Theorem 2.9]{RW}.

\begin{lemma}[c.f. (S4.2), (S4.9)]\label{lemma:stabindex}
Let $\Sigma_g$ be a closed surface of genus $g \ge 2$ and let $r$ divide $2g-2$. If $r$ is odd, then the mapping class group acts transitively on the set of $r$--spin structures. If $r$ is even, then there are two orbits of the $\Mod(\Sigma_g)$ action, distinguished by their Arf invariant.

Consequently, if $\phi$ is an $r$--spin structure, then the index $[\Mod(\Sigma_g) : \Mod_g[\phi]]$ is
\begin{itemize}
\item $r^{2g}$ if $r$ is odd, 
\item $(r/2)^{2g} \left( 2^{g-1} (2^g + 1) \right)$ if $r$ is even and $\phi$ has even Arf invariant, and
\item $(r/2)^{2g} \left( 2^{g-1} (2^g - 1) \right)$ if $r$ is even and $\phi$ has odd Arf invariant.
\end{itemize}
\end{lemma}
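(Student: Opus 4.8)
The plan is to reduce all three assertions to two inputs --- an enumeration of the full set of $r$--spin structures and a determination of the $\Mod(\Sigma_g)$--orbits --- after which the index formula follows from the orbit--stabilizer theorem.

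\emph{Enumeration.} Fix a geometric homology basis $\mathcal B = \{x_1,\dots,x_{2g}\}$. By Lemma \ref{lemma:basisvalues} the assignment $\phi \mapsto (\phi(x_1),\dots,\phi(x_{2g}))$ is a bijection from the set of $r$--spin structures onto $(\Z/r\Z)^{2g}$; in particular there are exactly $r^{2g}$ of them. When $r$ is even, reducing values modulo $2$ carries an $r$--spin structure $\phi$ to a $2$--spin structure $\overline\phi$ (twist--linearity and the normalization $\phi(\zeta)=1$ survive the reduction $\Z/r\Z\to\Z/2\Z$), and under the bijections of Lemma \ref{lemma:basisvalues} this is exactly coordinatewise reduction $(\Z/r\Z)^{2g}\to(\Z/2\Z)^{2g}$; hence $\phi\mapsto\overline\phi$ is surjective onto the set of $2$--spin structures, with every fiber of size $(r/2)^{2g}$. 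By Definition \ref{definition:arfQF} together with Remark \ref{remark:SSandQF}, $\Arf(\phi)$ depends only on $\overline\phi$ (indeed $\Arf(\phi)=\Arf(\overline\phi)$), so the number of $r$--spin structures with a prescribed Arf invariant equals $(r/2)^{2g}$ times the number of quadratic forms on $H_1(\Sigma_g;\Z/2\Z)$ with that invariant. The latter is the classical count: among the $2^{2g}$ quadratic forms refining the intersection form, $2^{g-1}(2^g+1)$ are even and $2^{g-1}(2^g-1)$ are odd (the sum being $2^{2g}$ and the difference $2^g$). This produces the three cardinalities in the statement.

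\emph{Orbits.} The set of $r$--spin structures is a torsor over $H^1(\Sigma_g;\Z/r\Z)=\Hom(H_1(\Sigma_g;\Z),\Z/r\Z)$, a fact that follows readily from Lemma \ref{lemma:basisvalues}. I would prove the sharper statement that the Torelli group $\mathcal I_g$ already moves each $\phi$ through the full coset under the subgroup $2H^1(\Sigma_g;\Z/r\Z)$. For disjoint homologous nonseparating simple closed curves $a,b$, the bounding pair map $T_aT_b^{-1}$ satisfies $(T_aT_b^{-1}\cdot\phi)(d) = \phi(d) + \bigl(\phi(b)-\phi(a)\bigr)\pair{d,a}$, so it translates $\phi$ by the class $\bigl(\phi(b)-\phi(a)\bigr)\pair{\,\cdot\,,a}\in H^1(\Sigma_g;\Z/r\Z)$; and homological coherence (Lemma \ref{lemma:homcoh}), applied to a subsurface cobounded by $a$ and $b$, evaluates $\phi(a)-\phi(b)$ as the Euler characteristic of that subsurface, which for $g\ge 2$ can be arranged to equal $-2$. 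Since $[a]$ may be taken to be an arbitrary primitive class (each such class is represented by a nonseparating curve, and these classes generate $H_1(\Sigma_g;\Z)$), these translations generate $2H^1(\Sigma_g;\Z/r\Z)$. If $r$ is odd then $2$ is a unit modulo $r$, so $2H^1(\Sigma_g;\Z/r\Z)=H^1(\Sigma_g;\Z/r\Z)$ and already $\mathcal I_g$ --- hence $\Mod(\Sigma_g)$ --- acts transitively. If $r$ is even, then $\phi$ and $\psi$ share an $\mathcal I_g$--orbit exactly when $\overline\phi=\overline\psi$, so the set of $\mathcal I_g$--orbits is identified with the set of $2$--spin structures; on this set $\Mod(\Sigma_g)/\mathcal I_g=\Sp(2g,\Z)$ acts through $\Sp(2g,\Z/2\Z)$ by Lemma \ref{lemma:2spinhom}, and this action has exactly two orbits, distinguished by the Arf invariant. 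As $\Arf$ is a $\Mod(\Sigma_g)$--invariant and both values occur, $\Mod(\Sigma_g)$ has exactly two orbits on $r$--spin structures, one for each Arf invariant. (This recovers \cite[Propositions 4.2 and 4.9]{Salter_monodromy}, which could alternatively be quoted outright.)

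\emph{Conclusion and the main difficulty.} With the orbit of $\phi$ identified --- all $r^{2g}$ spin structures when $r$ is odd, and the full Arf--class of $\phi$ when $r$ is even --- the orbit--stabilizer theorem identifies $[\Mod(\Sigma_g):\Mod_g[\phi]]$ with the cardinality computed in the enumeration step, which is precisely the displayed list. I expect the only genuinely substantive part to be the orbit computation, and within it the one point requiring real care is the bounding--pair identity combined with the homological--coherence evaluation of $\phi(a)-\phi(b)$ and the check that the resulting translations exhaust $2H^1(\Sigma_g;\Z/r\Z)$; the remaining ingredients --- the enumeration via Lemma \ref{lemma:basisvalues}, the classical count of quadratic forms, the orbit count for $\Sp(2g,\Z/2\Z)$ on quadratic forms, and the $\Mod(\Sigma_g)$--invariance of $\Arf$ --- are routine or standard.
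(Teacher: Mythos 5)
Your argument is correct, and since the paper does not prove this lemma but imports it from \cite[Propositions 4.2 and 4.9]{Salter_monodromy}, there is no in-paper proof to compare against; your route --- enumerate via Lemma \ref{lemma:basisvalues}, show the Torelli group translates through exactly $2H^1(\Sigma_g;\Z/r\Z)$ via the bounding-pair formula plus homological coherence, reduce the even-$r$ orbit count to the classical $\Sp(2g,\Z/2\Z)$ action on quadratic forms, and finish with orbit--stabilizer --- is the standard argument and matches the cited source in spirit. One small remark: for $g=2$ there are no nontrivial bounding pairs cobounding a genus-$1$ subsurface (any two disjoint, homologous, non-isotopic nonseparating curves on $\Sigma_2$ would force a complementary piece of half-integral genus), so the bounding-pair step produces nothing; but since $r\mid 2$ forces $2H^1(\Sigma_2;\Z/r\Z)=0$, the conclusion that the Torelli translations exhaust $2H^1$ still holds vacuously, and the rest of the argument goes through, so this is harmless.
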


\section{The sliding principle}\label{section:sliding}

This section is devoted to establishing a versatile lemma for computations in the mapping class group which we call the {\em sliding principle}. In the course of our later work in Section \ref{section:finitegen}, we will often need to demonstrate that given a subgroup $\Gamma \le \Mod(\Sigma_g)$ and two simple closed curves $a$ and $b$, there is some $\gamma \in \Gamma$ such that $\gamma(a) = b$. The statements of the relevant lemmas (\ref{lemma:hardeven}, \ref{lemma:acurve}, and \ref{lemma:hardodd}) are technical, and their proofs are necessarily computational. However, they are all manifestations of the sliding principle, which appears as Lemma \ref{lemma:sliding} below as the culmination of a sequence of examples. 

Ultimately the sliding principle is merely a labor-saving device which circumvents the need for lengthy explicit Dehn twist computations, and this section can safely be taken as a black box by those more interested in the global structure of the argument and less so the minutiae of mapping class group computations.

\subsection{Sliding along chains and Birman--Hilden theory}\label{subsection:chainslide}

The simplest example of the sliding principle is the braid relation: recall that if $a$ and $b$ are simple closed curves on a surface which intersect exactly once, then 
\[T_aT_bT_a=T_bT_aT_b\]
and this element interchanges the curves $a$ and $b$. More generally, if $(a_1, \ldots, a_n)$ is an {\em $n$--chain} of simple closed curves (i.e. curves $a_i$ and $a_{i+1}$ intersect transversely once, and $a_i$ and $a_j$ are disjoint for $\abs{i-j} >1$), then there is an element of $\Gamma := \langle T_{a_1}, \ldots, T_{a_n} \rangle$ which takes $a_1$ to $a_n$. We think of the curve $a_1$ as ``sliding'' along the chain $(a_1, \dots, a_n)$ to $a_n$.

The theory of Birman and Hilden (see, e.g., \cite{MargWin_BHSurvey}) clarifies this phenomenon by identifying the group $\Gamma$ as a braid group. This identification provides an explicit model for the action of $\Gamma$ on simple closed curves, making the above statement apparent.

Namely, let $W$ be a regular neighborhood of the $n$-chain $\{a_1, \ldots, a_n\}$; then $W$ has a unique hyperelliptic involution $\iota$ which (setwise) fixes each $a_i$ curve.
The quotient $W/ \langle \iota \rangle$ is a disk with $n+1$ marked points, and the Birman--Hilden theorem implies that 
\begin{equation}\label{eqn:BH}
C_W(\iota) \cong B_{n+1}
\end{equation}
where $C_W(\iota)$ is the centralizer of $\iota$ inside of $\Mod(W)$. The Dehn twist about $a_i$ descends to the half--twist $h_i$ interchanging the $i^\text{th}$ and $(i+1)^\text{st}$ curves, and so we see that under the isomorphism \eqref{eqn:BH}, $\{ T_{a_i}, \ldots, T_{a_n}\}$ corresponds to the standard Artin generators for $B_{n+1}$.

Now in $B_{n+1}$ it is evident that any two half--twists $h_i$ and $h_j$ are conjugate, for example, by a braid which interchanges the $i^\text{th}$ and $(i+1)^\text{st}$ strands with the $j^\text{th}$ and $(j+1)^\text{st}$ strands. By the Birman--Hilden correspondence, $T_{a_i}$ and $T_{a_j}$ are conjugate in $C_W(\iota)$, and hence there is some element of $\Gamma$ taking $a_i$ to $a_j$ (and vice--versa).

Similarly, any two sub--braid groups $B_{i,j}:= \langle h_i, h_{i+1} \ldots, h_j \rangle$ and $B_{k, \ell}:= \langle h_k, h_{k+1} \ldots, h_\ell \rangle$
generated by consecutive half--twists are conjugate in $B_{n+1}$ if and only if $j-i = \ell - k$, that is, if they act on the same number of strands. 
In terms of subsurfaces, this means that if $Y_{i,j}$ and $Y_{k, \ell}$ denote the subsurfaces given as regular neighborhoods of $(a_i, \ldots, a_j)$ and $(a_k, \ldots, a_\ell)$, respectively, then there is some element $\gamma \in \Gamma$ which identifies the chains in an order--preserving way and hence takes $Y_{i,j}$ to $Y_{k, \ell}$.

The sliding principle for chains then boils down to using this action to transport curves living on $Y_{i,j}$ to curves on $Y_{k, \ell}$. In order to make this work, we need a coherent way of marking each subsurface.

By construction, $ Y_{i,j} \setminus (a_i \cup \ldots \cup a_j)$ is a union of either one or two annuli, one for each component of $\partial  Y_{i,j}$. In particular, the chain $(a_i, \ldots, a_j)$ determines a marking of $Y_{i,j}$ up to mapping classes of $Y_{i,j}$ preserving each curve of the chain. In the case at hand, the only such elements are Dehn twists about $\partial Y_{i,j}$ and the hyperelliptic involution $\iota$ (since $\iota$ fixes each curve of the subchain $(a_i, \ldots, a_j)$, it restricts to an involution of the regular neighborhood $Y_{i,j}$).

Choose an orientation on $a_1$; this specifies an orientation on each subsequent $a_i$ by the convention that $\pair{a_i, a_{i+1}} = 1$. Now the hyperelliptic involution reverses the orientation of each $a_i$, and hence the data of $(a_i, \ldots, a_j)$ together with their orientations is enough to determine a marking up to twists about $\partial Y_{i,j}$. Of course, the same procedure may be repeated for $Y_{k, \ell}$.

The identification $\gamma(Y_{i,j}) = Y_{k, \ell}$ should therefore be thought of as an identification of {\em marked} subsurfaces (up to twisting about $\partial Y_{i,j}$), and so can be used to transport any simple closed curve $c$ supported on $Y_{i,j}$ to a curve $\gamma(c)$ supported on $Y_{k, \ell}$. Moreover, one can use the (signed) intersection pattern of $c$ with the $a_i$ to explicitly identify $\gamma(c)$ as a curve on $Y_{k, \ell}$.

\begin{example}\label{example:chainslide}
As a simple example of the sliding principle, consider the curves $x$ and $y$ shown in Figure \ref{fig:sliding2} below. The curve $x$ is supported on the (subsurface determined by the) $5$--chain $(5,4,3,6,7)$, and $y$ is supported on $(3,6,7,8,9)$. When $(5,4,3,6,7)$ is slid to $(3,6,7,8,9)$, this identification takes $x$ to $y$. 
\end{example}

\begin{remark}
A similar philosophy can be used to investigate the $\Gamma$ action on curves which merely intersect $W$, but then one must be careful to take into account the incidence of the curve with $\partial W$ and ensure that there is no twisting about $\partial Y_{i,j}$ (c.f. \cite[Lemmas A.4--7]{Calderon_strata}).
\end{remark}

\subsection{General sliding}\label{subsection:generalslide}
So far, what we have discussed is just an extended consequence of the Birman--Hilden correspondence for a hyperelliptic subsurface. The general sliding principle is a method for investigating the action on a union of such subsurfaces.

Let $\mathcal{C}$ be a set of simple closed curves on the surface $\Sigma_g$ and set
\[\Gamma := \langle T_a : a \in \mathcal{C} \rangle.\]
Define the {\em intersection graph} $\Lambda_\mathcal{C}$ of $\mathcal{C}$ to have a vertex for each curve of $\mathcal{C}$, and two vertices to be connected by an edge if and only if the curves they represent intersect exactly once. Without loss of generality, we will assume that $\Lambda_\mathcal{C}$ is connected (otherwise each component can be dealt with separately).

Paths in the intersection graph $\Lambda_{\mathcal C}$ correspond to chains on the surface, which in turn fill hyperelliptic subsurfaces. By the discussion above, the $\Gamma$ action can be used to slide curves supported in a neighborhood of $\mathcal{C}$ along paths in the intersection graph.

Generally, however, a curve cannot traverse all of $\Lambda_\mathcal{C}$ just by sliding. In particular, the subsurface carrying the curve can only transfer between chains or reverse the order of its associated chain when there is enough space for it to ``turn around.'' For example, consider the set of curves $\mathcal{C} \subset \Sigma$ shown in Figure \ref{fig:sliding2}, whose intersection graph $\Lambda_\mathcal{C}$ is a tripod with legs of length 2, 2, and 6. We claim that $\Gamma$ acts transitively on the set of (ordered) 3--chains in $\mathcal{C}$.

\begin{figure}[ht]
\centering
\labellist
\small
\pinlabel $\mathcal{C}$ [tl] at 230.2 15.2
\pinlabel $\Lambda_{\mathcal{C}}$ [tl] at 375 15.2
\pinlabel 1 [br] at 43.2 57
\pinlabel 2 [bl] at 76.6 46.8
\pinlabel 3 [bl] at 94.4 40.8
\pinlabel 4 [tl] at 69.4 28
\pinlabel 5 [tl] at 31.2 12.8
\pinlabel 6 [b] at 111.8 38
\pinlabel 7 [b] at 131.8 41
\pinlabel 8 [b] at 151.2 38
\pinlabel 9 [b] at 174.2 40
\pinlabel 1 [tr] at 302 69.4
\pinlabel 2 [tr] at 314.8 57.2
\pinlabel 3 [b] at 320.8 35.8
\pinlabel 4 [tl] at 295.8 27
\pinlabel 5 [tr] at 292.4 13
\pinlabel 6 [b] at 340.8 35.8
\pinlabel 7 [b] at 360.8 35.8
\pinlabel 8 [b] at 380.8 35.8
\pinlabel 9 [b] at 400.8 35.8
\pinlabel $x$ at 90 7
\pinlabel $y$ at 141 15
\endlabellist
\includegraphics{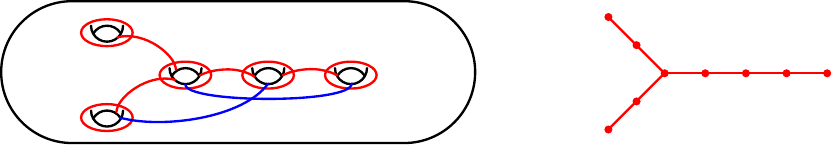}
\caption{A set of simple closed curves $\mathcal{C}$ and its intersection graph $\Lambda_\mathcal{C}$. The $\Gamma$-action is transitive on $3$--chains but not on $5$--chains (at least not obviously so).}
\label{fig:sliding2}
\end{figure}

Indeed, given any 3--chain in $\mathcal{C}$, the sliding principle for chains implies that it can be taken to either
$(a_1, a_2, a_3)$ or $(a_7, a_8, a_9)$,
possibly with orientation reversed. The chains $(a_1, a_2, a_3)$ and $(a_7, a_8, a_9)$ are in turn related by sliding, so $\Gamma$ acts transitively on the set of unordered 3--chains. Therefore, to see that $\Gamma$ acts transitively on ordered 3--chains, it suffices to show that $(a_1, a_2, a_3)$ is in the $\Gamma$ orbit of $(a_3, a_2, a_1)$. This follows by repeated sliding:
\[(a_1, a_2, a_3) \sim (a_3, a_4, a_5) \sim (a_9, a_8, a_7) \sim (a_3, a_2, a_1)\]
where we have written $c \sim c'$ to indicate that the chain $c$ can be slid to the chain $c'$ along a chain in $\mathcal{C}$.

However, the sliding principle does not imply that $\Gamma$ acts transitively on the set of 5--chains in $\mathcal{C}$. This can be explained by a lack of space in $\Lambda_\mathcal{C}$: the 5--chain $(a_1, \ldots, a_5)$ cannot be slid to lie entirely on one branch of $\Lambda_{\mathcal{C}}$, and so we cannot perform the same turning maneuvers as in the case of 3--chains.

We record this intuition in the following statement, the proof of which is just a repeated application of the sliding principle for chains.

\begin{lemma}[The sliding principle]\label{lemma:sliding}
Suppose that $\mathcal{C}$ is a set of simple closed curves on a surface $\Sigma_g$ and set
\[\Gamma = \langle T_a : a \in \mathcal{C} \rangle.\]
Let $Y, W \subset \Sigma_g$ be subsurfaces filled by $k$-chains $(y_1, \ldots, y_k) \subset \mathcal{C} $ and $(w_1, \ldots, w_k) \subset \mathcal{C} $, respectively.
If there exists a sequence $c_1, \ldots, c_m$ of chains in $\mathcal{C}$ (not necessarily of the same length) such that
\begin{itemize}
\item $(y_1, \ldots, y_k)$ is a subchain of $c_1$
\item $(w_1, \ldots, w_k)$  is a subchain of $c_m$
\item $c_i$ and $c_{i+1}$ overlap in a subchain of length at least $k$
\end{itemize}
then there exists $\gamma \in \Gamma$ taking $(y_1, \ldots, y_k)$ to $(w_1, \ldots, w_k)$. Moreover, $\gamma$ induces a natural identification of the simple closed curves supported entirely on $Y$ with those supported entirely on $W$.
\end{lemma}

\section{Finite generation of the admissible subgroup}\label{section:finitegen}

\subsection{Outline of Theorem \ref{theorem:genset}}

We now turn to the proof of Theorem \ref{theorem:genset}. As the proof is spread out over the next two sections, both of which contain rather technical lemmas, we pause here to remind the reader of the work remaining to be done (see also the outline given in Section \ref{section:introduction}). 

At the highest level, the proof divides into two pieces: we first establish the ``maximal'' case $r  = 2g-2$ formulated in Theorem \ref{theorem:genset}.\ref{case:hardeven} and \ref{theorem:genset}.\ref{case:hardodd}, and then we will use this to establish the case of general $r$ as formulated in Theorem \ref{theorem:genset}.\ref{case:r}.

The proof of the maximal case $r = 2g-2$ divides further into two steps. The first step, carried out in Section \ref{section:finitegen}, shows that the finite collection of twists described in Theorem \ref{theorem:genset} generates the full {\em admissible subgroup} $\mathcal T_\phi$ (c.f. Definition \ref{definition:admissible}). The second step (Proposition \ref{prop:admissfull}) is to show that the admissible subgroup coincides with the spin structure stabilizer: $\mathcal T_\phi = \Mod_g[\phi]$. This is accomplished in Section \ref{section:admiss=stab} (more precisely, Sections \ref{subsection:firststep}--\ref{subsection:johnson}). The work here applies to general $r$ with essentially no modification, and in anticipation of the general case, we formulate and prove Proposition \ref{prop:admissfull} for arbitrary $r$.

Given the maximal case, the proof in the general case is actually quite easy, and is handled in Section \ref{section:generalr}. In light of Proposition \ref{prop:admissfull}, it suffices to show that a finite collection of twists as given in Theorem \ref{theorem:genset}.\ref{case:r}, together with the stabilizer of a lift of $\phi$, generates the admissible subgroup $\T_\phi$.

\subsection{Statement of the main result of Section \ref{section:finitegen}} The first step is to show that each of the finite collections of Dehn twists presented in Figures \ref{figure:hardcaseeven} and \ref{figure:hardcaseodd} generate their respective admissible subgroups $\mathcal T_\phi$. This is the main result of the next  section.

\begin{remark}
In the statement of Theorem \ref{theorem:genset}, we consider both an $r$-spin structure $\phi$ for arbitrary $r$ dividing $2g-2$, as well as a lift of $\phi$ to a $(2g-2)$-spin structure, which we notate $\tilde \phi$. In this section, all spin structures under consideration have $r = 2g-2$, and will be denoted $\phi$ for ease of notation.
\end{remark}
 
	\begin{proposition}\label{lemma:gammaadmiss}
	In case \ref{case:hardeven} (respectively, case \ref{case:hardodd}) of Theorem \ref{theorem:genset}, let $\Gamma$ denote the group generated by the indicated collections of Dehn twists. Then $\Gamma = \mathcal T_\phi$, where $\phi$ is the $(2g-2)$--spin structure specified by assigning $\phi(c) = 0$ for every curve $c$ appearing in Figure \ref{figure:hardcaseeven} (respectively, Figure \ref{figure:hardcaseodd}).
	\end{proposition}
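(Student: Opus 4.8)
The plan is to establish $\Gamma = \mathcal{T}_\phi$ by the two-way inclusion. The inclusion $\Gamma \le \mathcal{T}_\phi$ is immediate: every curve $c$ appearing in Figure \ref{figure:hardcaseeven} (respectively \ref{figure:hardcaseodd}) is nonseparating (this can be read off the figure) and has $\phi(c) = 0$ by the defining choice of $\phi$, so each generator $T_{a_i}$ of $\Gamma$ is an admissible twist. (One should first check that such a $(2g-2)$--spin structure $\phi$ genuinely exists and is well defined: by Lemma \ref{lemma:basisvalues} it suffices to exhibit a geometric homology basis among the $a_i$ and their bounding complements, compute the would-be winding numbers via homological coherence (Lemma \ref{lemma:homcoh}) on the subsurfaces cut out by the chains, and verify consistency; this also pins down $\Arf(\phi)$, giving the stated congruence. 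This bookkeeping is essentially the content of Lemmas \ref{lemma:hardeven} and \ref{lemma:hardodd}.)

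The substance of the proof is the reverse inclusion $\mathcal{T}_\phi \le \Gamma$. Here the strategy, following \cite{Salter_monodromy}, is to show that $\Gamma$ already contains a Dehn twist about \emph{every} admissible curve, i.e. every nonseparating $c$ with $\phi(c) = 0$. The tool is Lemma \ref{lemma:pushmakesT}: it suffices to produce inside $\Gamma$ a sufficiently rich configuration of admissible curves — concretely, an ``$E$-type'' or ``$I$-type'' configuration of the kind whose associated point-pushing subgroup forces all admissible twists — together with the connectivity input of that lemma (which is where $g \ge 5$ is used). So the first reduction is: identify within the curves of Figure \ref{figure:hardcaseeven}/\ref{figure:hardcaseodd} a small explicit subconfiguration satisfying the hypotheses of Lemma \ref{lemma:pushmakesT}, and then conclude that $\mathcal{T}_\phi \subseteq \langle T_c : c \text{ in that configuration} \rangle \le \Gamma$ once every admissible curve's twist has been ``acquired.''

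The real work — and the step I expect to be the main obstacle — is acquiring twists about admissible curves that lie ``far out'' in the curve complex from the finite generating configuration. This is done by the sliding principle (Lemma \ref{lemma:sliding}): given an admissible curve $c$, one wants $\gamma \in \Gamma$ with $\gamma(c_0) = c$ for some curve $c_0$ whose twist is already known to lie in $\Gamma$, whence $T_c = \gamma T_{c_0}\gamma^{-1} \in \Gamma$. The obstruction is that the intersection graph $\Lambda$ of the configuration in Figure \ref{figure:hardcaseeven}/\ref{figure:hardcaseodd} — a ``large'' tree or near-tree (in particular containing a long tail $a_7, a_8, \dots, a_{2g-1}$ plus a compact ``head'' $a_0, \dots, a_6$) — must have enough room near the head for the requisite ``turning around'' maneuvers, exactly as in the tripod example of Figure \ref{fig:sliding2}. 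One must therefore check, by direct sliding computations in the spirit of Example \ref{example:chainslide}, that the head configuration is large enough to realize: (i) all half-twists in a given chain as conjugate in $\Gamma$; (ii) order-reversal of chains; and (iii) transfer of a carried subsurface between the different legs. This is where Lemmas \ref{lemma:hardeven}, \ref{lemma:acurve}, and \ref{lemma:hardodd} come in — they verify precisely these sliding properties for the specific finite configurations at hand, replacing the auxiliary curve configurations of \cite{Salter_monodromy} that are unavailable when $r = 2g - 2$. Once these sliding lemmas are in place, one works outward along the tail: the chain $(a_7, \dots, a_{2g-1})$ together with the head lets one slide any admissible curve into a fixed ``model position'' near the head where its twist is already in $\Gamma$, and then Lemma \ref{lemma:pushmakesT} closes the argument by upgrading ``all admissible twists'' to ``$\mathcal{T}_\phi \le \Gamma$.'' Combining the two inclusions yields $\Gamma = \mathcal{T}_\phi$.
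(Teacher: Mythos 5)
Your high-level skeleton is right: reduce to Lemma \ref{lemma:pushmakesT} and then verify its hypotheses for the explicit configurations in Figures \ref{figure:hardcaseeven} and \ref{figure:hardcaseodd}. But the account of what the hypotheses actually are, and of where the technical work lies, is off in a way that would leave the proof without its central step.

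Lemma \ref{lemma:pushmakesT} asks for a $3$-chain $(a,a',b)$ with $\phi(a)=\phi(a')=0$, $\phi(b)=-1$, together with the containment $\widetilde\Pi(b) \le \Gamma$, where $\widetilde\Pi(b)$ is the spin subsurface push subgroup associated to the complementary subsurface $\overline{S'}$. Your proposal never addresses how to get $\widetilde\Pi(b)$ inside $\Gamma$; instead you describe the ``real work'' as sliding far-out admissible curves into a model position near the head of the configuration. That is not what happens, and it also would not suffice: $\widetilde\Pi(b)$ is not generated by admissible twists alone (it contains, by Lemma \ref{lemma:sspschar}, the non-admissible element $T_b^{2g-2}$ as the kernel of the surjection to $\pi_1(\overline{S'})$). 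The mechanism that puts $\widetilde\Pi(b)$ into $\Gamma$ is Lemma \ref{lemma:networkgens}: one must exhibit an arboreal \emph{filling} network $\mathcal N$ on $\overline{S'}$ with all $T_a$, $a \in \mathcal N$, in $\Gamma$, a pair of pants $a \cup a' \cup b$, and — critically — the extra generator $T_b^{2g-2} \in \Gamma$.

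The primary content of Lemmas \ref{lemma:hardeven} and \ref{lemma:hardodd} is precisely this last point: producing $T_b^{2g-2} \in \Gamma$ via the $D$-relation (Lemma \ref{lemma:dn}) applied to nested $\mathscr D_k$ configurations of the generating curves, combined with the chain relation (Lemma \ref{lemma:chain}), followed by a short sliding argument to move between the curve $b$ and the auxiliary curve $c$ (resp.\ $b'$) appearing in those computations. Sliding plays only a supporting role; the $D$-relation bookkeeping is the engine. Your description, which reads the sliding lemmas (\ref{lemma:hardeven}, \ref{lemma:acurve}, \ref{lemma:hardodd}) as ``verifying sliding properties'' of the intersection graph so that arbitrary admissible curves can be transported around, misidentifies where the difficulty sits: transporting arbitrary admissible curves is exactly what Lemma \ref{lemma:pushmakesT} does for you once its hypotheses are in hand, so there is nothing left to ``acquire'' by hand. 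As written, the proposal would establish neither $\widetilde\Pi(b) \le \Gamma$ nor $T_b^{2g-2} \in \Gamma$, and so would not actually satisfy the hypotheses of Lemma \ref{lemma:pushmakesT}. Also, the closing remark that Lemma \ref{lemma:pushmakesT} ``upgrades `all admissible twists' to `$\mathcal T_\phi \le \Gamma$'\hspace{1pt}'' is a tautology, since $\mathcal T_\phi$ \emph{is} by definition the group generated by all admissible twists; the lemma's content is that a much smaller input (two admissible twists plus the push group) already generates $\mathcal T_\phi$.

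One smaller correction in Case \ref{case:hardodd}: the arboreal filling network on $\overline{S'}$ is \emph{not} a subcollection of the displayed generators $a_0,\dots,a_{2g-1}$; it requires the auxiliary curves $a_{2g}$, $a_{2g+1}$ of Figure \ref{figure:hardodd3}. Lemmas \ref{lemma:acurve} and \ref{lemma:hardodd1} exist precisely to show $T_{a_{2g}}, T_{a_{2g+1}} \in \Gamma$ before Lemma \ref{lemma:networkgens} can be applied. This is a genuine additional step that your proposal omits.
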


The proof of Proposition \ref{lemma:gammaadmiss} is completed in Section \ref{subsection:containspin} as the synthesis of a series of technical lemmas. In Section \ref{subsection:subspin} we recall the notion of a ``spin subsurface push subgroup'' $\tilde \Pi(b)$ from \cite{Salter_monodromy} and establish a criterion (Lemma \ref{lemma:pushmakesT}) for $\Gamma$ to contain $\mathcal T_\phi$ in terms of $\tilde \Pi(b)$. In Section \ref{subsection:networks}, we review the theory of ``networks'' from \cite{Salter_monodromy}, and use this to formulate an explicit generating set for $\tilde \Pi(b)$ (Lemma \ref{lemma:networkgens}). In Section \ref{section:relations}, we briefly recall some relations in the mapping class group. Finally in Section \ref{subsection:containspin} we use the results of the preceding sections to show the containment $\tilde \Pi(b) \le \Gamma$, and so conclude the proof of Proposition \ref{lemma:gammaadmiss}.

As our ultimate goal is the proof of Proposition \ref{lemma:gammaadmiss}, throughout this section we consider only $(2g-2)$--spin structures. Refer to \cite{Salter_monodromy} for the corresponding statements for general $r$.

\subsection{Spin subsurface push subgroups}\label{subsection:subspin}
Here we recall the notion of a ``spin subsurface push subgroup'' from \cite[Section 8]{Salter_monodromy}. The main objective of this subsection is Lemma \ref{lemma:pushmakesT} below, which provides a criterion for a subgroup $H \le \Mod(\Sigma_g)$ to contain the admissible subgroup $\mathcal T_\phi$ in terms of a spin subsurface push subgroup. Let $\Sigma_g$ be a closed surface equipped with a $(2g-2)$--spin structure $\phi$, and let $b \subset \Sigma_g$ be an essential, oriented, nonseparating curve satisfying $\phi(b) = -1$. Define $S'$ to be the {\em closed} subsurface of $\Sigma_g$ obtained by removing an open annular neighborhood of $b$; let $\Delta$ denote the boundary component of $S'$ corresponding to the left side of $b$. Let $\overline{S'}$ denote the surface obtained from $S'$ by capping off $\Delta$ by a disk. 

Combining a suitable form of the Birman exact sequence (c.f. \cite[Section 4.2.5]{FarbMarg}) with the inclusion homomorphism $i_*: \Mod(S') \to \Mod(\Sigma_g)$, the capping operation induces a homomorphism 
\[
\mathcal P: \pi_1(UT\overline{S'}) \to \Mod(\Sigma_g);
\]
here $UT \overline{S'}$ denotes the unit tangent bundle to $\overline S'$. Intuitively, $\mathcal P$ acts as follows: suppose that $c \in UT \overline{S'}$ can be represented in $\Mod(S')$ by pushing $\Delta$ along some simple path $\gamma$. Then $c$ has an expression in terms of Dehn twists: 
\[
c = T_{\gamma_L} T_{\gamma_R}^{-1} T_\Delta^k
\]
for some $k \in \Z$, where $\gamma_L, \gamma_R$ are the simple closed curves on $S'$ lying to the left (resp. right) of the path $\gamma$. Each of the curves $\gamma_L, \gamma_R, \Delta$ are contained in $\Sigma_g$ under the inclusion $S' \into \Sigma_g$, and so the above expression determines the mapping class $\mathcal P(c) \in \Mod(\Sigma_g)$.

We call the image 
\[
 \Pi(b) := \mathcal P(\pi_1(UT\overline{S'}))
\]
a {\em subsurface push subgroup}\footnote{We have attempted to improve the notation introduced in \cite[Section 8]{Salter_monodromy} where the corresponding subsurface push subgroup was denoted $\Pi(S', \Delta)$.} and remark that $\mathcal P$ can be shown to be an injection.

\begin{definition}[Spin subsurface push subgroup]
Let $\Sigma_g$ be a closed surface equipped with a $(2g-2)$--spin structure $\phi$, and let $b \subset \Sigma_g$ be an essential, oriented, nonseparating curve satisfying $\phi(b) = -1$. The {\em spin subsurface push subgroup} $\tilde \Pi(b)$ is\footnote{Again, the notation here differs slightly with \cite{Salter_monodromy}, where the spin subsurface push subgroup is denoted $\tilde \Pi(\Sigma_g \setminus \{b\})$.} the intersection
\[
\tilde \Pi(b) : = \Pi(b) \cap \Mod_g[\phi].
\]
\end{definition}

\begin{lemma}[\cite{Salter_monodromy}, Lemma 8.1]\label{lemma:sspschar}
The spin subsurface push subgroup $\tilde \Pi( b)$ is a finite--index subgroup of $\Pi(b)$. It is characterized by the group extension 
\begin{equation}\label{equation:pi}
1 \to \pair{T_b^{2g-2}} \to \tilde \Pi(b) \to \pi_1(\overline{S'}) \to 1;
\end{equation}
the map $\tilde \Pi(b) \to \pi_1(\overline{S'})$ is induced by the capping map $S' \to \overline{S'}$ where the boundary component corresponding to the left side of $b$ is capped off with a punctured disk. 
\end{lemma}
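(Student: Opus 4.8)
The statement to prove is Lemma \ref{lemma:sspschar}: that $\tilde\Pi(b)$ is finite-index in $\Pi(b)$ and fits into the extension \eqref{equation:pi}. I would organize the proof around the injection $\mathcal P \colon \pi_1(UT\overline{S'}) \hookrightarrow \Mod(\Sigma_g)$ whose image is $\Pi(b)$, so that everything reduces to understanding how the spin structure $\phi$ interacts with this push subgroup. First I would recall the structure of the unit tangent bundle: there is a central extension
\[
1 \to \pair{z} \to \pi_1(UT\overline{S'}) \to \pi_1(\overline{S'}) \to 1,
\]
where $z$ is the loop in the fiber, and under $\mathcal P$ the generator $z$ maps to the Dehn twist $T_\Delta$ about the capping curve, which equals $T_b$ in $\Mod(\Sigma_g)$ after re-identifying $S'$ inside $\Sigma_g$. (This is the standard description of the Birman exact sequence for a capped subsurface: pushing a point around a loop, tracking the framing, is recorded by $UT\overline{S'}$, and the fiber loop becomes the boundary twist.) So $\Pi(b)$ itself sits in $1 \to \pair{T_b} \to \Pi(b) \to \pi_1(\overline{S'}) \to 1$.

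**The key step.** The heart of the matter is to compute which elements of $\Pi(b)$ preserve $\phi$. Since $\phi(b) = -1$, the twist $T_b$ does \emph{not} preserve $\phi$: by twist-linearity, $T_b^k \cdot \phi = \phi$ iff $(2g-2) \mid k \cdot \phi(b) \cdot(\text{something})$ — more precisely $T_b$ acts on $\phi$ by $(T_b \cdot \phi)(c) = \phi(c) - \pair{c,b}\phi(b) = \phi(c) + \pair{c,b}$, so $T_b^k$ acts by adding $k\pair{c,b}$, and this is trivial mod $2g-2$ for all $c$ exactly when $(2g-2)\mid k$. Hence $\pair{T_b} \cap \Mod_g[\phi] = \pair{T_b^{2g-2}}$, which has index $2g-2$ in $\pair{T_b}$. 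Next I would argue that the quotient map $\Pi(b) \to \pi_1(\overline{S'})$ sends $\tilde\Pi(b)$ \emph{onto} $\pi_1(\overline{S'})$: given any $\gamma \in \pi_1(\overline{S'})$, lift it to some $\tilde\gamma \in \pi_1(UT\overline{S'})$; the push map $\mathcal P(\tilde\gamma)$ acts on $\phi$ by some element which — because pushing a point around $\gamma$ and capping only changes winding numbers by a controlled amount, measured precisely by a power of $z\mapsto T_b$ — can be corrected by multiplying by a suitable power of $T_b$ (using that $T_b$ realizes all $2g-2$ cosets' worth of correction, and the obstruction to preserving $\phi$ lies in a cyclic group of order dividing $2g-2$). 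Concretely: the point-pushing subgroup acts on $r$-spin structures, and the obstruction cocycle for a loop $\gamma$ together with a framing is exactly measured by the winding-number/Chillingworth class, which lives in $\Z/(2g-2)\Z \cong \pair{T_b}/\pair{T_b^{2g-2}}$; so each $\gamma$ has a lift into $\tilde\Pi(b)$, unique up to $\pair{T_b^{2g-2}}$. This gives both surjectivity onto $\pi_1(\overline{S'})$ and identifies the kernel as $\pair{T_b^{2g-2}}$, yielding \eqref{equation:pi}; and it gives $[\Pi(b):\tilde\Pi(b)] = [\pair{T_b}:\pair{T_b^{2g-2}}] = 2g-2 < \infty$.

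**The main obstacle.** The delicate point — and the one I would expect to need the most care — is the claim that the $\phi$-preservation obstruction for a pushed loop is realized by a power of $T_b$, i.e. that the composite $\Pi(b) \to \Aut(r\text{-spin structures})$, restricted along the section-up-to-$T_b$-powers, lands in the cyclic image generated by the $T_b$-action. This is essentially the statement that point-pushing changes $\phi$ only through the winding number of the pushed loop relative to the framing, which is the content of the Chillingworth/Humphries--Johnson description of winding number functions (Section \ref{subsection:wnf}) combined with the precise form of the Birman exact sequence for $UT\overline{S'}$. I would prove it by a direct computation of $(\mathcal P(\tilde\gamma) \cdot \phi)(c)$ for $c$ ranging over a geometric homology basis (Lemma \ref{lemma:basisvalues}), checking that the discrepancy depends only on $\gamma$ through a single integer mod $2g-2$ and matches the effect of the appropriate power of $T_b$. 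The rest of the argument (the central extension for $UT\overline{S'}$, the identification $z \mapsto T_b$, the index count) is standard and I would cite \cite{FarbMarg} for the Birman exact sequence machinery. Since the lemma is attributed to \cite{Salter_monodromy}, I would in practice simply refer there for the detailed verification, recording the extension \eqref{equation:pi} and the finite-index claim with the short argument sketched above.
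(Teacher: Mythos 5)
Your proposal is correct and follows the argument one would expect from \cite{Salter_monodromy}: push the central extension $1 \to \pair{z} \to \pi_1(UT\overline{S'}) \to \pi_1(\overline{S'}) \to 1$ forward along the injection $\mathcal P$ (with $z \mapsto T_\Delta$, which becomes $T_b$ in $\Mod(\Sigma_g)$), use twist-linearity and $\phi(b) = -1$ to get $\pair{T_b} \cap \Mod_g[\phi] = \pair{T_b^{2g-2}}$, and then verify surjectivity of $\widetilde\Pi(b) \to \pi_1(\overline{S'})$ by correcting an arbitrary lift with a power of $T_b$.

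One point deserves to be made explicit, since it is exactly your ``main obstacle'' and it uses the hypothesis $\phi(b) = -1$ a \emph{second} time, beyond the index computation. For a simple loop $\gamma$ in $\overline{S'}$, write $\mathcal P(\widetilde\gamma) = T_{\gamma_L} T_{\gamma_R}^{-1} T_\Delta^n$, where $\gamma_L, \gamma_R$ are the two parallel copies of $\gamma$ bounding, together with $\Delta$, a pair of pants. A direct twist-linearity computation gives
\[
\bigl(\mathcal P(\widetilde\gamma)\cdot\phi\bigr)(c) - \phi(c)
= n\,\pair{c,\Delta}\,\phi(\Delta) \;+\; \pair{c,\gamma_L}\,\phi(\gamma_L) \;-\; \pair{c,\gamma_R}\,\phi(\gamma_R).
\]
Since $[\gamma_L] - [\gamma_R] = \pm[\Delta] = \pm[b]$, the last two terms collapse to a multiple of $\pair{c,b}$ if and only if $\phi(\gamma_L) = \phi(\gamma_R)$. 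Homological coherence (Lemma \ref{lemma:homcoh}) applied to the pair of pants bounded by $\gamma_L \cup \gamma_R \cup \Delta$ yields $\phi(\gamma_L) - \phi(\gamma_R) = \phi(\Delta) + 1$, and $\phi(\Delta) = \phi(b) = -1$ forces this to vanish. Only then is the discrepancy a single multiple of $\pair{\cdot,b}$, hence killed by an appropriate power of $T_b$, which is what surjectivity onto $\pi_1(\overline{S'})$ requires. So the Chillingworth-type principle you invoke is correct, but it is not a generality about point-pushing; it rests on this sign check with $\phi(b) = -1$. For other values of $\phi(b)$ both $\pair{T_b} \cap \Mod_g[\phi]$ and the surjectivity claim would change, so you should keep the hypothesis in view throughout the ``direct computation over a homology basis'' you describe.
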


The following Lemma \ref{lemma:pushmakesT} was established in \cite{Salter_monodromy}. It shows that a spin subsurface push subgroup $\tilde \Pi(b)$ is ``not far'' from containing the entire admissible subgroup $\mathcal T_\phi$. In the next subsection, we will make this more concrete by finding an explicit finite set of generators for $\tilde \Pi(b)$, and in Section \ref{subsection:containspin} we will do the work necessary to show that $\Gamma$ contains this generating set, and consequently to show the equality $\Gamma = \mathcal T_\phi$.

	\begin{lemma}[C.f. \cite{Salter_monodromy}, Proposition 8.2]\label{lemma:pushmakesT}
	Let $\phi$ be a $({2g-2})$--spin structure on a closed surface $\Sigma_g$ for $g \ge 5$. Let $(a,a',b)$ be an ordered $3$-chain of curves with $\phi(a) = \phi(a') = 0$ and $\phi(b) = -1$. Let $H \leqslant \Mod(\Sigma_g)$ be a subgroup containing $T_{a}, T_{a'}$ and the spin subsurface push group $\widetilde\Pi(b)$. Then $H$ contains $\mathcal T_\phi$.
	\end{lemma}

\subsection{Networks} \label{subsection:networks}
In this subsection we describe an explicit finite generating set for $\widetilde \Pi(b)$, stated as Lemma \ref{lemma:networkgens}. This is formulated in the language of ``networks'' from \cite[Section 9]{Salter_monodromy}. 

\begin{definition}[Networks]\label{definition:network}
Let $S = \Sigma_{g,b}^n$ be a surface, viewed as a compact surface with marked points. A {\em network} on $S$ is any collection $\mathcal N = \{a_1,\dots, a_n\}$ of simple closed curves on $S$, disjoint from any marked points, such that $\#(a_i \cap a_j) \le 1$ for all pairs of curves $a_i, a_j \in \mathcal N$, and such that there are no triple intersections. A network $\mathcal N$ has an associated {\em intersection graph} $\Lambda_{\mathcal N}$, whose vertices correspond to curves $x \in \mathcal N$, with vertices $x,y$ adjacent if and only if $\#(x\cap y) = 1$.  A network is said to be {\em connected} if $\Lambda_{\mathcal N}$ is connected, and {\em arboreal} if $\Lambda_{\mathcal N}$ is a tree. A network is {\em filling} if 
\[
S \setminus \bigcup_{a \in \mathcal N} a
\]
is a disjoint union of disks and boundary-parallel annuli; each disk component is allowed to contain at most one marked point of $S$ and each annulus component may not contain any.
\end{definition}

The following lemma provides the promised explicit finite generating set for (a supergroup of) $\widetilde \Pi(b)$. As always, we assume that $\phi$ is a $({2g-2})$--spin structure on $\Sigma_g$ with $g \ge 5$. Let $b \subset \Sigma_g$ be an essential, oriented, nonseparating curve satisfying $\phi(b) = -1$, and consider the surface $\overline{S'}$ of Section \ref{subsection:subspin} as well as the spin subsurface push subgroup $\widetilde \Pi(b)$.
\begin{lemma}\label{lemma:networkgens}
 Suppose $\mathcal N$ is an arboreal filling network on $\overline{S'}$, and suppose that there exist $a, a' \in \mathcal N$ such that $a \cup a' \cup b$ forms a pair of pants on $\Sigma_g$.  Let $H \leqslant \Mod(\Sigma_g)$ be a subgroup containing $T_a$ for each $a \in \mathcal N$ and $T_b^{2g-2}$. Then $\widetilde \Pi(b) \leqslant H$.
\end{lemma}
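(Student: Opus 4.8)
The strategy is to leverage the group extension \eqref{equation:pi} characterizing $\widetilde\Pi(b)$, namely
\[
1 \to \pair{T_b^{2g-2}} \to \widetilde\Pi(b) \to \pi_1(\overline{S'}) \to 1.
\]
Since $H$ is assumed to contain $T_b^{2g-2}$, it suffices to produce, for a generating set of $\pi_1(\overline{S'})$, lifts to $\widetilde\Pi(b)$ that lie in $H$; these lifts together with $T_b^{2g-2}$ will then generate $\widetilde\Pi(b)$. The main point is therefore to realize loop-generators of $\pi_1(\overline{S'})$ as products of the Dehn twists $\{T_a : a \in \mathcal N\}$, using the arboreal filling network structure.

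\textbf{Key steps.} First, I would recall (from \cite[Section 8]{Salter_monodromy}) the geometric description of the push map $\mathcal P$: an element of the subsurface push subgroup $\Pi(b)$ pushing a point around a loop $\gamma$ on $\overline{S'}$ is expressible as $T_{\gamma_L} T_{\gamma_R}^{-1}$, where $\gamma_L, \gamma_R$ are the two boundary curves of an annular neighborhood of a simple representative of $\gamma$ (or a similar "point-push = product of two twists" formula; there is also the unit-tangent-bundle direction, handled by $T_b$ itself or a boundary twist). Second, I would use the arboreal filling hypothesis on $\mathcal N$ to set up a convenient generating set for $\pi_1(\overline{S'})$: since $\mathcal N$ fills $\overline{S'}$ and its intersection graph is a tree, standard surface-topology arguments (as in \cite{Salter_monodromy}) show that curves of $\mathcal N$ and their pairwise "sums" along chains span $H_1$, and more precisely that the free homotopy classes of curves obtained as boundaries of regular neighborhoods of subchains of $\mathcal N$ generate $\pi_1(\overline{S'})$. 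Third, for each such generator, the sliding principle (Lemma \ref{lemma:sliding}) together with the chain/Birman--Hilden machinery of Section \ref{subsection:chainslide} lets me write the corresponding point-push as an explicit word in $\{T_a : a \in \mathcal N\}$: a point-push around a curve carried by a chain $(a_i,\dots,a_j)\subset \mathcal N$ lies in $\langle T_{a_i},\dots,T_{a_j}\rangle$, since in the hyperelliptic model \eqref{eqn:BH} point-pushes are products of half-twists. The hypothesis that $a\cup a' \cup b$ bounds a pair of pants is what ties the "base curve" $b$ of the push subgroup to the network $\mathcal N$, ensuring the annulus around $b$ (whose twist $T_b^{2g-2}$ generates the kernel) is visible from $\mathcal N$ and that the push of a loop encircling the capped-off puncture of $\overline{S'}$ is accounted for.

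\textbf{Expected main obstacle.} The delicate part is the bookkeeping in the middle step: showing that the specific collection of subchain-boundary curves one extracts from an \emph{arboreal} network actually generates all of $\pi_1(\overline{S'})$ — not just $H_1(\overline{S'})$ — and doing so in a way compatible with the push homomorphism so that the resulting twist-words genuinely land in $\widetilde\Pi(b)$ (i.e., preserve $\phi$, which is automatic once they're in $\Pi(b)$ and expressed via admissible twists, but must be checked against the extension). Concretely, one must verify that the relations in $\pi_1(\overline{S'})$ are respected modulo $\pair{T_b^{2g-2}}$, so that a \emph{generating set} upstairs is obtained rather than merely a set mapping onto generators downstairs; this is where the precise structure of the extension \eqref{equation:pi} and the arboreal hypothesis do the real work. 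I anticipate this reduces, as in \cite[Section 9]{Salter_monodromy}, to an induction on the size of $\mathcal N$ (or on the tree $\Lambda_{\mathcal N}$), peeling off leaves of the tree one at a time and at each stage using a chain-slide to move a newly acquired push generator into position, with the pair-of-pants condition on $a, a', b$ furnishing the base case.
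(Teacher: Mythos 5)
Your proposal takes essentially the same route as the paper, which itself gives only a citation-level proof: the paper's argument is simply ``this is an amalgamation of the results of \cite[Section 9]{Salter_monodromy},'' pointing in particular to Lemma 9.4 and the proof of Theorem 9.5 there, together with one concrete modification. You correctly identify the skeleton (exploit the extension \eqref{equation:pi}, realize a generating set of $\pi_1(\overline{S'})$ by point-push maps expressed as words in the network twists, and use the pair-of-pants condition as a base case for an induction propagated along the arboreal network) and you correctly anticipate that the hard bookkeeping is deferred to \cite[Section 9]{Salter_monodromy}, which is exactly what the paper does.

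One point worth sharpening: the role of the pair-of-pants hypothesis in the paper is precise and you gesture at it only vaguely. In \cite[Lemma 9.4]{Salter_monodromy}, the hypotheses include that a specific push map $P(a_1)$ already lies in $H$; the paper replaces this by the condition that $a \cup a' \cup b$ bounds a pair of pants, because then $a$ and $a'$ become isotopic in $\overline{S'}$ with the marked point in the annulus between them, so the push around $a$ is literally $T_a T_{a'}^{-1}$, which is in $H$ since $a, a' \in \mathcal N$. Your explanation (that the condition ``ties $b$ to the network,'' ``makes the annulus around $b$ visible,'' and ``accounts for the push of a loop encircling the capped-off puncture'') is muddled --- a loop encircling the capped-off puncture is null-homotopic in $\overline{S'}$, so there is no push to account for there; the relevant push is the one around the curve $a$ itself. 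Similarly, your claim that a point-push around a curve carried by a chain $(a_i, \dots, a_j)$ lies in $\langle T_{a_i}, \dots, T_{a_j} \rangle$ is not quite right as stated, since the marked point being pushed need not lie in that subsurface; what actually happens (in the mechanism imported from \cite[Section 9]{Salter_monodromy}) is that the base push $T_a T_{a'}^{-1}$ is conjugated by words in the network twists to acquire pushes around the other generators. These are matters of precision rather than a wrong approach.
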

\begin{proof}
This is an amalgamation of the results of \cite[Section 9]{Salter_monodromy}. See especially the proof of \cite[Theorem 9.5]{Salter_monodromy} as well as \cite[Lemma 9.4]{Salter_monodromy}. In the latter, we have replaced the hypothesis $P(a_1) \in H$ by the requirement that $a \cup a' \cup b$ form a pair of pants; in this case, the corresponding push map is given simply by $T_a T_{a'}^{-1} \in H$. 
\end{proof}

\subsection{Relations in the mapping class group}\label{section:relations} In preparation for the explicit computations to be carried out in Section \ref{subsection:containspin}, we collect here some relations within the mapping class group. The chain and lantern relations are classical; a discussion of the $D$ relation can be found in, e.g., \cite[Section 2.3]{Salter_monodromy}. Throughout, all Dehn twists are taken to be left-handed.

\begin{lemma}[The chain relation]\label{lemma:chain}
Let $(a_1, \dots, a_k)$ be a chain of simple closed curves. If $k$ is even, let $d$ denote the single boundary component of the subsurface determined by $(a_1, \dots, a_k)$, and if $k$ is odd, let $d_1, d_2$ denote the two boundary components. 
\begin{itemize}
\item If $k$ is even, then $T_d = (T_{a_1} \dots T_{a_k})^{2k+2}$.
\item If $k$ is odd, then $T_{d_1} T_{d_2} = (T_{a_1} \dots T_{a_k})^{k+1}$.
\end{itemize}
\end{lemma}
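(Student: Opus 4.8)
Since the curves $a_1,\dots,a_k$ and the boundary curve(s) of the subsurface $N \subset \Sigma_g$ filled by the chain $(a_1,\dots,a_k)$ all lie in $N$, it suffices to prove the two identities inside $\Mod(N)$ and then push forward along the inclusion homomorphism $\Mod(N) \to \Mod(\Sigma_g)$. The first step is to recognize $N$, together with the twists $T_{a_i}$, as (part of) a braid group exactly as in Section~\ref{subsection:chainslide}: the surface $N$ carries a hyperelliptic involution $\iota$ setwise fixing each $a_i$, the quotient $D := N/\iota$ is a disk with $k+1$ marked points, and the Birman--Hilden correspondence lets us transport relations between $B_{k+1} = \Mod(D)$ and $\Mod(N)$ (up to composing with $\iota$, an ambiguity that will be harmless below since every element we produce is manifestly a product of Dehn twists). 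Under this correspondence each $T_{a_i}$ descends to the standard Artin half--twist $\sigma_i$, so $T_{a_1}\cdots T_{a_k}$ corresponds to $\sigma_1\cdots\sigma_k$.

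The second step is the classical braid group identity
\[
(\sigma_1 \cdots \sigma_k)^{k+1} = \Delta^2 = T_{\partial D},
\]
i.e.\ the full twist generates the center of $B_{k+1}$ and equals the Dehn twist about $\partial D$. Consequently $(T_{a_1}\cdots T_{a_k})^{k+1}$ is a lift of $T_{\partial D}$ under the double branched cover $N \to D$, and $(T_{a_1}\cdots T_{a_k})^{2k+2}$ is a lift of $T_{\partial D}^2$; since both expressions are supported in the interior of $N$ and fix $\partial N$ pointwise, they are honest elements of $\Mod(N)$.

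The final step, and the one I expect to be the main obstacle, is to read off precisely which elements of $\Mod(N)$ these lifts are — this is exactly where the parity of $k$ enters and where the factor-of-two discrepancy in the exponents ($2k+2$ versus $k+1$) comes from. Computing the monodromy of the branched cover around $\partial D$ — it is the product of the $k+1$ branch-point monodromies, each the nontrivial deck element, hence trivial if and only if $k+1$ is even — shows that the preimage of $\partial D$ consists of two parallel curves $d_1,d_2$ when $k$ is odd, and of a single curve $d=\partial N$ when $k$ is even. In the odd case a collar of $\partial D$ pulls back to two disjoint annuli, so $T_{\partial D}$ lifts directly to $T_{d_1}T_{d_2}$, giving $(T_{a_1}\cdots T_{a_k})^{k+1} = T_{d_1}T_{d_2}$. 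In the even case the collar pulls back to a single annulus double-covering it; a short covering-space computation shows that $T_{\partial D}$ itself admits no lift to a homeomorphism of $N$ fixing $\partial N$ pointwise, while $T_{\partial D}^2$ lifts to $T_d$, giving $(T_{a_1}\cdots T_{a_k})^{2k+2} = T_d$. Carrying out this covering-space bookkeeping carefully (tracking orientations and the precise form of the Birman--Hilden correspondence for surfaces with boundary) is the only delicate point; as the statement records, this is classical, and the full details may be found in \cite[Section 4.4]{FarbMarg}.
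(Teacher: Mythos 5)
The paper does not prove this lemma: immediately before its statement the chain and lantern relations are described as classical, with \cite[Section 4.4]{FarbMarg} as the standard reference. Your proposal reconstructs that standard textbook argument via the Birman--Hilden correspondence and the braid identity $(\sigma_1\cdots\sigma_k)^{k+1}=\Delta^2=T_{\partial D}$ in $B_{k+1}$, and the overall structure is sound.

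There is, however, a factual error in the way you justify the $k$-even case. You assert that ``$T_{\partial D}$ itself admits no lift to a homeomorphism of $N$ fixing $\partial N$ pointwise.'' This is false and directly contradicts the framework you set up: under the Birman--Hilden identification the braid $(\sigma_1\cdots\sigma_k)^{k+1}=T_{\partial D}$ corresponds to $(T_{a_1}\cdots T_{a_k})^{k+1}\in\Mod(N)$, which is a product of Dehn twists about curves in the interior of $N$ and therefore visibly fixes $\partial N$ pointwise. Concretely, one lifts $T_{\partial D}$ to the collar of $d=\partial N$ as the fractional twist $(\theta,t)\mapsto(\theta+\pi t,t)$ (using the double cover $(\theta,t)\mapsto(2\theta,t)$ on the collar) and extends by the involution $\iota$ off the collar; the result commutes with $\iota$, fixes $\partial N$ pointwise, and covers $T_{\partial D}$. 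The actual source of the factor of two is different: since $d$ double-covers $\partial D$, the Dehn twist $T_d$ descends to $T_{\partial D}^2$, not to $T_{\partial D}$. Hence $(T_{a_1}\cdots T_{a_k})^{k+1}$ is a lift of $T_{\partial D}$ whose \emph{square} is $T_d$, giving $T_d=(T_{a_1}\cdots T_{a_k})^{2k+2}$. Your claim that $T_{\partial D}^2$ lifts to $T_d$ is correct and your final identities are right; only the ``no lift exists'' step needs to be replaced by this square-root observation.
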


\begin{lemma}[The lantern relation]\label{lemma:lantern}
Let $a,b,c,d,x,y,z$ be the simple closed curves shown in Figure \ref{figure:lantern}. Then
\[
T_a T_b T_c T_d = T_x T_y T_z.
\]
\end{lemma}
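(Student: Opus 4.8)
\textbf{Proof proposal for the lantern relation (Lemma \ref{lemma:lantern}).}

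The plan is to prove the lantern relation by exhibiting an explicit homeomorphism of the four-holed sphere $\Sigma_{0,4}$ and tracking how it acts on the relevant isotopy classes of curves, together with a count of how the two candidate words permute the boundary. First I would recall that the lantern relation is supported on a four-holed sphere $S = \Sigma_{0,4}$ embedded in the surface with boundary curves $a,b,c,d$ (the four ``outer'' curves), while $x,y,z$ are the three distinct isotopy classes of essential separating curves in $S$, each splitting the four boundary components into a pair. So it suffices to prove the identity $T_aT_bT_cT_d = T_xT_yT_z$ in $\Mod(\Sigma_{0,4})$, and the general statement follows by the inclusion homomorphism (twists about disjoint curves commute, and the supporting subsurface is the four-holed sphere shown in Figure \ref{figure:lantern}).

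The key computational step is to verify that both sides induce the same mapping class of $S$. Two mapping classes of $\Sigma_{0,4}$ that (i) act the same way on the (rational) isotopy classes of essential simple closed curves and (ii) restrict to the same boundary behavior are equal, since the mapping class group of the four-holed sphere relative to its boundary is generated by Dehn twists and is well understood — for instance, $\Mod(\Sigma_{0,4})$ modulo the central subgroup generated by the four boundary twists acts faithfully on the Farey graph of curves in $S$. Concretely, I would fix a hyperbolic pair-of-pants decomposition realizing $x$, express $y$ and $z$ in terms of the Farey coordinates, and compute the action of both $T_aT_bT_cT_d$ and $T_xT_yT_z$ on a small generating set of curves (say $x$ and one curve crossing it), checking the outputs agree. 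Simultaneously one checks that both words act on $H_1(S)$ (equivalently, on the boundary) in the same way; since each $T_a, T_b, T_c, T_d$ is a boundary twist and $T_x T_y T_z$ can be seen to twist the boundary the compatible amount, this pins down the product on the nose rather than just up to boundary twists.

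The main obstacle is the bookkeeping: one must set up unambiguous representatives and orientations for $x,y,z$ relative to the picture in Figure \ref{figure:lantern}, and carry out the curve-tracking carefully enough that no stray boundary twist is lost. An alternative, perhaps cleaner, route that I would fall back on if the direct computation gets unwieldy is the standard ``picture proof'': realize $S$ as a disk with three subdisks removed, present the lantern as a cut-and-paste identity (equivalently, as a relation in the mapping class group of the disk with marked points, i.e. among half-twists), and cite that this is classical (see, e.g., \cite{FarbMarg}, where the lantern relation is proved in full detail). Since the statement is entirely standard, for the purposes of this paper it is enough to indicate the supporting four-holed sphere, fix the curves as in Figure \ref{figure:lantern}, and refer to \cite{FarbMarg} for the verification.
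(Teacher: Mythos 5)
The paper does not prove this lemma at all: in Section \ref{section:relations} it is stated alongside the chain relation with the remark that ``the chain and lantern relations are classical,'' and no argument is given. Your fallback — fix the configuration from Figure \ref{figure:lantern} and cite \cite{FarbMarg} — is therefore precisely the treatment the paper adopts, and it is the right call for a paper whose content lies elsewhere.

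Two remarks on your primary sketch, should you ever wish to carry it out. First, the assertion that ``$x,y,z$ are the three distinct isotopy classes of essential separating curves in $S$'' is not correct: a four-holed sphere has infinitely many isotopy classes of essential curves (parameterized, as you yourself observe a sentence later, by the Farey graph). What is true is that the three \emph{pairings} of the four boundary components are each realized by a unique curve in the configuration, and the lantern relation concerns one specific triple $x,y,z$ of such curves, chosen so that any two intersect in exactly two points. This precision matters if you are going to track curve images; with the wrong representatives the relation simply fails. Second, the reduction to ``check the action on curves, then pin down the boundary twists'' is a legitimate strategy, but the cleanest implementation uses that $\Mod(\Sigma_{0,4},\partial)$ is a central $\Z^4$-extension of $\Mod(\Sigma_{0,4})$ by boundary twists, so one verifies the identity on the Farey graph and then separately on $H_1$ of the subsurface; your sketch gestures at both steps, which is correct in outline. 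For this paper's purposes, though, citing the standard reference is both sufficient and consistent with how the paper itself handles the statement.
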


\begin{figure}[ht]
\labellist
\small
\pinlabel $a$ at 45 57
\pinlabel $b$ at 78 113 
\pinlabel $c$ at 111 57
\pinlabel $d$ at 140 20
\pinlabel $x$ at 35 100
\pinlabel $y$ at 120 100
\pinlabel $z$ at 78 25
\endlabellist
\includegraphics{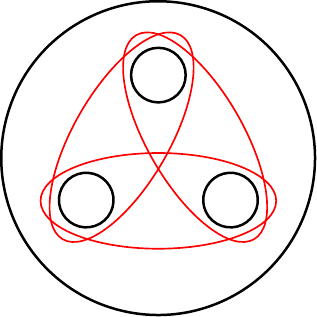}
\caption{The lantern relation.}
\label{figure:lantern}
\end{figure}

\begin{figure} 
\labellist
\small
\pinlabel $\Delta_0$ [tl] at 17 13.6
\pinlabel $\Delta_1$ [l] at 312 96
\pinlabel $\Delta_1'$ [l] at 312 24
\pinlabel $\Delta_2$ [bl] at 275 100
\pinlabel $a$ [l] at 60 96
\pinlabel $a'$ [l] at 60 24
\pinlabel $c_1$ [tr] at 40.8 48
\pinlabel $c_2$ [bl] at 85 70
\pinlabel $c_3$ [tr] at 140 42
\pinlabel $c_4$ [bl] at 160 70
\pinlabel $c_{2g-1}$ [tr] at 245 42
\pinlabel $c_{2g}$ [br] at 270 70
\endlabellist
\includegraphics{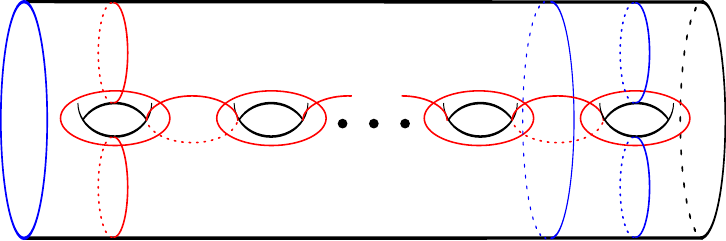}
\caption{The configuration of curves used in the $D$ relation.}
\label{figure:dnrel}
\end{figure}

\begin{lemma}[The $D$ relation]\label{lemma:dn} 
Let $n \ge 3$ be given, and express $n = 2g+1$ or $n = 2g+2$ according to whether $n$ is odd or even. With reference to Figure \ref{figure:dnrel}, let $H_n$ be the group generated by elements of the form $T_x$, with $x \in \mathscr D_n$ one of the curves below:
\begin{align*}
\mathscr D_{n} &= \{a, a', c_1, \dots, c_{n-2}\}.
\end{align*}
Then for $n = 2g+1$ odd,
\[
T_{\Delta_0}^{2g-1} T_{\Delta_2} \in H_n,
\]
and for $n = 2g+2$ even,
\[
T_{\Delta_0}^{g} T_{\Delta_1} T_{\Delta_1'}\in H_n.
\]
\end{lemma}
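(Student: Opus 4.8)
The plan is to derive both cases of Lemma \ref{lemma:dn} from the lantern relation together with the chain relation, working with the explicit curve configuration of Figure \ref{figure:dnrel}. The key observation is that the curves $a$, $a'$, and the chain $(c_1,\dots,c_{n-2})$ are arranged so that the subsurface they fill is a hyperelliptic subsurface whose boundary consists of $\Delta_0$ together with the other boundary curves $\Delta_1, \Delta_1'$ (in the even case) or $\Delta_2$ (in the odd case). First I would use the chain relation (Lemma \ref{lemma:chain}) applied to the chain $(a', c_1, c_2, \dots, c_{n-2})$ — or a suitable reordering realizing these curves as a genuine chain — to express an appropriate power of the boundary twist(s) of the filled subsurface as a product of the twists $T_a$, $T_{a'}$, $T_{c_i}$, all of which lie in $H_n$. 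This already shows that the relevant product of boundary twists lies in $H_n$; the remaining work is to identify that product with the stated expressions $T_{\Delta_0}^{2g-1} T_{\Delta_2}$ and $T_{\Delta_0}^{g} T_{\Delta_1} T_{\Delta_1'}$ respectively.

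Next I would handle the bookkeeping of which boundary component appears with which multiplicity. The curve $a$ is positioned so that, together with the chain, the filled subsurface has the boundary component $\Delta_0$ appearing "twice" in an appropriate sense (it wraps around both $a$ and $a'$), which is the source of the exponents $2g-1$ and $g$; I would make this precise by cutting along $a$ and $a'$ and tracking the boundary curves, or alternatively by a direct Euler characteristic and genus count on the filled subsurface. Concretely, in the odd case $n = 2g+1$, the chain $(a', c_1, \dots, c_{n-2})$ has odd length $n-1 = 2g$, wait — here I would instead take the chain $(a, a', c_1, \dots, c_{n-2})$ of length $n = 2g+1$ (odd), whose two boundary curves are $\Delta_0$ and $\Delta_2$; the chain relation gives $T_{\Delta_0} T_{\Delta_2} = (T_a T_{a'} T_{c_1} \cdots T_{c_{n-2}})^{n+1} \in H_n$. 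To upgrade this to $T_{\Delta_0}^{2g-1} T_{\Delta_2}$ I would additionally use that $T_{\Delta_0}$ itself (or its appropriate power) lies in $H_n$, obtained from a shorter subchain, and multiply; the precise exponent $2g-1$ is forced by the configuration and I would verify it by homological coherence / the Euler characteristic of the relevant subsurfaces. The even case $n = 2g+2$ is analogous, using a chain of even length $n$ whose single boundary curve, after accounting for the doubled role of $\Delta_0$, corresponds to the product $\Delta_0^{?}\Delta_1\Delta_1'$, with the exponent pinned down by the same count.

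The main obstacle I anticipate is precisely this last point: correctly identifying the boundary of the subsurface filled by $\{a, a', c_1, \dots, c_{n-2}\}$ and the multiplicity with which $\Delta_0$ occurs, since Figure \ref{figure:dnrel} shows $a$ and $a'$ as two "parallel-looking" curves that both border the region near $\Delta_0$. I would resolve this by a careful local analysis near $\Delta_0$ — either by an explicit cut-and-paste argument showing the filled subsurface deformation retracts appropriately, or by invoking the Birman–Hilden description of Section \ref{subsection:chainslide}, under which $a$, $a'$, and the $c_i$ become half-twists in a disk with marked points, and the boundary multiplicities become visible combinatorially. Once the boundary combinatorics is settled, the rest is a direct assembly of the chain relation applied to the full chain and to the sub-chain carrying $T_{\Delta_0}$, with the lantern relation available as a backup tool if the "doubled boundary" needs to be disentangled into $\Delta_0$ and the remaining components. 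Since this lemma is essentially quoted from \cite[Section 2.3]{Salter_monodromy}, I would keep the argument brief and refer there for the detailed verification of the exponents.
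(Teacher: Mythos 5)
The paper does not prove this lemma at all: the sentence preceding it reads ``a discussion of the $D$ relation can be found in, e.g., \cite[Section 2.3]{Salter_monodromy},'' and the statement is quoted there as background. Your final sentence, deferring to that reference, therefore matches the paper exactly. However, the sketch you offer before falling back to the citation contains a genuine error, and it is worth flagging.

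The central flaw is the claim that $(a, a', c_1, \dots, c_{n-2})$ forms a chain. It does not. Looking at Figure \ref{figure:dnrel}, the curves $a$ and $a'$ are disjoint from one another, and each meets $c_1$ exactly once; the configuration $\mathscr D_n$ has the intersection pattern of a $D_n$ Dynkin diagram, not an $A_n$ diagram. Lemma \ref{lemma:chain} requires consecutive curves to intersect once and non-consecutive curves to be disjoint, so it simply cannot be invoked on the ordered list $(a, a', c_1, \dots, c_{n-2})$: the pair $(a,a')$ is consecutive but disjoint. A second symptom of the same problem is the shape of the conclusion. In the odd case the chain relation always produces $T_{d_1} T_{d_2}$ with the two boundary twists appearing to the \emph{same} power, whereas the $D$ relation produces $T_{\Delta_0}^{2g-1} T_{\Delta_2}$ with wildly unequal exponents; no amount of relabeling boundary components in a single chain relation yields that asymmetry. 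The suggestion to ``upgrade'' the exponent by separately exhibiting $T_{\Delta_0}$ or a power of it in $H_n$ via a subchain is not straightforward either, since $\Delta_0$ is itself a boundary component and one has to show the needed power already lies in $H_n$, which is essentially the content of the lemma. The $D$ relation can in fact be derived from the chain and lantern relations, but that derivation involves cutting the filled subsurface in a specific way and combining both relations nontrivially (or equivalently working through the Birman--Hilden correspondence with care about the branch locus); it does not reduce to a single application of the chain relation. Given the paper's own treatment, the appropriate thing to do here is exactly what you do in your last sentence: state the relation and cite \cite[Section 2.3]{Salter_monodromy}.
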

	
\subsection{Generating the spin subsurface push subgroup}\label{subsection:containspin} 
In this section we complete the proof of Proposition \ref{lemma:gammaadmiss}. We begin by establishing the containment of certain Dehn twists in the group $\Gamma$ generated by the collections of Dehn twists indicated in cases \ref{case:hardeven} and \ref{case:hardodd} of Theorem \ref{theorem:genset}, which will in turn allow us to apply the machinery developed in the previous sections.

\para{Case \ref{case:hardeven}}
With reference to Figure \ref{figure:hardeven2}, we observe that the $3$--chain $(a_4,a_3,b)$ satisfies the hypotheses of the $3$--chain $(a,a',b)$ of Lemma \ref{lemma:pushmakesT} and that $\{T_{a_i} \mid 0 \le i \le 2g-1,\ i \ne 3\}$ forms a suitable arboreal filling network on the capped surface $\overline{S'}$. Therefore, in order to apply Lemmas \ref{lemma:pushmakesT} and \ref{lemma:networkgens}, we need only to show that $T_b^{2g-2} \in \Gamma$.

\begin{figure}[ht]
\labellist
\small
\pinlabel $b$ at 32 16
\pinlabel $a_0$ [r] at 113 16
\pinlabel $a_1$ [b] at 103 78
\pinlabel $a_2$ [bl] at 100 50
\pinlabel $a_4$ at 70 48
\pinlabel $a_3$ [br] at 25.6 46.4
\pinlabel $a_5$ [bl] at 125 18
\pinlabel $a_6$ at 154 48
\pinlabel $a_{2g-1}$ [b] at 315.2 48.8
\pinlabel $c$ at 75 16
\pinlabel $c''$ at 135 70
\pinlabel $c'$ at 78 95
\endlabellist
\includegraphics{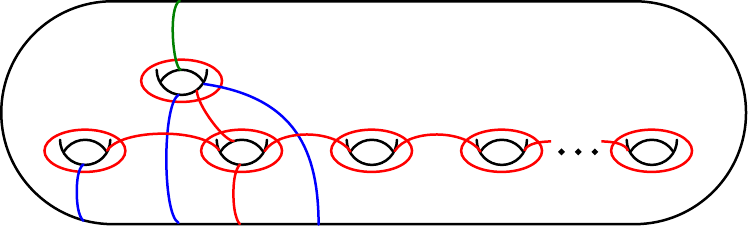}
\caption{The configuration of curves defining $\Gamma$ in case \ref{case:hardeven}, along with the auxiliary curves $b, c, c', c''$. For clarity, the portions of the curves on the back side have been omitted; in all cases, they continue on the back as the mirror image (relative to the plane of the page).}
\label{figure:hardeven2}
\end{figure}

\begin{lemma} \label{lemma:hardeven}
Let $b$ be the curve indicated in Figure \ref{figure:hardeven2}. Then $T_b^{2g-2} \in \Gamma$ in case \ref{case:hardeven} of Theorem \ref{theorem:genset}.
\end{lemma}
\begin{proof}
We will first show that $T_c^{2g-2} \in \Gamma$ for the curve $c$ shown in Figure \ref{figure:hardeven2}, and then we will conclude the argument by showing that $b$ and $c$ are in the same orbit of the $\Gamma$--action on simple closed curves. Consider the $\mathscr D_{2g-3}$--configuration determined by the curves $a_0, a_2, a_5, a_6, \dots, a_{2g-1}$ with boundary components $c, c'$. Applying the $D$ relation (Lemma \ref{lemma:dn}),
\[
T_c^{2g-5} T_{c'} \in \Gamma.
\]
Next, consider the $\mathscr D_{5}$--configuration determined by $a_0,a_2, a_3, a_4, a_5$ with boundary components $c', c''$. Applying the $D$ relation to this, we find
\[
T_{c'} T_{c''}^3 \in \Gamma.
\]
Finally, the chain relation as applied to $(a_0,a_5, a_2)$ shows that 
\[
T_c T_{c''} \in \Gamma;
\]
combining these three results shows $T_c^{2g-2} \in \Gamma$. 

The curves $b,c$ are boundary components of the $3$--chains $(a_4,a_5,a_0)$ and $(a_2, a_5,a_0)$, respectively. Since we can slide the $3$--chains to each other via
\[(a_4,a_5,a_0) \sim (a_3, a_4, a_5) \sim (a_5, a_6, a_7) \sim (a_1, a_2,a_5) \sim (a_2, a_5,a_0)\]
the sliding principle (Lemma \ref{lemma:sliding}) shows that $b$ can be taken to $c$ by an element of $\Gamma$. 
\end{proof}

\para{Case \ref{case:hardodd}}
In Case \ref{case:hardodd}, the network we use does not consist entirely of the curves $a_0, \dots, a_{2g-1}$, and so our first item of business is to see that $\Gamma$ contains the admissible twists $T_{a_{2g}}, T_{a_{2g+1}}$ shown in Figure \ref{figure:hardodd3}. In Lemma \ref{lemma:hardodd}, we will also need to use the twist $T_{a_{2g+2}}$ for the curve $a_{2g+2}$ shown in Figure \ref{figure:a}, and in fact we will obtain $T_{a_{2g}}, T_{a_{2g+1}} \in \Gamma$ from the containment $T_{a_{2g+2}} \in \Gamma$ established in Lemma \ref{lemma:acurve}.

\begin{figure}[ht]
\labellist
\small
\pinlabel $a_{2g+1}$ [r] at 167.2 83.2
\pinlabel $a_1$ [b] at 52 66.4
\pinlabel $a_2$ [b] at 74 60.8
\pinlabel $a_{2g-1}$ [b] at 321 65.6
\pinlabel $b$ [r] at 210 83.2
\pinlabel $a_{2g}$ at 210 13
\pinlabel $a_8$ [b] at 220 60.8
\pinlabel $a_9$ [b] at 240 65.6
\endlabellist
\includegraphics{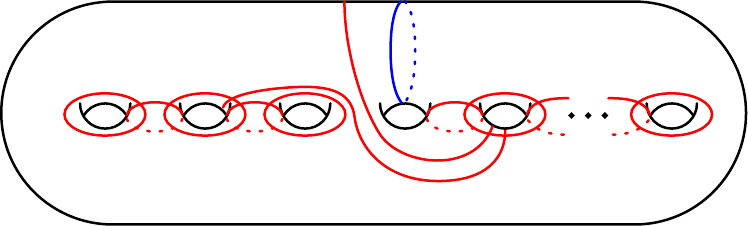}
\caption{The filling arboreal network $\mathcal N$ used in Case \ref{case:hardodd}. Curves $a_{2g}, a_{2g+1}$ continue on the back of the surface as the mirror image.}
\label{figure:hardodd3}
\end{figure}

\begin{figure}
\includegraphics{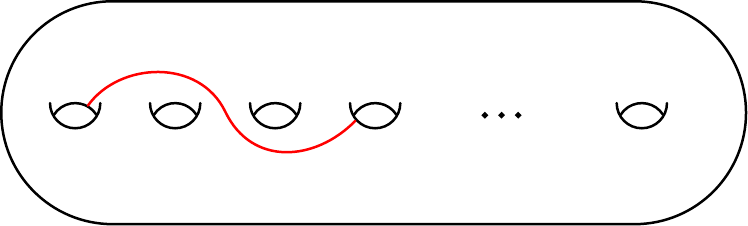}
\caption{The curve $a_{2g+2}$ used in the proof of Lemma \ref{lemma:hardodd}. It continues on the back side as the mirror image.}
\label{figure:a}
\end{figure}

\begin{lemma} \label{lemma:acurve}
In case \ref{case:hardodd} of Theorem \ref{theorem:genset}, we have $T_{a_{2g+2}} \in \Gamma$ for the curve $a_{2g+2}$ shown in Figure \ref{figure:a}.
\end{lemma}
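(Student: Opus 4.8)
The goal is to show $T_{a_{2g+2}} \in \Gamma$, where $\Gamma = \langle T_{a_0},\dots,T_{a_{2g-1}}\rangle$. The plan is to exhibit $a_{2g+2}$ either as a boundary curve cut out by a short subchain of $(a_2,\dots,a_6)$ — so that $T_{a_{2g+2}}$ is literally a power of a word in the $T_{a_i}$ via the chain relation — or, if it is a genuine curve on the full $5$--chain, as the image $\gamma(a_j)$ of a generator under some $\gamma \in \Gamma$, whence $T_{a_{2g+2}} = \gamma T_{a_j}\gamma^{-1} \in \Gamma$. In either case the element $\gamma$, or the relevant relation, is supplied by the sliding principle (Lemma \ref{lemma:sliding}) and the Birman--Hilden set-up of Section \ref{subsection:chainslide}, combined if necessary with the chain or lantern relations of Section \ref{section:relations}.

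The first task is to pin down the isotopy class of $a_{2g+2}$ relative to the generators. By Example \ref{example:chainslide}, $a_{2g+2}$ is carried by the $5$--chain $(a_2,a_3,a_4,a_5,a_6)$, which fills a subsurface $W \cong \Sigma_{2,2}$ with hyperelliptic involution $\iota$; by Birman--Hilden, $\langle T_{a_2},\dots,T_{a_6}\rangle = C_W(\iota)$ is a braid group acting on the quotient of $W$, a disk with six marked points, with the $T_{a_j}$ corresponding to the standard half-twist generators (cf. Section \ref{subsection:chainslide}). Reading off Figure \ref{figure:a}, I would identify the image of $a_{2g+2}$ in this model. If it is the lift of a simple arc joining two marked points, then its twist descends to a half-twist, and since all half-twists in a braid group are conjugate, $T_{a_{2g+2}}$ is conjugate by an explicit element of $\langle T_{a_2},\dots,T_{a_6}\rangle \le \Gamma$ to one of $T_{a_2},\dots,T_{a_6}$; the conjugating word is computed from the intersection pattern of the arc with the arcs underlying $a_2,\dots,a_6$, exactly as a chain-slide. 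If instead $a_{2g+2}$ separates off a subsurface cut out by a subchain, the chain relation (Lemma \ref{lemma:chain}) writes $T_{a_{2g+2}}$ directly, up to at worst an auxiliary boundary twist $T_d$ which is then cleared by a further slide, as in the proof of Lemma \ref{lemma:hardeven}.

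The one real difficulty is bookkeeping: correctly extracting the isotopy type of $a_{2g+2}$ from Figure \ref{figure:a} and then tracking the conjugating element (equivalently, the sequence of chains witnessing the slide), and, should an auxiliary twist $T_d$ appear, verifying that $d$ is already in the $\Gamma$--orbit of a generator so that $T_{a_{2g+2}}$ can be isolated. No ideas beyond Section \ref{section:sliding} and the standard relations are needed. Once $T_{a_{2g+2}} \in \Gamma$ is established, the twists $T_{a_{2g}}$ and $T_{a_{2g+1}}$ follow immediately: by Example \ref{example:chainslide}, sliding $(a_4,\dots,a_8)$ to $(a_2,\dots,a_6)$ carries $a_{2g}$ to $a_{2g+2}$, giving $T_{a_{2g}} \in \Gamma$, and a similar slide handles $a_{2g+1}$.
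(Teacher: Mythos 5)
Your approach is conceptually the same as the paper's — conjugate $T_{a_{2g+2}}$ to a known generator by an explicit element of $\Gamma$, using the sliding principle — but your proposal stops exactly where the proof begins. The entire content of this lemma is the verification you defer as ``bookkeeping'': one must actually pin down the isotopy class of $a_{2g+2}$ relative to the $a_i$ and produce the conjugating word. The paper does this by exhibiting the explicit $16$--fold product
\[
(T_{a_5}T_{a_4}T_{a_3}T_{a_2})(T_{a_6}T_{a_5}T_{a_4}T_{a_3})(T_{a_7}T_{a_6}T_{a_5}T_{a_4})(T_{a_0}T_{a_5}T_{a_6}T_{a_7}),
\]
which carries $a_{2g+2}$ to $a_0$ (verified in Figure \ref{figure:acomp}), whence $T_{a_{2g+2}}$ is conjugate in $\Gamma$ to $T_{a_0}$. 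Your sketch hedges between two possible situations (lift of a simple arc versus boundary of a subchain) without determining which holds, and that determination is precisely what a proof must supply.

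There is also a smaller discrepancy in scope. You propose to work entirely inside $\langle T_{a_2},\dots,T_{a_6}\rangle \cong B_6$ and conjugate $a_{2g+2}$ to one of $a_2,\dots,a_6$. The paper's conjugating word reaches outside the $5$--chain, using $T_{a_0}$ and $T_{a_7}$, and lands on $a_0$. Your restricted Birman--Hilden plan \emph{could} in principle succeed if $a_{2g+2}$ is indeed the lift of a simple arc on the quotient disk (in which case all half-twists in $B_6$ are conjugate within $\langle T_{a_2},\dots,T_{a_6}\rangle$), but this is an assumption you haven't checked, and it is not what the paper does. In short: right tools, right target, but the decisive computation is missing.
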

\begin{proof}
This is closely related to the sliding principle. One verifies (see Figure \ref{figure:acomp}) that
\[
(T_{a_5}T_{a_4}T_{a_3}T_{a_2})(T_{a_6}T_{a_5}T_{a_4}T_{a_3})(T_{a_7}T_{a_6}T_{a_5}T_{a_4})(T_{a_0}T_{a_5}T_{a_6}T_{a_7})(a_{2g+2}) = a_0.
\]
This product of twists is an element of $\Gamma$, showing that $T_{a_{2g+2}}$ is conjugate to $T_{a_0}$ by an element of $\Gamma$, and hence $T_{a_{2g+2}} \in \Gamma$ itself. 
\end{proof}

\begin{figure}[ht]
\labellist
\small
\pinlabel $T_{a_0}T_{a_5}T_{a_6}T_{a_7}$ at 220 225
\pinlabel $T_{a_7}T_{a_6}T_{a_5}T_{a_4}$ at 220 190
\pinlabel $T_{a_6}T_{a_5}T_{a_4}T_{a_3}$ at 165 145
\pinlabel $T_{a_5}T_{a_4}T_{a_3}T_{a_2}$ at 220 75
\pinlabel $a_{2g+2}$ at 150 280
\pinlabel $a_{0}$ at 165 40
\endlabellist
\includegraphics{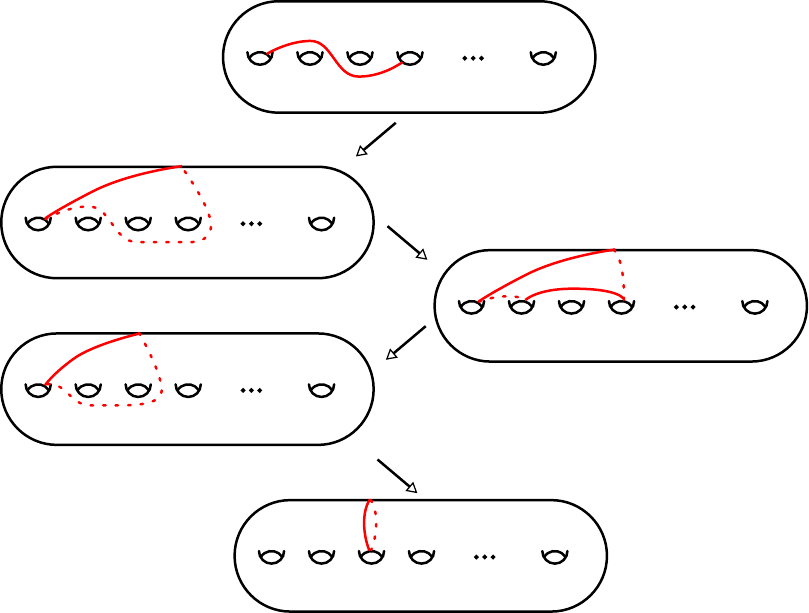}
\caption{The sequence of twists used to take $a_{2g+2}$ to $a_0$ in Lemma \ref{lemma:acurve}. All curves $a_i$ are labeled as in Figures \ref{figure:a} and \ref{figure:hardodd2}.}
\label{figure:acomp}
\end{figure}

\begin{lemma}\label{lemma:hardodd1}
The admissible twists $T_{a_{2g}}$ and $T_{a_{2g+1}}$ shown in Figure \ref{figure:hardodd3} are both contained in $\Gamma$. 
\end{lemma}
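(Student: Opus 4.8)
The plan is to show that each of $a_{2g}$ and $a_{2g+1}$ lies in the $\Gamma$--orbit of a simple closed curve whose Dehn twist is already known to lie in $\Gamma$. Since $\Gamma$ is generated by Dehn twists and $T_{\gamma(x)} = \gamma T_x \gamma^{-1}$, this immediately yields $T_{a_{2g}}, T_{a_{2g+1}} \in \Gamma$. (Admissibility is then automatic: the generators $T_{a_i}$ of $\Gamma$ preserve $\phi$ by twist--linearity, so $\Gamma \le \Mod_g[\phi]$, and every $\Gamma$--translate of the admissible curve $a_0$ is again admissible.)

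For $a_{2g}$, I would invoke the sliding principle (Lemma \ref{lemma:sliding}) exactly as described in Example \ref{example:chainslide}. The curve $a_{2g}$ is supported on the $5$--chain $(a_4, a_5, a_6, a_7, a_8)$ and the curve $a_{2g+2}$ of Lemma \ref{lemma:acurve} is supported on the $5$--chain $(a_2, a_3, a_4, a_5, a_6)$; both are subchains of the $7$--chain $(a_2, a_3, a_4, a_5, a_6, a_7, a_8)$, whose curves all lie among the generators of $\Gamma$ (this uses $g \ge 5$, so that $a_8$ is available). Sliding the subchain $(a_4, \dots, a_8)$ to the subchain $(a_2, \dots, a_6)$ inside this $7$--chain produces $\gamma \in \Gamma$ whose induced identification of the filled subsurfaces carries $a_{2g}$ to $a_{2g+2}$ — this is precisely the content of Example \ref{example:chainslide}, read off the signed intersection pattern of each curve with its supporting $5$--chain in Figures \ref{figure:hardodd3} and \ref{figure:a}. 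Since $T_{a_{2g+2}} \in \Gamma$ by Lemma \ref{lemma:acurve}, conjugating gives $T_{a_{2g}} \in \Gamma$.

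For $a_{2g+1}$ I would run the same strategy: identify a chain among the curves $a_i$ on which $a_{2g+1}$ is supported, apply Lemma \ref{lemma:sliding} to slide it to a position matching one of $a_0$, $a_{2g}$, or $a_{2g+2}$ (all of whose twists are now known to be in $\Gamma$), and confirm via the figure that the sliding identification carries the chain--supported curve to $a_{2g+1}$. If the intersection graph does not admit a direct slide, the fallback — mirroring the proof of Lemma \ref{lemma:acurve} — is to exhibit an explicit product of twists $T_{a_i} \in \Gamma$ carrying $a_{2g+1}$ to $a_0$ and to verify the equality curve by curve, in the style of Figure \ref{figure:acomp}. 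The main obstacle is exactly this last bookkeeping step for $a_{2g+1}$: confirming that the chosen sliding move (or explicit word in the twists) really sends the relevant chain--supported curve to $a_{2g+1}$. This is a finite, picture--driven verification of intersection data, but it is the only place where genuine care is required, since everything else is a formal consequence of the sliding principle together with Lemma \ref{lemma:acurve}.
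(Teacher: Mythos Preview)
Your argument for $a_{2g}$ is correct and exactly matches the paper: slide the $5$--chain $(a_4,\dots,a_8)$ to $(a_2,\dots,a_6)$ as in Example \ref{example:chainslide}, carrying $a_{2g}$ to $a_{2g+2}$, and invoke Lemma \ref{lemma:acurve}.

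For $a_{2g+1}$ your outline is right but the proof is not finished --- and you say so yourself. The paper fills the gap with a concrete two-step slide, which you may not have spotted because it uses the branch vertex $a_0$ of the intersection graph rather than staying on the main chain. Namely, $a_{2g+1}$ is supported on the $5$--chain $(a_0,a_5,a_6,a_7,a_8)$, and one slides
\[
(a_4,\dots,a_8)\ \sim\ (a_5,\dots,a_9)\ \sim\ (a_0,a_5,a_6,a_7,a_8),
\]
the first slide inside $(a_4,\dots,a_9)$ and the second inside $(a_0,a_5,\dots,a_9)$ (using that $a_0$ meets $a_5$). This carries $a_{2g}$ to $a_{2g+1}$, so $T_{a_{2g+1}}\in\Gamma$ follows from the case of $a_{2g}$. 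The only subtlety is recognizing that the supporting chain for $a_{2g+1}$ starts at $a_0$; once you see that, your proposed strategy goes through without needing the fallback of an explicit twist word.
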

\begin{proof}
The curve $a_{2g}$ is obtained from $a_{2g+2}$ by sliding (see Example \ref{example:chainslide}). To find a sequence of twists about elements $T_{a_i} (0 \le i \le 2g-1)$ taking $a_{2g}$ to $a_{2g+1}$, we observe that the sequence of slides
\[(a_4, \ldots, a_8) \sim (a_5, \ldots, a_9) \sim (a_0, a_5, \ldots, a_8)\]
takes the curve $a_{2g}$ to $a_{2g+1}$.
\end{proof}

\begin{lemma}\label{lemma:hardodd}	
Let $b$ be the curve indicated in Figure \ref{figure:hardodd2}. Then $T_b^{2g-2} \in \Gamma$ in case \ref{case:hardodd} of Theorem \ref{theorem:genset}.
\end{lemma}
\begin{proof}
\begin{figure}
\labellist
\small
\pinlabel $a_0$ [r] at 167.2 83.2
\pinlabel $a_1$ [b] at 52 66.4
\pinlabel $a_2$ [b] at 81.6 60.8
\pinlabel $a_4$ [b] at 145 60.8
\pinlabel $a_{2g-1}$ [b] at 308.8 65.6
\pinlabel $b'$ [r] at 108 83.2
\pinlabel $c$ at 255 83.2
\pinlabel $c'$ at 102 20
\pinlabel $b$ at 224 83.2
\endlabellist
\includegraphics{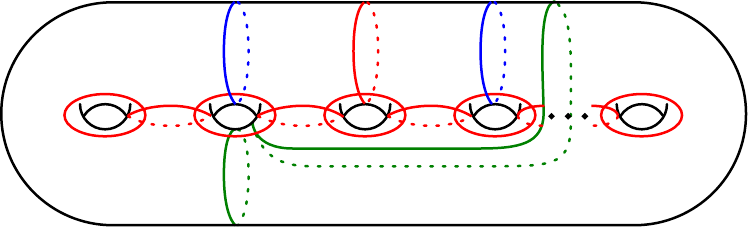}
\caption{The configuration of curves defining $\Gamma$ in case \ref{case:hardodd}, along with the auxiliary curves $b',c,c'$.}
\label{figure:hardodd2}
\end{figure}

As in Case \ref{case:hardeven}, we will first show $T_{b'}^{2g-2} \in \Gamma$ for a different curve $b'$, and subsequently show that $b, b'$ are in the same $\Gamma$--orbit. Consider first the $\mathscr D_{5}$--configuration determined by $a_0,a_4,a_5,a_6,a_7$ with boundary components $b',c$. By the $D$ relation (Lemma \ref{lemma:dn})
\[
T_{b'}^3 T_c \in \Gamma.
\]
Next consider the $\mathscr D_{2g-3}$--configuration determined by $a_0, a_4, a_5, \dots, a_{2g-1}$. By the $D$ relation,
\[
T_{b'}^{2g-5} T_{c'} \in \Gamma.
\]
As in Lemma \ref{lemma:hardeven}, it now suffices to show that $T_c T_{c'} \in \Gamma$. To see this, observe that the sequence $(a_0, a_5,a_6,a_7,a_{2g+2},a_1,a_2)$ forms a $7$--chain with boundary components $c,c'$. Thus
\[
T_c T_{c'} \in \Gamma
\]
by an application of the chain relation. Combining these elements yields $T_{b'}^{2g-2} \in \Gamma$; the $\Gamma$--equivalence of $b$ and $b'$  follows by the sliding principle (Lemma \ref{lemma:sliding}). 
\end{proof}

We are now in a position to complete the proof of Proposition \ref{lemma:gammaadmiss} by applying the machinery developed in Sections \ref{subsection:subspin} and \ref{subsection:networks}.

\begin{proof}[Proof of Proposition \ref{lemma:gammaadmiss}]
Recall that we are proving that the group $\Gamma$ generated by the collections of Dehn twists in cases \ref{case:hardeven} and \ref{case:hardodd} of Theorem \ref{theorem:genset} is equal to the admissible subgroup $\mathcal T_\phi$. One containment is vacuous, and so it remains to show that $\mathcal T_\phi \le \Gamma$.

We begin by showing that the generating sets of cases \ref{case:hardeven} and \ref{case:hardodd} can be completed to suitable arboreal networks; applying Lemma \ref{lemma:networkgens} then implies that $\Gamma$ contains a spin subsurface push subgroup $\widetilde \Pi(b)$. Lemma \ref{lemma:pushmakesT} then proves that $\mathcal T_\phi \le \Gamma$. As the networks are different in each case, we split our proof accordingly.

\para{Case \ref{case:hardeven}}
Consider the configuration of curves of $\Sigma_g$ as labeled in Figure \ref{figure:hardeven2}; by Lemma \ref{lemma:hardeven}, we have that $T_b^{2g-2} \in \Gamma$. Now observe that $a_0 \cup a_4 \cup b$ forms a pair of pants and $\{T_{a_i} \mid 0 \le i \le 2g-1,\ i \ne 3\}$ forms an arboreal filling network on $\overline{S'}$, the surface obtained by cutting along $b$ and capping off the left side with a disk. Therefore by Lemma \ref{lemma:networkgens} we know that $\Gamma$ contains $\widetilde \Pi(b)$. Since the $3$--chain $(a_4,a_3,b)$ satisfies the hypotheses of the $3$--chain $(a,a',b)$ of Lemma \ref{lemma:pushmakesT}, we can conclude that $\Gamma$ contains the admissible subgroup $\mathcal T_\phi$.

\para{Case \ref{case:hardodd}}
We now consider the configuration of curves as labeled in Figure \ref{figure:hardodd3}. By Lemmas \ref{lemma:hardodd1} and \ref{lemma:hardodd}, we know that $T_{a_{2g}}$, $T_{a_{2g+1}}$, and $T_b^{2g-2}$ are all contained in $\Gamma$. The network $\mathcal N$ defined in Figure \ref{figure:hardodd3} is evidently arboreal and fills $\overline{S'}$, and $a_8 \cup a_{2g+1} \cup b$ forms a pair of pants, so we may invoke Lemma \ref{lemma:networkgens} to conclude that $\widetilde \Pi(b) \leqslant \Gamma$. Applying Lemma \ref{lemma:pushmakesT} to the chain $(a_8,a_7,b)$ therefore implies that $\Gamma$ contains $\mathcal T_\phi$, concluding the proof of Proposition \ref{lemma:gammaadmiss}.
\end{proof}

\section{Spin structure stabilizers and the admissible subgroup}\label{section:admiss=stab}

The second step in the proof of Theorem \ref{theorem:genset} is to show that the admissible subgroup $\mathcal T_\phi$ coincides with the full spin structure stabilizer $\Mod_g[\phi]$. This is the counterpart to \cite[Propositions 5.1 and 6.2]{Salter_monodromy}. Those results only applied for $g$ sufficiently large\footnote{There is a typo in the statement of \cite[Proposition 5.1]{Salter_monodromy} -- the range should be $g \ge 5$, not $g \ge 3$ as claimed.} and imposed the requirement $r < g-1$. Moreover, in the case of $r$ even, \cite[Proposition 6.2]{Salter_monodromy} does not assert the equality between the admissible twist group $\mathcal T_\phi$ and the $r$-spin mapping class group $\Mod_g[\phi]$, only that $[\Mod_g[\phi] : \mathcal T_\phi]< \infty$. Proposition \ref{prop:admissfull} deals with all of these issues at once.
	\begin{proposition} \label{prop:admissfull}
	Let $\phi$ be an $r$--spin structure on a surface $\Sigma_g$ of genus $g \ge 3$. Then $\mathcal T_\phi = \Mod_g[\phi]$.
	\end{proposition}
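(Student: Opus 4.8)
The plan is to show that the inclusion $\mathcal T_\phi \le \Mod_g[\phi]$, which is obvious from twist-linearity, is in fact an equality. Following the strategy indicated in the outline, I would compare the two groups through their interaction with the Johnson filtration $\Mod(\Sigma_g) = \mathcal I_g(0) \ge \mathcal I_g(1) \ge \mathcal I_g(2) \ge \cdots$, where $\mathcal I_g(1)$ is the Torelli group and $\mathcal I_g(2)$ the Johnson kernel. The key point is that both $\mathcal T_\phi$ and $\Mod_g[\phi]$ surject onto the same subgroup of $\Sp(2g,\Z)$ (or, when $r$ is odd, onto all of $\Sp(2g,\Z)$), so it suffices to show that they contain the same elements of $\mathcal I_g(1)$; and then one peels this off one graded piece at a time. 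Concretely, I would establish three lemmas, exactly paralleling Lemmas \ref{step1}, \ref{step2}, \ref{step3} referenced in the excerpt: first, that $\mathcal T_\phi$ and $\Mod_g[\phi]$ have the same image in $\Sp(2g,\Z)$; second, that $\mathcal T_\phi \cap \mathcal I_g(1)$ and $\Mod_g[\phi] \cap \mathcal I_g(1)$ have the same image under the Johnson homomorphism $\tau: \mathcal I_g(1) \to \wedge^3 H / H$ (equivalently, that $\mathcal T_\phi$ contains a generating set for the relevant piece of $\wedge^3 H/H$ modulo $\mathcal I_g(2)$); and third, that $\mathcal T_\phi$ contains $\mathcal I_g(2)$, the Johnson kernel.

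For the first step, I would use that $\Mod_g[\phi]$ maps onto $\Sp(2g,\Z)[q]$ when $r$ is even (the stabilizer of the associated quadratic form, c.f. Definition \ref{definition:algstab}) and onto all of $\Sp(2g,\Z)$ when $r$ is odd, by Lemma \ref{lemma:stabindex} and the fact that $\Sp(2g,\Z)$ is generated by transvections; each transvection $T_c$ about an admissible nonseparating curve lies in $\mathcal T_\phi$, and one checks that admissible curves realize a generating set of transvections for the relevant symplectic group (here the hypothesis $g \ge 3$ and the abundance of admissible curves, guaranteed by homological coherence and Lemma \ref{lemma:basisvalues}, are what one needs). The third step — that $\mathcal I_g(2) \le \mathcal T_\phi$ — I expect to follow from the fact that the Johnson kernel is generated by twists about separating curves, together with the classical expression of a separating twist as a product of admissible nonseparating twists (bounding-pair maps and lantern relations among admissible curves), all of which live in $\mathcal T_\phi$ since $\phi$ assigns controlled values to the curves involved. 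The crucial input is that any separating curve can be built from a chain of admissible curves of the right winding numbers, which one arranges using homological coherence.

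The main obstacle is the second step — matching the Johnson homomorphism images. The group $\Mod_g[\phi] \cap \mathcal I_g(1)$ has a nontrivial but computable image in $\wedge^3 H/H$ (it is cut out by a condition coming from $\phi$, or is all of $\wedge^3 H/H$ when the constraint is vacuous modulo $\mathcal I_g(2)$), and I must produce enough admissible bounding-pair maps in $\mathcal T_\phi$ to realize every element of that image. This is where the argument is genuinely delicate: one needs an explicit supply of bounding pairs $(c,d)$ with $c,d$ both admissible and with prescribed homology classes, and then to verify via the standard formula $\tau(T_c T_d^{-1}) = [\,c\,] \wedge (\cdots)$ that these span the target. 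I would handle this by a direct coordinate computation using a geometric symplectic basis adapted to $\phi$ (as in Lemma \ref{lemma:basisvalues}), exploiting that one has complete freedom in the $\phi$-values on a basis; the improvement over \cite{Salter_monodromy} claimed in the excerpt — replacing an intricate genus bound by the uniform requirement $g \ge 3$ — should come from choosing this basis more cleverly and from a less coordinate-dependent spanning argument for the image of $\tau$. Once all three steps are in place, a routine diagram chase up the filtration forces $\mathcal T_\phi = \Mod_g[\phi]$.
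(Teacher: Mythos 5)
Your proposal follows the paper's proof strategy essentially verbatim: it is exactly the same three-step reduction via the Johnson filtration (symplectic image, Johnson homomorphism image, Johnson kernel containment), and the implementation ideas you sketch for each step (transvections on admissible curves for $\Sp$; lantern/chain relations among admissible curves for $\mathcal K_g$; admissible bounding-pair maps for $\tau$) are the ones the paper uses. One caution if you try to flesh this out: you place the Johnson-kernel containment last and the Johnson-homomorphism comparison second, but in the paper's execution the $\tau$-image computation (via Lemma \ref{lemma:twistpowers}) actually \emph{uses} the containment $\mathcal K_g \le \mathcal T_\phi$, so the kernel step has to come first; and the ``condition coming from $\phi$'' cutting out $\tau(\Mod_g[\phi] \cap \mathcal I_g)$ needs to be made precise --- the paper identifies it as the kernel of the contraction $C_s: \wedge^3 H/H \to H_1(\Sigma_g;\Z/s\Z)$ via Johnson's relation between $C_{g-1}\circ\tau$ and the mod-$(2g-2)$ Chillingworth invariant, which is the piece of the argument giving the concrete generating set to span.
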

When $r  = 2g-2$, this will complete the proof of Theorem \ref{theorem:genset}\ref{case:hardeven} and \ref{theorem:genset}\ref{case:hardodd}. The proof of Theorem \ref{theorem:genset}\ref{case:r} follows quickly, and is contained in Section \ref{section:generalr}.

	The proof of the Proposition is again accomplished in stages. In Section \ref{subsection:firststep} we outline the strategy and establish the first of three substeps. In Section \ref{section:ccp}, we discuss various versions of the ``change--of--coordinates principle'' in the presence of an $r$--spin structure. In the following Sections \ref{subsection:johnson} and \ref{subsection:torelli} we use these results to carry out the second and third substeps, respectively. 
	
\subsection{Outline: the Johnson filtration}\label{subsection:firststep} Once again the outline follows that given in \cite{Salter_monodromy} -- compare to Sections 5 and 6 therein. For any $r$--spin structure $\phi$, there is the evident containment 
\[
\mathcal T_\phi \le \Mod_g[\phi].
\]
To obtain the opposite containment, we appeal to the {\em Johnson filtration} of $\Mod(\Sigma_g)$. For our purposes, we need only consider the three--step filtration
\[
\mathcal K_g \le \mathcal I_g \le \Mod(\Sigma_g).
\]
The subgroup $\mathcal I_g$ is the {\em Torelli group}. It is defined as the kernel of the symplectic representation $\Psi: \Mod(\Sigma_g) \to \Sp(2g,\Z)$ which sends a mapping class $f$ to its induced action $f_*$ on $H_1(\Sigma_g; \Z)$. Set 
\[
H := H_1(\Sigma_g; \Z).
\] 
The group $\mathcal K_g$ is the {\em Johnson kernel}. It is defined as the kernel of the {\em Johnson homomorphism} (see Lemma \ref{lemma:johnsonhom})
\[
\tau: \mathcal I_g \to \wedge^3 H/ H.
\]
There is an alternate characterization of $\mathcal K_g$ due to Johnson. 
\begin{theorem}[Johnson \cite{Johnson_kernel}]\label{theorem:johnson2}
Let $\mathcal C$ denote the set of separating curves $c \subset \Sigma_g$ where $c$ bounds a subsurface of genus at most $2$. For $g \ge 3$, there is an equality
\[
\mathcal K_g = \pair{T_c \mid c \in \mathcal C}.
\]
\end{theorem}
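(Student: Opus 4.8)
The plan is to reconstruct Johnson's original argument, separating it into two logically independent pieces. The inclusion $\pair{T_c \mid c \in \mathcal C} \leqslant \mathcal{K}_g$ is automatic: a direct computation from the definition of the Johnson homomorphism shows that $\tau$ vanishes on every separating twist, regardless of the genus of the complementary pieces, so in fact $\pair{T_c \mid c\ \text{separating}}\leqslant \mathcal{K}_g$. The content lies in the reverse inclusion, and I would split it into (i) showing that every Dehn twist about an arbitrary separating curve already lies in the subgroup $B_g := \pair{T_c \mid c \in \mathcal C}$ generated by those bounding a subsurface of genus at most $2$, and (ii) showing that $\mathcal{K}_g$ is contained in the subgroup $B_g'$ generated by \emph{all} separating twists. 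Since $B_g \leqslant B_g' \leqslant \mathcal{K}_g$ tautologically, combining (i) (which gives $B_g' \leqslant B_g$) with (ii) (which gives $\mathcal{K}_g \leqslant B_g'$) yields $\mathcal{K}_g = B_g' = B_g$, as claimed. Note step (i) is vacuous for $g=3,4$, since there every essential separating curve bounds a subsurface of genus at most $2$ on one side.

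Step (i) is the more elementary half, carried out by induction on the genus $h\geqslant 3$ of the smaller side of a separating curve $c$, using the lantern relation (Lemma~\ref{lemma:lantern}) together with the chain relation (Lemma~\ref{lemma:chain}). The idea is to position a four-holed sphere $P\subset\Sigma_g$ with separating boundary curves, one of which is $c$, so that the lantern relation rewrites $T_c$ as a product of Dehn twists about separating curves each bounding a subsurface of strictly smaller genus; iterating drives the genus down to at most $2$. A single lantern move does not suffice in general, so some bookkeeping is needed to verify that a configuration realizing the required genus drop exists at each stage, but this is a routine change-of-coordinates argument with no conceptual difficulty.

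Step (ii) is the heart of the theorem. The starting input is Johnson's generation theorem: for $g\geqslant 3$, the Torelli group $\mathcal{I}_g$ is generated by genus-$1$ bounding-pair maps $T_aT_b^{-1}$ (with $a,b$ disjoint homologous nonseparating curves one side of whose union has genus $1$). I would then form $Q := \mathcal{I}_g/B_g'$ and argue in three moves. (1) $Q$ is abelian: one shows the commutator of any two bounding-pair maps is a product of separating twists, so $[\mathcal{I}_g,\mathcal{I}_g]\leqslant B_g'$; this is done by isotoping the two maps to have nearly disjoint support and identifying the resulting ``error'' with separating twists via the lantern and chain relations. (2) Since $Q$ is abelian, $\tau$ descends to a surjection $\bar\tau : Q \to \wedge^3 H/H$. (3) $\bar\tau$ is injective: for each basis vector $x\wedge y\wedge z$ of $\wedge^3 H/H$ one exhibits an explicit genus-$1$ bounding-pair map $f_{xyz}$ with $\tau(f_{xyz}) = x\wedge y\wedge z$, checks that the assignment $x\wedge y\wedge z\mapsto [f_{xyz}]$ respects the defining relations of $\wedge^3 H/H$ modulo $B_g'$ (so extends to a homomorphism $s:\wedge^3 H/H\to Q$), and verifies that $s$ is surjective — i.e. that every genus-$1$ bounding-pair map lies, modulo $B_g'$, in the subgroup generated by the $f_{xyz}$ — directly via the change-of-coordinates principle, reducing to finitely many model configurations. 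Then $\bar\tau\circ s = \mathrm{id}$ and $s$ is onto, so $\bar\tau$ is an isomorphism and $\ker\tau = B_g'$, i.e. $\mathcal{K}_g = B_g'$.

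The hard part is move (3) of step (ii): a priori $\mathcal{I}_g/B_g'$ could be strictly larger than $\wedge^3 H/H$, which would mean some element of the Johnson kernel fails to be a product of separating twists. Ruling this out amounts to understanding \emph{all} relations among bounding-pair maps modulo separating twists — precisely the technical core of Johnson's ``Structure of the Torelli group II'' — and in practice it rests on the two inputs that $Q$ is abelian and that $\mathcal{I}_g$ is generated by genus-$1$ bounding-pair maps, after which the remaining work is a bounded number of explicit computations with curve configurations.
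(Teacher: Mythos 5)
First, a point of comparison: the paper does not prove this statement at all --- it is imported verbatim as Johnson's theorem \cite{Johnson_kernel} and used as a black box in Section 5 --- so the only meaningful benchmark is Johnson's original argument. Your outline does reproduce its architecture faithfully: the trivial inclusion because $\tau$ kills every separating twist; the reduction of an arbitrary separating twist to ones of genus at most $2$ via lantern and chain relations; and, for the hard inclusion $\mathcal{K}_g \le B_g'$, the strategy of combining genus-$1$ bounding-pair generation of $\mathcal{I}_g$ with abelianness of $\mathcal{I}_g/B_g'$ and a surjective splitting $s:\wedge^3 H/H\to\mathcal{I}_g/B_g'$ of $\bar\tau$, which forces $\bar\tau$ to be injective and hence $\ker\tau=B_g'$. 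That is indeed the shape of Johnson's ``Structure of the Torelli group II.''

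As a proof, however, the proposal has genuine gaps: essentially all of the technical content is named rather than carried out. In step (i) you must actually exhibit, for a separating curve of genus $h\ge 3$, a four-holed sphere all seven of whose lantern curves are nullhomologous (hence separating) and of strictly smaller genus on the relevant sides; this is standard but the homological bookkeeping that makes the three cross curves separating is precisely what ``routine change of coordinates'' is hiding. More seriously, in step (ii) the moves (1) and (3) are the theorem. Your mechanism for (1) --- isotope two bounding-pair maps to ``nearly disjoint support'' and absorb the error into separating twists --- does not work as stated: bounding-pair maps whose supports can be made disjoint commute on the nose, and the ones that cannot are exactly those whose commutators carry content; showing those commutators lie in $B_g'$ requires the case analysis over intersection patterns that occupies most of Johnson's paper. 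Likewise, verifying that $x\wedge y\wedge z\mapsto [f_{xyz}]$ respects all relations of $\wedge^3 H/H$ modulo $B_g'$ (multilinearity, the relations from killing the copy of $H$, and independence of the choices of representative curves) and that these classes generate $\mathcal{I}_g/B_g'$ is not a bounded list of routine configurations one can defer --- it is the core of Johnson's argument. So what you have is a correct roadmap of the known proof, not a proof; if this theorem is to be used the honest options are to cite \cite{Johnson_kernel}, as the paper does, or to supply the full computations behind (ii)(1) and (ii)(3).
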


The containment of the spin structure stabilizer $\Mod_g[\phi]$ inside of the admissible subgroup $\mathcal T_\phi$ will follow from a sequence of three lemmas. In preparation for Lemma \ref{step1}, recall from Section \ref{subsection:arf} that an $r$--spin structure for $r$ even determines an associated quadratic form (Remark \ref{remark:SSandQF}), as well as the algebraic stabilizer subgroup $\Sp(2g,\Z)[q]$ of Definition \ref{definition:algstab}. 

Recall that $\Psi:\Mod_g \rightarrow \Sp(2g, \Z)$ denotes the classical symplectic representation.
\begin{lemma}[Step 1]\label{step1} Fix $g \ge 3$ and let $\phi$ be an $r$--spin structure on $\Sigma_g$. If $r$ is odd, there is an equality
\[
\Psi(\Mod_g[\phi]) = \Psi(\mathcal T_\phi) = \Sp(2g,\Z).
\]
If $r$ is even, let $q$ denote the quadratic form on $H_1(\Sigma_g, \Z/2\Z)$ associated to $\phi$. Then there is an equality 
\[
\Psi(\Mod_g[\phi]) = \Psi(\mathcal T_\phi) = \Sp(2g,\Z)[q].
\]
\end{lemma}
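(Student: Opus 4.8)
The plan is to prove the chain of equalities by establishing two inclusions in each case. Since $\mathcal T_\phi \le \Mod_g[\phi]$ holds trivially, we get $\Psi(\mathcal T_\phi) \le \Psi(\Mod_g[\phi])$ for free, so it suffices to show (a) $\Psi(\Mod_g[\phi])$ is contained in the claimed target group ($\Sp(2g,\Z)$ or $\Sp(2g,\Z)[q]$), and (b) the claimed target group is contained in $\Psi(\mathcal T_\phi)$. Inclusion (a) is essentially formal: when $r$ is odd there is nothing to prove since $\Sp(2g,\Z)$ is the whole group; when $r$ is even, any $f \in \Mod_g[\phi]$ fixes $\phi$, hence fixes its mod-$2$ reduction, and by Lemma \ref{lemma:2spinhom} and Remark \ref{remark:SSandQF} this mod-$2$ reduction is exactly the datum of the quadratic form $q$, so $\Psi(f) \in \Sp(2g,\Z)[q]$ by the definition of the algebraic stabilizer (Definition \ref{definition:algstab}). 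The real content is inclusion (b), which I expect to be the main obstacle.

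For inclusion (b), the strategy is to exhibit enough admissible Dehn twists whose symplectic images generate the target group. The key input here is the classical fact that $\Sp(2g,\Z)$ is generated by transvections about a symplectic basis together with a few extra transvections (the Humphries-type generating set), and that $\Sp(2g,\Z)[q]$ is generated by the transvections $T_v$ for $v$ ranging over homology classes with $q(v) = 0$ (equivalently $\phi$-value $1$ under $\phi \mapsto \phi + 1$; here one must be careful with the normalization conventions). Concretely, I would invoke the change-of-coordinates machinery developed later in Section \ref{section:ccp}: given the target generating set of transvections, realize each generator as $\Psi(T_c)$ for a nonseparating simple closed curve $c$ with $\phi(c) = 0$, i.e., a $\phi$-admissible curve, so that $T_c \in \mathcal T_\phi$ by Definition \ref{definition:admissible}. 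The point is that a transvection $T_v$ lies in $\Sp(2g,\Z)[q]$ precisely when $q(v) = 0$, and $q(v) = 0$ translates (via Remark \ref{remark:SSandQF}) into the condition $\phi(c) \equiv 1 \pmod 2$ being the "wrong" condition — so one instead wants the curves with $\phi(c) = 0$, and one checks these exactly realize the required transvections. This is where the parity bookkeeping must be done carefully, and I would lean on Lemma \ref{lemma:basisvalues} to produce, for the chosen $\phi$, a geometric homology basis on whose elements $\phi$ takes prescribed values, then verify that twists about these basis curves plus a bounded number of auxiliary admissible curves suffice.

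The main obstacle is genuinely the even case: I need to verify that the subgroup of $\Sp(2g,\Z)$ generated by transvections about $q$-isotropic vectors is all of $\Sp(2g,\Z)[q]$ — this is a known result about orthogonal/symplectic groups over $\Z$ (and reduces mod $2$ to a statement about the orthogonal group $O^\pm(2g,\F_2)$ being generated by transvections), but pinning down the precise integral statement and confirming it holds for all $g \ge 3$ requires care. There is also a subtlety that a transvection $T_v$ in $\Sp(2g,\Z)$ is realized by a Dehn twist about a simple closed curve in class $v$, and one must ensure such a curve can be chosen nonseparating with the correct $\phi$-value; this is exactly the kind of statement the change-of-coordinates results of Section \ref{section:ccp} are designed to furnish, so modulo those I expect the argument to go through cleanly. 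For the odd case, inclusion (b) is easier: since there is no constraint, it suffices to find admissible curves realizing a generating set of transvections for all of $\Sp(2g,\Z)$, and the flexibility afforded by $r$ odd (Lemma \ref{lemma:stabindex} gives transitivity) makes this routine.
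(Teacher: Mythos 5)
Your overall skeleton (two inclusions, with inclusion (a) formal and inclusion (b) the content, realized by finding admissible curves whose twists hit a generating set of transvections) is the right approach and matches the strategy of the cited source: the paper itself does not prove Lemma~\ref{step1} but defers entirely to \cite[Lemmas 5.4 and 6.4]{Salter_monodromy}, and those lemmas argue precisely along these lines. The odd case you handle correctly: when $r$ is odd, Lemma~\ref{lemma:ccphomol} produces an admissible curve in any prescribed primitive homology class, so every transvection is hit, and $\Sp(2g,\Z)$ is generated by transvections.

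There is, however, a concrete arithmetic error in the even case that runs through the middle of your argument. You assert that a transvection $T_v$ lies in $\Sp(2g,\Z)[q]$ precisely when $q(v)=0$; this is backwards. From Definition~\ref{definition:QF}, one computes
\[
q\bigl(T_v(w)\bigr) = q\bigl(w + \langle w,v\rangle v\bigr) = q(w) + \langle w,v\rangle\bigl(q(v)+1\bigr),
\]
so $T_v$ preserves $q$ if and only if $q(v)=1$. With the normalization $q = \phi + 1 \pmod 2$ from Remark~\ref{remark:SSandQF}, the condition $q(v)=1$ is equivalent to $\phi \equiv 0 \pmod 2$ on $v$, which is exactly the parity of an admissible curve. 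You reach the correct conclusion --- that one wants curves with $\phi(c)=0$ --- but only by declaring your own deduced condition ``the wrong condition'' and flipping it; the bookkeeping as written does not actually chain together, and the statement you later phrase as ``transvections about $q$-isotropic vectors generate $\Sp(2g,\Z)[q]$'' is false under the usual meaning of isotropic ($q(v)=0$): those transvections do not preserve $q$ at all.

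The second gap is the one you correctly flag but leave open: that $\Sp(2g,\Z)[q]$ is generated by the transvections $T_v$ with $q(v)=1$. Once the sign is fixed, this is the real content; it is a nontrivial integral statement (the mod-$2$ reduction to generation of $O^{\pm}(2g,\F_2)$ by non-isotropic transvections is classical, but lifting back to $\Sp(2g,\Z)[q]$ requires an additional argument about the kernel of reduction mod $2$, which lies inside $\Sp(2g,\Z)[q]$ and is itself generated by transvection squares and commutators). Your sketch does not supply this, and the change-of-coordinates references (Lemmas~\ref{lemma:ccpcurves} and~\ref{lemma:ccphomol}) handle only the realization of a given transvection, not the generation question. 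So as written the proposal has the right shape but contains both a wrong intermediate claim (the $q(v)=0$ criterion) and an unfilled essential step.
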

\begin{lemma}[Step 2]\label{step2}
For $g \ge 3$, both $\Mod_g[\phi]$ and $\mathcal T_\phi$ contain the Johnson kernel $\mathcal K_g$.
\end{lemma}
\begin{lemma}[Step 3]\label{step3}
For $g \ge 3$ there is an equality
\[\tau(\Mod_g[\phi] \cap \mathcal I_g) = \tau(\mathcal T_\phi \cap \mathcal I_g)\]
of subgroups of $\wedge^3 H / H$.
\end{lemma}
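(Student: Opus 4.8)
The plan is to show the nontrivial inclusion $\tau(\Mod_g[\phi] \cap \mathcal{I}_g) \subseteq \tau(\mathcal{T}_\phi \cap \mathcal{I}_g)$; the reverse inclusion is immediate from $\mathcal{T}_\phi \le \Mod_g[\phi]$. My strategy is to understand the image $\tau(\Mod_g[\phi] \cap \mathcal{I}_g)$ inside $\wedge^3 H / H$ explicitly, and then to exhibit, for a spanning set of that image, Torelli elements lying in $\mathcal{T}_\phi$ with the prescribed Johnson image. First I would recall the standard generators of $\wedge^3 H/H$ realized by genus-$1$ and genus-$2$ bounding pair maps and by separating twists: Johnson showed $\tau(\mathcal{I}_g) = \wedge^3 H/H$ is spanned by the images of bounding pair maps $T_c T_d^{-1}$, where $c \cup d$ bounds a subsurface, and for such a map $\tau(T_c T_d^{-1}) = [c] \wedge (\text{basis of the complementary handle})$ type classes. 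The key point is to identify which of these elementary generators survive in the $\phi$-stabilizer: a bounding pair map $T_c T_d^{-1}$ preserves $\phi$ precisely when $\phi(c) = \phi(d)$ (using twist-linearity, Definition \ref{definition:spin}.\ref{item:TL}, together with the fact that $c,d$ are homologous hence $\langle c, \cdot\rangle = \langle d, \cdot \rangle$), and in particular this always holds when $\phi(c) = \phi(d) = 0$, i.e.\ when both $c$ and $d$ are $\phi$-admissible.

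The core of the argument is then a change-of-coordinates computation. I would use the $r$-spin change-of-coordinates principle developed in Section \ref{section:ccp} (which the excerpt promises) to show that $\Mod_g[\phi]$ acts transitively enough on configurations of admissible curves that every element of $\tau(\Mod_g[\phi]\cap \mathcal{I}_g)$ can be written as a sum of Johnson images of bounding pair maps $T_c T_d^{-1}$ with $c,d$ {\em admissible} and $c \cup d$ bounding a genus-$1$ (or genus-$2$) subsurface. Each such bounding pair map lies in $\mathcal{T}_\phi$ by definition, so its Johnson image lies in $\tau(\mathcal{T}_\phi \cap \mathcal{I}_g)$. Concretely: given a target class $x \wedge y \wedge z \in \wedge^3 H/H$ in the image of $\tau(\Mod_g[\phi]\cap\mathcal{I}_g)$, I would produce a genus-$1$ bounding pair $(c,d)$ in $\Sigma_g$ with $[c] = x$ (say), with the complementary genus-$1$ handle carrying $y,z$, and with $\phi(c) = \phi(d) = 0$ — the admissibility being arrangeable because we may freely adjust $\phi$-values by Lemma \ref{lemma:basisvalues} / homological coherence on the subsurfaces involved, and by pushing curves around using admissible twists already known to lie in $\mathcal{T}_\phi$. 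Step 1 (Lemma \ref{step1}) is the essential input that lets us move admissible configurations around: since $\Psi(\mathcal{T}_\phi) = \Psi(\Mod_g[\phi])$, any homological configuration achievable in the stabilizer is achievable by an element of $\mathcal{T}_\phi$ up to Torelli, which is exactly the slack Step 3 needs.

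I expect the main obstacle to be the case analysis forced by the parity of $r$ and by the Arf invariant when $r$ is even: when $r$ is even, not every homology class is represented by an admissible curve (only those $x$ with $q(x) = 1$, equivalently $\phi(x) \equiv 0 \bmod 2$), so one cannot naively realize an arbitrary $x \wedge y \wedge z$ using an admissible bounding pair with first curve in class $x$. The fix will be to use the freedom in the $\wedge^3 H / H$ presentation — rewriting a given class as a sum of "admissible" wedges by exploiting the relations $x \wedge y \wedge z$ modulo $H$ and the fact that the quadratic form $q$, being nondegenerate, has its zero set spanning $H_1(\Sigma_g;\Z/2\Z)$ — together with a symplectic-basis argument to split off handles on which $\phi$ restricts with controlled Arf invariant (Remark \ref{remark:arfadd}). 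This is precisely the sort of coordinate bookkeeping the excerpt claims has been streamlined relative to \cite{Salter_monodromy}; I would organize it by first handling $r$ odd (where admissibility is no constraint homologically and the argument is clean), then reducing $r$ even to a computation on a genus-$3$ subsurface where one checks by hand that the relevant bounding-pair Johnson images with admissible curves already span the required subgroup, and finally invoking Lemma \ref{lemma:homcoh} to propagate the local computation to all of $\Sigma_g$.
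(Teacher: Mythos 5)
The high-level plan — understand the target subgroup explicitly, produce a spanning set, and realize each spanning element by something in $\T_\phi$, using the $\Sp$--equivariance from Step~1 to move configurations around — does match the shape of the paper's argument. But there is a genuine gap at the very first step: you say you would ``understand the image $\tau(\Mod_g[\phi] \cap \mathcal{I}_g)$ inside $\wedge^3 H/H$ explicitly'' but never actually say what this image is or how you would identify it, and this is where the real content lies. The paper pins it down via the mod--$r$ Chillingworth invariant: Johnson's theorem $C_{g-1}\circ\tau = c_{2g-2}$, reduced mod $r$, shows that $\tau(\Mod_g[\phi] \cap \I_g)$ lands in $\ker(C_s)$, where $C_s: \wedge^3 H/H \to H_1(\Sigma_g;\Z/s\Z)$ is the contraction and $s = r$ or $r/2$ depending on parity (Lemmas \ref{lemma:contraction} and \ref{lemma:characterizetorelli}). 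Once you know the image sits in $\ker(C_s)$, you can write down a concrete finite list of generators (Lemma \ref{lemma:spgenset}) and try to realize each. Without that characterization, the claim that $\Mod_g[\phi]$ acts ``transitively enough'' to express an arbitrary element as a sum of admissible bounding-pair images has nothing to check it against; you'd be guessing at the span.

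There is a second, more concrete obstruction to the approach as you've described it. You propose to generate the image entirely by bounding pairs $T_cT_d^{-1}$ with $c,d$ both $\phi$--admissible. But the generators of type (G1) in the paper are of the form $s(z \wedge x_i \wedge y_i)$ --- note the scalar $s$. A bounding pair $T_cT_d^{-1}$ with $c,d$ admissible and bounding a genus-$h$ subsurface has Johnson image $(x_1\wedge y_1 + \cdots + x_h\wedge y_h)\wedge[c]$ with no such scalar, and a single admissible bounding pair cannot produce the $s$. The paper gets these elements from $(T_bT_{b'}^{-1})^s$ where $b,b'$ are the two boundary curves of a $3$--chain of admissible curves; by homological coherence $\phi(b) = -1$, so $b$ and $b'$ are \emph{not} admissible, and the fact that $(T_bT_{b'}^{-1})^s \in \T_\phi$ requires the separate Lemma \ref{lemma:twistpowers} ($T_b^r \in \T_\phi$ for $\phi(b)=-1$), which itself takes a nontrivial case-by-case argument using the $D$ relation. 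Your proposal of ``admissible bounding pairs plus coordinate bookkeeping'' as outlined cannot reach these generators. (Your parity worry about $q(z)=1$ for $r$ even is real and the paper handles it, but that's a side issue compared to the two gaps above.)
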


Lemma \ref{step1} was established as \cite[Lemmas 5.4 and 6.4]{Salter_monodromy}. Lemma \ref{step2} is established in Section \ref{subsection:johnson}. The proof of Lemma \ref{step3} relies on the previous two steps, and is established in Section \ref{subsection:torelli}.\\

\subsection{Change--of--coordinates}\label{section:ccp}
The classical change--of--coordinates principle (c.f. \cite[Section 1.3]{FarbMarg}) describes the orbits of various configurations of curves and subsurfaces under the action of the mapping class group. When the underlying surface is equipped with an $r$--spin structure $\phi$, we will need to understand $\Mod_g[\phi]$--orbits of configurations as well. The results below (Lemma \ref{lemma:ccpodd}--\ref{lemma:ccphomol}) all present various facets of the change--of--coordinates principle in the presence of a spin structure. We will not prove these statements; Lemmas \ref{lemma:ccpodd} and \ref{lemma:ccpeven} are taken from \cite[Section 4]{Salter_monodromy} verbatim, while Lemmas \ref{lemma:ccpcurves} and \ref{lemma:ccphomol} follow easily from the techniques therein. All of these results are ultimately a consequence of the classification of orbits of $r$-spin structures under $\Mod(\Sigma_g)$, originally obtained by Randal--Williams \cite[Theorem 2.9]{RW}. 

\begin{lemma}\label{lemma:ccpodd}
Let $r$ be an odd integer, and let $\Sigma_g$ be a surface of genus $g \ge 2$ equipped with an $r$--spin structure $\phi$. Let $S \subset \Sigma_g$ be a (not necessarily proper) subsurface of genus $h \ge 2$ with a single boundary component. Then the following assertions hold:
\begin{enumerate}
\item\label{item:oddgsb} For any $2h$--tuple $(i_1, j_1, \dots, i_h, j_h)$ of elements of $\Z / r \Z$, there is some geometric symplectic basis $\mathcal B = \{a_1, b_1, \dots, a_h, b_h\}$ for $S$ with $\phi(a_\ell) = i_\ell$ and $\phi(b_\ell) = j_\ell$ for all $1 \le \ell \le h$,
\item\label{item:oddchain} For any $2h$--tuple $(k_1, \dots, k_{2h})$ of elements of $\Z/r\Z$, there is some chain $(a_1, \dots, a_{2h})$ of curves on $S$ such that $\phi(a_\ell) = k_\ell$ for all $1 \le \ell \le 2h$.
\end{enumerate}
\end{lemma}

\begin{lemma}\label{lemma:ccpeven}
Let $r$ be an even integer, and let $\Sigma_g$ be a surface of genus $g \ge 2$ equipped with an $r$--spin structure $\phi$. Let $S \subset \Sigma_g$ be a (not necessarily proper) subsurface of genus $h \ge 2$ with a single boundary component. Then the following assertions hold:
\begin{enumerate}
\item\label{item:gsbeven} For a given $2h$--tuple $(i_1, j_1, \dots, i_h, j_h)$ of elements of $\Z / r \Z$, there is some geometric symplectic basis $\mathcal B = \{a_1, b_1, \dots, a_h, b_h\}$ for $S$ with $\phi(a_\ell) = i_\ell$ and $\phi(b_\ell) = j_\ell$ for $1 \le \ell \le h$ if and only if the parity of the spin structure defined by these conditions agrees with the parity of the restriction $\phi |_{S}$ to $S$. 
\item\label{item:gsbevenrestricted} For {\em any} $(2h-2)$--tuple $(i_1, j_1, \dots,i_{h-1}, j_{h-1})$ of elements of $\Z / r \Z$, there is some geometric symplectic basis $\mathcal B = \{a_1, b_1, \dots, a_h, b_h\}$ for $S$ with $\phi(a_\ell) = i_\ell$ and $\phi(b_\ell) = j_\ell$ for $1 \le \ell \le h-1$.
\item \label{item:chaineven} For a given $2h$--tuple $(k_1, \dots, k_{2h})$ of elements of $\Z/r\Z$, there is some chain $(a_1, \dots, a_{2h})$ of curves on $S$ such that $\phi(a_\ell) = k_\ell$ for all $1 \le \ell \le 2h$ if and only if the parity of the spin structure defined by these conditions agrees with the parity of the restriction $\phi |_S$ to $S$. 
\item \label{item:chainevenrestricted} For {\em any} $(2h-2)$--tuple $(k_1, \dots, k_{2h-2})$ of elements of $\Z/r\Z$, there is some chain $(a_1, \dots, a_{2h-2})$ of curves on $S$ such that $\phi(a_\ell) = k_\ell$ for all $1 \le \ell \le 2h-2$.
\end{enumerate}
\end{lemma}

One of the most important iterations of the change--of--coordinates principle is that the spin stabilizer subgroup acts transitively on the set of all curves with a given winding number.

\begin{lemma}\label{lemma:ccpcurves}
Let $\phi$ be a $r$--spin structure, and let $c,d \subset \Sigma_g$ be nonseparating curves. If $\phi(c) = \phi(d)$, then there is some $f \in \Mod_g[\phi]$ such that $f(c) = d$.
\end{lemma}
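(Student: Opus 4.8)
The plan is to bootstrap from the ordinary change--of--coordinates principle, upgrading it with its spin--decorated refinements (Lemmas~\ref{lemma:ccpodd} and~\ref{lemma:ccpeven}) and the rigidity statement Lemma~\ref{lemma:basisvalues}. After fixing orientations so that $\phi(c) = \phi(d) =: k$ (reversing the orientation of $d$ negates $\phi(d)$, so this costs nothing), the idea is to complete each of $c$ and $d$ to a geometric symplectic basis of $\Sigma_g$ carrying \emph{the same} tuple of $\phi$--values, and then transport one basis to the other by a mapping class which, by Lemma~\ref{lemma:basisvalues}, is automatically $\phi$--preserving.

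The crux is the following spin--decorated extension principle, which I would isolate as a lemma: if $e \subset \Sigma_g$ is nonseparating with $\phi(e) = k$, then for \emph{every} tuple $(s_1, \dots, s_{2g-1}) \in (\Z/r\Z)^{2g-1}$ whose completion $(k, s_1, \dots, s_{2g-1})$ is compatible with $\Arf(\phi)$ in the even case, there is a geometric symplectic basis $\{e, f_1, e_2, f_2, \dots, e_g, f_g\}$ of $\Sigma_g$ on which $\phi$ realizes exactly the values $(k, s_1, \dots, s_{2g-1})$. Granting this, I would apply it to $c$ with an arbitrary admissible target tuple (say all $s_i = 0$, altering a single coordinate to correct the Arf invariant when $r$ is even) to obtain a basis $\mathcal B_c$, and then to $d$ with the \emph{same} tuple to obtain $\mathcal B_d$; this second application is legitimate precisely because $\phi(d) = k = \phi(c)$ and $\Arf(\phi)$ does not depend on the curve. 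The classical change--of--coordinates principle then provides $f \in \Mod(\Sigma_g)$ with $f(\mathcal B_c) = \mathcal B_d$ curve by curve, so in particular $f(c) = d$; since $f \cdot \phi$ and $\phi$ agree on every element of the geometric homology basis $\mathcal B_d$, Lemma~\ref{lemma:basisvalues} forces $f \cdot \phi = \phi$, i.e.\ $f \in \Mod_g[\phi]$, as desired.

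To prove the extension principle, I would choose a dual curve $f_1$ with $i(e, f_1) = 1$, splitting off a one--holed torus $T = N(e \cup f_1)$ whose complement $S$ has genus $g - 1$ with one boundary component, and then apply part~\ref{item:oddgsb} of Lemma~\ref{lemma:ccpodd} (for $r$ odd) or part~\ref{item:gsbeven} of Lemma~\ref{lemma:ccpeven} (for $r$ even, where the Arf compatibility is exactly the hypothesis of that lemma) to realize $(s_2, \dots, s_{2g-1})$ on a geometric symplectic basis of $S$. The one genuinely nontrivial point --- and the step I expect to be the main obstacle --- is arranging $\phi(f_1) = s_1$: twisting $f_1$ about $e$ only moves $\phi(f_1)$ within the coset $\phi(f_1) + k\Z$, so one instead has to band--sum $f_1$ with suitably chosen curves supported in the complement of $e$ (which, for $g \ge 3$, contains curves of every prescribed winding number by Lemmas~\ref{lemma:ccpodd} and~\ref{lemma:ccpeven}), exactly the sort of manipulation carried out in \cite[Section~4]{Salter_monodromy}; in the even case one must additionally keep track of the parity introduced by these band sums. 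Everything else is bookkeeping with the $\Mod(\Sigma_g)$--action on spin structures together with a single invocation of Lemma~\ref{lemma:basisvalues}.
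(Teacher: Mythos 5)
Your plan is sound, and --- to be transparent --- the paper itself does not supply a proof of Lemma~\ref{lemma:ccpcurves}: it explicitly states that this lemma ``follows easily from the techniques'' of \cite[Section~4]{Salter_monodromy}, which are precisely the techniques you are invoking. So there is no paper proof to compare against word for word, but your strategy is the natural one that those techniques yield.

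A few remarks on the details. Your reduction is correctly organized: completing each of $c$ and $d$ to geometric symplectic bases $\mathcal B_c, \mathcal B_d$ realizing an identical tuple of $\phi$--values, taking $f$ with $f(\mathcal B_c) = \mathcal B_d$ index-by-index (and orientation-preservingly, which the classical change--of--coordinates principle allows, via the standard $4g$--gon picture), and then observing that $f \cdot \phi$ agrees with $\phi$ on the geometric homology basis $\mathcal B_d$, so that Lemma~\ref{lemma:basisvalues} forces $f \in \Mod_g[\phi]$. The one substantive ingredient you flag --- the ``extension principle'' that a fixed first curve $e$ with $\phi(e) = k$ can be completed to a geometric symplectic basis realizing any admissible tuple $(k, s_1, \dots, s_{2g-1})$ --- is indeed the crux, and it does not follow from Lemmas~\ref{lemma:ccpodd}.\ref{item:oddgsb} or~\ref{lemma:ccpeven}.\ref{item:gsbeven} as literally stated, since those only assert existence of some basis, not one containing a prescribed curve. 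Your band-sum resolution is correct: by homological coherence, replacing a dual curve $f_1$ by $f_1 +_\epsilon \gamma$ (with $\gamma$ a nonseparating curve in the complement of $e \cup f_1$ and $\epsilon$ disjoint from $e$) shifts $\phi(f_1)$ by $\phi(\gamma) - 1$, and for $g \ge 3$ such $\gamma$ exist with every prescribed winding number by Lemma~\ref{lemma:ccpodd} or~\ref{lemma:ccpeven}, so $s_1$ can be hit exactly. The remaining values $(s_2, \dots, s_{2g-1})$ are then imposed on the genus-$(g-1)$ complement $S$ of a neighborhood of $e \cup f_1$ via Lemma~\ref{lemma:ccpodd}.\ref{item:oddgsb} or~\ref{lemma:ccpeven}.\ref{item:gsbeven}, and the Arf-compatibility you need for $\phi|_S$ is exactly what the global constraint $\Arf(k, s_1, \dots, s_{2g-1}) = \Arf(\phi)$ guarantees once $\Arf(\phi|_T) = (k+1)(s_1+1)$ is subtracted off. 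The only thing to be a little careful about, which you gesture at but don't quite nail down, is that this construction must be run in the correct order: fix the target tuple first (Arf-corrected if $r$ is even), then produce $f_1$ with the prescribed $s_1$, and only then complete in the complement --- since band-summing changes which subsurface $S$ you are working in. Stated that way the argument goes through.
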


One can also use this principle to find curves in a given homology class with given winding number (subject to Arf invariant restrictions, when applicable).

\begin{lemma}\label{lemma:ccphomol}
Let $\phi$ be an $r$--spin structure on $\Sigma_g$ and let $z \in H_1(\Sigma_g;\Z)$ be fixed. If $r$ is odd, then for any element $k \in \Z/r\Z$, there is a simple closed curve $c$ satisfying $\phi(c) = k$ and $[c] = z$. If $r$ is even, let $\epsilon \in \Z/2\Z$ denote the mod $2$ value of $\phi(z)$ in the sense of Lemma \ref{lemma:2spinhom}. Then there exists a simple closed curve $c$ satisfying $\phi(c) = k$ and $[c] = z$ if and only if $k \equiv \epsilon \pmod 2$. 
\end{lemma}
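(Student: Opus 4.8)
The plan is to split on the homology class $z$. A simple closed curve is either separating, with homology class $0$, or nonseparating, with primitive homology class; since the lemma only concerns classes carried by simple closed curves, we may assume $z = 0$ or $z$ is primitive. In the even case the necessity of $k \equiv \epsilon \pmod 2$ is immediate: by Lemma \ref{lemma:2spinhom} the residue $\phi(c) \bmod 2$ depends only on $[c]$, so if $[c] = z$ then $\phi(c) \equiv \epsilon \pmod 2$. (When $z = 0$ one has $\epsilon = 1$, since the inessential curve $\zeta$ has $[\zeta] = 0$ and $\phi(\zeta) = 1$, equivalently $q(0) = 0$ for the associated quadratic form.) It remains to realize every admissible value of $k$.

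First I would treat $z = 0$. Here $c$ must be separating; if $c$ bounds a subsurface of genus $h$ to its left with $1 \le h \le g-1$, then homological coherence (Lemma \ref{lemma:homcoh}) forces $\phi(c) = \chi(\Sigma_{h,1}) = 1 - 2h$, and conversely each such $h$ is realized. Because $r$ divides $2g-2$, the residues $2h \bmod r$ for $h = 1, \dots, g-1$ run over all even residues modulo $r$, and over all residues when $r$ is odd (then $2$ is invertible and $r \mid g-1$). Hence the values $1 - 2h$ realize exactly the odd residues when $r$ is even and all residues when $r$ is odd, as required.

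Next, for $z$ primitive: fix a nonseparating simple closed curve $c_0$ with $[c_0] = z$, which exists by the change--of--coordinates principle (\cite[Section 1.3]{FarbMarg}), and note $\phi(c_0) \equiv \epsilon \pmod 2$ when $r$ is even, again by Lemma \ref{lemma:2spinhom}. The idea is to surger $c_0$ so as to shift its winding number by a prescribed even amount while preserving its homology class. To do this I would choose a separating curve $s$, disjoint from $c_0$, bounding a genus $h$ subsurface disjoint from $c_0$; such an $s$ can be found inside $\Sigma_g \setminus c_0 \cong \Sigma_{g-1,2}$ for each $1 \le h \le g-1$. \emph{Band summing} $c_0$ and $s$ along an embedded band $\beta$ produces a simple closed curve $c_0 \#_\beta s$ with $[c_0 \#_\beta s] = [c_0] + [s] = z$, and applying homological coherence to a regular neighborhood of $c_0 \cup \beta \cup s$ --- a pair of pants whose boundary components are $c_0$, $s$, and $c_0 \#_\beta s$ --- computes $\phi(c_0 \#_\beta s) = \phi(c_0) + \phi(s) \pm 1$, where $\phi(s) = \pm(1-2h)$ depending on the orientation of $s$ and the $\pm 1$ depends on the band. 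Letting $h$ and the signs vary produces every value $\phi(c_0) + 2m$ with $|m|$ at most roughly $g$, which --- as $r \mid 2g-2$ --- exhausts the coset $\phi(c_0) + 2(\Z/r\Z)$. This coset is all of $\Z/r\Z$ when $r$ is odd and is $\{k : k \equiv \epsilon \pmod 2\}$ when $r$ is even, completing the construction.

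The hard part will be the bookkeeping in the primitive case: pinning down the band--sum identity $\phi(c_0 \#_\beta s) = \phi(c_0) + \phi(s) \pm 1$ with the correct sign, which amounts to carefully orienting the three boundary curves of the pair of pants before invoking homological coherence, and verifying that separating curves of every genus $1 \le h \le g-1$ can be isotoped off $c_0$. These are standard surgery computations in the spirit of \cite{HJ_windingnumber}; everything else reduces to homological coherence together with the structural observation that $0$ and the primitive classes are the only homology classes represented by simple closed curves.
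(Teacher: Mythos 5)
Your proposal is correct. Note that the paper itself supplies no proof of Lemma~\ref{lemma:ccphomol} --- it only remarks that it ``follows easily from the techniques'' of \cite[Section~4]{Salter_monodromy} --- so there is no argument in this paper to compare against, only the cited change--of--coordinates machinery. Your band--sum surgery is a reasonable and self--contained route to the same conclusion: fix one representative $c_0$ of $z$, then slide its winding number around by two at a time by summing with separating curves of varying genus. On the sign bookkeeping you flag as the ``hard part'': the clean version of the curve--arc sum identity, consistent with Definition~\ref{definition:CAS} and with the computations carried out later in the proof of Lemma~\ref{lemma:ctwist}, is $\phi(a +_\epsilon b) = \phi(a) + \phi(b) + 1$ when $\epsilon$ attaches to the left side of both curves. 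Varying the side of attachment and re--orienting to keep homology class $z$ would in principle give all four combinations $\phi(c_0) \pm \phi(s) \pm 1$, but because an embedded band from $c_0$ to the separating curve $s$ must land on the side of $s$ containing $c_0$, only two of the four occur, and both collapse to $\phi(c_0) \pm 2h$ once you substitute $\phi(s) = \pm(1-2h)$ and reduce modulo $2g-2$. Since $r \mid 2g-2$, these shifts exhaust $\phi(c_0) + 2(\Z/r\Z)$ as you say, and the parity constraint in the even case is exactly Lemma~\ref{lemma:2spinhom}. The $z=0$ branch, the reduction to $z$ primitive or zero, and the observation $\epsilon = 1$ when $z=0$ are all handled correctly. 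The one purely cosmetic slip is writing $\phi(c_0) + \phi(s) \pm 1$ rather than $\phi(c_0) \pm \phi(s) \pm 1$; it does not affect the argument.
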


\subsection{Step 2: Containment of the Johnson kernel}\label{subsection:johnson}
Our objective in this section is to establish Lemma \ref{step2}, showing that the admissible subgroup $\mathcal T_\phi$ contains the Johnson kernel $\mathcal K_g$. We establish one simple preliminary lemma first.
\begin{lemma}\label{lemma:hyperellipticparity}
Let $\Sigma_{g,1}$ be a surface with one boundary component, and suppose $a_1, \dots, a_{2g}$ is a maximal chain. Suppose that $\phi$ is a $2g-2$-spin structure on $\Sigma_{g,1}$ for which each $a_i$ is admissible. Then the parity of $\phi$ is given as follows:
\[
\Arf(\phi) = \begin{cases} 	1& g \equiv 1,2 \pmod 4\\
					0& g \equiv 3,0 \pmod 4.

\end{cases}
\]
Conversely, if $\Sigma_{g,1}$ is equipped with a $(2g-2)$-spin structure $\phi$ with $\Arf(\phi)$ as above, then there exists a maximal chain $a_1, \dots, a_{2g}$ of admissible curves.

\end{lemma}
\begin{proof}
The first assertion is an easy computation following from the definition of the Arf invariant (Definition \ref{definition:arfQF}) and homological coherence (Lemma \ref{lemma:homcoh}) which we omit. The second claim follows directly from Lemma \ref{lemma:ccpeven}.\ref{item:chaineven}.
\end{proof}

\begin{proof}[Proof of Lemma \ref{step2}]
Suppose that $\phi$ is an $r$--spin structure, and $\tilde \phi$ is a $({2g-2})$--spin structure which refines $\phi$. Then $\T_{\tilde \phi} \le \T_\phi$, and hence it suffices to prove the lemma in the case when $\phi$ is a $({2g-2})$--spin structure.

By Theorem \ref{theorem:johnson2}, it suffices to prove that if $c$ is a separating curve bounding a subsurface of genus at most $2$, then $T_c \in \T_\phi$.

\para{Genus 1}
We begin by considering the genus 1 case, so suppose that $a$ is a curve which bounds a genus 1 subsurface $W_a$.
Observe that if either
\begin{itemize}
\item $ g \equiv 2, 3 \pmod 4$ and $\Arf(\phi) = 1 + \Arf\left( \phi |_{W_a} \right)$ or
\item $ g \equiv 0, 1 \pmod 4$ and $\Arf(\phi) = \Arf\left( \phi |_{W_a} \right)$,
\end{itemize}
then the complementary subsurface $\Sigma_g \setminus W_a$ has
\begin{itemize}
\item $ g(\Sigma_g \setminus W_a) \equiv 1, 2 \pmod 4$ and $\Arf\left( \phi |_{\Sigma_g \setminus W_a} \right) = 1$ or
\item $ g(\Sigma_g \setminus W_a) \equiv 3, 0 \pmod 4$ and $\Arf\left( \phi |_{\Sigma_g \setminus W_a} \right) = 0$,
\end{itemize}
respectively. In either of the above cases, Lemma \ref{lemma:hyperellipticparity} asserts that there exists a maximal chain of admissible curves on $\Sigma_g \setminus W_a$, and hence by the chain relation (Lemma \ref{lemma:chain}), $T_a \in \T_\phi$.

Suppose now that we are not in one of the cases above, so the complementary subsurface $\Sigma_g \setminus W_a$ does not admit a maximal chain of admissible curves. In order to exhibit the twist on $a$, we will form a lantern relation and prove that the other terms in the relation lie in $\T_\phi$.

By Lemma \ref{lemma:ccpeven}.\ref{item:chainevenrestricted}, there exists a $2g-4$-chain of admissible curves on $\Sigma_g \setminus W_a$, filling a subsurface of genus $g-2$. Let $b$ be a separating curve that bounds a subsurface $W_b$ of genus 1 disjoint from this chain. The chain relation (Lemma \ref{lemma:chain}) then implies that $T_b \in \T_\phi$.

Let $c$ be any curve in $\Sigma_g \setminus (W_a \cup W_b)$ such that $\phi(c) = -2$, and take $d$ to be a curve which together with $a, b,$ and $c$ bounds a four--holed sphere (in the case $g = 3$, necessarily $c = d$, but this is not a problem). By homological coherence (Lemma \ref{lemma:homcoh}), $\phi(d) = -2$.
These curves fit into a lantern relation as shown in Figure \ref{fig:lantern1}.

\begin{figure}[ht]
\labellist
\small
\pinlabel $W_a$ [bl] at 164 168.8
\pinlabel $W_b$ [br] at 16 108
\pinlabel $x$ [bl] at 152 56.8
\pinlabel $y$ [tl] at 152 95.2
\pinlabel $z$ [l] at 80 75.2
\pinlabel $a$ [bl] at 168 135.2
\pinlabel $b$ [tr] at 60 28.8
\pinlabel $c$ [l] at 170 8
\pinlabel $d$ [bl] at 182 120
\endlabellist
\includegraphics{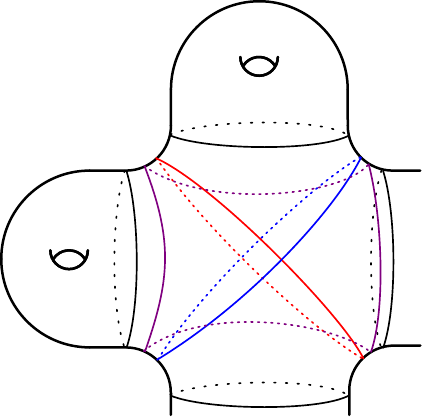}
\caption{The lantern for genus 1.}
\label{fig:lantern1}
\end{figure}

By construction, $c \cup d$ bounds a subsurface $V \cong \Sigma_2^2$. Choose a subsurface $V' \subset V$ with a single boundary component $y$ such that $V'$ contains both $W_a$ and $W_b$. Since the Arf invariants of $W_a$ and $W_b$ are of opposite parity, we have
\[\Arf\left( \phi |_ {V'} \right) = 1.\]
By the change of coordinates principle (Lemma \ref{lemma:ccpeven}.\ref{item:chaineven}) there exists a maximal chain $\{a_1, a_2, a_3, a_4\}$ of admissible curves on $V'$; by the chain relation, $T_y \in \T_\phi$.
Now let $a_5 \subset V$ be a curve disjoint from $a_2$ so that $i(a_4, a_5)=1$ and which together with $a_1$, $a_3$, and $c$ bounds a four--holed sphere. By homological coherence (Lemma \ref{lemma:homcoh}), we have that $\phi(a_5)=0$. Therefore, $\{a_1, \ldots, a_5\}$ is a maximal chain of admissible curves on $V$ and so by the chain relation, we have that $T_c T_d \in \T_\phi$.

Finally, we note that the pairs $(c,x)$ and $(d, z)$ each bound subsurfaces of genus $1$ with two boundary components. Since $\phi(c) = \phi(d) = -2$, homological coherence implies that $x$ and $z$ must both be admissible.

Applying the lantern relation (Lemma \ref{lemma:lantern}), we have that
\[T_a = (T_bT_cT_d)^{-1} (T_x T_y T_z) \in \T_\phi.\]
Observe that if $g=3$, then this is enough to finish the proof, since every separating twist is of genus $1$.

\para{Genus 2}
Now suppose that $g \ge 4$ and let $x$ be a curve bounding a subsurface $W_x$ of genus $2$ (this choice of label will allow Figures \ref{fig:lantern1} and \ref{fig:lantern2} to share a labeling system). Observe that if the Arf invariant of $\phi |_{W_x}$ is odd, then by the change--of--coordinates principle (Lemma \ref{lemma:ccpeven}.\ref{item:chaineven}), $W_x$ admits a maximal chain of admissible curves and so by applying the chain relation, $T_x \in \T_\phi$.

So suppose that $\Arf\left( \phi |_{W_x} \right) =0$. By the change--of--coordinates principle (in particular, Lemma \ref{lemma:ccpeven}.\ref{item:gsbeven}), we can choose two disjoint subsurfaces $W_b \subset W_x$ and $W_c \subset W_x$ each homeomorphic to $\Sigma_1^1$ such that
\[ 
\Arf\left( \phi |_{W_b} \right) = \Arf\left( \phi |_{W_c} \right) = 1.
\]
Let their corresponding boundaries be $b$ and $c$.
Again appealing to Lemma \ref{lemma:ccpeven}.\ref{item:gsbeven}, choose $a$ to be a curve bounding a subsurface $W_a$ of $\Sigma_g \setminus W_x$ with $W_ a \cong \Sigma_1^1$ such that
\[
\Arf\left( \phi |_{W_a} \right) = 0.
\]
By the genus--1 case established above, we know that $T_a , T_b, T_c \in \T_\phi$. Finally, choose $d$ to be any curve in $\Sigma_g \setminus (W_x \cup W_a)$ which bounds a pair of pants together with $a$ and $x$. The curves then fit into a lantern relation as in Figure \ref{fig:lantern2}.

\begin{figure}[ht]
\labellist
\small
\pinlabel $W_a$ [bl] at 160 237.6
\pinlabel $W_b$ [br] at 12.8 165.6
\pinlabel $x$ [bl] at 155.2 108.8
\pinlabel $y$ [tl] at 155.2 140
\pinlabel $z$ [l] at 80.8 131.2
\pinlabel $a$ [bl] at 169.6 188.8
\pinlabel $b$ [tr] at 60 80.8
\pinlabel $c$ [l] at 171.2 59.2
\pinlabel $d$ [bl] at 187.2 172.8
\pinlabel $W_c$ [tl] at 160.8 17.6
\endlabellist
\includegraphics{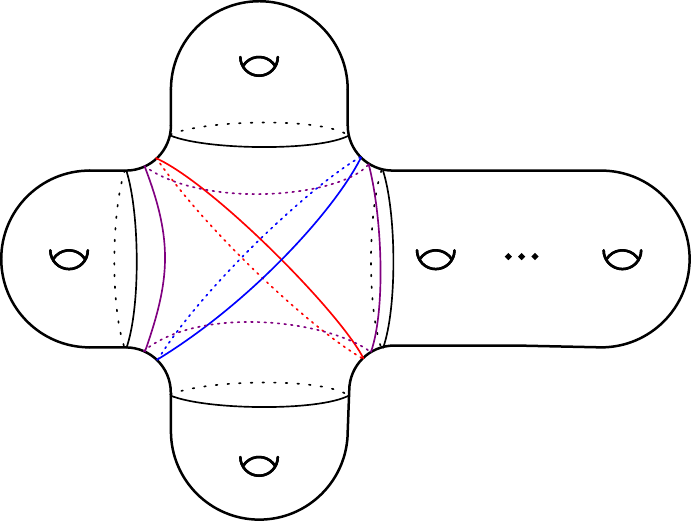}
\caption{The lantern for genus 2 separating twists.}
\label{fig:lantern2}
\end{figure}

Let $W_d$ denote the subsurface bounded by $d$ which contains $W_a, W_b,W_c$. By construction, we have $\Arf(\phi |_{W_d}) = 0$. By the change of coordinates principle (Lemma \ref{lemma:ccpeven}.\ref{item:chaineven}), it follows that $W_d$ admits a maximal chain of admissible curves, and so $T_d \in \T_\phi$ by the chain relation.

Finally, observe that the curves $y$ and $z$ shown in Figure \ref{fig:lantern2} bound subsurfaces of genus $2$ with odd Arf invariant, and so both admit maximal chains of admissible curves. Thus $T_y, T_z \in \T_\phi$.

Again applying the lantern relation (Lemma \ref{lemma:lantern}), we see
\[T_x = (T_a T_b T_c T_d) (T_y T_z)^{-1} \in \T_\phi.\]
Therefore, since the separating twists of genus one and two generate the Johnson kernel (Theorem \ref{theorem:johnson2}), we see that $\K_g < \T_\phi$.
\end{proof}

\subsection{Step 3: Intersection with the Torelli group}\label{subsection:torelli}
In order to show the equality $\tau(\Mod_g[\phi] \cap \mathcal I_g) = \tau(\mathcal T_\phi \cap \mathcal I_g)$, it is necessary to give a precise description of the subgroup $\tau(\Mod_g[\phi] \cap \mathcal I_g)$. We begin with a brief summary of the theory of the Johnson homomorphism $\tau$. It is not necessary to know a construction; we content ourselves with a minimal account of its properties.

\para{The Johnson homomorphism} Recall the notation $H := H_1(\Sigma_g; \Z)$, and observe that there is an embedding
\[
H\into \wedge^3 H
\]
defined by $x \mapsto x \wedge \omega$, with $\omega = x_1 \wedge y_1 + \dots + x_g \wedge y_g$ for some symplectic basis $x_1, y_1, \dots, x_g, y_g$.  

\begin{lemma}[Johnson, \cite{johnsonhom}]\label{lemma:johnsonhom}
\ 
\begin{enumerate}
\item There is a surjective homomorphism $\tau: \mathcal I_g \to \wedge^3 H/ H$ known as the {\em Johnson homomorphism}; as the target is abelian, $\tau$ factors through $H_1(\mathcal I_g;\Z)$. It is $\Sp(2g,\Z)$--equivariant with respect to the action of $\Sp(2g,\Z)$ on $H_1(\mathcal I_g;\Z)$ induced by conjugation by $\Mod(\Sigma_g)$ and the evident action of $\Sp(2g, \Z)$ on $\wedge^3 H / H$. 
\item \label{item:bpformula} Let $c\cup d$ bound a subsurface $\Sigma_{h,2}$. Choose any further subsurface $\Sigma_{h,1} \subset \Sigma_{h,2}$, and let $\{x_1, y_1, \dots, x_h, y_h\}$ be a symplectic basis for $H_1(\Sigma_{h,1};\Z)$. Then 
\[
\tau(T_c T_d^{-1}) = (x_1 \wedge y_1 + \dots + x_h \wedge y_h)\wedge [c],
\]
where $c$ is oriented with $\Sigma_{h,2}$ to the left. In the case $h = 1$, if $\alpha,\beta,\gamma$ is a maximal chain on $\Sigma_{1,2}$, then 
\[
\tau(T_cT_d^{-1}) = [\alpha] \wedge [\beta] \wedge [\gamma].
\]

\end{enumerate}
\end{lemma}

To describe $\tau(\Mod_g[\phi] \cap \mathcal I_g)$, we consider a contraction of $\wedge^3 H/ H$. Lemma \ref{lemma:contraction} is well known; see, e.g. \cite[Sections 5,6]{johnsonhom}.
\begin{lemma}\label{lemma:contraction}
For any $s$ dividing $g-1$, there is an $\Sp(2g,\Z)$--equivariant surjection
\[
C_s: \wedge^3 H / H \to H_1(\Sigma_g; {\Z/s\Z})
\]
given by the contraction
\[
C(x\wedge y \wedge z) = \pair{x,y}z+ \pair{y,z} x+ \pair{z,x}y \pmod{s}.
\]
\end{lemma}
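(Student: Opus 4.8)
The plan is to realize $C_s$ as the mod-$s$ reduction of an integral contraction map and then check the three required properties (descent to the quotient, $\Sp(2g,\Z)$-equivariance, and surjectivity) by direct computation. First I would define $C\colon \wedge^3 H \to H$ on simple wedges by the cyclic formula
\[
C(x\wedge y\wedge z) = \pair{x,y}z + \pair{y,z}x + \pair{z,x}y .
\]
The right-hand side is visibly trilinear and alternating in $(x,y,z)$ (e.g.\ $C(x\wedge x\wedge z) = \pair{x,z}x - \pair{x,z}x = 0$), so it descends to a well-defined homomorphism on $\wedge^3 H$. Since the algebraic intersection pairing $\pair{\cdot,\cdot}$ is preserved by $\Sp(2g,\Z)$, the map $C$ is manifestly $\Sp(2g,\Z)$-equivariant.

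The one computation that really does the work is evaluating $C$ on the image of the embedding $H \into \wedge^3 H$, $x\mapsto x\wedge\omega$, where $\omega = \sum_{i=1}^g x_i\wedge y_i$. Expanding termwise and using $\pair{x_i,y_i}=1$ gives $C(x\wedge\omega) = \sum_{i=1}^g\bigl(\pair{x,x_i}y_i + x - \pair{x,y_i}x_i\bigr)$, and evaluating the symplectic sum $\sum_i\bigl(\pair{x,x_i}y_i - \pair{x,y_i}x_i\bigr)$ on the basis elements $x_j,y_j$ shows it equals $-x$. Hence
\[
C(x\wedge\omega) = (g-1)\,x ,
\]
so the image of $H$ under $C$ is exactly $(g-1)H$. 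Now whenever $s$ divides $g-1$ we have $(g-1)H \le sH$, so the composite $\wedge^3 H \xrightarrow{C} H \to H/sH$ kills the image of $H$ and therefore factors through a homomorphism $C_s\colon \wedge^3 H/H \to H/sH = H_1(\Sigma_g;\Z/s\Z)$, which inherits $\Sp(2g,\Z)$-equivariance from $C$.

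Surjectivity is then immediate: for $i\ne j$ one computes $C(x_i\wedge x_j\wedge y_j) = x_i$ (and symmetrically for the $y_i$), so $C$ already surjects onto $H$, hence $C_s$ surjects onto $H/sH$; here one uses only $g\ge 2$, which holds a fortiori in the present setting $g\ge 3$. There is no genuine obstacle in this argument — the only delicate point is the identity $C(x\wedge\omega)=(g-1)x$, which is precisely what singles out the divisors of $g-1$ as the values of $s$ for which the contraction descends to $\wedge^3 H/H$. As this is standard (cf.\ \cite[Lemma 5.6]{Salter_monodromy}), in the paper it suffices to record the formula for $C_s$ together with this computation.
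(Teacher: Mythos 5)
Your proof is correct and is essentially the standard verification; the paper itself does not reprove this lemma but cites \cite[Lemma 5.6]{Salter_monodromy}, whose argument is the same computation you give. The heart of the matter is exactly the identity $C(x \wedge \omega) = (g-1)x$, which you derive correctly: each summand $C(x \wedge x_i \wedge y_i)$ contributes $x + \pair{x,x_i}y_i - \pair{x,y_i}x_i$, and the latter two terms sum over $i$ to $-x$ since they reproduce (up to sign) the expansion of $x$ in the symplectic basis. Your observation that the trilinear expression is alternating, and the direct surjectivity check $C(x_i \wedge x_j \wedge y_j) = x_i$ for $i \ne j$, are both correct and complete the argument.
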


Although it was not formulated in this language, Johnson showed that the contraction $C$ vanishes on the group $\tau(\Mod_g[\phi] \cap \mathcal I_g)$.

\begin{lemma}\label{lemma:characterizetorelli}
Let $\phi$ be an $r$--spin structure on a surface $\Sigma_g$ of genus $g \ge 3$. Set $s = r$ if $r$ is odd, and $s = r/2$ if $r$ is even. Then $C_s \circ \tau = 0$ on $\Mod_g[\phi] \cap \mathcal I_g$. 
\end{lemma}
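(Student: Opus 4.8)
The plan is to reduce the statement to the behavior of the Johnson homomorphism on Dehn twists about bounding pairs, and then to check the vanishing of $C_s \circ \tau$ on a generating set for $\Mod_g[\phi] \cap \mathcal I_g$. First I would recall (from Lemma~\ref{lemma:johnsonhom} and the standard structure theory of the Torelli group, due to Johnson and Birman--Powell) that $\mathcal I_g$ is generated by bounding-pair maps $T_c T_d^{-1}$ (where $c \cup d$ bounds a subsurface) together with separating twists, and that separating twists lie in $\mathcal K_g = \ker \tau$. Hence it suffices to show $C_s(\tau(f)) = 0$ for $f = T_cT_d^{-1}$ whenever $f \in \Mod_g[\phi]$; i.e., whenever $\phi(c) = \phi(d)$ in the sense appropriate to an unoriented bounding pair (so that $f$ preserves $\phi$, using twist--linearity in Definition~\ref{definition:spin}.\ref{item:TL} and the fact that $[c] = [d]$ for a bounding pair). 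Note, however, that not every bounding-pair map preserving $\phi$ has the form $T_cT_d^{-1}$ with $c$ and $d$ individually admissible; the genuinely relevant generators are bounding-pair maps whose homology class $[c]$ has $\phi$-value compatible with the relevant congruence. I would instead argue via the formula for $\tau$ on bounding pairs and homological coherence.

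The computational heart is as follows. Fix a bounding pair $c \cup d$ cobounding $\Sigma_{h,2}$ with $c$ oriented so that $\Sigma_{h,2}$ lies to the left. By Lemma~\ref{lemma:johnsonhom}.\ref{item:bpformula}, choosing a symplectic basis $\{x_1,y_1,\dots,x_h,y_h\}$ for a subsurface $\Sigma_{h,1} \subset \Sigma_{h,2}$, we have
\[
\tau(T_cT_d^{-1}) = \Big(\textstyle\sum_{i=1}^h x_i \wedge y_i\Big) \wedge [c].
\]
Applying the contraction $C_s$ from Lemma~\ref{lemma:contraction} and using $\pair{x_i,y_i} = 1$, $\pair{x_i,[c]} = \pair{y_i,[c]} = 0$ (since the $x_i, y_i$ are supported inside $\Sigma_{h,1}$, disjoint from the boundary curve $c$), one computes
\[
C_s\big(\tau(T_cT_d^{-1})\big) = \textstyle\sum_{i=1}^h [c] = h\,[c] \pmod s.
\]
Now homological coherence (Lemma~\ref{lemma:homcoh}) applied to the subsurface $\Sigma_{h,1}$ gives $\sum_i \big(\phi(x_i) + \phi(y_i)\big) \equiv \chi(\Sigma_{h,1}) = 1 - 2h \pmod r$ — wait, more to the point, I would apply homological coherence to $\Sigma_{h,2}$ itself: $\phi(c) + \phi(d) \equiv \chi(\Sigma_{h,2}) = -2h \pmod r$. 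Since $T_cT_d^{-1}$ preserving $\phi$ forces (via twist--linearity and $[c]=[d]$) a relation making $\phi(c) = \phi(d)$ on the relevant classes, we get $2\phi(c) \equiv -2h \pmod r$, hence $\phi(c) \equiv -h \pmod s$ where $s = r$ ($r$ odd) or $s = r/2$ ($r$ even). Combined with Lemma~\ref{lemma:ccphomol} (which shows the $\phi$-value on a homology class is the obstruction to realizability) and the fact that $s \mid g-1$, the key point is that $h[c] \pmod s$ is governed by the class $[c]$ through $\phi$; one then shows that $\Mod_g[\phi]$-invariance of $\phi$ precisely forces $h\,[c] \equiv 0$ in $H_1(\Sigma_g;\Z/s\Z)$ on the $\Z/s\Z$-span, completing the vanishing of $C_s \circ \tau$ on the generators, and hence on all of $\Mod_g[\phi] \cap \mathcal I_g$.

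I expect the main obstacle to be pinning down exactly which bounding-pair maps to use as generators and verifying the congruence $h\,[c] \equiv 0 \pmod s$ cleanly — the naive statement "$\phi(c)=\phi(d)$" needs care because $\phi$ depends on orientations and a bounding-pair map need not be a product of two admissible twists. The cleanest route is probably to observe that $\Mod_g[\phi] \cap \mathcal I_g$ is generated by bounding-pair maps $T_cT_d^{-1}$ together with $\mathcal K_g$ (a consequence of Step~2, Lemma~\ref{step2}, giving $\mathcal K_g \le \mathcal T_\phi \le \Mod_g[\phi]$, plus Johnson's generation of $\mathcal I_g/\mathcal K_g$ by bounding pairs), reduce to such generators, and then run the contraction computation above, invoking homological coherence to convert the arithmetic of $\phi$ into the divisibility $s \mid h - (\text{something})$ that kills $h[c]$ modulo $s$. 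If this direct approach proves delicate, an alternative is to cite Johnson's original computation (as the lemma statement hints, "although it was not formulated in this language") and merely translate his result on the image of the Johnson homomorphism restricted to theta-group/spin stabilizers into the contraction language of Lemma~\ref{lemma:contraction}.
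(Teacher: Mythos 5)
Your plan of reducing to bounding pair maps and computing the contraction directly is attractive and the core computation is right as far as it goes: for a bounding pair $T_cT_d^{-1}$ of genus $h$, $C_s(\tau(T_cT_d^{-1})) = h[c] \pmod s$. But there is a real gap in the reduction itself. Knowing that $\mathcal I_g$ is generated by bounding pairs and separating twists does \emph{not} give you a generating set for $\Mod_g[\phi]\cap\mathcal I_g$; there is no reason the bounding-pair maps that happen to preserve $\phi$ should generate this intersection. So "hence it suffices to show $C_s(\tau(f))=0$ for $f=T_cT_d^{-1}\in\Mod_g[\phi]$" is not justified, and without it the argument does not close.

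There is also a sign problem in the step that is supposed to kill $h[c]$: with $c,d$ oriented so $\Sigma_{h,2}$ lies to their left one has $[c]=-[d]$, and twist--linearity then shows $T_cT_d^{-1}$ preserves $\phi$ precisely when $\phi(c)+\phi(d)\equiv 0 \pmod r$. Combined with homological coherence $\phi(c)+\phi(d)\equiv -2h$, this forces $2h\equiv 0 \pmod r$, i.e.\ $s\mid h$, which gives $h[c]\equiv 0 \pmod s$ directly. Your convention $[c]=[d]$ instead produces $\phi(c)\equiv -h\pmod s$, which does not obviously imply $h[c]\equiv 0$, and the final "one then shows\dots" hand-waves exactly the missing step. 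The clean way to fix both issues at once is the paper's: note that for $f\in\mathcal I_g$ the difference $f\cdot\phi-\phi\in H^1(\Sigma_g;\Z/r\Z)$ is independent of $\phi$ (this is the mod--$r$ Chillingworth invariant $c_r$, a homomorphism on all of $\mathcal I_g$), that $\Mod_g[\phi]\cap\mathcal I_g=\ker c_r$ tautologically, and then invoke Johnson's identity $C_{g-1}\circ\tau=c_{2g-2}$ (reduced mod $r$) to conclude. Your direct bounding-pair computation essentially re-derives that identity on generators of $\mathcal I_g$, which is fine in principle, but you must compute $c_r$ and $C_s\circ\tau$ on \emph{all} bounding-pair generators of $\mathcal I_g$ (not just those in $\Mod_g[\phi]$) and compare the two homomorphisms; restricting prematurely to $\Mod_g[\phi]$ is the wrong move. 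Your closing remark --- cite Johnson and translate --- is in fact the route the paper takes.
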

\begin{proof}
We recall (c.f. \cite{Chill_wn1}, see also \cite[Section 6]{johnsonhom} and \cite[Theorem 5.5]{Salter_monodromy}) that the ``mod--$r$ Chillingworth invariant'' is a homomorphism 
\[
c_r: \mathcal I_g \to 2H_1(\Sigma_g; \Z/r\Z) \cong H_1(\Sigma_g; \Z/s\Z)
\]
with the property that $c_r(f) = 0$ for $f \in \mathcal I_g$ if and only if $f$ preserves {\em all} $r$--spin structures. For $r'$ dividing $r$, the invariants $c_r$ and $c_{r'}$ are compatible in the sense that $c_{r'} = c_r \pmod{r'}$.

If $r$ is odd, then there is a natural identification of the kernels of $c_r$ and of $c_{2r}$, for
\[2H_1(\Sigma_g; \Z/2r\Z) \cong H_1(\Sigma_g; \Z/r\Z) \cong 2H_1(\Sigma_g; \Z/r\Z).\]
Thus it suffices to consider the case of $r$ even.

According to \cite[Theorem 3]{johnsonhom}, there is an equality
\[
C_{g-1} \circ \tau = c_{2g-2}.
\]
This establishes the claim in the case $r = 2g-2$. The general case now follows by reduction mod $r$. 
\end{proof}

We will show that the constraint of Lemma \ref{lemma:characterizetorelli} in fact {\em characterizes} the groups $\tau(\T_\phi \cap \I_g)$ and $\tau(\Mod_g[\phi] \cap \I_g)$. Lemma \ref{lemma:refine2} refines the statement of Lemma \ref{step2}; our goal in the remainder of the subsection is to prove Lemma \ref{lemma:refine2} and so accomplish Step 2.

\begin{lemma}\label{lemma:refine2}
Set $s$ as in Lemma \ref{lemma:characterizetorelli}. Then there is an equality $\tau(\T_\phi \cap \I_g) = \ker(C_s)$. Consequently, $\tau(\Mod_g[\phi] \cap \mathcal I_g) = \tau(\mathcal T_\phi \cap \mathcal I_g)$.
\end{lemma}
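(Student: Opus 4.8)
The plan is to prove the two displayed equalities in sequence; the second follows formally from the first once we recall Lemma \ref{lemma:characterizetorelli}. Since $\T_\phi \le \Mod_g[\phi]$, we always have the chain of inclusions
\[
\tau(\T_\phi \cap \I_g) \le \tau(\Mod_g[\phi] \cap \I_g) \le \ker(C_s),
\]
where the last inclusion is exactly the content of Lemma \ref{lemma:characterizetorelli}. Thus it suffices to prove the reverse inclusion $\ker(C_s) \le \tau(\T_\phi \cap \I_g)$; this squeezes all three groups together and simultaneously establishes both claims of the lemma. The whole argument therefore reduces to: \emph{every element of $\ker(C_s) \subset \wedge^3 H / H$ is realized as $\tau(f)$ for some $f \in \T_\phi \cap \I_g$.}

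The natural strategy is to produce an explicit generating set for $\ker(C_s)$ consisting of elements that are visibly of the form $\tau(T_cT_d^{-1})$ for an admissible bounding pair, or $\tau(T_c)$ for an admissible separating curve of genus one. The key computational input is Lemma \ref{lemma:johnsonhom}.\ref{item:bpformula}: if $c \cup d$ bounds a subsurface $\Sigma_{1,2}$ with maximal chain $\alpha,\beta,\gamma$, then $\tau(T_cT_d^{-1}) = [\alpha]\wedge[\beta]\wedge[\gamma]$, and more generally bounding pairs of higher genus realize $(x_1\wedge y_1 + \dots + x_h\wedge y_h)\wedge[c]$. To land inside $\T_\phi$, I would invoke the change-of-coordinates principle in the presence of a spin structure: given any prescribed homology classes (and, when $r$ is even, winding-number data compatible mod $2$ with those classes via Lemma \ref{lemma:ccphomol}), Lemmas \ref{lemma:ccpodd}/\ref{lemma:ccpeven} let me build a subsurface carrying an admissible chain with exactly those classes, so that the resulting bounding-pair map or genus-one separating twist lies in $\T_\phi$. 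One then checks that the collection of $\wedge^3$-vectors so obtained, together with Step 2 ($\K_g \le \T_\phi$, whose images under $\tau$ contribute the "genus $\le 2$ separating" part — but note $\tau$ kills $\K_g$, so this is really used only to know $\T_\phi$ is large), generates all of $\ker(C_s)$. Concretely, $\ker(C_s)$ is spanned by the "genus-one type" vectors $x\wedge y\wedge z$ with $\pair{x,y}=\pair{y,z}=\pair{z,x}=0$ reducing appropriately mod $s$, and by differences forcing the contraction to vanish; each such spanning vector can be matched to an admissible configuration by choosing a symplectic basis adapted to it and applying change-of-coordinates with winding numbers all set to $0$ (respecting the Arf constraint when $r$ is even, which is exactly the $\bmod\, 2$ compatibility already recorded in Lemma \ref{lemma:ccphomol} and the residue condition defining $s=r/2$).

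The main obstacle I anticipate is the even case: when $r$ is even one cannot freely prescribe winding numbers $0$ on a full symplectic basis of a subsurface, because the Arf invariant of the admissible chain is constrained (this is precisely the "if and only if" in Lemma \ref{lemma:ccpeven}.\ref{item:chaineven}). The resolution is to work with $s = r/2$ rather than $r$: the contraction $C_s$ only remembers homology mod $r/2$, and the parity obstruction to building an all-admissible chain in a given homology class is exactly the $\bmod\, 2$ constraint in Lemma \ref{lemma:ccphomol}, which is consistent with membership in $\ker(C_s)$ after the reduction. So one must be careful to split off the genus-one separating pieces (handled directly as in Step 2's lantern arguments, which already live in $\T_\phi$) and to check that every residual class mod $s$ is hit by a bounding pair whose chain has the forced parity. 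Once this bookkeeping is done, combining the realized vectors shows $\ker(C_s) \le \tau(\T_\phi \cap \I_g)$, which completes the proof of Lemma \ref{lemma:refine2} and hence of Step 3.
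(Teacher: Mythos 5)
Your high-level structure matches the paper exactly: squeeze via $\tau(\T_\phi\cap\I_g)\le\tau(\Mod_g[\phi]\cap\I_g)\le\ker(C_s)$ (the last by Lemma~\ref{lemma:characterizetorelli}) and then realize a generating set of $\ker(C_s)$ inside $\tau(\T_\phi\cap\I_g)$. But there is a concrete gap in your description of the generating set, and it propagates into the realization step.

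Your description of $\ker(C_s)$ as spanned by Lagrangian triples $x\wedge y\wedge z$ with all pairings zero, together with ``differences forcing the contraction to vanish,'' captures only the classes whose contraction $C$ vanishes \emph{integrally} --- i.e.\ the kernel of $C$ over $\Z$. When $s>1$, $\ker(C_s)$ is strictly larger: it also contains the classes $s(z\wedge x_i\wedge y_i)$ (type (G1) in the paper's Lemma~\ref{lemma:spgenset}), which contract to $sz\ne 0$ and hence are \emph{not} in the span of your two families. These generators are indispensable (when $r$ is odd, Lemma~\ref{step1} already forces $\Psi(\T_\phi)$ to be all of $\Sp(2g,\Z)$, so $\tau(\T_\phi\cap\I_g)$ must contain some class contracting to a nonzero multiple of $z$), and they are not realizable as $\tau$ of an \emph{admissible} bounding pair map. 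For a bounding pair $c\cup d$ bounding a genus-$h$ piece, homological coherence forces $\phi(c)+\phi(d)=-2h$; if both curves are admissible then $\tau(T_cT_d^{-1})=(\sum_{j\le h}x_j\wedge y_j)\wedge[c]$ is a \emph{sum} of $h$ simple wedges, not $s$ times a single wedge, and translating between these requires exactly the (G2)-type relations that your argument postpones. The paper instead constructs a chain with a boundary curve $b$ of winding number $-1$, raises the bounding pair map to the $s$-th power, and cancels the rogue term using $T_b^r\in\T_\phi$ --- that last fact is Lemma~\ref{lemma:twistpowers}, a substantial independent input whose proof itself uses the $D$ relation, chain relations, and Step~2. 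Your plan makes no provision for anything like this, and ``winding numbers all set to 0'' cannot manufacture the factor of $s$.

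A smaller but real omission: the paper does not realize every generator by a bespoke curve configuration. It does a handful of explicit topological computations and then uses the $\Sp(2g,\Z)$-equivariance of $\tau$ combined with the surjectivity $\Psi(\T_\phi)=\Sp(2g,\Z)[q]$ from Lemma~\ref{step1} (packaged as Lemma~\ref{lemma:symptools}) to push these around the orbit. Your plan invokes change-of-coordinates for each generator separately, which in the even case runs head-on into the Arf constraint you correctly flag; resolving it generator-by-generator without the equivariance mechanism is considerably more delicate than you suggest, especially for classes involving the basis vector on which $q$ vanishes.
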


This will follow by first exhibiting a generating set for the kernel of the contraction $C_s$ (Lemma \ref{lemma:spgenset}) and then finding elements of $\T_\phi \cap \I_g$ realizing these elements (Lemma \ref{lemma:Texhibit}). 

\para{Symplectic linear algebra} To find the generators for $\ker(C_s)$ in $\tau(\T_\phi \cap \I_g)$, we will make heavy use of the $\Sp(2g,\Z)$--equivariance of $\tau$ asserted in Lemma \ref{lemma:johnsonhom}.1. We begin with some results in symplectic linear algebra to this end. We will only need the result of Lemma \ref{lemma:symptools} in the proof; the Lemmas \ref{lemma:existsA} and \ref{lemma:extendB} are preliminary. 

Let $H$ be a free $\Z$--module of rank $2g \ge 6$ equipped with a symplectic form $\pair{\cdot,\cdot}$, and suppose that $q$ is a nondegenerate quadratic form on $H \otimes (\Z/2Z) \cong (\Z / 2\Z)^{2g}$ (see Definition \ref{definition:QF}). Given such a $q$, the {\em $q$--vector} of a symplectic basis $\mathcal B = (x_1, y_1, \dots, x_g,y_g)$ for $H$ is the element $\vec q(\mathcal B)$ of $(\Z/2\Z)^{2g}$ given by $\vec q(\mathcal B) = (q([x_1]), \dots, q([y_g]))$. Recall also the definition of the algebraic stabilizer $\Sp(2g,\Z)[q]$ of a mod-$2$ quadratic form discussed in Definition \ref{definition:algstab}.
	\begin{lemma}\label{lemma:existsA}
	If $\mathcal B = (x_1, y_1, \dots x_g, y_g)$ and $\mathcal B' = (x_1',y_1', \dots, x_g', y_g')$ are symplectic bases with $\vec q(\mathcal B) = \vec q(\mathcal B')$, then there is $A \in \Sp(2g,\Z)[q]$ such that $A(\mathcal B) = \mathcal B'$.
	\end{lemma}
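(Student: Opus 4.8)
The plan is to show that the obvious candidate already works: any $A \in \Sp(2g,\Z)$ carrying $\mathcal B$ to $\mathcal B'$ will automatically lie in $\Sp(2g,\Z)[q]$, so the only inputs are the transitivity of $\Sp(2g,\Z)$ on symplectic bases and a rigidity statement for quadratic forms over $\Z/2\Z$. First I would invoke the standard fact that $\Sp(2g,\Z)$ acts (simply) transitively on the set of symplectic bases of $H$ — the change-of-basis matrix between two symplectic bases preserves $\pair{\cdot,\cdot}$ by construction — to produce $A \in \Sp(2g,\Z)$ with $A(\mathcal B) = \mathcal B'$.

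It then remains to check $q \circ A = q$. I would consider the pulled-back form $q' := q \circ A$ on $H \otimes \Z/2\Z$; since $A$ preserves $\pair{\cdot,\cdot}$, the reduction $q'$ is again a quadratic refinement of $\pair{\cdot,\cdot} \bmod 2$. Evaluating on the mod-$2$ reduction of $\mathcal B$ gives $q'(x_i) = q(Ax_i) = q(x_i')$ and $q'(y_i) = q(y_i')$, and by the hypothesis $\vec q(\mathcal B) = \vec q(\mathcal B')$ these are equal to $q(x_i)$ and $q(y_i)$ respectively. Thus $q$ and $q'$ are two quadratic refinements of the same symplectic form on the $\Z/2\Z$-vector space $H \otimes \Z/2\Z$ that agree on a symplectic basis; since the defining relation $q(u+v) = q(u) + q(v) + \pair{u,v}$ lets one recover $q$ on an arbitrary vector from its values on a basis, this forces $q' = q$. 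Hence $A \in \Sp(2g,\Z)[q]$, and this $A$ does the job.

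This argument is soft, so I do not expect a genuine obstacle; the two points deserving care are the passage between $H$ and $H \otimes \Z/2\Z$ (one should note, as is immediate, that the reduction of an integral symplectic basis is a symplectic basis of $H \otimes \Z/2\Z$) and the rigidity fact that a quadratic refinement of a symplectic form over $\Z/2\Z$ is determined by its values on any symplectic basis — the linear-algebra analogue of Lemma \ref{lemma:basisvalues}. Conversely, the hypothesis $\vec q(\mathcal B) = \vec q(\mathcal B')$ is clearly necessary, since any $A$ stabilizing $q$ satisfies $\vec q(A\mathcal B) = \vec q(\mathcal B)$.
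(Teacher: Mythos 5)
Your argument is correct and matches the paper's proof essentially verbatim: both take the $A \in \Sp(2g,\Z)$ guaranteed by transitivity on symplectic bases, compare $q$ with its pullback under $A$ by evaluating on a symplectic basis, and invoke the fact that a quadratic refinement of a symplectic form over $\Z/2\Z$ is determined by its basis values. The only cosmetic difference is which basis ($\mathcal B$ versus $\mathcal B'$) you evaluate on, and whether the pullback is written as $q \circ A$ or $A^{-1}\cdot q$; these are equivalent.
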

	\begin{proof}
	There is some element $A \in \Sp(2g,\Z)$ such that $A(\mathcal B) = \mathcal B'$. We claim that necessarily $A \in \Sp(2g,\Z)[q]$. Let $q'$ be the quadratic form $q' = A \cdot q$. We wish to show that $q' = q$. It suffices to show that $\vec{q'}(\mathcal B') = \vec{q}(\mathcal B')$. By construction,
	\[
	\vec{q'}(\mathcal B' ) = \vec q(A^{-1} \mathcal B') = \vec q(\mathcal B) = \vec q(\mathcal B');
	\]
	the last equality holding by hypothesis.
	\end{proof}

In the statement of Lemma \ref{lemma:extendB} below, a {\em partial symplectic basis} is a collection of vectors $\{v_1, \dots, v_k\}$ with 	$\pair{v_{2i-1}, v_{2i}} = 1$ for all $2i \le k$ and all other pairings zero. We do not assume that $k$ is even. 
	
	\begin{lemma}\label{lemma:extendB}
	Let $q$ be a quadratic form, $\mathcal B = (x_1, y_1, \dots, x_g,y_g)$ a symplectic basis, and $\vec q(\mathcal B)$ the associated $q$--vector. Suppose $\{v_1, \dots, v_k\}$ is a partial symplectic basis, and moreover that $q(v_{2i-1}) = q(x_i)$ and $q(v_{2i}) = q(y_i)$ for all $2i \le k$. Then $\{v_1, \dots, v_k\}$ admits an extension to a symplectic basis $\mathcal B'$ with $\vec q(\mathcal B) = \vec q(\mathcal B')$. 
	\end{lemma}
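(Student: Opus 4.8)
The plan is to peel off the symplectic direct summand spanned by $\{v_1,\dots,v_k\}$, realize the remaining prescribed $q$--values on a complementary symplectic summand (checking via the Arf invariant that no obstruction arises), and finally lift the resulting basis from characteristic $2$ back to $\Z$.

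In detail, first suppose $k = 2m$ is even. Then the Gram matrix of $\pair{\cdot,\cdot}$ on $V := \Span{v_1,\dots,v_k}$ with respect to the $v_i$ is the standard symplectic matrix, hence unimodular, so $V$ is a symplectic direct summand and $H = V \oplus V^\perp$ with $V^\perp$ free of rank $2(g-m)$ carrying a unimodular symplectic form. Working mod $2$, I would use the hypotheses $q(v_{2i-1}) = q(x_i)$, $q(v_{2i}) = q(y_i)$ to compute $\Arf(q|_{V\otimes\Z/2\Z}) = \sum_{i=1}^m q(x_i)q(y_i)$, and then additivity of the Arf invariant (Remark \ref{remark:arfadd}) together with $\Arf(q) = \sum_{i=1}^g q(x_i)q(y_i)$ yields $\Arf(q|_{V^\perp \otimes \Z/2\Z}) = \sum_{j=1}^{g-m} q(x_{m+j})q(y_{m+j})$. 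This is exactly the condition needed for a nondegenerate quadratic form on $(\Z/2\Z)^{2(g-m)}$ to admit a symplectic basis with $q$--vector $(q(x_{m+1}),q(y_{m+1}),\dots,q(x_g),q(y_g))$: one builds a model form on $(\Z/2\Z)^{2(g-m)}$ realizing this $q$--vector on a standard symplectic basis, observes it has the same Arf invariant, and transports a symplectic basis using the classification of quadratic forms over $\Z/2\Z$. Since $\Sp(2(g-m),\Z) \onto \Sp(2(g-m),\Z/2\Z)$, this $\Z/2\Z$--basis lifts to a symplectic $\Z$--basis $(a_1,b_1,\dots,a_{g-m},b_{g-m})$ of $V^\perp$ with $q(a_j) = q(x_{m+j})$ and $q(b_j) = q(y_{m+j})$; then $\mathcal B' := (v_1,\dots,v_k,a_1,b_1,\dots,a_{g-m},b_{g-m})$ is the desired extension with $\vec q(\mathcal B') = \vec q(\mathcal B)$.

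For $k = 2m+1$ odd I would reduce to the even case by adjoining one more vector: setting $V' := \Span{v_1,\dots,v_{2m}}$ (again a symplectic summand), $v_{2m+1}$ is a primitive isotropic vector of the symplectic summand $V'^\perp$ of rank $2(g-m)\ge 2$, and I would produce $v_{2m+2} \in V'^\perp$ with $\pair{v_{2m+1},v_{2m+2}}=1$ and $q(v_{2m+2}) = q(y_{m+1})$, after which $\{v_1,\dots,v_{2m+2}\}$ satisfies the even-case hypotheses. The vectors $w$ with $\pair{v_{2m+1},w}=1$ form a coset of $(v_{2m+1})^\perp$ in $V'^\perp$, and one checks that this coset contains a vector of the required $q$--value: it is nonconstant for $q$ when $\rank V'^\perp \ge 4$, while when $\rank V'^\perp = 2$ a direct computation shows $q$ is constant on the coset with value $\Arf(q|_{V'^\perp})$, which the Arf bookkeeping forces to equal $q(y_{m+1})$. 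The main obstacle I anticipate is precisely this last check -- keeping the Arf--invariant accounting consistent in the odd case (equivalently, confirming that the hypotheses leave no room for an obstruction) -- whereas the even case is a routine matter of splitting off a symplectic summand and combining the classification of $\Z/2\Z$--quadratic forms with the surjectivity of the mod $2$ reduction of the symplectic group.
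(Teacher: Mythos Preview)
Your approach is correct and matches the paper's: split off the symplectic summand spanned by the $v_i$, use Arf additivity on the complement, and reduce odd $k$ to even by adjoining one vector. The paper's execution of the even case is a bit slicker: rather than building a basis on $V^\perp$ via the classification of $\Z/2\Z$--quadratic forms and then lifting through $\Sp(2(g-m),\Z)\onto\Sp(2(g-m),\Z/2\Z)$, it notes that the complement of $\Span{v_1,\dots,v_k}$ and the complement of $\Span{x_1,\dots,y_{k/2}}$ have equal Arf invariant, hence are isomorphic as symplectic modules-with-form, and simply transports the tail $(x_{k/2+1},\dots,y_g)$ across that isomorphism. One small imprecision in your odd-case analysis: the claim that $q$ is constant on the coset $\{w:\pair{v_{2m+1},w}=1\}$ when $\rank V'^\perp=2$ holds only when $q(v_{2m+1})=1$; when $q(v_{2m+1})=0$ the two coset elements have distinct $q$-values, so the desired $v_{2m+2}$ exists for the easier reason, and the conclusion is unaffected.
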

	\begin{proof}
	If $k$ is odd, choose an arbitrary element $v_{k+1}$ satisfying $\pair{v_k, v_{k+1}} = 1$ and $q(v_{k+1}) = q(y_{(k+1)/2})$; we proceed with the argument under the assumption that $k$ is even. Let $V$ denote the orthogonal complement to $\{x_1, y_1, \dots, x_{k/2}, y_{k/2}\}$; this is a symplectic $\Z$--module of rank $2g-k$ equipped with a quadratic form $q|_{V}$ induced by the restriction of $q$. Likewise, let $W$ denote the orthogonal complement to $\{v_1, \dots, v_k\}$; then $W$ is also a symplectic $\Z$--module of rank $2g-k$ equipped with a quadratic form $q|_{W}$. Since the $q$--values of $\{x_1, \dots, y_{k/2}\}$ and $\{v_1, \dots, v_k\}$ agree and the Arf invariant is additive under symplectic direct sum (Remark \ref{remark:arfadd}), we conclude that $\Arf(q|_{V}) = \Arf(q|_{W})$. Thus there is a symplectic isomorphism $f: V \to W$ that transports the form $q|_{V}$ to $q|_{W}$. The symplectic basis
	\[
	\mathcal B' = \{v_1, \dots, v_k, f(x_{k/2+1}), f(y_{k/2+1}), \dots, f(x_g), f(y_g)\}
	\]
	satisfies $\vec q(\mathcal B) = \vec q(\mathcal B')$ by construction.
	\end{proof}
	
	\begin{lemma}\label{lemma:symptools}\ 	
	\begin{enumerate}
	\item\label{item:contractible} Let $v_1, v_2, v_3$ and $v_1', v_2', v_3'$ be partial symplectic bases for $H$. If $q(v_i) = q(v_i')$ for $i = 1,2,3$, then there is some element $A \in \Sp(2g,\Z)[q]$ such that $A(v_1\wedge v_2 \wedge v_3) = v_1'\wedge v_2' \wedge v_3'$. 
	\item \label{item:threetuple} Let $v_1, v_2, v_3$ and $v'_1, v_2', v_3'$ be triples such that $\pair{v_i, v_j} = \pair{v_i', v_j'} = 0$ for all pairs of indices $1 \le i < j \le 3$. If $q(v_i) = q(v_i')$ for $i = 1,2,3$, then there is some element $A \in \Sp(2g,\Z)[q]$ such that $A(v_1 \wedge v_2 \wedge v_3) = v_1'\wedge v_2' \wedge v_3'$.
	\end{enumerate}
	\end{lemma}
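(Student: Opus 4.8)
Both parts of Lemma~\ref{lemma:symptools} are statements that $\Sp(2g,\Z)[q]$ acts transitively on certain configurations of vectors with prescribed $q$-values; the natural strategy is to reduce each to the transitivity statement for symplectic bases provided by Lemma~\ref{lemma:existsA}, using Lemma~\ref{lemma:extendB} to complete partial configurations to full symplectic bases. The key point throughout is that $\Sp(2g,\Z)[q]$ preserves not only the symplectic form but the quadratic refinement, so any manipulation we perform on the $v_i$ must be tracked at the level of $q$-values.

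For part~\ref{item:contractible}: the triple $(v_1,v_2,v_3)$ is already a partial symplectic basis, so $\pair{v_1,v_2}=1$, $\pair{v_2,v_3}=1$ and $\pair{v_1,v_3}=0$; likewise for the primed triple. First I would extend $(v_1,v_2,v_3)$ to a symplectic basis $\mathcal B = (v_1, v_2, w_3, w_4, x_3, y_3,\dots,x_g,y_g)$ (note the reindexing: $v_1,v_2$ play the role of the first $x,y$ pair, and $v_3$ becomes the third basis vector, so we first pick a partner $w$ for $v_3$ and then fill out), being careful to use Lemma~\ref{lemma:extendB} so that one has full control of $\vec q(\mathcal B)$. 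Do the same for the primed triple, producing $\mathcal B'$ — and here one arranges the $q$-vector to agree: since $q(v_i)=q(v_i')$ for $i=1,2,3$, after choosing compatible partners one can appeal to Lemma~\ref{lemma:extendB} to force $\vec q(\mathcal B)=\vec q(\mathcal B')$. Then Lemma~\ref{lemma:existsA} produces $A\in\Sp(2g,\Z)[q]$ with $A(\mathcal B)=\mathcal B'$, and in particular $A(v_i)=v_i'$ for $i=1,2,3$, so certainly $A(v_1\wedge v_2\wedge v_3) = v_1'\wedge v_2'\wedge v_3'$.

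For part~\ref{item:threetuple}: now the $v_i$ are pairwise orthogonal, which is a \emph{weaker} hypothesis — the triple spans an isotropic $3$-plane rather than sitting inside a symplectic plane plus a line. The idea is the same: the standard change-of-coordinates principle lets one extend an isotropic triple $(v_1,v_2,v_3)$ to a symplectic basis in which $v_1,v_2,v_3$ appear among the "$x$" vectors, say as $x_1,x_2,x_3$ of a basis $(x_1,y_1,x_2,y_2,x_3,y_3,\dots)$. The subtlety is choosing the dual vectors $y_1,y_2,y_3$ and the $q$-values on them so that the resulting $q$-vectors of the two completed bases match; this again comes down to an application of Lemma~\ref{lemma:extendB} (applied with $k$ odd at an intermediate stage, or iteratively), using only the fact that the prescribed $q$-values $q(v_i)=q(v_i')$ agree and that $\Arf$ is additive over symplectic direct sums (Remark~\ref{remark:arfadd}), so the orthogonal complements of the two partial bases carry isomorphic quadratic forms. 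Once $\mathcal B$ and $\mathcal B'$ are matched, Lemma~\ref{lemma:existsA} again supplies the desired $A\in\Sp(2g,\Z)[q]$.

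\textbf{Main obstacle.} The routine linear algebra is in extending a partial configuration to a symplectic basis; the genuine content is bookkeeping the quadratic form. The one place that requires care is verifying, in each part, that one \emph{can} complete both triples to symplectic bases with \emph{equal} $q$-vector: this is exactly where $2g\ge 6$ is used (so that the orthogonal complement of an isotropic $3$-plane, or of a symplectic plane plus a line, is nonzero and large enough to absorb any Arf-invariant discrepancy via Lemma~\ref{lemma:extendB}), and where one must be slightly attentive that when $k$ is odd the intermediate step of Lemma~\ref{lemma:extendB} introduces a choice of $q$-value that must be made consistently on both sides. Provided that is handled, everything else is a direct citation of Lemmas~\ref{lemma:existsA} and~\ref{lemma:extendB}.
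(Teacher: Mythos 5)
Your proof takes essentially the same approach as the paper's: extend both triples to full symplectic bases, invoke Lemma~\ref{lemma:extendB} to arrange equal $q$-vectors, and conclude via Lemma~\ref{lemma:existsA}. One small slip to correct: for a partial symplectic basis $\{v_1, v_2, v_3\}$ as defined in the paper, the pairings are $\pair{v_1,v_2}=1$ and $\pair{v_1,v_3}=\pair{v_2,v_3}=0$ (not $\pair{v_2,v_3}=1$ as you wrote); your subsequent step of choosing a separate partner $w$ for $v_3$ is what the correct structure requires, so the argument is unaffected.
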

	\begin{proof}
	For \eqref{item:contractible}, we extend $\{v_1,v_2,v_3\}$ and $\{v_1',v_2',v_3'\}$ to symplectic bases $\mathcal B, \mathcal B'$. By Lemma \ref{lemma:extendB}, we can furthermore assume that $\vec q(\mathcal B) = \vec q(\mathcal B')$. The required element $A \in \Sp(2g,\Z)[q]$ is now obtained by an appeal to Lemma \ref{lemma:existsA}. 
	
	The proof of \eqref{item:threetuple} is very similar: $\{v_1, v_2, v_3\}$ and $\{v_1', v_2', v_3'\}$ can again be extended to symplectic bases $\mathcal B, \mathcal B'$ with equal $q$--vectors and the result follows by Lemma \ref{lemma:existsA}. 
	\end{proof}

\para{Some topological computations} Along with symplectic linear algebra, we will also need to see that $\mathcal T_\phi$ contains an ample supply of certain specific mapping classes.
\begin{lemma}\label{lemma:twistpowers}
Let $\phi$ be an $r$--spin structure on a surface $\Sigma_g$ of genus $g \ge 3$ (if $g = 3$, assume $\Arf(\phi) = 1$). Let $b \subset \Sigma_g$ be a nonseparating simple closed curve satisfying $\phi(b) = -1$. Then $T_b^r \in \mathcal T_\phi$. 
\end{lemma}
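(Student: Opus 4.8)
The plan is to mimic the computation in the proof of Lemma \ref{lemma:hardeven} -- indeed, that computation is essentially the present lemma in the special case $r = 2g-2$ -- by writing $T_b^r$ as a word in admissible twists, using the $D$--relation (Lemma \ref{lemma:dn}) and the chain relation (Lemma \ref{lemma:chain}) applied to configurations of admissible curves containing $b$, with the sizes of the configurations calibrated to $r$ rather than to $g$.

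Concretely, I would first use the change--of--coordinates principle (Lemmas \ref{lemma:ccpodd} and \ref{lemma:ccpeven}) to realize inside $\Sigma_g$ a configuration $\mathcal C$ of simple closed curves every one of which is admissible, containing $b$ and arranged as in Figure \ref{figure:hardeven2} but with the long chain truncated to length roughly $r$: it should contain a $\mathscr D_{n_1}$-- and a $\mathscr D_{n_2}$--subconfiguration sharing a single complementary boundary curve $c'$, with $b$ occurring as the distinguished boundary curve $\Delta_0$ of the first subconfiguration and an auxiliary curve $c''$ occurring as $\Delta_0$ of the second, together with a $3$--chain in $\mathcal C$ having boundary components $b$ and $c''$. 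Since $a \cup a' \cup b$ is forced to be a pair of pants, homological coherence (Lemma \ref{lemma:homcoh}) automatically pins $\phi(b) = -1$, as required; likewise $\phi(c'') = -1$, which is exactly the boundary value needed for a $3$--chain of admissible curves to have $b \cup c''$ as its boundary. As every curve of $\mathcal C$ is admissible, the groups generated by the two $\mathscr D_n$--configurations lie in $\mathcal T_\phi$, and the chain relation puts $T_b T_{c''} \in \mathcal T_\phi$.

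Now the $D$--relation gives $T_b^{p_1} T_{c'} \in \mathcal T_\phi$ and $T_{c''}^{p_2} T_{c'} \in \mathcal T_\phi$ for the appropriate exponents $p_i$; dividing cancels $T_{c'}$ and leaves $T_b^{p_1} T_{c''}^{-p_2} \in \mathcal T_\phi$, and multiplying by the $p_2$--nd power of $T_b T_{c''}$ cancels $c''$ and yields $T_b^{p_1+p_2} \in \mathcal T_\phi$. It then suffices to choose $n_1, n_2$ so that $p_1 + p_2 = r$ -- for instance $n_1 = r-1$ and $n_2 = 5$ when $r$ is even, with the remaining small values of $r$ handled directly, and an analogous choice using one even--sized $\mathscr D_n$ (hence one additional auxiliary boundary curve, eliminated by a further chain relation) when $r$ is odd. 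The one genuinely delicate point, and the main obstacle, is the first step: showing that all of these admissible subconfigurations, the connecting $3$--chain, and $b$ can be realized simultaneously inside a single genus--$g$ surface with every winding number equal to $0$ and, when $r$ is even, with the Arf invariants of the relevant subsurfaces compatible. This is a bounded but somewhat intricate application of the change--of--coordinates principle, and the hypothesis $g \ge 3$ (with $\Arf(\phi) = 1$ when $g = 3$) is precisely what guarantees that the relevant subsurfaces carry maximal admissible chains, so that the construction goes through.
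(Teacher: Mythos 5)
Your proposal takes a genuinely different route from the paper's. The paper treats Lemma \ref{lemma:twistpowers} as an explicit patchwork: for $g \ge 5$ and $r < g-1$ it invokes \cite[Lemma 5.2]{Salter_monodromy}; for $r = 2g-2$ it cites Lemmas \ref{lemma:hardeven} and \ref{lemma:hardodd}, then transports the result from the specific curve constructed there to arbitrary $b$ using change of coordinates and the \emph{normality} of $\T_\phi$ in $\Mod_g[\phi]$; and for $r = g-1$ (with $g \ge 4$) and the sporadic cases $(g,r) \in \{(3,2),(4,2)\}$ it builds a configuration in which one of the boundary curves of a $\mathscr D_n$ configuration is \emph{separating} and then appeals to Lemma \ref{step2} (the containment $\mathcal K_g \le \T_\phi$, established in Step 2) to discard that separating twist. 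Your plan is instead to scale a single Lemma \ref{lemma:hardeven}--type configuration uniformly in $r$, using only admissible curves and never the Johnson kernel. That would be cleaner if it worked, but as written it has real gaps, and they land exactly on the cases the paper works hardest on.

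The central gap is the one you flag and then set aside: the simultaneous realizability, with every curve admissible, of the two $\mathscr D_{n_i}$ subconfigurations sharing $c'$, the connecting $3$--chain with boundary $b \cup c''$, and the correct boundary identifications. This is not a routine invocation of change of coordinates: for $r$ even, Lemma \ref{lemma:ccpeven} imposes Arf--parity constraints on which subsurfaces can carry maximal admissible chains, and reconciling these constraints is precisely what the paper's cases (2) and (3) do. A second gap is the $r$ odd case: the $D$ relation for even $n$ produces \emph{two} extra boundary curves $\Delta_1,\Delta_1'$, not ``one additional auxiliary boundary curve,'' and the chain relation on an odd chain yields only products $T_{d_1}T_{d_2}$ of boundary twists, so it is not clear how you eliminate the spurious factor without a separating twist (i.e.\ without Lemma \ref{step2} after all). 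Finally, your exponent bookkeeping ($n_1 = r-1$) breaks down at small $r$ (e.g.\ $r = 2$), and the ``handled directly'' small cases are exactly the new sporadic cases $(3,2)$ and $(4,2)$, where the $\Arf(\phi) = 1$ hypothesis for $g = 3$ is load--bearing. Until these points are addressed, this is a plausible outline rather than a proof.
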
 	
\begin{proof}
This will require a patchwork of arguments depending on the specific values of $r$ and $g$. For $g \ge 5$ and $r < g-1$, this was established in \cite[Lemma 5.2]{Salter_monodromy}. We will treat the remaining cases as follows: (1) for $g \ge 3$ and $r = 2g-2$, (2) for $g \ge 4$ and $r = g-1$, (3) the remaining sporadic cases appearing for $g \le 4$.

(1): Lemmas \ref{lemma:hardeven} and \ref{lemma:hardodd} furnish a specific $b$ with $\phi(b) = -1$ and $T_b^{2g-2} \in \T_\phi$. By the change--of--coordinates principle (specifically Lemma \ref{lemma:ccpcurves}), given {\em any} nonseparating $b'$ satisfying $\phi(b') = -1$, one can find an element $f \in \Mod_g[\phi]$ such that $f(b') = b$; consequently the elements $T_b^{2g-2}$ and $T_{b'}^{2g-2}$ are conjugate elements of $\Mod_g[\phi]$. To conclude the argument, we observe that $\T_\phi$ is a {\em normal} subgroup of $\Mod_g[\phi]$, so that $T_{b'}^{2g-2} \in \T_\phi$ as desired.

(2): Assume now $g \ge 4$ and $r = g-1$. Let $b$ be an arbitrary nonseparating curve satisfying $\phi(b) = -1$. Our first task is to find a certain configuration of admissible curves well--adapted to $b$; the $D$ relation (Lemma \ref{lemma:dn}) then allows us to exhibit $T_b^r \in \T_\phi$. The configuration we construct is depicted in Figure \ref{figure:g-1}. 

By the change--of--coordinates principle (Lemma \ref{lemma:ccpodd} or \ref{lemma:ccpeven}), there exists a chain of admissible curves $a_2, \dots, a_{2g-3}$ disjoint from $b$. Let $a_1'$ be any curve satisfying $i(a_1', b) = i(a_1', a_2) = 1$ and $i(a_1', a_j) = 0$ for $j \ge 2$. For any $k \in \Z$, the curves $T_b^k(a_1')$ have these same intersection properties. Since $\phi(b) = -1$, twist linearity (Definition \ref{definition:spin}.\ref{item:TL}) implies that we can choose $a_1 = T_b^k(a_1')$ for suitable $k$ such that $a_1$ is admissible. Finally, let $a_0$ be a curve so that $a_0 \cup a_2 \cup b$ forms a pair of pants to the left of $b$, and such that $i(a_0, a_3) = 1$ and $i(a_0, a_j) = 0$ for all other $j$. By homological coherence (Lemma \ref{lemma:homcoh}), $a_0$ is also admissible. Finally, let $d$ be chosen so that $b \cup a_2 \cup a_4 \cup \dots \cup a_{2g-8} \cup d$ bounds a subsurface to the left of $b$ of genus $2$ and $g-2$ boundary components, such that $i(d, a_{2g-7}) = 1$ and $i(d,a_j) = 0$ for other $j$. Since $r = g-1$, homological coherence (Lemma \ref{lemma:homcoh}) implies that $d$ is admissible. 

Consider the $\mathscr D_{2g-3}$ configuration determined by the curves $a_0, a_2, \dots, a_{2g-3}$. By construction, one boundary component is $b$, and the other is the curve $c$ shown in Figure \ref{figure:g-1}. Applying the $D$ relation to this configuration shows that 
\begin{equation}\label{first}
T_b^{2g-5} T_c \in \T_\phi.
\end{equation}
Consider next the $\mathscr D_{2g-6}$ configuration determined by the curves $a_0, a_2, \dots a_{2g-7}, d$. This configuration has boundary components $b,c$, and the {\em separating} curve $c'$. Applying the $D$ relation shows
\begin{equation}\label{second}
T_b^{g-4} T_c T_{c'} \in \T_\phi;
\end{equation}
since $c'$ is separating, we invoke Lemma \ref{step2} to conclude that also $T_b^{g-4} T_c \in \T_\phi$. Combining \eqref{first} and \eqref{second} shows that $T_b^{g-1} \in \T_\phi$.

(3) The remaining cases are $(g,r) = (3,2)$ and $(4,2)$. Let $b$ be a nonseparating curve satisfying $\phi(b) = -1$, and choose an admissible curve $a_1$ disjoint from $b$. Let $a_2$ be chosen so that $a_1 \cup a_2 \cup b$ forms a pair of pants; by homological coherence (Lemma \ref{lemma:homcoh}), $a_2$ is also admissible. By the change--of--coordinates principle (Lemmas \ref{lemma:ccpodd} and \ref{lemma:ccpeven}) , it is easy to find an admissible curve $a_0$ with the following intersection properties:
\[
i(a_0, b) = 0, \qquad i(a_0,a_1) = i(a_0,a_2) = 1.
\]
Finally, choose $a_3$ so that the following conditions are satisfied: $i(a_3, a_0) = 1$ and $a_3$ is disjoint from all other curves under consideration, and $a_1 \cup a_3$ bounds a subsurface of genus $1$ containing $b$. By homological coherence, $a_3$ is admissible, and by construction, $(a_0, a_1, a_2, a_3)$ forms a $\mathscr D_4$ configuration. In the notation of Figure \ref{figure:dnrel}, the boundary component $\Delta_0$ is separating, and the curves $\Delta_1$ and $\Delta_1'$ are both isotopic to $b$. By the $D$ relation (Lemma \ref{lemma:dn}),
\[
T_{\Delta_0}T_{b}^2 \in \T_\phi.
\]
By Lemma \ref{step2}, since $\Delta_0$ is separating, it follows that $T_{b}^2 \in \T_\phi$ as required. 
\end{proof}	

\begin{figure}
\labellist
\small
\pinlabel $b$ [r] at 72.8 84
\pinlabel $a_1$ [br] at 61.6 61.6
\pinlabel $a_0$ [r] at 135.2 84
\pinlabel $a_2$ [b] at 112.8 61.6
\pinlabel $a_{2g-7}$ [b] at 214.4 66.4
\pinlabel $d$ [r] at 208 22.4
\pinlabel $a_{2g-3}$ [b] at 324.8 66.4
\pinlabel $c$ [r] at 72.8 22.4
\pinlabel $c'$ at 235 95
\endlabellist
\includegraphics{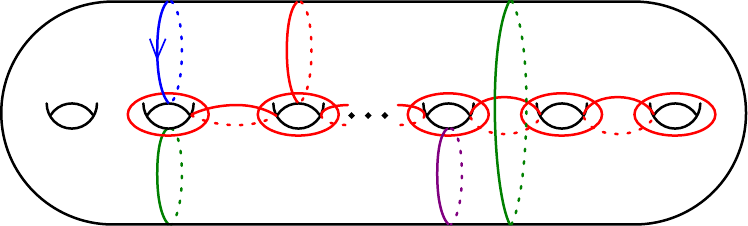}
\caption{The configuration used in the proof of Lemma \ref{lemma:twistpowers}.}
\label{figure:g-1}
\end{figure}

Taking connected sums of curves allows us to build new curves from old ones and relate the winding number of the new to that of the old. This construction will be used in Lemma \ref{lemma:ctwist} below.
\begin{definition}[Connected sums]\label{definition:CAS}
Let $a$ and $b$ be disjoint oriented simple closed curves, and let $\epsilon$ be an embedded arc connecting the left side of $a$ to that of $b$ so that $\epsilon$ is otherwise disjoint from $a \cup b$. A regular neighborhood of $ a \cup \epsilon \cup b$ is then a three--holed sphere; two of the boundary components are isotopic to $a$ and $b$. The {\em connected sum} $a+_\epsilon b$ is the simple closed curve in the isotopy class of the third boundary component. See Figure \ref{figure:casum}.
\end{definition}

\begin{figure}
\labellist
\Huge
\pinlabel $\rightsquigarrow$ at 180 45
\endlabellist
\includegraphics{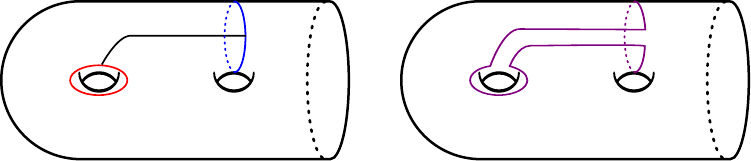}
\caption{The connected sum operation.}
\label{figure:casum}
\end{figure}

\begin{lemma}\label{lemma:ctwist}
Let $(x_i, y_i)$ and $(x_j,y_j)$ be distinct pairs of symplectic basis vectors, and let $z \in H$ be a primitive vector orthogonal to $\langle x_i,y_i,x_j,y_j\rangle$; if $r$ is even, suppose $q(z) = 1$. Then there is an element $f \in \T_\phi \cap \I_g$ satisfying
\[
\tau(f) = z \wedge (x_i \wedge y_i - x_j \wedge y_j).
\]
\end{lemma}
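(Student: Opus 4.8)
The plan is to realize $f$ as an explicit product of genus--one bounding--pair maps whose Johnson images are $z\wedge x_i\wedge y_i$ and $-\, z\wedge x_j\wedge y_j$ respectively, and then to arrange the underlying configuration of curves carefully enough that this product is visibly a word in admissible Dehn twists. The organizing identity is the bounding--pair formula of Lemma~\ref{lemma:johnsonhom}, part~\eqref{item:bpformula}: in the case $h=1$ it computes $\tau(T_cT_d^{-1}) = u\wedge v\wedge [c]$ whenever $c\cup d$ bounds a $\Sigma_{1,2}$ with a genus--one piece symplectically spanned by $u,v$. Summing one such contribution over each of the two handles $\langle x_i,y_i\rangle$ and $\langle x_j,y_j\rangle$ --- each taken ``around'' a common curve in the class $z$ --- produces an element with Johnson image $z\wedge(x_i\wedge y_i - x_j\wedge y_j)$; note that $C_s\circ\tau$ vanishes on this class, consistent with Lemma~\ref{lemma:characterizetorelli}.

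Concretely, first use Lemma~\ref{lemma:ccphomol} to choose a nonseparating simple closed curve $\hat z$ with $[\hat z] = z$ and $\phi(\hat z) = 0$; this is where the hypothesis $q(z)=1$ for $r$ even is used, and primitivity of $z$ is used to have a simple representative at all. Next, using the spin change--of--coordinates principle (Lemmas~\ref{lemma:ccpodd} and~\ref{lemma:ccpeven}) together with the hypothesis $g\ge 3$, choose disjoint genus--one subsurfaces $W_i,W_j$, disjoint from $\hat z$, carrying the handles $\langle x_i,y_i\rangle$ and $\langle x_j,y_j\rangle$, and arranged so that $\phi$ restricts to each $W_i,W_j$ with the parity making a \emph{maximal admissible chain} available on it (the global Arf constraint leaves enough room once $g\ge 3$). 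Adjoining short arcs connecting $\hat z$ to $W_i$ and to $W_j$, build from $\hat z$, the admissible chains on $W_i,W_j$, and curve--arc sums (Definition~\ref{definition:CAS}) the boundary curves of two copies of $\Sigma_{1,2}$ with one boundary component homologous to $z$; the associated bounding--pair maps then have the Johnson images above, and an appropriate product $f$ of one with the inverse of the other satisfies $\tau(f) = z\wedge(x_i\wedge y_i - x_j\wedge y_j)$. The point of routing everything through $\hat z$ and through admissible chains is that, after applying the chain relation (Lemma~\ref{lemma:chain}) to rewrite the boundary twists, $f$ becomes a word in twists about \emph{admissible} curves, with any leftover separating twist absorbed by Step~2 ($\K_g\le\T_\phi$, Lemma~\ref{step2}) and any leftover power $T_b^r$ with $\phi(b)=-1$ handled by Lemma~\ref{lemma:twistpowers}; hence $f\in\T_\phi$, and $f\in\I_g$ since $\tau(f)$ is well defined.

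There is a cleaner route to the bookkeeping that I would run in parallel as a fallback: since $\Psi(\T_\phi)$ is all of $\Sp(2g,\Z)[q]$ (respectively $\Sp(2g,\Z)$) by Step~1 (Lemma~\ref{step1}) and $\T_\phi$ normalizes $\T_\phi\cap\I_g$, conjugation by $\T_\phi$ makes the subgroup $\tau(\T_\phi\cap\I_g)\le\wedge^3 H/H$ into an $\Sp(2g,\Z)[q]$--submodule. Combined with the symplectic linear algebra of Lemmas~\ref{lemma:existsA}--\ref{lemma:symptools} --- extending the partial symplectic data $(z,x_i,y_i,x_j,y_j)$ through the ambient surface with matching $q$--vector --- this reduces the whole lemma to exhibiting $f$ for finitely many standard configurations $(z,x_i,y_i,x_j,y_j)$ (one for each relevant pair of handle--Arf invariants), so only a single hands--on computation of $f$ is needed. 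The main obstacle I anticipate is precisely this hands--on step: one must choose $\hat z$, the handles $W_i,W_j$, the connecting arcs, and the filling admissible chains so that simultaneously (a) every curve to be twisted about is admissible, (b) the required disjointness and single--intersection relations hold, and (c) after applying the chain relation the word telescopes onto exactly the two bounding--pair maps with the correct Johnson image. The change--of--coordinates lemmas supply enough flexibility to meet (a) and (b) when $g\ge 3$, but verifying (c) --- that the non--admissible and separating twists genuinely cancel or are absorbed --- is the delicate part of the argument.
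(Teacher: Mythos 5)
Your proposal is structurally close to the paper's: both realize the target tensor via the bounding--pair formula (Lemma~\ref{lemma:johnsonhom}.\ref{item:bpformula}) for two genus--one bounding pairs sharing a common admissible curve in the class $z$, and both aim to convert the resulting product into admissible twists via the chain relation. The gap is precisely what you flagged in point (c), but it is more than a ``delicate verification'' --- it is a hypothesis your construction silently needs and cannot ensure. When you ask to ``arrange'' that each of $W_i, W_j$ supports a maximal admissible chain, you are requiring $\Arf(\phi|_{W_i}) = \Arf(\phi|_{W_j}) = 1$. But a genus--one one--holed subsurface carrying the handle $\langle x_i, y_i\rangle$ has $\Arf(\phi|_{W_i}) = q(x_i)\,q(y_i)$, a quantity fixed by the data of the lemma and not adjustable (the ambient genus $g\ge 3$ gives no leverage here). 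The paper invokes Lemma~\ref{lemma:ctwist} with $x_i,y_i,x_j,y_j$ entirely arbitrary --- including exactly the cases in the proof of Lemma~\ref{lemma:Texhibit} where one of the products $q(x_i)q(y_i)$, $q(x_j)q(y_j)$ vanishes --- so your plan fails where the result is most needed. Your symplectic--equivariance fallback does not repair this: Lemmas~\ref{lemma:existsA}--\ref{lemma:symptools} preserve $q$--profiles, so they only normalize within a fixed profile, and you would still owe a hands--on construction for the even profiles.

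The paper avoids the parity obstruction by never placing a maximal admissible chain on the individual handle subsurfaces. Instead, after choosing the admissible $c$ with $[c]=z$ and a genus--two subsurface $S$ with $\partial S = a\cup b$, $[a]=[b]=[c]=z$, it shows $T_aT_b \in \T_\phi$ in one step by producing a maximal admissible $5$--chain on all of $S$. To build the chain without any hypothesis on $\Arf(\phi|_S)$, it first takes a geometric symplectic basis $\{c_1,c_2,c_3,c_4\}$ for a genus--two piece of $S$ with $c_1,c_2,c_3$ admissible and $\phi(c_4)\in\{0,-1\}$ (Lemma~\ref{lemma:ccpeven}.\ref{item:gsbeven} leaves exactly this much freedom), and then, in the ``bad'' case $\phi(c_4)=0$, replaces $c_4$ by the curve--arc sum $c_4+_\epsilon a$; since $\phi(a)=-2$ by homological coherence, the sum has $\phi$--value $-1$ and the basis is repaired. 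This use of the \emph{boundary curve} $a$ as the second summand in a curve--arc sum is the device that absorbs the Arf constraint uniformly, and it is the concrete idea missing from your argument; once inserted, your outline carries through.
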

\begin{proof}
\begin{figure}
\labellist
\small
\pinlabel $a$ [bl] at 24 102
\pinlabel $b$ [bl] at 211.2 102
\pinlabel $c$ [bl] at 120 102
\pinlabel $x_i$ [bl] at 72 73.4
\pinlabel $y_i$ [l] at 65 26.8
\pinlabel $x_j$ [bl] at 170 72.4
\pinlabel $y_j$ [l] at 161.6 26.8
\endlabellist
\includegraphics{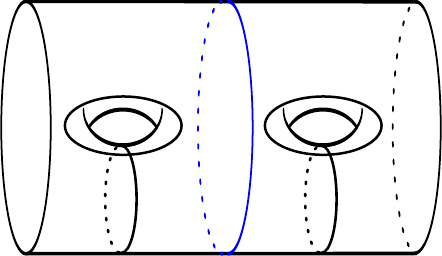}
\caption{The configuration of curves used to exhibit (G\ref{item:difference}).}
\label{figure:g2}
\end{figure}
This follows the argument for (G\ref{item:difference}) given in \cite[proof of Lemma 5.8]{Salter_monodromy}. The change--of--coordinates principle (in the guise of Lemma \ref{lemma:ccphomol}) implies that there exists an admissible curve $c$ such that $[c] =z$ (in the case of $r$ even, this uses the assumption that $q(z) = 1$). By the classical change--of--coordinates principle, there exist curves $a,b$ with the following properties: (1) $a \cup b$ bounds a subsurface $S$ of genus $2$, (2) $c \subset S$, (3) $[a] = [b] = [c]$, and $c$ separates $S$ into two subsurfaces $S_1, S_2$ each of genus $1$, (4) $x_i, y_i$ determine a symplectic basis for $S_1$ and $x_j, y_j$ determine a symplectic basis for $S_2$. Such a configuration is shown in Figure \ref{figure:g2}. By homological coherence (Lemma \ref{lemma:homcoh}), $\phi(a) = \phi(b) = -2$ when $a,b$ are oriented with $S$ to the left. 
By Lemma \ref{lemma:johnsonhom}.\ref{item:bpformula},
\[
\tau(T_a T_c^{-1}) = z \wedge x_i \wedge y_i
\]
and
\[
\tau(T_b T_c^{-1}) = -z \wedge x_j \wedge y_j.
\]
Therefore, it is necessary to show $T_a T_b T_c^{-2} \in \mathcal T_\phi$. By hypothesis, $T_c \in \mathcal T_\phi$, so it remains to show $T_a T_b \in \T_\phi$ as well. 

We claim that there exists a maximal chain $a_1, \dots, a_5$ of admissible curves on $S$; modulo this, the claim follows by an application of the chain relation. Choose an arbitrary subsurface $S' \subset S$ homeomorphic to $\Sigma_2^1$, and let $\mathcal B = \{c_1,c_2,c_3,c_4\}$ be a geometric symplectic basis for $S'$; by Lemma \ref{lemma:ccpodd}.\ref{item:oddgsb} or \ref{lemma:ccpeven}.\ref{item:gsbeven}, such a basis can be chosen with $c_1,c_2,c_3$ admissible, and $c_4$ either admissible or else satisfying $\phi(c_4) = -1$. 

If $\phi(c_4) = 0$, consider the connected sum
\[
c_4' = c_4 +_{\epsilon} a,
\]
where $\epsilon$ is disjoint from $c_1,c_2, c_3$. By homological coherence (Lemma \ref{lemma:homcoh}), $\phi(c_4') = -1$, and $\{c_1, c_2, c_3, c_4'\}$ forms a geometric symplectic basis. Thus we may assume that $S'$ is chosen with $c_1,c_2,c_3$ admissible and $\phi(c_4) = -1$. Under this assumption, we can set $a_1 = c_1, a_2 = c_2, a_4 = c_3$, and $a_3 = c_4 +_\epsilon a_2$ with $\epsilon$ an arc connecting the left side of $c_4$ to $a_1$ and otherwise disjoint from the other curves under consideration. By homological coherence, $a_3$ is admissible as well. Now let $a_5$ be any curve extending $a_1,\dots, a_4$ to a maximal chain on $S$. Since $\phi(a) = -2$ when oriented with $S$ to the left, homological coherence implies that $a_5$ is admissible, and we have constructed the required maximal chain. 
\end{proof}

\para{Concluding Lemma \ref{lemma:refine2}} We can now show that $\T_\phi \cap \I_g$ surjects onto the kernel of the contraction $C_s$. Note that establishing Lemma \ref{lemma:Texhibit} will complete the proof of Lemma \ref{lemma:refine2}, which in turn completes the final Step 3 of the proof of Proposition \ref{prop:admissfull}.

\begin{lemma}\label{lemma:spgenset}
For any $s$ dividing $g-1$, the subspace $\ker(C_s) \le \wedge^3 H/H$ has a generating set consisting of the following classes of elements; in each case $z \in \{x_1, y_1, \dots, x_g, y_g\}$ with further specifications listed below.
\begin{enumerate}[(G1)]
\item\label{item:deltatwist} $s (z \wedge x_i \wedge y_i)$ for $z \ne x_i, y_i$
\item\label{item:difference} $z \wedge (x_i \wedge y_i - x_j\wedge y_j)$ for $z \ne x_i, y_i, x_j, y_j$
\item\label{item:lagrangian} $z_i \wedge z_j \wedge z_k$ for $\{i,j,k\} \subset \{1, \dots, g\}$ distinct. 
\end{enumerate}
\end{lemma}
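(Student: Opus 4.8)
The plan is to prove this by explicit integral linear algebra in $\wedge^3 H$, using a direct sum decomposition adapted to a symplectic basis. Fix a symplectic basis $x_1, y_1, \dots, x_g, y_g$ and split the standard $\Z$--basis of $\wedge^3 H$: call a wedge of three distinct basis vectors \emph{Lagrangian} if no two of them form a dual pair $\{x_i, y_i\}$, and \emph{split} if it has the form $x_i \wedge y_i \wedge w$ with $w \notin \{x_i, y_i\}$. Writing $\wedge^3 H = L \oplus N$ for the $\Z$--spans of the Lagrangian and split wedges, the Lagrangian wedges are exactly the classes listed in (G\ref{item:lagrangian}), so $L$ is freely generated by the classes (G\ref{item:lagrangian}); and since $C$ is assembled from the three pairings $\pair{u,v}, \pair{v,w}, \pair{w,u}$, all of which vanish on a Lagrangian wedge, $C_s$ is identically zero on $L$.

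Next I would analyze $N$. For each basis vector $w$, let $N_w$ be the $\Z$--span of the split wedges $x_i \wedge y_i \wedge w$ (over all pairs with $w \notin \{x_i, y_i\}$); this is free of rank $g-1$ and $N = \bigoplus_w N_w$. A one--line computation with the contraction formula gives $C_s(x_i \wedge y_i \wedge w) \equiv w \pmod s$, so $C_s$ carries $N_w$ onto $\Z w/ s\Z w$ by the ``sum of coordinates mod $s$'' map. Since the targets $\Z w/s\Z w$ are the distinct standard summands of $H_1(\Sigma_g;\Z/s\Z)$, it follows that $\ker(C_s) = L \oplus \bigoplus_w \ker(C_s|_{N_w})$ inside $\wedge^3 H$, where $\ker(C_s|_{N_w})$ is the sublattice of vectors in $N_w$ whose coordinates sum to $0$ modulo $s$.

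Now I would match the proposed spanning set to this kernel. Each class in (G\ref{item:deltatwist}) is, up to sign, $s$ times a basis vector of some $N_w$, and each class in (G\ref{item:difference}) is a difference of two basis vectors lying in a common $N_w$; the differences generate the sum--zero sublattice of $N_w$, and adjoining one vector of the form $s\cdot(\text{basis vector})$ enlarges it to precisely the sublattice where the coordinate sum is $\equiv 0 \pmod s$ (under the identification $N_w \cong \Z^{g-1}$ the sum map exhibits this sublattice as the preimage of $s\Z$). Hence (G\ref{item:deltatwist}) together with (G\ref{item:difference}) generate $\bigoplus_w \ker(C_s|_{N_w})$, and adjoining (G\ref{item:lagrangian}) we obtain all of $\ker(C_s) \le \wedge^3 H$. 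Finally I would descend to the quotient: the embedding $H \into \wedge^3 H$ is $z \mapsto z \wedge \omega$, and $C(z \wedge \omega) = (g-1)z \equiv 0 \pmod s$ because $s \mid g-1$ (this divisibility is also exactly what makes $C_s$ descend to $\wedge^3 H/H$), so the image of $H$ lies in $\ker(C_s)$; moreover $\omega \wedge x_k$ and $\omega \wedge y_k$ each lie in a single summand $N_w$ as the ``all--ones'' vector, whose coordinate sum is $g-1 \equiv 0 \pmod s$, so they already belong to the span of (G\ref{item:deltatwist}) and (G\ref{item:difference}). Therefore $\ker(C_s)$ in $\wedge^3 H/H$ is the image of the group generated by (G\ref{item:deltatwist}), (G\ref{item:difference}), (G\ref{item:lagrangian}), and is in particular generated by the images of those classes.

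I expect the only genuine difficulty to be the integral bookkeeping. Over $\Q$ the statement is immediate from the $\Sp(2g,\Q)$--equivariant splitting of $\wedge^3 (H\otimes\Q)$ into its primitive summand and a copy of $H\otimes\Q$; over $\Z$ one must instead track the coordinate--sum lattices in each $N_w$ and verify compatibility with the quotient by $H$, both of which hinge on the hypothesis $s \mid g-1$. Everything else is a routine evaluation of $C_s$ on the standard basis.
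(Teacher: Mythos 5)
Your argument is correct, and since the paper simply defers to \cite[proof of Lemma 5.8]{Salter_monodromy} there is no in-text proof to compare against; the natural approach is exactly the one you take, and I would expect the cited proof to look much like yours. The key points all check out: the $\Z$-basis of $\wedge^3 H$ does split into ``Lagrangian'' wedges (killed by $C$ since all three pairings vanish) and ``split'' wedges $x_i \wedge y_i \wedge w$; your computation $C(x_i \wedge y_i \wedge w) = w$ is right; the summands $N_w$ do map to the pairwise-distinct summands $\Z w / s\Z w$ of the target, so $\ker(C_s)\big|_{\wedge^3 H} = L \oplus \bigoplus_w \ker\bigl(C_s|_{N_w}\bigr)$; the sum-$\equiv 0 \pmod s$ sublattice of $\Z^{g-1}$ is indeed generated by the differences $e_i - e_j$ together with $se_1$ (and (G\ref{item:difference}) provides all such differences within each $N_w$, since the restriction $z \ne x_i,y_i,x_j,y_j$ is precisely the condition that both summands live in $N_z$); and $z \wedge \omega = \sum_{i\ne k} x_i \wedge y_i \wedge z$ is the all-ones vector in $N_z$, with coordinate sum $g-1 \equiv 0 \pmod s$, verifying both that $H \subseteq \ker(C_s)$ and that it already lies in the span of (G\ref{item:deltatwist})--(G\ref{item:difference}). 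The final descent to $\wedge^3 H / H$ is then immediate. One cosmetic remark: the observation that the image of $H$ lies in the span of (G\ref{item:deltatwist})--(G\ref{item:difference}) is not logically needed — once you know $\ker(C_s)$ upstairs equals the $\Z$-span of (G\ref{item:deltatwist})--(G\ref{item:lagrangian}) and contains $H$, the quotient statement follows directly — but it is a nice sanity check.
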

\begin{proof}
See \cite[proof of Lemma 5.8]{Salter_monodromy}.
\end{proof}

\begin{lemma}\label{lemma:Texhibit}
Let $\phi$ be an $r$--spin structure on a surface $\Sigma_g$ of genus $g \ge 3$ (if $g = 3$, assume $\Arf(\phi) = 1$). For each generator $f$ of the form (G\ref{item:deltatwist}) -- (G\ref{item:lagrangian}) as presented in Lemma \ref{lemma:spgenset}, the group $\T_\phi \cap \I_g$ contains an element $\gamma$ satisfying $\tau(\gamma) =f$. 
\end{lemma}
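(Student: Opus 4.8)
The plan is to exhibit, for each of the classes (G\ref{item:deltatwist})--(G\ref{item:lagrangian}) of Lemma \ref{lemma:spgenset}, an element of $\mathcal T_\phi \cap \mathcal I_g$ with that Johnson image; since $\tau$ is a homomorphism into the abelian group $\wedge^3 H / H$, the set $R := \tau(\mathcal T_\phi \cap \mathcal I_g)$ is a subgroup, so this suffices. The key structural observation is that $R$ is invariant under $\Psi(\mathcal T_\phi)$: if $\gamma \in \mathcal T_\phi \cap \mathcal I_g$ and $A \in \Psi(\mathcal T_\phi)$, pick $\widetilde A \in \mathcal T_\phi$ with $\Psi(\widetilde A) = A$; then $\widetilde A \gamma \widetilde A^{-1}$ again lies in $\mathcal T_\phi$ (a subgroup) and in $\mathcal I_g$ (a normal subgroup), and the $\Sp(2g,\Z)$--equivariance of $\tau$ (Lemma \ref{lemma:johnsonhom}.1) gives $\tau(\widetilde A \gamma \widetilde A^{-1}) = A \cdot \tau(\gamma)$. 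By Step 1 (Lemma \ref{step1}), $\Psi(\mathcal T_\phi)$ equals $\Sp(2g,\Z)$ for $r$ odd and $\Sp(2g,\Z)[q]$ for $r$ even, so $R$ is a module over this group. Together with the symplectic linear algebra of Lemma \ref{lemma:symptools} and the change--of--coordinates principle (Lemmas \ref{lemma:ccpodd}--\ref{lemma:ccphomol}), this reduces the problem to realizing one representative per orbit; when $r$ is even, orbits are distinguished only by the $q$--values of the vectors involved (cf. Lemmas \ref{lemma:existsA}, \ref{lemma:extendB}), so representatives may be chosen with the relevant classes satisfying $q = 1$.

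The class (G\ref{item:difference}) is exactly the output of Lemma \ref{lemma:ctwist} whenever $z$ is primitive, orthogonal to $\langle x_i, y_i, x_j, y_j \rangle$, and (for $r$ even) $q(z) = 1$. In the one remaining case, $r$ even with $q(z) = 0$, choose $z'$ orthogonal to $z, x_i, y_i, x_j, y_j$ with $q(z') = 1$ (so $q(z - z') = q(z) + q(z') + \pair{z,z'} = 1$, and $z - z'$ is primitive) and write $z \wedge (x_i\wedge y_i - x_j\wedge y_j) = z' \wedge (x_i\wedge y_i - x_j\wedge y_j) + (z-z')\wedge(x_i\wedge y_i - x_j\wedge y_j)$ as a sum of two elements of $R$.

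For the class (G\ref{item:deltatwist}), $s(z\wedge x_i\wedge y_i)$, I would use powers of a genus--$1$ bounding pair map. By the change--of--coordinates principle, choose a bounding pair $c\cup d$ cobounding a copy of $\Sigma_{1,2}$ whose genus--$1$ handle carries a symplectic basis realizing $x_i, y_i$ and with $[c] = z$; the bounding pair formula (Lemma \ref{lemma:johnsonhom}.\ref{item:bpformula}) then gives $\tau(T_c T_d^{-1}) = \pm(z\wedge x_i\wedge y_i)$. When $r$ is odd, arrange $\phi(c) = \phi(d) = -1$; then $T_c^r, T_d^r \in \mathcal T_\phi$ by Lemma \ref{lemma:twistpowers}, and as $c, d$ are disjoint, $(T_c T_d^{-1})^r = T_c^r T_d^{-r} \in \mathcal T_\phi \cap \mathcal I_g$ has Johnson image $r(z\wedge x_i\wedge y_i) = s(z\wedge x_i\wedge y_i)$. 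When $r$ is even, $s = r/2$, and the extra factor of $2$ is the first real difficulty: here take $\phi(c) = 0$ (so $T_c \in \mathcal T_\phi$) and $\phi(d) = -2$ (forced by homological coherence), and invoke $T_d^{r/2} \in \mathcal T_\phi$. This last fact is the winding--number $-2$ analogue of Lemma \ref{lemma:twistpowers}, proved by the same $\mathscr D_n$--relation argument: realize $d$ as the doubled boundary component $\Delta_1 = \Delta_1'$ of a $\mathscr D_n$--configuration of admissible curves with separating $\Delta_0$, so that Lemma \ref{lemma:dn} and Step 2 (Lemma \ref{step2}) yield $T_d^{r/2} \in \mathcal T_\phi$. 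Then $(T_c T_d^{-1})^{r/2} = T_c^{r/2} T_d^{-r/2} \in \mathcal T_\phi \cap \mathcal I_g$ with Johnson image $(r/2)(z\wedge x_i\wedge y_i)$.

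The class (G\ref{item:lagrangian}), $z_i\wedge z_j\wedge z_k$ with pairwise orthogonal factors, cannot come from a single bounding pair map (the Johnson image of a genus--$1$ bounding pair map spans a $3$--dimensional subspace on which the intersection form is not identically zero), so I would realize it, following the device in the proof of Lemma \ref{lemma:ctwist}, by a product of two genus--$1$ bounding pair maps sharing a common curve and arranged to lie in $\mathcal T_\phi$ by the chain relation. After a symplectic change of coordinates (legitimate by the module structure of $R$ and Lemma \ref{lemma:symptools}.\ref{item:threetuple}) it suffices to realize $x_1\wedge x_2\wedge x_3$; here one uses the decomposition $x_1\wedge x_2 = x_1\wedge(x_2 - y_1) + x_1\wedge y_1$ into hyperbolic pairs, choosing bounding pair partners of a curve $c$ with $[c] = x_3$ realizing these two handles and cobounding a subsurface that carries a maximal chain of admissible curves. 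I expect the main obstacle throughout to be precisely this sort of bookkeeping: producing the Johnson image with coefficient exactly $1$ rather than a multiple of $s$ in cases (G\ref{item:difference}) and (G\ref{item:lagrangian}) forces a delicate choice of winding numbers on the curves involved, and when $r$ is even one must simultaneously control Arf invariants so that the relevant subsurfaces support the maximal admissible chains needed for the chain relation -- exactly as in the proof of Step 2. Combining these realizations with Lemma \ref{lemma:spgenset} (and Lemma \ref{lemma:characterizetorelli}) gives $\tau(\mathcal T_\phi \cap \mathcal I_g) = \ker(C_s) = \tau(\Mod_g[\phi]\cap\mathcal I_g)$, which is Lemma \ref{lemma:refine2} and completes Step 3.
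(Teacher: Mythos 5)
Your high-level structure mirrors the paper's (exploit the $\Sp$--equivariance of $\tau$, reduce to orbit representatives via Step~1 and Lemma~\ref{lemma:symptools}, realize one representative per orbit type, handle the one ``bad'' vector $y_g$ when $q(y_g)=0$ by a subtraction trick), and your treatment of (G\ref{item:difference}) is essentially identical to the paper's.

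There is a genuine gap in your (G\ref{item:deltatwist}) argument for $r$ even, and it sits precisely at the ``first real difficulty'' you flag. You reduce to a bounding pair $c\cup d$ with $\phi(c)=0$, $\phi(d)=-2$, and assert $T_d^{r/2}\in\T_\phi$ as ``the winding--number $-2$ analogue of Lemma~\ref{lemma:twistpowers}, proved by the same $\mathscr D_n$--relation argument.'' But the argument you sketch --- a $\mathscr D_n$--configuration of admissible curves with $\Delta_1 = \Delta_1' = d$ and separating $\Delta_0$ --- does not yield $T_d^{r/2}$: Lemma~\ref{lemma:dn} together with Step~2 would give $T_d^{2}\in\T_\phi$. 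Since $T_d^{2}\in\Mod_g[\phi]$ forces $2\phi(d)=-4\equiv 0\pmod r$, i.e.\ $r\mid 4$, no such configuration of admissible curves can exist once $r>4$, and the method produces the wrong exponent in any case. A fix requires a genuinely different argument for the $-2$ analogue, and it is not obvious one exists without first proving the results of this section.

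The paper sidesteps the issue entirely, and this is the key difference you should internalize: rather than starting from an arbitrary bounding pair with prescribed winding numbers, it \emph{manufactures} the bounding pair $(b,b')$ as the boundary of a $3$--chain of admissible curves. The chain relation (Lemma~\ref{lemma:chain}) then gives $T_b T_{b'}\in\T_\phi$ for free, homological coherence gives $\phi(b)=\phi(b')=-1$, and combining with $T_b^{\,r}=T_b^{\,2s}\in\T_\phi$ from Lemma~\ref{lemma:twistpowers} yields $T_b^{\,s}T_{b'}^{\,-s}=(T_bT_{b'}^{-1})^s\in\T_\phi$ --- the factor of $s=r/2$ (rather than $r$) comes from splitting $T_b^{2s}$ against $(T_bT_{b'})^s$, not from any new twist--power lemma. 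Your $r$--odd branch of (G\ref{item:deltatwist}) is correct and close to this, but note the paper handles both parities uniformly by this chain device. Finally, your (G\ref{item:lagrangian}) sketch points in the right direction --- it is indeed a difference of two genus--$1$ bounding pair maps sharing a curve $d$ --- but the paper's mechanism is subtler than ``arranged to lie in $\T_\phi$ by the chain relation'': it takes $\phi(d)=-2$ and finds two admissible partners $e_1,e_2$, so that $T_dT_{e_i}^{-1}$ are individually \emph{not} in $\T_\phi$, yet $T_{e_2}T_{e_1}^{-1}=(T_dT_{e_1}^{-1})^{-1}(T_dT_{e_2}^{-1})$ is (since $e_1,e_2$ are admissible), with the $T_d$ cancelling to leave the desired Lagrangian wedge.
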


\begin{proof}
To avoid having to formulate two nearly identical arguments, one for each parity of $r$, we treat only the case of $r$ even. The presence of a residual mod--$2$ spin structure makes this case strictly harder than that for $r$ odd. 

Let $q$ denote the quadratic form associated to $\phi$; recall that if $c$ is a simple closed curve, then $q([c]) = \phi([c]) + 1 \pmod 2$. We fix a symplectic basis $\mathcal B = \{x_1, y_1, \dots, x_g, y_g\}$ such that $q(x_i) = 1$ for $1 \le i \le g$ and $q(y_i)= 1$ for $1 \le i \le g-1$; the value of $q(y_g)$ is then determined by $\Arf(q)$. Throughout, we will use the following principle: we will perform a topological computation to obtain some tensor in $\tau(\T_\phi \cap \I_g) \le \wedge^3H/H$. We will then combine the $\Sp(2g,\Z)$--equivariance of Lemma \ref{lemma:johnsonhom}.1 and the surjectivity of the homological representation $\Psi$ onto $\Sp(2g,\Z)[q]$ (Lemma \ref{step1}) to see that this single computation provides a large class of further elements of $\tau(\T_\phi \cap \I_g)$.

Generators of type (G\ref{item:deltatwist}) are of the form $s(z \wedge x_i \wedge x_j)$; here $s = r/2$. To obtain such elements in $\tau(\T_\phi \cap \I_g)$, we begin by using the change--of--coordinates principle (Lemma \ref{lemma:ccpeven}.4) to choose a $3$--chain of admissible curves $a_0, a_1, a_2$ representing respectively the homology classes $x_1, y_1, (x_1 + x_2 + x_3)$. Let $b,b'$ denote the boundary components of this chain. By the chain relation, $T_b T_{b'} \in \T_\phi$ and so 
\[
(T_b T_{b'})^s \in \T_\phi
\]
as well. By homological coherence, $\phi(b) = -1$, and so by Lemma \ref{lemma:twistpowers}, also 
\[
T_b^{r} \in \T_\phi
\]
Combining these two shows that
\[
(T_{b} T_{b'}^{-1})^{s} \in \T_\phi.
\]
By Lemma \ref{lemma:johnsonhom}.\ref{item:bpformula}, it follows that
\[
s\ (x_1 \wedge y_1\wedge (x_2 + x_3)) \in \tau(\T_\phi \cap \I_g).
\]
This argument can be repeated with curves $a_0', a_1', a_2'$ representing respectively the homology classes $x_1, y_1, (x_1 + y_2 + x_3)$, showing that also
\[
s\ (x_1 \wedge y_1\wedge (y_2 + x_3)) \in \tau(\T_\phi \cap \I_g).
\]
Subtracting,
\[
s\ (x_1 \wedge y_1 \wedge (x_2 - y_2)) \in \tau(\T_\phi \cap \I_g).
\]
As $q(x_1) = q(y_1) = q(x_2-y_2) = 1$, Lemma \ref{lemma:symptools}.\ref{item:contractible} shows that $\tau(\T_\phi \cap \I_g)$ contains all generators of type \eqref{item:deltatwist} of the form $s\ (z \wedge x_i \wedge y_i)$ for $i \le g-1$, except for $z = y_g$ in the case $q(y_g) = 0$. In this latter case, an application of Lemma \ref{lemma:symptools}.\ref{item:contractible} to $s\ (x_1 \wedge y_1 \wedge (x_2 + x_3))$ shows that $s\ (y_g \wedge x_i \wedge y_i) \in \tau(\T_\phi \cap \I_g)$ for $i \le g-1$ regardless. 

It remains to show $s\ (z \wedge x_g \wedge y_g) \in \tau(\T_\phi \cap \I_g)$. If $q(y_g)= 1$ then the above results are already sufficient. Otherwise, by above, 
\[
s\ (x_1 \wedge x_g \wedge (y_{g-1} + y_g)) \in \tau(\T_\phi \cap \I_g).
\]
It thus suffices to show $s\ (x_1 \wedge x_g \wedge y_{g-1}) \in \tau(\T_\phi \cap \I_g)$. By Lemma \ref{lemma:symptools}.\ref{item:threetuple}, it is in turn sufficient to show $s\ (x_1 \wedge x_2 \wedge x_3) \in \tau(\T_\phi \cap \I_g)$. By the computations above,
\[
s\ (x_1 \wedge (y_1 + x_2 + x_3) \wedge x_3)  \qquad \mbox{ and }\qquad s\ (x_1 \wedge y_1 \wedge x_3)
\]
are both elements of $ \tau(\T_\phi \cap \I_g)$; taking the difference, the result follows. 

Now we consider generators of type (G\ref{item:difference}); recall these are of the form $z \wedge (x_i \wedge y_i - x_j \wedge y_j)$. Applying Lemma \ref{lemma:ctwist}, we find 
\[
z \wedge (x_i \wedge y_i - x_j \wedge y_j) \in \tau(\T_\phi \cap \I_g)
\]
for $x_i,y_i,x_j,y_j$ arbitrary and for all $z \in \{x_1, \dots, y_g\}$ satisfying $q(z) = 1$. This encompasses all elements $x_1, \dots, x_{g-1}, y_{g-1}, x_g$, and possibly $y_g$ as well. In the case where $q(y_g) = 0$, we have $q(y_{g-1}) = q(y_{g-1}+ y_g) = 1$. Applying Lemma \ref{lemma:ctwist} with $z = y_{g-1}$ and $z = y_{g-1}+y_g$ in turn and subtracting, we obtain all elements of the form 
\[
y_g \wedge (x_i \wedge y_i - x_j \wedge y_j) \in \tau(\T_\phi \cap \I_g)
\]
as well, completing this portion of the argument. 

Finally, we consider generators of type (G\ref{item:lagrangian}), of the form $z_i\wedge z_j \wedge z_k$ for distinct indices $i,j,k$. By Lemma \ref{lemma:ccphomol}, there exists a curve $d$ with $[d] = x_2$ and $\phi(d)= -2$. Choose some curve $e_1$ disjoint from $d$ such that $d \cup e_1$ bounds a subsurface $S_1$ of genus $1$ to the left of $d$, and such that $S_1$ contains a pair of curves in the homology classes $x_1, y_1$. By homological coherence (Lemma \ref{lemma:homcoh}), $e_1$ is admissible, and by Lemma \ref{lemma:johnsonhom}.\ref{item:bpformula},
\[
\tau(T_d T_{e_1}^{-1}) = x_1 \wedge y_1 \wedge x_2.
\]
Similarly, we can find a curve $e_2$ disjoint from $d$ such that $d \cup e_2$ bounds a subsurface $S_2$ of genus $1$ to the left of $d$, and such that $S_2$ contains a pair of curves in the homology classes $x_1, y_1-x_3$. Again by homological coherence, $e_2$ is admissible, and by Lemma \ref{lemma:johnsonhom}.\ref{item:bpformula},
\[
\tau(T_d T_{e_2}^{-1}) = x_1 \wedge (y_1-x_3) \wedge x_2.
\]
Combining these computations, since $e_1, e_2$ are admissible,
\[
\tau(T_{e_2} T_{e_1}^{-1}) = x_1 \wedge x_3 \wedge x_2 \in \tau(\T_\phi \cap \I_g).
\]

Applying Lemma \ref{lemma:symptools}.\ref{item:threetuple}, it follows that if $z_i,z_j,z_k$ are pairwise--orthogonal primitive vectors with $q(z_i) = q(z_j) = q(z_k) = 1$, then $z_i \wedge z_j \wedge z_k \in \tau(\T_\phi \cap \I_g)$. This includes all generators of the form (G\ref{item:lagrangian}) except when $z_i = y_g$ and $q(y_g) = 0$. In this case, both $q(y_{g-1}) = q(y_{g-1} + y_g) = 1$, and we conclude the argument as we did for (G\ref{item:difference}) by finding $z_i \wedge z_j \wedge y_{g-1}$ and $z_i \wedge z_j \wedge (y_{g-1} + y_g)$ in $\tau(\T_\phi \cap \I_g)$ and subtracting.
\end{proof}

\para{Recap: Completing the proof of Proposition \ref{prop:admissfull}} Having at this point completed Step 3, we have now established all of the claims necessary to prove Proposition \ref{prop:admissfull}, as outlined above in Section \ref{subsection:firststep}. We give a final summary of the proof below.
\begin{proof}[Proof of Proposition \ref{prop:admissfull}]
Recall that the objective is to show that for an arbitrary $r$-spin structure $\phi$ on a surface of genus $g \ge 3$, there is an equality $\mathcal T_\phi = \Mod_g[\phi]$ (recall that $\mathcal T_\phi$ is the admissible subgroup, i.e. the group generated by twists about admissible curves). By definition, there is a containment
\[
\mathcal T_\phi \le \Mod_g[\phi].
\]
By Step 2 (Lemma \ref{step2}), the Johnson kernel $\mathcal K_g$ is a subgroup of $\mathcal T_\phi$, and so it will suffice to show that there is an equality
\[
\mathcal T_\phi / \mathcal K_g = \Mod_g[\phi] / \mathcal K_g. 
\]
The group $\Mod_g[\phi] / \mathcal K_g$ fits into the short exact sequence below (recall that $\tau$ is the Johnson homomorphism and $\Psi$ is the symplectic representation, both discussed above in Section \ref{subsection:firststep}):
\[
1 \to \tau(\Mod_g[\phi] \cap \mathcal I_g) \to \Mod_g[\phi] / \mathcal K_g \to \Psi(\Mod_g[\phi]) \to 1.
\]
Step 3 (Lemma \ref{step3}) establishes the equality $\tau(\Mod_g[\phi] \cap \mathcal I_g) = \tau(\mathcal T_\phi \cap \mathcal I_g)$; along with the containments $\ker(\tau) = \mathcal K_g \le \mathcal T_\phi \le \Mod_g[\phi]$, this shows that 
\[
\Mod_g[\phi] \cap \mathcal I_g =  \mathcal T_\phi \cap \mathcal I_g.
\]
Thus it remains only to show that $\Psi(\Mod_g[\phi]) = \Psi(\mathcal T_\phi)$ as subgroups of $\Sp(2g,\Z)$, and this is established in Step 1 (Lemma \ref{step1}).
\end{proof}

In turn, the completion of Proposition \ref{prop:admissfull} allows us to complete the proof of Theorem \ref{theorem:genset} in the setting $r = 2g-2$.

\begin{proof}[Proof of Theorem \ref{theorem:genset}\ref{case:hardeven} and \ref{theorem:genset}\ref{case:hardodd}]
By Proposition \ref{lemma:gammaadmiss}, the set of Dehn twists in the curves indicated in Figure \ref{figure:hardcaseeven} (respectively, Figure \ref{figure:hardcaseodd}) generates $\mathcal T_\phi$, where $\phi$ is the $(2g-2)$--spin structure specified by assigning $\phi(c) = 0$ for every curve $c$. By Proposition \ref{prop:admissfull}, $\mathcal T_\phi = \Mod_g[\phi]$. Therefore the Dehn twists in the curves of Figure \ref{figure:hardcaseeven} (respectively, Figure \ref{figure:hardcaseodd}) generate the respective spin structure stabilizers.
\end{proof}

\subsection{The case of general $r$}\label{section:generalr}

In this section, we prove Theorem \ref{theorem:genset}\ref{case:r}.
We first demonstrate how the change--of--coordinates principle and twist--linearity can be used, given two curves, to produce a third whose winding number is the greatest common divisor of the other two.

\begin{lemma}\label{lemma:gcd}
Let $\phi$ be a $({2g-2})$--spin structure on a surface $\Sigma_g$ of genus at least $3$ and suppose that $ \phi(a_1) = k_1$ and $ \phi(a_2) = k_2$. Set
\[\Gamma = \langle \Mod_g[\phi], T_{a_1}, T_{b_2} \rangle.\]
Then $\Gamma$ contains $T_c$ for some nonseparating curve $c$ with $ \phi(c) = \gcd(k_1, k_2).$
\end{lemma}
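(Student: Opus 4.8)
The plan is to reduce to the one-variable case by an iterated Bézout-type argument, using twist-linearity to decrease the pair $(k_1, k_2)$ to $(\gcd(k_1,k_2), 0)$. First I would record the key elementary move: given a nonseparating curve $a$ with $\phi(a) = k$ and any integer $m$, the conjugate $T_a^m(d)$ of a nonseparating curve $d$ with $\phi(d) = \ell$ and $\pair{d,a} = 1$ satisfies, by twist-linearity (Definition \ref{definition:spin}.\ref{item:TL}) applied $m$ times, $\phi(T_a^m(d)) = \ell + mk \pmod{2g-2}$; moreover $T_{T_a^m(d)} = T_a^m T_d T_a^{-m}$, so if $T_a, T_d \in \Gamma$ then $T_{T_a^m(d)} \in \Gamma$ as well. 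Thus from two admissible-ish curves $a, d$ meeting once with winding numbers $k, \ell$, the group $\Gamma$ contains a twist about a curve with winding number $\ell + mk$ for every $m \in \Z$.

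Next I would set up the initial configuration. Since $\Mod_g[\phi] \le \Gamma$ and $\Mod_g[\phi]$ acts transitively on curves of a fixed winding number (Lemma \ref{lemma:ccpcurves}), I may replace $a_1, a_2$ by any curves with the same winding numbers; in particular, by the change-of-coordinates principle (Lemma \ref{lemma:ccpodd} or \ref{lemma:ccpeven}, whichever applies), I can arrange that $a_1$ and $a_2$ are the first two curves $a_1, a_2$ of a $2$-chain on $\Sigma_g$, so $\pair{a_2, a_1} = \pm 1$ and $T_{a_1}, T_{a_2} \in \Gamma$. (Here I should be slightly careful: the statement writes $T_{b_2}$, which I take to be a typo for $T_{a_2}$; in any case what is needed is that $\Gamma$ contains twists about two curves meeting once with the prescribed winding numbers, and conjugating the given data into $\Gamma$ by an element of $\Mod_g[\phi]$ achieves this.) Now run the Euclidean algorithm: applying the move above to $a_2$ (playing the role of $d$) and $a_1$ (playing the role of $a$), $\Gamma$ contains a twist about a curve $a_2'$ with $\pair{a_2', a_1} = \pm 1$ and $\phi(a_2') = k_2 + m k_1$ for any $m$; choosing $m$ to reduce $k_2$ modulo $k_1$, then swapping the roles of the two curves and iterating, after finitely many steps $\Gamma$ contains a twist about a curve $c'$ meeting once a curve $c''$ with $\{\phi(c'), \phi(c'')\} = \{\gcd(k_1, k_2), 0\}$ (the classical termination of the Euclidean algorithm). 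In particular $\Gamma$ contains $T_c$ for $c$ with $\phi(c) = \gcd(k_1,k_2)$.

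The only subtlety — and the step I expect to need the most care — is bookkeeping the geometric configuration through the iteration: each time I form $T_a^m(d)$ I must know that the new curve $a_2'$ (or its successor) is again nonseparating and again meets the "pivot" curve exactly once, so that the next application of twist-linearity is valid. This is automatic because $T_a^m$ is a homeomorphism: it preserves being nonseparating and preserves geometric and algebraic intersection numbers, so $\pair{T_a^m(d), a} = \pair{d, a} = \pm 1$ and, after relabeling, the pair is again a $2$-chain (up to sign, which only affects $\phi$ by an overall sign and is harmless since $\gcd$ is insensitive to signs). One should also check $g \ge 3$ is enough room to realize the initial $2$-chain with prescribed winding numbers, which is exactly the content of Lemmas \ref{lemma:ccpodd} and \ref{lemma:ccpeven} with $h = 2$. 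With these points in place the lemma follows; the $\gcd$ output is exactly what the Euclidean descent produces, and no appeal to Proposition \ref{prop:admissfull} or the finite generation results is needed here — this lemma is a purely local, change-of-coordinates argument.
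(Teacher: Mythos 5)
Your proposal uses the same basic machinery as the paper's proof --- twist-linearity $\phi(T_a^m(d)) = \phi(d) + m\,\langle d,a\rangle\,\phi(a)$, change-of-coordinates (Lemmas \ref{lemma:ccpcurves}, \ref{lemma:ccpodd}/\ref{lemma:ccpeven}) to set up curves meeting once with prescribed winding numbers, and conjugation by $\Mod_g[\phi]$ to keep the twists in $\Gamma$ --- so the two arguments are close cousins. The organizational difference is that the paper applies B\'ezout in one shot: writing $xk_1 + yk_2 = r$ and then doing $T_{b_1}^x$ followed by $T_{b_2}^{y-1}$, re-invoking change-of-coordinates in the middle to reset the intersection pattern. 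You instead keep a single pair of curves meeting once and iterate Euclidean-type reductions, carefully checking (correctly) that each application of $T_a^m$ preserves both nonseparating-ness and the algebraic intersection $\pm 1$, so the configuration never has to be rebuilt. You also correctly spot that $T_{b_2}$ in the lemma statement is a typo for $T_{a_2}$. All of this is sound.

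The one genuine gap is in the termination step. The winding numbers $k_1, k_2$ live in $\Z/(2g-2)\Z$, and the $\gcd$ the lemma must produce is the \emph{generator of the subgroup} $\langle k_1, k_2\rangle \le \Z/(2g-2)\Z$ --- equivalently $\gcd(\tilde k_1, \tilde k_2, 2g-2)$ for integer lifts $\tilde k_i$ --- because that is what the application in Theorem \ref{theorem:genset}.\ref{case:r} requires. Running ``the classical Euclidean algorithm'' on integer lifts stalls at $\gcd(\tilde k_1, \tilde k_2)$, which can be strictly larger. Concretely, take $\underline\kappa = (4,4,2)$, so $2g-2 = 10$ and the relevant winding numbers are $4$ and $8$: the integer Euclidean algorithm gives $(4,8) \to (4,0)$ and terminates at $4$, but the subgroup $\langle 4,8\rangle = 2\Z/10\Z$ is generated by $2$, which is the value the proof of Theorem \ref{theorem:genset}.\ref{case:r} needs. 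The fix is available with your own elementary move --- from a curve of winding number $4$ one can twist an admissible curve to reach $4 \cdot 3 \equiv 2 \pmod{10}$, i.e.\ one must keep reducing \emph{modulo $2g-2$} rather than stopping at a zero second coordinate --- but this is exactly what ``classical termination'' fails to capture. The paper's B\'ezout formulation, read as a congruence $xk_1 + yk_2 \equiv r \pmod{2g-2}$, sidesteps this because solvability is exactly equivalent to $r \in \langle k_1, k_2\rangle \le \Z/(2g-2)\Z$. You should either reformulate your descent as iterated $\Z$-linear combinations mod $2g-2$ (which does reach the subgroup generator), or appeal to a modular B\'ezout identity outright.
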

\begin{proof}
Set $r = \gcd(k_1, k_2)$; then there exist some $x, y \in \Z$ such that
\[xk_1 + yk_2 = r.\]
Without loss of generality, we may suppose that $x,y \neq 0$ (else $T_{a_1}$ or $T_{a_2}$ has the desired property.).

By the change--of--coordinates principle (Lemma \ref{lemma:ccpeven}), there exists some curve $b_1$ with $i(b_1, a_2) = 1$ and $ \phi(b_1) = k_1$. By Lemma \ref{lemma:ccpcurves}, there is some element $f \in \Mod_g[\phi]$ such that $f(a_1)=b_1$; then $T_{b_1} = f T_{a_1} f^{-1}$ and so $T_{b_1} \in \Gamma$.
Now by twist linearity (Definition \ref{definition:spin}(\ref{item:TL})), we have that
\[ \phi \left( T_{b_1}^x (a_2) \right)
=  \phi(a_2) + x \phi( b_1) = xk_1 + k_2.
\]
Again by Lemma \ref{lemma:ccpeven} , there is a curve $b_2$ which only intersects $T_{b_1}^x (a_2)$ once and has $ \phi(b_2) = k_2$. Applying Lemma \ref{lemma:ccpcurves} as above, we similarly see that $T_{b_2} \in \Gamma$. Therefore
\[
 \phi \left( T_{b_2}^{( y-1)}\left( T_{b_1}^x (a_2) \right) \right)
=  \phi \left( T_{b_1}^x (a_2) \right) + (y-1) \phi( b_2)
= xk_1 + yk_2 = r.
\]
Setting $c=T_{b_2}^{( y-1)} T_{b_1}^x (a_2)$ completes the proof (since $c$ is in the $\Gamma$ orbit of $a_2$, the twist $T_c$ is conjugate to $T_{a_2}$ by an element of $\Gamma$).
\end{proof}

\begin{proof}[Proof of Theorem \ref{theorem:genset}\ref{case:r}]
Let $\phi$, $\tilde \phi$, and $\{c_i\}$ be as in the statement of the theorem, and set 
\[
\Gamma = \pair{\Mod_g[\tilde \phi], \{T_{c_i}\}}.
\]
Our task is to show that $\Gamma = \Mod_g[\phi]$. By Proposition \ref{prop:admissfull}, it is enough to show that $\Gamma = \T_\phi$, i.e. that $T_c \in \Gamma$ for any curve $c$ with $\phi(c)=0$. Since $\tilde \phi$ is a lift of $\phi$, we see that this is equivalent to showing that $T_c \in \Gamma$ for every $c$ with $\tilde \phi(c) \equiv 0 \pmod r$.

We claim that it is enough to exhibit the Dehn twist in a single curve $c$ with $\tilde \phi(c)=r$. Indeed, by the transitivity of $\Mod_g[\tilde \phi]$ on the set of (non--separating) curves with given $\tilde \phi$--winding number (Lemma \ref{lemma:ccpcurves}), this implies that every curve with winding number $r$ is in $\Gamma$. Now given any $c$ with $\tilde \phi(c) \equiv 0 \pmod r$, Lemma \ref{lemma:ccpeven}.4 guarantees that there exists some curve $d$ with $\tilde \phi(d) = r$ which intersects $c$ exactly once, and by twist--linearity (Definition \ref{definition:spin}), we have that
\[ \tilde \phi \left( T_{d}^{-(\tilde \phi(c) / r)} (c) \right) 
= \tilde \phi(c) - (\tilde \phi(c) / r) \cdot r = 0.\]
Therefore $T_c$ may be conjugated to a $\tilde \phi$--admissible twist by an element of $\Gamma$ and hence $T_c \in \Gamma$.

To exhibit such a twist, one needs only to iteratively apply Lemma \ref{lemma:gcd} to the collection of curves $\{ c_i \}$ to recover a curve with $\tilde \phi$--winding number $r = \gcd \left( \tilde \phi(c_i) \right)$.
\end{proof}

\section{Connected components and geometric monodromy groups of strata}\label{section:classification}

In this section, we prove Theorems \ref{theorem:classification} and \ref{theorem:moncomp}. Our strategy matches the one employed by the first author in \cite[\S\S 5,6]{Calderon_strata}, but we reproduce the details below for the convenience of the reader.

The plan of proof is as follows: appealing to Kontsevich and Zorich's classification of the components of $\HM(\sing)$ (Theorem \ref{theorem:KZstrata}), we equate the classification of components of $\HT(\sing)$ with the computation of the geometric monodromy groups (Definition \ref{def:geomonodromy}) of components of $\HM(\sing)$. This is recorded as Proposition \ref{prop:monstab}.

Ultimately, Theorem \ref{theorem:moncomp} implies that each geometric monodromy group {\em coincides} with the stabilizer of some $r$-spin structure, and so by the orbit--stabilizer theorem, $r$--spin structures are in 1--to--1 correspondence with the (non--hyperelliptic) components of $\HT(\sing)$. Lemma \ref{lemma:rspinconstant} shows that each geometric monodromy group stabilizes some $r$--spin structure $\phi$, demonstrating one direction of inclusion.

The reverse inclusion follows by applying Theorem \ref{theorem:genset}. In Section \ref{subsection:prototype}, we use a construction of Thurston and Veech to build an abelian differential out of a system of curves satisfying the hypotheses of Theorem \ref{theorem:genset} (Lemma \ref{lemma:prototype}). The geometry of these differentials allows us to realize the Dehn twist in each curve as a continuous deformation of the flat structure, thereby implying that every element of $\Mod_g[\phi]$ is realized through flat deformations and so proving Theorem \ref{theorem:moncomp}, hence Theorem \ref{theorem:classification}.

\subsection{The flat geometry of an abelian differential}\label{subsection:flat}

We begin with a review of some background information on abelian differentials and their induced flat cone metrics. While not every flat cone metric comes from an abelian differential (Lemma \ref{lemma:flatequiv}), those that do induce $r$--spin structures (Construction \ref{example:diffspin}). In Lemma \ref{lemma:cylinderadmissible}, we record a first result relating the geometry of the flat metric with the admissible curves for the induced spin structure.

An {\em abelian differential} $\omega$ on a Riemann surface $X$ is a holomorphic section of the canonical bundle $K_X$. In charts away from its zeros $\{p_1, \ldots, p_n\}$, the form is locally equivalent to $dz$, while at each $p_i$ it is locally equivalent to $z^{k_i} dz$ for some $k_i \ge 1$. The metric given by $|dz|^2$ is then Euclidean away from each $p_i$, at which the metric has a cone angle of $2\pi (k_i+1)$.
Along with the flat cone metric, $\omega$ also induces a ``horizontal'' vector field $V=1/\omega$ away from $\{p_1, \ldots, p_n\}$, at which $V$ has index $-k_i$. For a more thorough discussion, see, e.g., \cite{Zorich_Survey}.

In practice, it is often useful not to build abelian differentials directly, but instead to build flat metrics and then check that they are induced from abelian differentials. For any locally flat metric $\sigma$ on a closed surface $\Sigma_g$ with finitely many cone points $p_1, \ldots, p_n$, there is a natural {\em holonomy representation}
\[\text{hol}_\sigma: \pi_1(\Sigma \setminus \{p_1, \ldots, p_n\} ) \rightarrow SO(2)\]
which measures the rotational difference between a tangent vector and its parallel transport along a loop in $\Sigma \setminus \{p_1, \ldots, p_n\}$.

It is a standard fact that the holonomy of a locally flat metric determines whether or not it comes from an abelian differential (see, e.g., \cite[\S 1.8]{MasurTabach_Survey}).

\begin{lemma}\label{lemma:flatequiv}
Let $\sigma$ be a flat cone metric on a closed surface $\Sigma_g$ with cone points $\{p_1, \ldots, p_n\}$. Then the following are equivalent:
\begin{enumerate}
\item There exists some Riemann surface $X$ and abelian differential $\omega$ so that $(X, \omega)$ with the induced metric $|dz|^2$ is isometric to $(\Sigma, \sigma)$
\item The holonomy representation $\text{hol}_\sigma:\pi_1(\Sigma \setminus \{p_1, \ldots, p_n\} ) \rightarrow SO(2)$ is trivial.
\item The cone angle at each $p_i$ is $2\pi(k_i+1)$ for some $k_i \in \mathbb{N}$ and there exists a locally constant vector field $V$ on $(\Sigma, \sigma)$, singular only at the $p_i$ with index $-k_i$ at each $p_i$.
\end{enumerate}
\end{lemma}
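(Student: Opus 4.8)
The plan is to establish the cycle of implications $(1)\Rightarrow(2)\Rightarrow(3)\Rightarrow(1)$. The implication $(1)\Rightarrow(2)$ is immediate: if $(X,\omega)$ induces $(\Sigma,\sigma)$, then on $\Sigma\setminus\{p_1,\dots,p_n\}$ the distinguished charts in which $\omega=dz$ form an atlas whose transition maps are translations $z\mapsto z+c$, so parallel transport is simply ``hold the vector constant in a $dz$--chart''; this is globally well defined, hence $\text{hol}_\sigma$ is trivial. Equivalently, the horizontal vector field $V=1/\omega$ is a globally defined parallel vector field, which forces the linear holonomy to vanish.

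For $(2)\Rightarrow(3)$: triviality of $\text{hol}_\sigma$ has two consequences. First, restricting to a small loop encircling $p_i$ shows the holonomy there is rotation by the cone angle modulo $2\pi$; vanishing forces the cone angle to be a positive multiple of $2\pi$, say $2\pi(k_i+1)$ with $k_i\in\mathbb{N}$. Second, since parallel transport along loops in the connected surface $\Sigma\setminus\{p_1,\dots,p_n\}$ is trivial, parallel transport of a single unit tangent vector based at a point is path--independent and produces a global unit--length parallel (equivalently, locally constant) vector field $V$ on $\Sigma\setminus\{p_1,\dots,p_n\}$. In a conformal coordinate $z$ near $p_i$ in which the metric is $|z^{k_i}\,dz|^2$, the parallel fields are the pullbacks of a constant field under $w=z^{k_i+1}$, and one reads off that $V$ extends over $p_i$ with index $-k_i$ (equivalently, the index of a parallel field at a cone of angle $2\pi(k_i+1)$ is $1-(k_i+1)=-k_i$).

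The implication $(3)\Rightarrow(1)$ is the one carrying actual content. The conformal class of $\sigma$ makes $\Sigma$ into a Riemann surface $X$: away from the $p_i$ this is standard, and a cone point of angle $2\pi(k_i+1)$ is conformally a smooth point, since a cyclic branched cover of a disk is a disk. Normalizing $V$ to unit length, let $\alpha=\langle V,\cdot\rangle$ and $\beta=\langle JV,\cdot\rangle$ be the metric--dual $1$--forms on $X\setminus\{p_1,\dots,p_n\}$, where $J$ is the complex structure, and set $\omega=\alpha+i\beta$. Because $V$ is parallel and the metric is flat, $\alpha$ and $\beta$ are parallel, hence closed, and a direct check gives $\omega(JW)=i\,\omega(W)$, so $\omega$ is a closed $(1,0)$--form, i.e.\ holomorphic; moreover $|\omega|^2=\alpha^2+\beta^2=\sigma$. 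Finally $\omega$ extends holomorphically across each $p_i$: in a conformal coordinate $z$ centered at $p_i$, holomorphy on the punctured disk together with the fact that $|\omega|^2$ has a cone of angle $2\pi(k_i+1)$ (so that $|\omega|$ vanishes to order exactly $k_i$) forces $\omega=z^{k_i}u(z)\,dz$ with $u(0)\neq 0$; here $k_i\ge 0$ is precisely what rules out a pole. Thus $(X,\omega)$ is an abelian differential inducing $(\Sigma,\sigma)$.

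I expect the only real obstacle to be $(3)\Rightarrow(1)$ --- specifically, verifying that the $1$--form manufactured from the parallel vector field is genuinely holomorphic and that it extends holomorphically (not merely meromorphically) over the cone points --- while $(1)\Rightarrow(2)$ and $(2)\Rightarrow(3)$ are routine unwindings of the holonomy of a flat cone metric. Since this is the standard dictionary between translation surfaces and flat cone metrics with trivial holonomy (cf.\ \cite[\S 1.8]{MasurTabach_Survey}), the write--up can be kept short.
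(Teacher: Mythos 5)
The paper does not supply a proof of Lemma~\ref{lemma:flatequiv}; it is stated with a citation to \cite[\S 1.8]{MasurTabach_Survey} as a standard fact, so there is no in-text argument to compare against. Your proof is correct and follows the expected standard route: $(1)\Rightarrow(2)$ via translation charts, $(2)\Rightarrow(3)$ by reading off the cone angle from the local holonomy and noting that trivial global holonomy makes parallel transport path-independent (with the index computation $1-(k_i+1)=-k_i$ at a $2\pi(k_i+1)$-cone), and $(3)\Rightarrow(1)$ by dualizing the parallel orthonormal frame $(V,JV)$ to obtain a closed $(1,0)$-form with $|\omega|^2=\sigma$ and checking that the cone-angle/finite-area constraint forces a holomorphic (not merely meromorphic) extension across each $p_i$. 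The only point worth flagging is small and does not affect correctness: when you invoke the normal form $\omega=z^{k_i}u(z)\,dz$, it is cleanest to first observe that finite total area already rules out poles and essential singularities, and then use the cone-angle hypothesis to pin down the order of vanishing as exactly $k_i$; as written, the two facts are somewhat conflated.
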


A vector field $V$ as above is sometimes called a {\em translation vector field}. The winding number with respect to a translation vector field serves as our bridge between the flat geometry of an abelian differential and its induced $r$--spin structure.

\begin{construction}[c.f. Example \ref{example:wnf}]\label{example:diffspin}
Suppose that $X$ is a Riemann surface equipped with an abelian differential $\omega$ with zeros of order $(k_1, \ldots, k_n)$. Let $V = 1/\omega$. Then the winding number function of a curve with respect to $V$ defines an $r$--spin structure $\phi$ on $X$, where $r = \gcd(k_1, \ldots, k_n)$ (see Example \ref{example:wnf}).

If $X$ is also equipped with a {\em marking}, that is, an isotopy class of map $f: \Sigma \rightarrow X$ where $\Sigma_g$ is a reference topological surface, then $\phi$ pulls back to an $r$--spin structure on $\Sigma_g$, which by abuse of notation we will also denote by $\phi$. In this way, we see that any marked abelian differential $(X, f, \omega)$ gives rise to an $r$--spin structure on $\Sigma_g$.
\end{construction}

\begin{remark}
It is not hard to see that any two translation vector fields on $(X, \omega)$ are related by a rotation, and so any two translation vector fields will induce the same $r$--spin structure as $V = 1/ \omega$.
\end{remark}

Because of the relationship between the flat geometry of $(X, \omega)$ and the vector field $V$, it is easy to produce examples of curves on $X$ of winding number $0$. If $\Sigma_g$ is a surface equipped with a flat cone metric $\sigma$, then a {\em cylinder} on $(\Sigma, \sigma)$ is an embedded Euclidean cylinder which contains no cone points in its interior.

\begin{lemma}[Lemma 4.6 of \cite{Calderon_strata}]\label{lemma:cylinderadmissible}
If $(X, \omega)$ is an abelian differential defining an $r$--spin structure $\phi$, then the core curve of any cylinder on $(X, \omega)$ is $\phi$--admissible.
\end{lemma}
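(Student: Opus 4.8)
The plan is to verify directly the two defining properties of a $\phi$--admissible curve for the core curve $c$ of a cylinder $C \subset (X,\omega)$: that $\phi(c) = 0$ and that $c$ is nonseparating. Since $C$ contains no cone points in its interior, I would first replace $c$ by its geodesic representative, the straight core curve of the Euclidean cylinder $C$. This isotopy takes place entirely in the complement of the zeros of $\omega$, and so, since the winding number function descends to isotopy classes of curves in $X \setminus \{p_1,\dots,p_n\}$, it does not affect the value of $\phi$.

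For the winding number, recall from Construction \ref{example:diffspin} that $\phi$ is the reduction mod $r$ of $wn_V$, where $V$ is a translation vector field on $(X,\omega)$; by Lemma \ref{lemma:flatequiv} such a $V$ is locally constant in the flat charts determined by $\omega$, away from the zeros. Along the geodesic representative of $c$ the unit tangent vector is likewise locally constant in these charts, so the Gauss map $t \mapsto c'(t)/V(c(t))$ into $S^1$ is constant, hence of degree zero. Therefore $wn_V(c) = 0$, and so $\phi(c) = 0$ in $\Z/r\Z$. (The value $wn_V(c)$ is independent of the choice of translation vector field, as noted in the remark following Construction \ref{example:diffspin}, so there is no ambiguity.)

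It remains to see that $c$ is nonseparating, and this is the only point that is not completely formal. The cylinder $C$ is isometric to $(\R/w\Z) \times (0,h)$ for some circumference $w > 0$ and height $h > 0$, and on $C$ the form $\omega$ is identified with $dz$; hence the period $\int_c \omega$ is the holonomy vector of the core curve, of modulus $w \ne 0$. Since $\omega$ is a closed $1$--form, a curve with nonzero period cannot be null--homologous, so $[c] \ne 0$ in $H_1(X;\Z)$, and a simple closed curve on a closed orientable surface with nonzero homology class is nonseparating. Assembling the two points, $c$ is $\phi$--admissible. The main (and essentially only) subtlety is thus the identification of the period $\int_c \omega$ with the nonzero circumference of the cylinder; everything else follows immediately from the definition of the winding number function together with the fact that translation vector fields are locally constant.
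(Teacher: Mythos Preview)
Your proof is correct. The paper does not give its own proof of this lemma; it is imported verbatim as Lemma~4.6 of \cite{Calderon_strata}. Your argument is the standard one: the core geodesic of a Euclidean cylinder has constant direction, so its winding number against any translation vector field vanishes, and the period $\int_c \omega$ equals the (nonzero) holonomy vector of the cylinder, forcing $[c]\ne 0$. The paper itself invokes exactly your second observation elsewhere (in the proof of Corollary~\ref{corollary:cylchar}, where it is phrased as ``by Stokes' theorem, no separating curve can ever be a cylinder on an abelian differential''), so your approach is in line with how the authors reason about this fact.
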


\subsection{Geometric monodromy and cylinder twists} Lemma \ref{lemma:cylinderadmissible} provides a first point of contact between the flat geometry of an abelian differential and the algebra of a spin structure stabilizer. In this subsection, we deepen this link, recording in Theorem \ref{theorem:KZstrata} Kontsevich and Zorich's classification of components of strata over moduli space. We then show in Lemma \ref{lemma:rspinconstant} that {\em any} deformation of a marked abelian differential preserves the associated $r$--spin structure. In Lemma \ref{lemma:cyltwist}, we begin the process of establishing the converse (and hence Theorem \ref{theorem:moncomp}) with the elementary but crucial observation that each cylinder gives rise to an admissible twist.

The relationship between abelian differentials and spin structures is not new; indeed, it plays a pivotal role in Kontsevich and Zorich's classification of the components of strata of unmarked differentials. In the absence of a marking, it is impossible to compare the spin structures induced by two points $(X, \omega)$ and $(X', \omega')$. It is therefore the Arf invariant, not the spin structure, which serves as a classifying invariant for the (non-hyperelliptic) components of $\HM(\sing)$.

\begin{theorem}[Theorem 1 of \cite{KZ_strata}]\label{theorem:KZstrata}
Let $g \ge 4$ and $\sing = (k_1, \ldots, k_n)$ be a partition of $2g-2$. Then there are always at most $3$ components of $\HM(\sing)$, which are classified as follows:
\begin{itemize}
\item If $\gcd(\sing)$ is odd, then $\HM(\sing)$ has exactly one non-hyperelliptic connected component.
\item If $\gcd(\sing)$ is even, then $\HM(\sing)$ has exactly two non-hyperelliptic components, distinguished by the Arf invariant of the 2--spin structure induced by $(X, \omega)$.
\item If $\gcd(\sing) \in \{2g-2, g-1\}$, then in addition to the non-hyperelliptic components above, there is one further connected component of $\HM(\sing)$ that consists entirely of hyperelliptic differentials.
\end{itemize}
\end{theorem}

With this classification in hand, we can now record how deformations of an abelian differential give rise to mapping classes. 
Note that each component of $\HM(\sing)$ is generally not a manifold but an orbifold, which is locally modeled on $H^1(X, \text{Zeros}(\omega); \C)$ away from its singularities (see, e.g., \cite[Lemma 2.1]{Wright_Survey1}).

\begin{definition}\label{def:geomonodromy}
Let ${ \mathcal H}$ be a component of $\HM(\sing)$ and let $(X, \omega) \in { \mathcal H}$ be a generic (non-orbifold) point. The forgetful map from $\mathcal{H} \rightarrow \mathcal{M}_g$ induces a map on fundamental groups
\[\rho:\pi_1^{\text{orb}}({ \mathcal H}, (X, \omega)) \rightarrow \Mod(X).\]
whose image is called the {\em geometric monodromy group} $\mathcal{G}({ \mathcal H})$ of $\mathcal H$.
\end{definition}

\begin{remark}\label{rem:geomonodromy}
One may also heuristically think of this map as coming from the parallel transport of the underlying topological surface along a loop $\gamma$ in ${ \mathcal H}$, yielding a(n isotopy class of) self--diffeomorphism $\rho(\gamma)$ of $X$. This description is unfortunately not quite correct, as there is no universal family over a stratum (due to to the presence of translation automorphisms).

This intuition can be made precise by noting that the orbifold points of $\mathcal H$ map to orbifold points of $\mathcal M_g$ with compatible isotropy groups, since any symmetry of $(X, \omega)$ is in particular a symmetry of $X$. The space $\HT(\sing)$ is therefore a manifold, equipped with a $\Mod_g$--equivariant forgetful map to $\T_g$. Picking any lift $\tilde{\gamma}$ of $\gamma$ to $\HT(\sing)$, the element $\rho(\gamma)$ can then be defined as the change-of-marking map from $\tilde{\gamma}(0)$ to $\tilde{\gamma}(1)$.
\end{remark}

Fixing a marking $f: \Sigma_g \rightarrow X$ identifies $\mathcal{G}({ \mathcal H})$ as a subgroup of $\Mod(\Sigma_g)$, but choosing a different basepoint $(X, \omega)$ or different marking $f$ will conjugate $\mathcal{G}({ \mathcal H})$ in $\Mod(\Sigma_g)$.
As such, in the following statements we will always begin by fixing some marked abelian differential $(X, f, \omega)$ with $(X, \omega) \in { \mathcal H}$; then the geometric monodromy group should be understood to be defined with reference to basepoint $(X, \omega)$ and marking $f$.

\begin{proposition}\label{prop:monstab}
Let ${ \mathcal H}$ be a component of $\HM(\sing)$ and fix $(X, f, \omega)$ with $(X, \omega) \in { \mathcal H}$. Then the components of $\HT(\sing)$ that cover ${ \mathcal H}$ are in bijective correspondence with the cosets of $\mathcal{G}({ \mathcal H})$ in $\Mod(\Sigma_g)$.
\end{proposition}
\begin{proof}
Let $\tilde { \mathcal H}$ be the component of $\HT(\sing)$ containing $(X, f, \omega)$. The mapping class group acts on the set of components of $\HT(\sing)$ which cover ${ \mathcal H}$ by permutations, and so it suffices to show that 
\[\mathcal{G}({ \mathcal H}) = \text{Stab}_{\Mod(\Sigma_g)}(\tilde { \mathcal H}).\]
Now if a mapping class $g$ is in $\mathcal{G}({ \mathcal H})$ then it is the image of a loop $\gamma$ in $\HM(\sing)$, which can be lifted to a path $\tilde{\gamma}$ in $\HT(\sing)$ from $(X,f, \omega)$ to $g \cdot (X, f, \omega)$. Therefore $g \in \text{Stab}_{\Mod(\Sigma_g)}(\tilde { \mathcal H})$.

Conversely, if $g$ stabilizes $\tilde { \mathcal H}$, then since $\tilde { \mathcal H}$ is path--connected there is a path $\tilde{\gamma}$ in $\tilde { \mathcal H}$ from $(X, f, \omega)$ to $g \cdot (X, f, \omega)$. The projection of $\tilde{\gamma}$ to ${ \mathcal H}$ is a loop $\gamma$ whose geometric monodromy is exactly $g$, and hence $g \in \mathcal{G}({ \mathcal H})$.
\end{proof}

Because the horizontal vector field of $(X, \omega)$ deforms continuously along with $(X, \omega)$, the winding number of any curve on $X$ is constant and so the geometric monodromy group must preserve the induced $r$--spin structure.

\begin{lemma}[Corollary 4.8 in \cite{Calderon_strata}]\label{lemma:rspinconstant}
Let $g \ge 2$ and $\sing$ a partition of $2g-2$ with $\gcd(\sing) = r$. Let ${ \mathcal H}$ be a component of $\HM(\sing)$ and fix $(X, f, \omega)$ with $(X, \omega) \in { \mathcal H}$. Then
\[\mathcal{G}({ \mathcal H}) \le \Mod_g[\phi],\]
where $\phi$ is the $r$--spin structure corresponding to $(X, f, \omega)$.
\end{lemma}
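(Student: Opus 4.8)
Since this is recorded as \cite[Corollary 4.8]{Calderon_strata}, I would be content to cite it; but the proof is short, and I sketch it. The plan is to package the content as a local constancy statement: the $r$-spin structure assigned to a marked abelian differential by Construction \ref{example:diffspin} is locally constant on $\HT(\sing)$. Granting this, take $g \in \mathcal G(\mathcal H)$, realized by a loop $\gamma$ in $\mathcal H$ based at $(X,\omega)$. As in the proof of Proposition \ref{prop:monstab}, $\gamma$ lifts (from the initial point $(X,f,\omega)$) to a path of marked differentials in $\HT(\sing)$ ending at $g^{\pm 1}\cdot(X,f,\omega)$, where the sign depends on orientation conventions in the definition of $\rho$ and is immaterial because $\Mod_g[\phi]$ is a subgroup. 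Local constancy forces the $r$-spin structures at the two endpoints of this path to coincide, and unwinding the $\Mod(\Sigma_g)$-action on $r$-spin structures turns this into the relation $g^{\pm 1}\cdot\phi = \phi$, i.e. $g \in \Mod_g[\phi]$.

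To prove local constancy, I would fix a path $\{(X_t, f_t, \omega_t)\}_{t\in[0,1]}$ of marked differentials, set $V_t = 1/\omega_t$, and let $\phi_t$ be the pullback along $f_t$ of the winding-number function $wn_{V_t}$, an $r$-spin structure on the fixed surface $\Sigma_g$. By Lemma \ref{lemma:basisvalues} it suffices to show that $t \mapsto \phi_t(c) \in \Z/r\Z$ is constant for $c$ ranging over a fixed geometric homology basis. Near any $t_0$, choose a $C^1$ representative of the curve $f_{t_0}(c) \subset X_{t_0}$ that is disjoint from the zeros of $\omega_{t_0}$; since the zeros and the marked curve vary continuously, the same can be arranged for $t$ in a neighborhood of $t_0$, and for those $t$ the value $\phi_t(c)$ equals the reduction mod $r$ of the honest integer winding number of that curve with respect to $V_t$ (the reduction being independent of the representative, as in Example \ref{example:wnf}). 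That integer is the degree of a continuously varying self-map of $S^1$, hence locally constant in $t$; connectedness of $[0,1]$ then gives the claim.

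I expect the only genuine obstacle to be bookkeeping rather than mathematics: one must track carefully the three moving pieces --- the vector fields $V_t$ living on the surfaces $X_t$, the markings $f_t\colon\Sigma_g\to X_t$, and the $\Mod(\Sigma_g)$-action on $r$-spin structures --- so that the endpoint comparison really yields $g^{\pm1}\cdot\phi = \phi$ with the correct interpretation. Once the conventions are pinned down, the whole argument reduces to the observation that a continuous map from a connected space to the discrete set $\Z/r\Z$ is constant.
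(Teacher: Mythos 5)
Your proof is correct and takes the same approach as the paper, which cites \cite[Corollary 4.8]{Calderon_strata} and in the preceding sentence sketches exactly the local-constancy-of-winding-numbers argument that you flesh out; your careful handling of the marking conventions (the $g^{\pm 1}$ bookkeeping) and the reduction to a geometric homology basis via Lemma \ref{lemma:basisvalues} are precisely the points that need care, and you have them right.
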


To exhibit the reverse inclusion, we need a way to build elements of $\mathcal{G}({ \mathcal H})$. A particularly simple method is to realize curves as cylinders; then the corresponding Dehn twists can be realized as continuous deformations of flat surfaces, and hence as elements of $\mathcal{G}({ \mathcal H})$.

\begin{lemma}[c.f. Lemma 6.2 in \cite{Calderon_strata}]\label{lemma:cyltwist}
Let ${ \mathcal H}$ be a component of $\HM(\sing)$ and fix $(X, f, \omega)$ with $(X, \omega) \in { \mathcal H}$. If $c$ is a simple closed curve on $\Sigma_g$ such that $f(c)$ is the core curve of a cylinder on $(X, \omega)$, then $T_c \in \mathcal{G}({ \mathcal H}).$
\end{lemma}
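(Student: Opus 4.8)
The plan is to exhibit $T_c$ directly as the geometric monodromy of an explicit loop in $\mathcal{H}$: the loop obtained by shearing the cylinder $f(c)$ through one full period. Fix a generic (non-orbifold) basepoint $(X,\omega) \in \mathcal{H}$ --- if the given differential happens to be an orbifold point, replace it by a nearby generic one and conjugate, since the conclusion $T_c \in \mathcal{G}(\mathcal{H})$ is insensitive to this choice. Let $C = f(c)$ be the cylinder, of circumference $w$ and height $h$, and fix an isometric identification $C \cong (\R/w\Z) \times [0,h]$ under which the boundary circles are $(\R/w\Z)\times\{0\}$ and $(\R/w\Z)\times\{h\}$ and the core curve corresponds to $(\R/w\Z)\times\{h/2\}$.

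First I would construct the loop. For $t \in [0,1]$, let $(X_t, \omega_t)$ be the translation surface that agrees with $(X,\omega)$ outside $C$ and replaces $C$ by the cylinder $\big((\R/w\Z)\times[0,h]\big) \big/ \big((x,0) \sim (x+tw, h)\big)$; the translation charts of $\omega$ extend over this new cylinder, so by Lemma \ref{lemma:flatequiv} this is again an abelian differential, with the same cone points of the same orders $k_1, \dots, k_n$. Thus $(X_t,\omega_t) \in \HM(\sing)$ for every $t$, and as $t \mapsto (X_t,\omega_t)$ is a continuous path starting at $(X_0,\omega_0) = (X,\omega) \in \mathcal{H}$, it stays in the single component $\mathcal{H}$. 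At $t=1$ the regluing is by $x \mapsto x+w = x$, so $(X_1,\omega_1) = (X_0,\omega_0)$ and $\gamma\colon t \mapsto (X_t,\omega_t)$ is a loop in $\mathcal{H}$ based at $(X,\omega)$.

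It then remains to identify $\rho(\gamma)$. The canonical isotopy realizing parallel transport along $\gamma$ is the family $\psi_t \colon X \to X_t$ that is the identity outside $C$ and, on the fundamental domain $(\R/w\Z)\times[0,h]$ of $C$, is given by $(x,y) \mapsto (x + t(y/h)w,\, y)$; one checks this descends consistently to the two gluings, so it is a well-defined homeomorphism. Its time-$1$ return map $\psi_1 \colon X \to X_1 = X$ is the identity outside $C$ and the shear $(x,y) \mapsto (x + (y/h)w, y)$ inside $C$, which is a representative of the Dehn twist $T_c$ about the core curve --- up to the sign of the shear, which we may arrange to match the orientation convention for $T_c$ (otherwise $\rho(\gamma) = T_c^{-1}$, which also lies in $\mathcal{G}(\mathcal{H})$). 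Hence $T_c \in \mathcal{G}(\mathcal{H})$. The only step requiring genuine care is this last identification: checking that developing the shear family produces the topological Dehn twist rather than a map isotopic to the identity. This is the standard principle that a \emph{cylinder twist} on a flat surface represents the Dehn twist about its core curve, but since it is the sole point where the flat geometry is converted into a statement in the mapping class group, the conventions should be spelled out explicitly.
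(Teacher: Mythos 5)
Your proof is correct in substance and is the standard cylinder-shear argument for this fact. The paper itself does not prove Lemma \ref{lemma:cyltwist}: it cites \cite[Lemma 6.2]{Calderon_strata} for the non-hyperelliptic case and handles the hyperelliptic case in a separate remark via the structure of the hyperelliptic involution. Your argument is very likely the mechanism behind that citation, but it has the advantage of treating both cases uniformly --- the monodromy of the cylinder-shear loop is a Dehn twist regardless of whether $\mathcal{H}$ is hyperelliptic, so no case distinction is needed.

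Two small corrections. First, a notational slip: $\bigl((\R/w\Z)\times[0,h]\bigr)\big/\bigl((x,0)\sim(x+tw,h)\bigr)$ denotes a torus (you have glued the two boundary circles of the cylinder to each other), not the deformed surface you intend. What you mean is that $X_t$ keeps the cylinder $C\cong(\R/w\Z)\times[0,h]$ and its gluing to $X\setminus C$ along $(\R/w\Z)\times\{0\}$ unchanged, while shifting the gluing along $(\R/w\Z)\times\{h\}$ by $tw$; equivalently, apply $\left(\begin{smallmatrix}1 & tw/h\\ 0 & 1\end{smallmatrix}\right)$ to $C$ in its natural coordinates and reglue continuously. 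With this fix, the shear family $\psi_t$ descends exactly as you say, and $\psi_1$ is the usual Dehn twist representative. Second, when you perturb away from an orbifold basepoint, you should say explicitly that the cylinder persists under small perturbations of the flat structure (its core closed geodesic deforms continuously and remains embedded in a maximal cylinder), so that the re-based loop still has a cylinder with the marked core curve $c$. A cleaner way to avoid the orbifold issue entirely is to lift the whole family to $\HT(\sing)$ using the markings $\psi_t\circ f$: one gets a path in a single component $\tilde{\mathcal{H}}$ of $\HT(\sing)$ from $(X,f,\omega)$ to $T_c^{\pm1}\cdot(X,f,\omega)$, and Proposition \ref{prop:monstab} then gives $T_c^{\pm1}\in\Stab_{\Mod(\Sigma_g)}(\tilde{\mathcal{H}})=\mathcal{G}(\mathcal{H})$ directly.
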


\begin{remark}
Lemma 6.2 of \cite{Calderon_strata} only deals with the case when ${ \mathcal H}$ is a non-hyperelliptic component of $\HM(\sing)$. When ${ \mathcal H}$ consists entirely of hyperelliptic differentials, the result follows from the description of $\mathcal{G}({ \mathcal H})$ appearing in the proof of \cite[Corollary 2.6]{Calderon_strata} together with the fact that the hyperelliptic involution of any $(X, \omega) \in { \mathcal H}$ (setwise) fixes each of its cylinders (see, e.g., \cite[Lemma 2.1]{Lind_hyp}).
\end{remark}

\subsection{Construction of prototypes and the proof of the main theorems}\label{subsection:prototype}

We now recall the Thurston--Veech method for building a flat surface out of a filling pair of multicurves. In Lemma \ref{lemma:flatprototype}, we use this procedure to build a locally flat metric with a collection of cylinders whose core curves satisfy the hypotheses of Theorem \ref{theorem:genset}. In Lemma \ref{lemma:prototype}, we analyze the holonomy of this metric in order to show that this is induced from an abelian differential. This is used to deduce that the geometric monodromy group of a component of $\HM(\sing)$ is exactly the stabilizer of the corresponding $r$--spin structure, completing the proof of Theorems \ref{theorem:classification} and \ref{theorem:moncomp}.

In Definition \ref{def:curvesystem} below, we describe a system of curves such that Dehn twists about these curves generate an $r$-spin mapping class group $\Mod_g[\phi]$. As observed in Lemma \ref{lemma:stabindex}, if $r$ is odd, then $\Mod(\Sigma_g)$ acts transitively on the set of $r$-spin structures, and if $r$ is even, then there are two orbits, distinguished by the Arf invariant. Thus there are only ever at most two conjugacy classes of $r$-spin mapping class groups for a given genus. The three cases in Definition \ref{def:curvesystem} therefore exhaust all possible behaviors with regards to the existence and parity of Arf. The dichotomy between $g \equiv 0,1 \pmod 4$ and $g \equiv 2,3 \pmod 4$ is a reflection of the fact that the Arf invariant of both of the configurations shown in Figures \ref{figure:curvelabels12} and \ref{figure:curvelabels3} exhibit mod-$4$ periodicity as a function of $g$, and are constructed so that the Arf invariants for a fixed $g$ are always distinct.

\begin{definition}[Definition 5.1 of \cite{Calderon_strata}]\label{def:curvesystem}
Suppose that $g \ge 3$ and let $\sing = (k_1, \ldots, k_n)$ be a partition of $2g-2$. If $\gcd(\sing)$ is even, also choose $\Arf \in \{0,1\}$. Label curves of $\Sigma_g$ as follows:
\begin{enumerate}
\item If $\gcd(\sing)$ is odd, then label the curves as in Figure \ref{figure:curvelabels12}

\item If $\gcd(\sing)$ is even and
\[
\Arf(\phi)  = \begin{cases} 
	1	&{g \equiv 0,3 \pmod 4}\\
	0	&{g \equiv 1,2 \pmod 4}
\end{cases}
\] 
then label the curves as in Figure \ref{figure:curvelabels12}.

\item If $\gcd(\sing)$ is even and
\[
\Arf(\phi)  = \begin{cases} 
	1	&{g \equiv 1,2 \pmod 4}\\
	0	&{g \equiv 0,3 \pmod 4}
\end{cases}
\] 
then label the curves as in Figure \ref{figure:curvelabels3}.
\end{enumerate}
No matter the labeling scheme, define the curve system $\mathsf{C}(\sing, \Arf)$ to be the collection
\[\mathsf{C}(\sing ,\Arf ) = \{ a_i\}_{i=1}^{2g-1} \cup \left\{b_i: i = \sum_{j=1}^\ell k_j \text{ for } \ell=1, \ldots, n \right\},\]
where the $b_i$ indices are understood mod $2g-2$.
\end{definition}

\begin{figure}[ht]
\centering
\begin{subfigure}{\textwidth}
\centering
\labellist
\small
\pinlabel $a_0=b_0$ [l] at 112.8 20
\pinlabel $b_1$ [l] at 176 20
\pinlabel $b_2$ [l] at 240 20
\pinlabel $b_{g-3}$ [l] at 308 20
\pinlabel $b_{2g-8}$ [l] at 240 84.8
\pinlabel $b_{2g-7}$ [l] at 176 84.8
\pinlabel $b_{2g-6}$ [l] at 112.8 84.8
\pinlabel $b_{2g-5}$ [bl] at 48 95
\pinlabel $b_{2g-4}$ [t] at 52 60
\pinlabel $b_{2g-3}$ [l] at 40 6.4
\endlabellist
\includegraphics{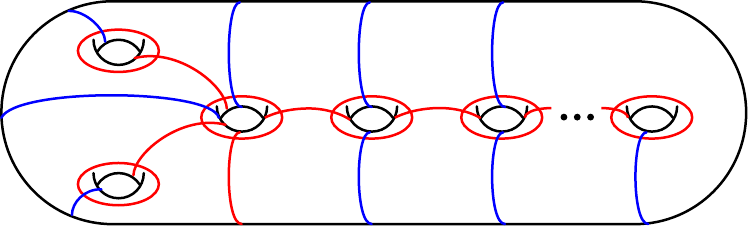}
\caption{Labels in cases (1) and (2) of Definition \ref{def:curvesystem}.}
\label{figure:curvelabels12}
\end{subfigure}\\
\begin{subfigure}{\textwidth}
\centering
\labellist
\small
\pinlabel $a_0=b_0$ [l] at 174.4 84.8
\pinlabel $b_1$ [l] at 108.8 84.8
\pinlabel $b_2$ [l] at 46.4 84.8
\pinlabel $b_3$ [l] at 108.8 21.6
\pinlabel $b_4$ [l] at 174.4 21.6
\pinlabel $b_5$ [l] at 238.4 21.6
\pinlabel $b_{g+1}$ [l] at 304 21.6
\endlabellist
\includegraphics{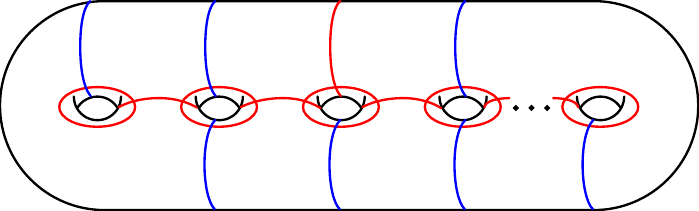}
\caption{Labels in case (3) of Definition \ref{def:curvesystem}.}
\label{figure:curvelabels3}
\end{subfigure}
\caption{Naming conventions for simple closed curves, depending on $\gcd(\sing)$, $\Arf$, and $g$.}
\label{figure:curvelabels}
\end{figure}

We first see that $\mathsf{C}(\sing, \Arf)$ determines a flat surface.

\begin{lemma}\label{lemma:flatprototype}
Suppose that $g \ge 3$. Let $\sing$ be any partition $\sing$ of $2g-2$, and if $\gcd(\sing)$ is even, choose $\Arf \in\{0,1\}$.
Then there exists a flat cone metric $\sigma$ on $\Sigma_g$ such that the curves of the curve system $\mathsf{C}(\sing, \Arf)$ are realized as cylinders on $(\Sigma, \sigma)$.
\end{lemma}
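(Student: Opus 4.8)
The plan is to apply the Thurston--Veech construction, so that the entire content of the argument becomes the verification that $\mathsf C(\sing, \Arf)$ can be presented as a pair of filling multicurves. Recall the construction: given multicurves $A$ and $B$ in minimal position whose union fills $\Sigma_g$, one places a unit Euclidean square centered at each point of $A \cap B$, with horizontal sides transverse to $B$ and vertical sides transverse to $A$, and glues the squares along their edges according to the way the arcs of $A \cup B$ are attached to one another in $\Sigma_g$. Because $A \cup B$ fills, the squares assemble into a closed surface $\Sigma$, and an Euler characteristic count --- there are $|A \cap B|$ squares, $2|A\cap B|$ edges among them, and $2 - 2g + |A \cap B|$ complementary polygons of $A \cup B$ --- shows that $\Sigma$ has genus $g$; since the construction is natural in the combinatorial data, $\Sigma$ comes with a homeomorphism to $\Sigma_g$ carrying the cylinder cores to the curves of $\mathsf C(\sing, \Arf)$. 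Here each component $\alpha$ of $A$ sweeps out a cyclically glued band of squares, hence an embedded horizontal cylinder with core isotopic to $\alpha$, and all the cone points sit at the images of the complementary polygons of $A \cup B$, hence off of every such core; the same holds for the components of $B$. So it remains only to exhibit the filling pair.

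First I would fix the bipartition. In each of the labeling schemes of Definition \ref{def:curvesystem} the curves of $\mathsf C(\sing, \Arf)$ admit a two-coloring, visible directly from Figure \ref{figure:curvelabels}, under which curves of the same color are disjoint while curves of opposite colors meet transversally in a single point: color the chain $a_1, \dots, a_{2g-1}$ according to the parity of the index, and give each remaining curve $b_i$ --- including $a_0 = b_0$, which is always present, as it corresponds to $\ell = n$ --- the color opposite to that of the unique chain curve it crosses. Since the $b_i$ are pairwise disjoint (again read off from the figure) this coloring is consistent; let $A$ and $B$ be the two color classes. These are multicurves, and they are automatically in minimal position, because a chain together with a collection of transverse arcs realizes its geometric intersection numbers.

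Next I would verify that $A \cup B$ fills, i.e., that every component of $\Sigma_g \setminus (A \cup B)$ is a disk. The cleanest route is to first record the complementary regions of the chain $\{a_i\}$ alone --- a short fixed list of polygons together with the disk capping off the subsurface that the chain fills --- and then check that adjoining the curves $b_i$ with $i = \sum_{j \le \ell} k_j$ subdivides these regions into disks. Equivalently, one can simply exhibit $2 - 2g + |A \cap B|$ simply connected complementary polygons in Figure \ref{figure:curvelabels} and invoke the Euler characteristic identity above. Either way, this is the picture analysis already carried out for the analogous curve systems in \cite[\S 5]{Calderon_strata}, which I would cite or reproduce.

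The main obstacle is precisely this filling verification. It is not a single clean argument but a case analysis, running over the three labeling schemes of Definition \ref{def:curvesystem}, all $g \ge 3$, both Arf invariants when $\gcd(\sing)$ is even, and --- crucially --- all partitions $\sing$, since the positions of the $b_i$, and hence the shapes of the complementary polygons, depend on the partial sums of $\sing$. (This is also the reason that exactly this subfamily of the $b_i$ is chosen: it is the sparsest family that still cuts the complement of the chain into disks, which in turn is what will later pin the cone orders to $\sing$.) Once filling is in hand everything else is formal: the Thurston--Veech machine outputs the flat cone metric $\sigma$, and each $a_i$ and each $b_i$ is a cylinder core by construction.
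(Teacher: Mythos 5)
Your proposal follows the paper's proof essentially verbatim: both invoke the Thurston--Veech construction, both obtain the bipartition of $\mathsf C(\sing, \Arf)$ into $\gamma_h$ and $\gamma_v$ from the fact that the intersection graph is a tree (which is exactly your two-coloring), and both then read off the cylinders from the square decomposition. The one thing you flag as a lingering obligation --- verifying that the union fills --- is in fact also left implicit in the paper's own proof (it is established by inspection of the figures and in the cited analysis of \cite[\S 5]{Calderon_strata}), so you have if anything been more careful than the source.
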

\begin{proof}
This is nothing more than an application of the Thurston--Veech construction; we recall the details for the interested reader below.

Suppose that $\gamma_h$ and $\gamma_v$ are any pair of multicurves which jointly fill $\Sigma_g$. We will eventually realize $\gamma_h (resp. \gamma_v)$ as a union of horizontal (resp. vertical) cylinders on a translation surface. Their union therefore defines a cellulation of $\Sigma_g$ whose 0--cells are the intersection points of $\gamma_h$ with $\gamma_v$, whose 1--cells are the simple arcs of $\gamma_h \cup \gamma_v$, and the 2--cells of which are $n$ polygonal disks with $2(m_1 + 2)$, $\ldots$, $2(m_n + 2)$ sides, respectively.

The dual complex is therefore built out of (topological) squares, with $2(m_i + 2)$ of them meeting around the $i^\text{th}$ vertex. Declaring each square to be a flat unit square yields a flat cone metric $\sigma$ on $\Sigma_g$ with cone angle
\[ \frac{\pi}{2} \cdot 2(m_i + 2) = m_i + 2\]
around the $i^\text{th}$ cone point $p_i$.

The intersection graph (c.f. Section \ref{subsection:generalslide}) associated to the curve system $\mathsf{C}(\sing, \Arf)$ is a tree, and therefore the curves can be partitioned into two multicurves $\gamma_h$ and $\gamma_v$. 
We call the flat cone metric $\sigma=\sigma(\sing, \Arf)$ resulting from the Thurston--Veech construction the {\em prototype} for the pair $(\sing, \Arf)$.
\end{proof}

\begin{remark}
Observe that by construction, $\sigma$ has cone points of angles
\[2(k_1 + 1) \pi, \ldots, 2(k_n + 1) \pi,\]
and if we assume that $\sigma$ does indeed come from an abelian differential $\omega$ (as established in Lemma \ref{lemma:prototype}), then the Arf invariant of the $r$--spin structure induced by $\omega$ agrees with the choice of $\Arf \in \{0,1\}$ used to construct it \cite[Lemma 5.4]{Calderon_strata}.
\end{remark}

\begin{lemma}\label{lemma:prototype}
Suppose that $g \ge 3$. Let $\sing$ be any partition $\sing$ of $2g-2$, and if $\gcd(\sing)$ is even, choose $\Arf \in\{0,1\}$ (if $g=3$, set $\Arf=1$).
Then there exists a non-hyperelliptic marked abelian differential $(X, f, \omega)$ in $\HT(\sing)$ such that the curves of the curve system $\mathsf{C}(\sing, \Arf)$ are realized as the vertical and horizontal cylinders of $(X, f, \omega)$.
\end{lemma}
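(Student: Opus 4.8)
The plan is to turn the prototype flat metric of Lemma~\ref{lemma:flatprototype} into an abelian differential by way of the holonomy criterion of Lemma~\ref{lemma:flatequiv}, and then, separately, to rule out hyperellipticity. First I would take $\sigma = \sigma(\sing,\Arf)$ from Lemma~\ref{lemma:flatprototype}, on which every curve of $\mathsf C(\sing,\Arf)$ is realized as a cylinder (horizontal for the curves lying in $\gamma_h$, vertical for those in $\gamma_v$). By the cone-angle count in the proof of Lemma~\ref{lemma:flatprototype} the cone angle at $p_i$ is $2\pi(k_i+1)$, so the holonomy around each cone point is already trivial, and the hypothesis of Lemma~\ref{lemma:flatequiv}(1) reduces to showing the holonomy representation $\mathrm{hol}_\sigma$ is globally trivial. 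Since $\sigma$ is tiled by axis-aligned unit squares, $\mathrm{hol}_\sigma$ has image in $\{\pm I\}\le \SO(2)$ and kills every loop around a cone point, hence descends to a class $\epsilon\in H^1(\Sigma_g;\Z/2)$. Orienting the components of $\gamma_h$ and $\gamma_v$ arbitrarily, one gets a sign $\epsilon_p\in\{\pm1\}$ at each intersection point $p$ of a horizontal curve with a vertical curve (comparing the frame $(\gamma_h'(p),\gamma_v'(p))$ with the orientation of $\Sigma_g$), and $\sigma$ is induced by an abelian differential precisely when these orientations can be reversed on a subset so that all the $\epsilon_p$ agree; the obstruction is the collection of products $\prod_p\epsilon_p$ over cycles in the intersection graph of $\mathsf C(\sing,\Arf)$. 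That graph is a tree (as recorded in the proof of Lemma~\ref{lemma:flatprototype}), so there are no cycles, $\epsilon=0$, and Lemma~\ref{lemma:flatequiv} produces an abelian differential $(X,\omega)$ isometric to $(\Sigma_g,\sigma)$. Taking the isometry as the marking $f\colon\Sigma_g\to X$ and reading the zero orders off the cone angles yields $(X,f,\omega)\in\HT(\sing)$ on which the curves of $\mathsf C(\sing,\Arf)$ are exactly the horizontal and vertical cylinders.

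The remaining point, which I expect to be the main obstacle, is that $(X,\omega)$ is non-hyperelliptic. If $\gcd(\sing)\notin\{g-1,2g-2\}$ this is immediate, since by Theorem~\ref{theorem:KZstrata} there is no hyperelliptic component of $\HM(\sing)$ at all; and when $\gcd(\sing)$ is even and $\mathcal H^{\mathrm{hyp}}(\sing)$ carries the Arf invariant opposite to the one built into $\sigma$ (which, together with the convention $\Arf=1$ when $g=3$, handles the case $g=3$), non-hyperellipticity follows from the Arf computation cited in the remark after Lemma~\ref{lemma:flatprototype}. In the remaining cases I would argue by contradiction. Suppose $(X,\omega)$ is hyperelliptic, with involution $\iota$, and let $\mathcal H := \mathcal H^{\mathrm{hyp}}(\sing)$. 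Parallel transport within $\mathcal H$ preserves the canonical hyperelliptic structure, so $\mathcal G(\mathcal H)$ lies in the centralizer of $\iota$ in $\Mod(\Sigma_g)$, which for $g\ge 3$ is a subgroup of infinite index. On the other hand the first part of the lemma displays every curve of $\mathsf C(\sing,\Arf)$ as a cylinder on $(X,f,\omega)$, so Lemma~\ref{lemma:cyltwist} places each of the corresponding Dehn twists in $\mathcal G(\mathcal H)$; but by Theorem~\ref{theorem:genset} (cases \ref{case:hardeven}, \ref{case:hardodd}, \ref{case:r}, which apply since $\mathsf C(\sing,\Arf)$ is designed to satisfy its hypotheses) these twists generate $\Mod_g[\phi]$, which has finite index in $\Mod(\Sigma_g)$ by Lemma~\ref{lemma:stabindex} --- a contradiction. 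Hence $(X,\omega)$ is non-hyperelliptic.

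For the small genera where Theorem~\ref{theorem:genset}.\ref{case:hardeven}/\ref{case:hardodd} is not yet available (i.e.\ $g=4$, where only $\sing=(6)$ with the ``wrong'' Arf and $\sing=(3,3)$ remain), I would instead run the direct fixed-point count: $\iota$ setwise preserves each cylinder and acts on it as an orientation-preserving involution reversing the core, hence is conjugate to $(\theta,t)\mapsto(-\theta,1-t)$ and contributes exactly two of the $2g+2$ fixed points of $\iota$, located in the interior of that cylinder; since distinct cylinders among $\mathsf C(\sing,\Arf)$ in a fixed direction have disjoint interiors (and the zero(s) of $\omega$ contribute further fixed points not accounted for this way), this is incompatible with the number of such cylinders --- alternatively one simply invokes the corresponding analysis of prototypes in \cite{Calderon_strata}, whose curve system is the one of Definition~\ref{def:curvesystem}. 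Combined with Lemmas~\ref{lemma:cyltwist} and \ref{lemma:rspinconstant}, the output of this lemma is exactly what is needed to finish Theorem~\ref{theorem:classification}. The part I expect to require genuine care is precisely the non-hyperelliptic step for the strata that admit a hyperelliptic component: the holonomy computation is essentially forced by the tree structure, but excluding hyperellipticity of the specific prototype is where the combinatorics of $\mathsf C(\sing,\Arf)$ (or a citation to \cite{Calderon_strata}) is really used.
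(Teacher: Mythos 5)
Your argument for the first half of the lemma---using the tree structure of the intersection graph to see that the square-tiled metric $\sigma(\sing,\Arf)$ has trivial holonomy and hence comes from an abelian differential via Lemma~\ref{lemma:flatequiv}---is essentially the paper's argument, just phrased cohomologically (a $\Z/2$--valued obstruction over cycles of the intersection graph) rather than in terms of directly extending a coherent orientation along the tree. These are the same idea.

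For the non-hyperellipticity step, you take a genuinely different route from the paper, and this is where the trouble lies. The paper's argument is a direct, elementary Birman--Hilden observation: if $\iota$ existed, each $a_i$ would be the lift of a simple arc $\alpha_i$ on the quotient, the formula $i(a_i,a_j)=2i(\alpha_i,\alpha_j)+e(\alpha_i,\alpha_j)$ would then force $\alpha_5$ to share an endpoint with each of $\alpha_0,\alpha_4,\alpha_6$, and since $\alpha_5$ has only two endpoints, two of those arcs would share an endpoint and hence the corresponding pairwise-disjoint $a_i$ would intersect --- a contradiction valid uniformly for all $g\ge 3$. Your main argument instead goes through Theorem~\ref{theorem:genset}: the cylinder twists would place all of $\Mod_g[\phi]$ (finite index) inside $\mathcal G(\mathcal H^{\mathrm{hyp}})$, which lies in the centralizer of $\iota$ (infinite index for $g\ge 3$). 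This is logically sound for $g\ge 5$ and is not circular (Theorem~\ref{theorem:genset} is proved independently of Section~\ref{section:classification}), but Lemma~\ref{lemma:prototype} is stated and needed for $g\ge 3$, and Theorem~\ref{theorem:genset} requires $g\ge 5$.

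That leaves a genuine gap in the small-genus fallback. Your fixed-point count for $g=4$ is not quite right as stated: the two fixed points of $\iota$ in the interior of a horizontal cylinder may coincide with fixed points in the interior of an intersecting vertical cylinder, so the naive tally $2h + 2v$ over-counts, and the bound that actually follows (roughly $\max(h,v)\le g+1$) does not produce a contradiction for the prototypes in $\HM(6)$ or $\HM(3,3)$ at $g=4$ (where $h$ and $v$ are each on the order of $g$). Falling back on a bare citation to \cite{Calderon_strata} is of course acceptable, but is not a self-contained proof. The paper avoids all of this casework: the Birman--Hilden intersection-parity argument handles every $g\ge 3$ (and every $\Arf$) at once, and does not depend on Theorem~\ref{theorem:genset} at all.
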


Note that in the case $g=3$, the components of $\HM(4)$ and $\HM(2,2)$ with $\Arf=0$ coincide with the hyperelliptic components \cite[Theorem 2]{KZ_strata}.

\begin{proof} In order to prove that the metric $\sigma$ is induced from an abelian differential, we show that our prototype surface admits a translation vector field $V$ outside of the singularities. Therefore, by Lemma \ref{lemma:flatequiv}, there is some $(X, f, \omega) \in \HT(\sing)$ which is isometric to $(\Sigma_g, \sigma)$ via a marking $f: \Sigma_g \rightarrow X$. 

To build this vector field, we choose a positive horizontal direction on each square. If we can do this so that the squares glue consistently along each edge, we may then define $V$ by pasting together the constant horizontal vector fields $\langle 1, 0 \rangle$. The problem then becomes to find a coherent choice of positive horizontal direction for each square.

Observe that declaring the edges dual to the edges of $\gamma_v$ to be horizontal and the edges dual to $\gamma_h$ to be vertical naturally partitions the edges of the squares. Then the coherence condition on gluing squares is equivalent to the condition that the curves of $\gamma_h$ and $\gamma_v$ may be oriented so that each intersection of a curve of $\gamma_h$ with one of $\gamma_v$ is positively oriented. Now since the intersection graph of the multicurves $\gamma_h$ and $\gamma_v$ is a tree, one may choose the orientation of a single curve of $\gamma_h$ and extend by the positivity constraint to yield a coherent orientation on $\gamma_h$ and $\gamma_v$ (see Figure \ref{figure:orientation}). The choice of positive horizontal on each square induced from the orientation of $\gamma_h$ then yields the desired result.

\begin{figure}[ht]
\labellist
\large
\pinlabel \red{$\gamma_v$} at 70 15
\pinlabel \blue{$\gamma_h$} at 50 15
\endlabellist 
\includegraphics{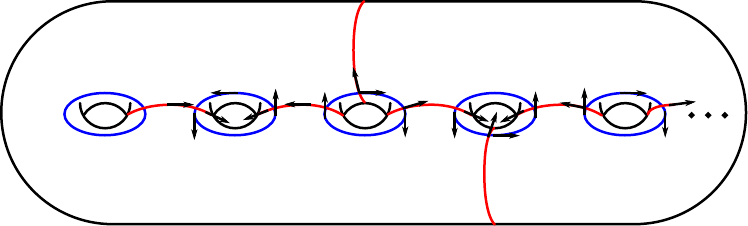}
\caption{Extending the orientation of a single curve to a global orientation of $\gamma_h \cup \gamma_v$.}
\label{figure:orientation}
\end{figure}

Suppose towards contradiction that $(X, \omega)$ is in a hyperelliptic component of $\HM(\sing)$; then by Theorem \ref{theorem:KZstrata}, we know that $\sing = (2g-2)$ or $(g-1, g-1)$. In this case, the hyperelliptic involution $\iota$ of $(X, \omega)$ must setwise fix each of its cylinders (see, e.g., \cite[Lemma 2.1] {Lind_hyp}). In particular, $\iota$ fixes the curves $\{a_i\}_{i=1}^{2g-1}$ but reverses their orientation. 

Therefore, each $a_i$ curve is the lift of an arc $\alpha_i$ on $X/ \iota$ connecting branch values of the associated cover $q: X \rightarrow X / \iota$. Now observe that (by the Birman--Hilden theory, see Section \ref{subsection:chainslide} or \cite{MargWin_BHSurvey}) the geometric intersection numbers of the $a_i$ are determined by the intersection numbers of the $\alpha_i$: indeed, one has
\[i(a_i, a_j) = 2 i(\alpha_i, \alpha_j) + e(\alpha_i, \alpha_j)\]
where $i(\alpha_i, \alpha_j)$ counts only intersection points in the interior of the arcs and $e(\alpha_i, \alpha_j) \in \{0, 1, 2\}$ is the number of their shared endpoints. But now since
\[i(a_5, a_4) = i(a_5, a_6) = i(a_5, a_0)=1\]
we know that $\alpha_5$ shares an endpoint with each of $\{\alpha_4, \alpha_6, \alpha_0\}$. However, since $\alpha_5$ has only two endpoints, this means that two of $\{\alpha_4, \alpha_6, \alpha_0\}$ share an endpoint, and hence their corresponding $a_i$ curves intersect, a contradiction.

Therefore $(X, \omega)$ cannot be hyperelliptic.
\end{proof}

We can now prove our main theorems. Since any twist in a cylinder of $(X, f, \omega)$ must stabilize the component of $\HT(\sing)$ in which it lies (Lemma \ref{lemma:cyltwist}), the monodromy theorem then follows from a quick application of Theorem \ref{theorem:genset}. The classification theorem is in turn an application of Proposition \ref{prop:monstab}.

\begin{proof}[Proof of Theorems \ref{theorem:classification} and \ref{theorem:moncomp}]
Let $g \ge 5$, let $\sing$ be a partition of $2g-2$, and choose ${ \mathcal H}$ to be a non-hyperelliptic connected component of $\HM(\sing)$.
Set $(X, f, \omega) \in \HT(\sing)$ to be the prototype for $(\sing, \Arf({ \mathcal H}))$ constructed in Lemma \ref{lemma:prototype}. Observe that $(X, \omega) \in { \mathcal H}$. Let $\phi$ be the $r$--spin structure induced by $(X, f, \omega)$, and define
\[\Gamma := \left \langle T_c : f(c) \text{ is a cylinder on }(X, \omega) \right \rangle.\]
By construction, the generating sets of cases 1 and 2 of Theorem \ref{theorem:genset} are realized as the core curves of cylinders on the appropriate prototype surface $(X, \omega)$; therefore
\[
\Mod_g[\tilde \phi] \le \Gamma,
\]
where $\tilde \phi$ is some $(2g-2)$-spin structure that refines $\phi$. 

It remains to show that $\Mod_g[\phi] \le \Gamma$. We begin by observing that for every $i$, the cut surface
\[ \Sigma_g \setminus ( b_0 \cup b_i \cup a_2\cup a_4 \cup \ldots\cup  a_{2g-2})\]
is the union of an $i+2$--holed and a $2g-i$--holed sphere, so homological coherence (Lemma \ref{lemma:homcoh}) implies that each curve $b_i$ has $\phi(b_i) = i$ (relative to an appropriate orientation). Therefore, by construction of the prototype $(X, f, \omega)$, we see that $\Gamma$ contains twists on curves $b_i$ with $\tilde \phi$--values
\[\left\{ \tilde \phi(b_i) \right\} = \left \{ \sum_{j=1}^\ell k_j : \ell=1, \ldots, n \right \}.\]
Since $r = \gcd(\sing)$, the set $\left\{ \tilde \phi(b_i) \right\}$ generates the subgroup $r \Z / (2g-2) \Z$ of $\Z/(2g-2) \Z$, and so Theorem \ref{theorem:genset}.\ref{case:r} implies that $\Gamma = \Mod_g[\phi].$

Putting this together with Lemmas \ref{lemma:cyltwist} and \ref{lemma:rspinconstant} yields
\begin{equation}\label{equation:monodromy}\Mod_g[\phi] = \Gamma \le \mathcal{G}({ \mathcal H}) \le \Mod_g[\phi]\end{equation}
and therefore all of the groups are equal, completing the proof of Theorem \ref{theorem:moncomp}.

Finally, Proposition \ref{prop:monstab} together with Lemma \ref{lemma:stabindex} imply that there are exactly
\[\left[ \Mod(\Sigma_g) : \Mod_g[\phi] \right]
= \left\{ \begin{array}{ll}
r^{2g} & \text{ if } r \text { is odd} \\
(r/2)^{2g} \left( 2^{g-1} (2^g + 1) \right) & \text{ if } r \text { is even and} \Arf({ \mathcal H}) = 0 \\
(r/2)^{2g} \left( 2^{g-1} (2^g - 1) \right) & \text{ if } r \text { is even and} \Arf({ \mathcal H}) = 1
\end{array}\right.\]
components of $\HT(\sing)$ lying over ${ \mathcal H}$. 
Combining the above statements for the components of $\HM(\sing)$, as classified by Theorem \ref{theorem:KZstrata}, completes the proof of Theorem \ref{theorem:classification}.
\end{proof}

Now that we have characterized the geometric monodromy group of $\mathcal H$, we can quickly recover the homological monodromy.

\begin{proof}[Proof of Corollary \ref{corollary:GR}]
The monodromy $\overline \rho: \pi_1^{\text{orb}}({ \mathcal H}, (X, \omega)) \to \Sp(2g;\Z)$ of the bundle $H_1({ \mathcal H})$ factors through the geometric monodromy $\rho: \pi_1^{\text{orb}}({ \mathcal H}, (X, \omega)) \to \Mod_g[\phi]$: 
\[
\overline \rho = \Psi \circ \rho.
\]
By Theorem \ref{theorem:moncomp}, $\rho$ surjects onto the spin structure stabilizer $\Mod_g[\phi]$. The result now follows from the description of the action of the $r$-spin mapping class group on homology (Lemma \ref{step1}).
\end{proof}

Finally, using our description of which deformations can occur in a stratum, we can also give a description of which curves appear as cylinders on a surface in a stratum.

\begin{proof}[Proof of Corollary \ref{corollary:cylchar}]
We first consider the case when $\tilde { \mathcal H}$ is a non-hyperelliptic component of $\HT(\sing)$. Let $\phi$ denote the corresponding $r$--spin structure. Recall that we are trying to prove that a curve $c$
is realized as the core curve of a cylinder on some marked abelian differential in $\tilde { \mathcal H}$ if and only if it is nonseparating and $\phi$--admissible.

Lemma \ref{lemma:cylinderadmissible} shows that the core curve of every cylinder on every $(X, f, \omega) \in \tilde { \mathcal H}$ is $\phi$--admissible, and by Stokes' theorem, no separating curve can ever be a cylinder on an abelian differential.

To see that the conditions are also sufficient, let $(X, f, \omega)$ be any marked abelian differential in $\tilde { \mathcal H}$ (for example, the prototype coming from Lemma \ref{lemma:prototype}) and let $\xi$ be a cylinder on $(X, \omega)$. Suppose that the core curve of $\xi$ is $f(d)$, where $d$ is a simple closed curve on $\Sigma_g$. By Lemma \ref{lemma:cylinderadmissible}, $d$ is $\phi$--admissible. 

As explained in Lemma \ref{lemma:ccpcurves}, the spin stabilizer subgroup $\Mod_g[\phi]$ acts transitively on the set of admissible curves, and hence there is some $g \in \Mod_g[\phi]$ so that $g(d)=c$. Therefore, $f \circ g^{-1}(c) = f(d)$ is the core curve of $\xi$ on $(X, \omega)$, and hence $c$ is realized as the core curve of a cylinder on
\[g \cdot (X, f, \omega) = (X, f g^{-1}, \omega).\]
By Proposition \ref{prop:monstab}, we have that $(X, fg^{-1}, \omega)$ is in $\tilde { \mathcal H}$, finishing the proof.

Suppose now that $\tilde { \mathcal H}$ is a hyperelliptic component of $\HT(\sing)$ with corresponding hyperelliptic involution $\iota$; then as in the proof of Lemma \ref{lemma:prototype}, $\sing = (2g-2)$ or $(g-1, g-1)$ and $\iota$ fixes the core curves of each cylinder. Therefore, the core curve of each cylinder on any $(X, f, \omega) \in \tilde { \mathcal H}$ is the lift of a simple arc of $X / \iota$.

To see that every nonseparating curve fixed by $\iota$ is the core curve of a cylinder, let $c$ be such a curve. As in the previous case, pick some $(X, f, \omega) \in \tilde { \mathcal H}$ and a cylinder on it with core curve $f(d)$. Let $\gamma$ and $\delta$ denote the (simple) arcs of $\Sigma_g / \iota$  corresponding to $c$ and $d$, which connect the branch values of the associated cover $q: X \rightarrow X /\iota$. 

We now recall that the hyperelliptic component ${ \mathcal H} \subset \HM(\sing)$ is an orbifold $K(\pi, 1)$ for (an extension of) a surface braid group on $X /\iota$ \cite[\S1.4]{LM_strata}. In particular, its geometric monodromy group $\mathcal{G}({ \mathcal H})$ contains a copy of the entire braid group $B_q$ on the set of branch values of $q$ which lift to regular points of $(X, \omega)$ (compare \cite[Proof of Corollary 2.6]{Calderon_strata}). Since such a braid group acts transitively on the set of simple arcs connecting its points, we know there is an element of $B_q$ taking $\delta$ to $\gamma$; hence by the Birman--Hilden theory (see Section \ref{subsection:chainslide} or \cite{MargWin_BHSurvey}) there is an element $g \in \mathcal{G}({ \mathcal H})$ taking $d$ to $c$.

As above, the curve $c$ is the core curve of a cylinder on
\[g \cdot (X, f, \omega) = (X, f g^{-1}, \omega),\]
and by Proposition \ref{prop:monstab}, we have that $(X, fg^{-1}, \omega)$ is in $\tilde { \mathcal H}$, finishing the proof.
\end{proof}

\end{document}